\definecolor{blue(munsell)}{rgb}{0.0, 0.5, 0.69}
\DeclareFontFamily{U}{min}{}
\DeclareFontShape{U}{min}{m}{n}{<-> udmj30}{}
\newcommand{\yo}{\!\text{\usefont{U}{min}{m}{n}\symbol{'210}}\!}
\theoremstyle{definition}
\newtheorem{thm}{Theorem}[subsection]
\newtheorem*{thm*}{Theorem}
\newtheorem{prop}[thm]{Proposition}
\newtheorem*{prop*}{Proposition}
\newtheorem{lem}[thm]{Lemma}
\newtheorem{cor}[thm]{Corollary}
\newtheorem{defn}[thm]{Definition}
\newtheorem*{defn*}{Definition}
\newtheorem*{war*}{Warning}
\newtheorem{rem}[thm]{Remark}
\newtheorem{constr}[thm]{Construction}
\newtheorem*{constr*}{Construction}
\newtheorem{exa}[thm]{Example}
\newtheorem{conj}[thm]{Conjecture}
\newtheorem{quest}[thm]{Question}
\newtheorem{notat}[thm]{Notation}
\newtheorem{term}[thm]{Terminology}
\newcommand{\textdel}[1]{}
\newcommand{\Hcal}{\mathcal{H}}
\newcommand{\Set}{\mathsf{Set}}
\newcommand{\op}{{}^{\mathrm{op}}}
\newcommand{\mc}[1]{\mathcal{#1}}
\newcommand{\mb}[1]{\mathbf{#1}}
\newcommand{\mbb}[1]{\mathbb{#1}}
\newcommand{\mr}[1]{\mathrm{#1}}
\newcommand{\ms}[1]{\mathsf{#1}}
\newcommand{\lan}{\ms{lan}}
\newcommand{\ran}{\ms{ran}}
\newcommand{\nt}{\Rightarrow}
\newcommand{\LInj}{\ms{LInj}}
\newcommand{\WLInj}{\ms{WLInj}}
\newcommand{\RInj}{\ms{RInj}}
\newcommand{\WRInj}{\ms{WRInj}}
\newcommand{\Inj}{\ms{Inj}}
\newcommand{\Topoi}{\ms{Topoi}}
\newcommand{\LEX}{\ms{LEX}}
\newcommand{\lex}{\ms{lex}}
\newcommand{\cat}{\ms{cat}}
\newcommand{\CAT}{\ms{CAT}}
\newcommand{\psh}{\ms{Psh}}
\newcommand{\sh}{\ms{Sh}}
\newcommand{\surj}{\twoheadrightarrow}
\newcommand{\inj}{\rightarrowtail}
\newcommand{\hook}{\hookrightarrow}
\newcommand{\co}{^{\mr{co}}}
\newcommand{\qsi}[1]{\widetilde{#1}}
\newcommand{\eff}{\Leftrightarrow}
\newcommand{\stt}[1]{\{\,#1\,\}}
\newcommand{\pw}{\ms{pw}}
\newcommand{\ct@}[2]{%
  \vtop{\m@th\ialign{##\cr
    \hfil$#1\operator@font lim$\hfil\cr
    \noalign{\nointerlineskip\kern1.5\ex@}#2\cr
    \noalign{\nointerlineskip\kern-\ex@}\cr}}%
}
\newcommand{\ct}{%
  \mathop{\mathpalette\ct@{\rightarrowfill@\textstyle}}\nmlimits@
}
\newcommand{\lt@}[2]{%
  \vtop{\m@th\ialign{##\cr
    \hfil$#1\operator@font lim$\hfil\cr
    \noalign{\nointerlineskip\kern1.5\ex@}#2\cr
    \noalign{\nointerlineskip\kern-\ex@}\cr}}%
}
\newcommand{\lt}{%
  \mathop{\mathpalette\lt@{\leftarrowfill@\textstyle}}\nmlimits@
}
\title{Logic and concepts in the $2$-category of topoi}
\author{Ivan Di Liberti$^{\mathbin{\rotatebox[origin=c]{-45}{$\diamond$}}}$}
\thanks{
$^{{\mathbin{\rotatebox[origin=c]{-45}{$\diamond$}}}}$ Department of Philosophy, Linguistics and Theory of Science, University of Gothenburg, Gothenburg, Sweden. \hfill \href{mailto:diliberti.math@gmail.com}{\sf diliberti.math@gmail.com}
}
\author{Lingyuan Ye$^\diamond$}
\thanks{
$^\diamond$ Department of Computer Science and Technology, University of Cambridge, Cambridge, UK. (Corresponding author). \hfill  \href{mailto:ye.lingyuan.ac@gmail.com}{\sf ye.lingyuan.ac@gmail.com}
}
\begin{document}

\begin{abstract}
We use Kan injectivity to axiomatise concepts in the 2-category of topoi. We showcase the expressivity of this language through many examples, and we establish some aspects of the formal theory of Kan extension in this 2-category (pointwise Kan extensions, fully faithful morphisms, etc.). We use this technology to introduce fragments of geometric logic, and we accommodate essentially algebraic, disjunctive, regular, and coherent logic in our framework, together with some more exotic examples. We show that each fragment $\Hcal$ in our sense identifies a lax-idempotent (relative) pseudomonad $\mathsf{T}^{\Hcal}$ on $\lex$, the $2$-category of finitely complete categories. We show that the algebras for $\mathsf{T}^{\Hcal}$ admit a notion of classifying topos, for which we deliver several Diaconescu-type results. The construction of classifying topoi allows us to define conceptually complete fragments of geometric logic. 
 
\smallskip \noindent \textbf{Keywords.} 
   categorical logic, doctrine,  topos,  coherent topos, Kan injectivity, flat geometric morphism, ultracategory, lax-idempotent pseudomonad, KZ-doctrine, formal category theory. \textbf{MSC2020.}  03B10, 03G30, 18B25, 18C10, 18F10, 18N10, 18D65, 18A15.
\end{abstract}

\maketitle

   {
   \hypersetup{linkcolor=black}
   \tableofcontents
   }

\section*{Introduction}

\subsection*{Landscape}
Geometric logic is a fragment of predicate logic that established itself because of a collection of serendipities:

\begin{itemize}
\item[$\star$] it guarantees great \textit{expressive power}, encompassing many structures of the working mathematician. See, among other sources, \cite[D1.1.7]{elephant2} for a broad collection of examples. 
\item[$\star$] it finds its natural semantics in \textit{Grothendieck topoi} (\cite[D1.2]{elephant2}), which provide a very interesting link to objects of geometric flavour. 
\item[$\star$] it allows for an inherently intuitionistic take on those structures, which is useful in those semantics where classical logic is not available.
\item[$\star$] it features a nice theory of \textit{syntactic categories} \cite[D1.4]{elephant2}. Moreover, its topos-theoretic semantics provides a \textit{completeness theorem} for geometric logic via the theory of classifying topoi \cite{Hakim1972}.
\end{itemize}
 
For these reasons, geometric logic has emerged -- especially in categorical logic -- as the playground/framing to treat and study many aspects of predicate logic. Several relevant fragments of first order logic can be understood as syntactic restrictions over geometric logic. This is particularly clear when reading \cite[D1.1]{elephant2}. We are introduced to a number of \textit{fragments} of geometric logic: \textit{(essentially) algebraic, regular, disjunctive, coherent, and others} (see \cite[Exa. D1.1.7]{elephant2}) and we are provided with a series of results for those fragments.

For each of these fragments, we find a parallel development of results -- each fragment enjoying its own syntactic category and various tailored completeness or representation theorems. Also, they all admit a classifying topos construction corresponding to each type of theory, and in doing so, each identifies a distinctive class of topoi. Coherent theories, for instance, are classified by coherent topoi; regular theories by regular ones; algebraic theories by suitable presheaf topoi. 

While these parallel results are rich and well-developed, there is an elephant in the room\footnote{(No) pun intended.}: we still lack a definition of what a fragment of geometric logic \textit{is}. The notion remains informal, more pattern than principle. As a result, the landscape is modular in spirit but not in practice -- each theorem is reproved for each fragment, following the same strategy, just in a different syntactic costume.

\subsection*{Contribution} This paper develops a framework that allows to:
 \begin{enumerate}
     \item \textit{define} fragments of geometric logic\footnote{For which examples we cover and which we do not, see e.g. \Cref{rem:limitation}.},
     \item accommodate many usual constructions from topos theory (syntactic category, classifying topos for a fragment of logic, conceptual completeness) in a way that is modular with respect to the specification of the fragment,
     \item deliver new theorems and old theorems in a \textit{modular} way. In the introduction of the recent paper \cite{di2024sketches}, the first author has already mentioned several \textit{patterns} of theorems that require a similar framework to be investigated\footnote{See \Cref{abstractnonsense} and \Cref{question}.}.
 \end{enumerate}

We introduce the blueprint of \textit{semantic prescriptions} (\ref{colimits}, \ref{prescribinguniversals}, \ref{rem:semanticcoherent}, \ref{prescriptiononelasttime}) which uses Kan injectivity to prescribe desired/intended properties to the semantics of a special class of geometric theories. For example, we expect categories of models of essentially algebraic theories to have \textit{all limits}, models of regular theories to have \textit{products}, models of coherent theories to have \textit{ultraproducts}. Requiring a topos to have such a property identifies the \textit{fragment of geometric logic} it belongs to.
 
Our notion of logic produces a (Kock--Zöberlein) \textit{doctrine}, consistent with the usual tradition of categorical logic. This should be understood as both a technical device to study logic, but also a sanity check that our notion is not too general. We shall analyse the structure of the paper in details in the next subsection, for the moment, we mention two of our main results. 
 
\begin{thm*}[\ref{constr:fromlotodoc} and \ref{prop:synofcoh}]
Every logic $\Hcal$ has an associated lax-idempotent (relative) pseudomonad $\mathsf{T}^{\mathcal{H}}$ defined over $\lex$. There is a logic $\Hcal_{\text{flat}}$ whose associated $\mathsf{T}^{\Hcal_{\text{flat}}}$ precisely the free pretopos construction over a lex category.
\end{thm*}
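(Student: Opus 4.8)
The plan is to prove the two assertions in turn; the second is just the general construction evaluated at the specific logic $\Hcal_{\mathrm{flat}}$, so the structural work is concentrated in the first. For the general statement (\Cref{constr:fromlotodoc}), recall that a logic $\Hcal$ is by definition (\Cref{colimits,prescribinguniversals,rem:semanticcoherent,prescriptiononelasttime}) a small family of Kan-injectivity prescriptions in the $2$-category $\Topoi$, cutting out the sub-$2$-category $\Topoi_{\Hcal}$ of $\Hcal$-topoi. The first ingredient is the formal theory of Kan injectivity: such data determines a \emph{KZ-reflective} inclusion $\Topoi_{\Hcal}\hookrightarrow\Topoi$, i.e.\ one admitting a lax-idempotent left biadjoint $R_{\Hcal}$ whose unit $r_{\mathcal E}\colon\mathcal E\to R_{\Hcal}\mathcal E$ is the universal $\Hcal$-cocontinuous morphism out of $\mathcal E$. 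The second ingredient is the classifying-topos assignment $\mathcal S[-]\colon\lex\to\Topoi$, $\mathcal C\mapsto\psh(\mathcal C)$: by Diaconescu, geometric morphisms into $\psh(\mathcal C)$ are lex functors out of $\mathcal C$, so $\mathcal S[-]$ presents the cartesian theory carried by $\mathcal C$ and is the free leg of a \emph{relative} pseudo-adjunction (relative because, for size reasons, $\mathcal S[-]$ has no honest right biadjoint on $\lex$). One then sets $\mathsf T^{\Hcal}(\mathcal C)$ to be the $\Hcal$-syntactic category of $\mathcal C$: a small lex site presenting $R_{\Hcal}\mathcal S[\mathcal C]$, obtained by closing the image of $\mathcal C$ inside $R_{\Hcal}\mathcal S[\mathcal C]$ under the operations prescribed by $\Hcal$.

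The relative-pseudomonad structure on $\mathsf T^{\Hcal}$ is then forced by these two ingredients. The unit $\eta_{\mathcal C}\colon\mathcal C\to\mathsf T^{\Hcal}(\mathcal C)$ corestricts $r_{\mathcal S[\mathcal C]}$ along Yoneda; the multiplication $\mu_{\mathcal C}$ descends from the idempotency equivalence $R_{\Hcal}R_{\Hcal}\simeq R_{\Hcal}$; and the associativity and unit coherences, together with their modifications, are transported from those of $R_{\Hcal}$ across the relative pseudo-adjunction $\mathcal S[-]$, up to the chosen small presentations --- this last proviso being exactly why one obtains a \emph{relative} pseudomonad (along the inclusion of $\lex$ into the ambient $2$-category in which the reflections live) rather than an honest pseudomonad on $\lex$. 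Lax-idempotency of $\mathsf T^{\Hcal}$ is inherited from that of $R_{\Hcal}$: transporting the adjunction that witnesses the KZ property of $R_{\Hcal}$ through $\mathcal S[-]$ and restricting to $\Hcal$-syntactic sites exhibits, for every $\mathsf T^{\Hcal}$-algebra, its structure morphism as left adjoint to the unit, so algebra structures are unique and $\mathsf T^{\Hcal}$ is lax-idempotent. This establishes the first assertion.

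For the second assertion (\Cref{prop:synofcoh}), unwind $\Hcal_{\mathrm{flat}}$: it is the logic whose prescription (\Cref{rem:semanticcoherent,prescriptiononelasttime}) forces categories of models to be closed under ultraproducts, encoded as Kan injectivity against the family of flat geometric morphisms presenting ultrapower diagrams. By the conceptual-completeness results of the paper, $\Topoi_{\Hcal_{\mathrm{flat}}}$ is precisely the $2$-category of coherent topoi --- equivalently the topoi $\sh(\mathcal P,J_{\mathrm{coh}})$ for $\mathcal P$ a pretopos, $\mathcal P$ being recoverable as the category of coherent objects. Now $R_{\Hcal_{\mathrm{flat}}}\mathcal S[\mathcal C]$ is the classifying topos of the coherent theory freely generated by $\mathcal C$; since that theory is cartesian-generated it is already coherent, so this topos is $\mathcal S[\mathcal C]=\psh(\mathcal C)$ itself, now presented --- via the universal property of the free pretopos $\mathsf P(\mathcal C)$, namely that coherent functors out of $\mathsf P(\mathcal C)$ are lex functors out of $\mathcal C$ --- as $\sh(\mathsf P(\mathcal C),J_{\mathrm{coh}})$. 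Hence its $\Hcal_{\mathrm{flat}}$-syntactic category, the pretopos of coherent objects, is $\mathsf P(\mathcal C)$; that is, $\mathsf T^{\Hcal_{\mathrm{flat}}}(\mathcal C)\simeq\mathsf P(\mathcal C)$, pseudonaturally in $\mathcal C$. Comparing the two universal properties --- the $2$-categorical one carried by $\mathsf T^{\Hcal_{\mathrm{flat}}}$ and the $1$-categorical free-pretopos completion --- identifies the induced pseudomonad with the standard free-pretopos pseudomonad $\mathsf P$ on $\lex$; its lax-idempotency reads off directly, since when $\mathcal C$ is already a pretopos every formal coproduct and every formal quotient in $\mathsf P(\mathcal C)$ reflects onto its genuine counterpart, so the evaluation $\mathsf P(\mathcal C)\to\mathcal C$ is left adjoint to $\eta_{\mathcal C}$.

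The main obstacle is the flat-case identification rather than the formal scaffolding: one must prove that Kan injectivity against the flat/ultraproduct family cuts out \emph{exactly} the coherent topoi --- soundness (coherent topoi carry the prescribed ultraproducts) and completeness (conceptual completeness forces any topos with them to be presented by its pretopos of coherent objects) --- and then check that the syntactic shadow of the resulting reflection is the free-pretopos construction on the nose, by the comparison of universal properties above. Granting this, everything else --- the relative-pseudomonad coherences and lax-idempotency --- follows formally from the KZ property of $R_{\Hcal}$ and the relative pseudo-adjunction $\mathcal S[-]$.
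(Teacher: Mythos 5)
Your proposal rests on an ingredient the paper neither has nor needs: a lax-idempotent left biadjoint $R_{\Hcal}$ to the inclusion $\WRInj(\Hcal)\hookrightarrow\Topoi$. Nothing in the paper establishes that a weak right Kan injectivity class in $\Topoi$ is (bi)reflective, and for the classes actually considered this is far from automatic: a logic $\Hcal$ is an \emph{arbitrary, typically proper-class-sized} family of geometric morphisms ($\mc H_{\mr{all}}$, $\mc H_{\mr{flat}}$, \dots), so no small-object-type argument applies; reflectivity is only noted in the paper for the isolated case of totally connected topoi, as a citation of a classical fact. Since your unit, multiplication, coherence data and the lax-idempotency of $\mathsf T^{\Hcal}$ are all ``transported from $R_{\Hcal}$,'' the whole first part is unsupported. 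The paper's route is different and avoids this entirely: it defines $\mathsf T^{\Hcal}=\mathsf{Syn}^{\Hcal}\circ\psh$ where $\mathsf{Syn}^{\Hcal}(\mc X)=\WRInj(\Hcal)(\mc X,\Set[\mathbb O])$, and builds the \emph{extension operator} of the relative pseudomonad by hand: a morphism $F:\mc C\to\mathsf T^{\Hcal}(\mc D)$ induces a geometric morphism $\mc PF\circ\Sigma\eta:\psh(\mc D)\to\psh(\mc C)$, which a direct computation (using the pointwise formula for right Kan extensions into free topoi) shows to lie in $\WRInj(\Hcal)$; precomposition with it gives $\widetilde F$. No reflection of topoi is ever formed. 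Relatedly, your assumptions that $\Hcal$ is ``small'' and that $\mathsf T^{\Hcal}(\mc C)$ is ``a small lex site'' contradict the paper, which introduces the notion of \emph{bounded} logic precisely because the syntactic category may be large --- this, not a failure of coherence data, is why the pseudomonad is relative to $\lex\hookrightarrow\LEX$.

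For the flat case you also over-claim: the paper does \emph{not} prove that $\WRInj(\mc H_{\mr{flat}})$ is precisely the coherent topoi --- that characterisation is explicitly left as a conjecture in the summary table, and Proposition~\ref{prop:betanotflat} shows the injectivity classes in this neighbourhood are delicate. What is proven, and what the computation of $\mathsf T^{\Hcal_{\mr{flat}}}$ actually requires, is only Proposition~\ref{prop:synofcoh}: for a \emph{coherent} topos $\mc X$ (in particular $\psh(\mc C)$), the objects preserving right Kan extensions along flat maps are exactly the coherent objects, with the hard inclusion resting on Makkai duality via the ultraproduct interpretation of $\ran_f$ along $\beta$-maps. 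Theorem~\ref{thm:cohalgebra} then identifies the coherent objects of $\psh(\mc C)$ with the closure of $\mc C$ under finite limits, finite coproducts and effective quotients --- the free pretopos --- via the comparison lemma. Your closing paragraph does correctly locate the genuine difficulty in this identification, but the detour through ``$\Topoi_{\Hcal_{\mr{flat}}}=$ coherent topoi'' is both unavailable and unnecessary.
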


\begin{thm*}[\ref{constr:classtopos} and \ref{diaconescu}]
Every (Morita-small) algebra for $\mathsf{T}^\Hcal$ admits a \textit{classifying topos}, producing a 2-functor $\ms{Cl} : \ms{alg}(\ms T^{\Hcal}) \to \Topoi\op$. Moreover, this construction enjoys a Diaconescu-type relative pseudoadjunction as below,
% https://q.uiver.app/#q=WzAsMyxbMSwwLCJcXG1hdGhzZntUb3BvaX0iXSxbMCwwLCJcXG1hdGhzZntBbGd9KFxcbWF0aHNme1R9XntcXEhjYWx9KV97XFxtYXRoc2Z7TX19XFxvcCJdLFswLDEsIlxcbWF0aHNme0FsZ30oXFxtYXRoc2Z7VH1ee1xcSGNhbH0pXFxvcCJdLFsxLDAsIlxcbXN7Q2x9Il0sWzAsMl0sWzEsMl0sWzQsMywiIiwyLHsibGV2ZWwiOjEsInN0eWxlIjp7Im5hbWUiOiJhZGp1bmN0aW9uIn19XV0=
\[\begin{tikzcd}
    {\mathsf{alg}(\mathsf{T}^{\Hcal})} & {\mathsf{Topoi}\op} \\
    {\mathsf{Alg}(\mathsf{T}^{\Hcal})}
    \arrow[""{name=0, anchor=center, inner sep=0}, "{\ms{Cl}}", from=1-1, to=1-2]
    \arrow[from=1-1, to=2-1]
    \arrow[""{name=1, anchor=center, inner sep=0}, from=1-2, to=2-1]
    \arrow["\dashv"{anchor=center, rotate=-90}, draw=none, from=1, to=0]
\end{tikzcd}\]
\end{thm*}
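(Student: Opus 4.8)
The plan is to realise $\ms{Cl}(\mathcal{A})$ as a topos of sheaves built directly from the algebra $\mathcal{A}$, and to read off the relative pseudoadjunction from a $2$-dimensional form of Diaconescu's theorem. Fix a Morita-small algebra $\mathcal{A}$. Since $\mathsf{T}^{\Hcal}$ is lax-idempotent, the structure map $\mathsf{T}^{\Hcal}\mathcal{A}\to\mathcal{A}$ is a left adjoint to the unit, so $\mathcal{A}$ genuinely carries the colimits that $\Hcal$ prescribes, and Morita-smallness ensures these data live on an essentially small dense site. I would let $J_{\Hcal}$ be the Grothendieck coverage on $\mathcal{A}$ generated by the cocones exhibiting the $\Hcal$-colimits (for instance, for coherent logic: coproduct cocones and image/quotient covers), and set $\ms{Cl}(\mathcal{A}):=\sh(\mathcal{A},J_{\Hcal})$, a Grothendieck topos. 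Composing the Yoneda embedding with sheafification gives a functor $\gamma_{\mathcal{A}}:\mathcal{A}\to\ms{Cl}(\mathcal{A})$ which is left exact (both Yoneda and sheafification are) and, by construction of $J_{\Hcal}$, sends the prescribed cocones to covering families; here one uses that the colimits a logic prescribes are \emph{stable} (pullback-stable, disjoint, effective, \dots), so that inside a topos the associated covers recover the colimit diagrams on the nose. Hence $\gamma_{\mathcal{A}}$ is a morphism of $\mathsf{T}^{\Hcal}$-algebras, and it will serve as the generic model.

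Next I would upgrade $\ms{Cl}$ to a $2$-functor. Given a morphism of algebras $f:\mathcal{A}\to\mathcal{B}$ --- a left exact functor preserving $\Hcal$-colimits --- the composite $\gamma_{\mathcal{B}}\circ f$ is left exact and sends $J_{\Hcal}$-covers on $\mathcal{A}$ to covering families in $\ms{Cl}(\mathcal{B})$, so by the ``morphism of sites'' form of Diaconescu it induces a geometric morphism $\ms{Cl}(\mathcal{B})\to\ms{Cl}(\mathcal{A})$, i.e.\ a $1$-cell $\ms{Cl}(f):\ms{Cl}(\mathcal{A})\to\ms{Cl}(\mathcal{B})$ in $\Topoi\op$; a natural transformation of algebra morphisms induces one between the corresponding left exact $J_{\Hcal}$-continuous functors, hence a $2$-cell between the associated geometric morphisms. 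The pseudofunctoriality constraints are then a lengthy but routine diagram chase, using the functoriality and coherence of sheafification and of the Diaconescu correspondence.

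For the adjunction, first observe that every Grothendieck topos $\mathcal{E}$, being cocomplete and left exact, carries a canonical (large) $\mathsf{T}^{\Hcal}$-algebra structure --- again by lax-idempotence, since every $\Hcal$-prescribed colimit exists in $\mathcal{E}$ --- and that inverse image functors, being left exact and cocontinuous, preserve $\Hcal$-colimits and thus are algebra morphisms, and likewise on $2$-cells; this gives the right-hand pseudofunctor $G:\Topoi\op\to\mathsf{Alg}(\mathsf{T}^{\Hcal})$, which is ``the identity on underlying data''. Then for $\mathcal{A}\in\ms{alg}(\mathsf{T}^{\Hcal})$ and $\mathcal{E}\in\Topoi$ I would establish a chain of pseudonatural equivalences
\[
\Topoi\op(\ms{Cl}(\mathcal{A}),\mathcal{E})\;=\;\Topoi(\mathcal{E},\sh(\mathcal{A},J_{\Hcal}))\;\simeq\;\mathsf{Flat}_{J_{\Hcal}}(\mathcal{A},\mathcal{E})\;\simeq\;\mathsf{Alg}(\mathsf{T}^{\Hcal})(\mathcal{A},G\mathcal{E}),
\]
where the middle step is Diaconescu's theorem (flat, $J_{\Hcal}$-continuous functors $\mathcal{A}\to\mathcal{E}$), and the last step uses that flatness of a functor out of the left exact category $\mathcal{A}$ is just left exactness, while $J_{\Hcal}$-continuity is --- by the stability of the $\Hcal$-colimits once more --- equivalent to preserving them, i.e.\ to being a morphism of algebras. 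Pseudonaturality in $\mathcal{A}$ and $\mathcal{E}$ is the $2$-dimensional content of Diaconescu; the unit of the relative adjunction is the generic model $\gamma_{\mathcal{A}}:\mathcal{A}\to G\ms{Cl}(\mathcal{A})$, and checking the triangle identities up to coherent invertible modification exhibits $\ms{Cl}$ as left pseudoadjoint to $G$ relative to the inclusion $\ms{alg}(\mathsf{T}^{\Hcal})\hookrightarrow\mathsf{Alg}(\mathsf{T}^{\Hcal})$, as displayed.

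The main obstacle is the claim used twice above: that the covering families generated by $\Hcal$-colimit cocones behave, inside any sheaf topos, exactly as the original colimit diagrams --- equivalently, that a left exact $J_{\Hcal}$-continuous functor preserves the $\Hcal$-colimits (and not merely sends them to jointly epimorphic families), and dually that $\gamma_{\mathcal{A}}$ is a bona fide algebra morphism. This is the genuinely ``Diaconescu-type'' part of the statement, and it is exactly where the defining requirement that a logic prescribe only stable (pullback-stable, disjoint, effective) colimits is indispensable. Everything else is size bookkeeping --- using Morita-smallness to obtain an honest small site and a Grothendieck topos, and checking that the relative-pseudomonad formalism comfortably accommodates the passage from the small algebras in $\ms{alg}(\mathsf{T}^{\Hcal})$ to the large algebras underlying topoi --- together with the substantial but routine coherence verifications needed to promote the pointwise hom-equivalences to a full relative pseudoadjunction.
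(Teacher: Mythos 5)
Your global architecture matches the paper's: equip the algebra with a site structure, sheafify to obtain $\ms{Cl}(\mc A)$, and derive the relative pseudoadjunction from the chain $\Topoi(\mc E,\ms{Cl}(\mc A))\simeq\ms{SITES}(J,\mc E)\simeq\ms{Alg}(\ms T^{\Hcal})(\mc A,\mc E)$, with the right leg $\Topoi\op\to\ms{Alg}(\ms T^{\Hcal})$ supplied by the observation that every topos is an algebra. The gap is exactly in the step you flag as the main obstacle, and it is genuine. You define the topology as the coverage generated by ``the cocones exhibiting the $\Hcal$-colimits'' and then need that a left exact $J_{\Hcal}$-continuous functor preserves those colimits (rather than merely sending the cocones to jointly epimorphic families); you attribute this to the prescribed colimits being pullback-stable, disjoint and effective. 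But nothing in the definition of a logic guarantees any such stability: $\Hcal$ is an arbitrary class of geometric morphisms, $\ms T^{\Hcal}(\mc C)$ is the abstractly defined full subcategory $\WRInj(\Hcal)(\psh(\mc C),\Set[\mbb O])$ of $\psh(\mc C)$, and the algebra structure is a left Kan extension condition along the unit $\eta$, not literally ``having certain colimits''. So the equivalence between $J$-continuity and being an algebra morphism --- the heart of the Diaconescu step --- is not established, and for a coverage generated by arbitrary cocones it is false in general.

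The paper closes this gap by choosing the topology differently: $J^{\Hcal}_{\mc C}$ is the \emph{least topology localising the comparison maps} $\delta_X:\sigma X\to\yo(aX)$ in $\psh(\mc C)$, where $\sigma:\ms T^{\Hcal}(\mc C)\hook\psh(\mc C)$ is the canonical inclusion and $a\dashv\eta$ is the algebra structure. With this definition $\sh(\mc C,J^{\Hcal}_{\mc C})$ is by construction the inverter of the $2$-cells $\delta_X$ in $\Topoi$, so a left exact $F:\mc C\to\mc E$ is cover-preserving precisely when its extension $a^F\cong\lan_\eta F$ inverts every $\delta_X$, i.e.\ $a^F\cong a^F\eta a$, which is by definition the condition that $F$ be a morphism of algebras; no stability hypothesis is needed. (In the coherent case the two topologies coincide, which is why your heuristic works in the worked examples.) Two smaller omissions: for genuinely large Morita-small algebras the adjoint $a\dashv\eta$ is not available and the paper must instead take a supremum of topologies $J^f_{\mc C}$ over all $f:\mc A\to\mc C$ from small lex categories; and the algebra structure on a topos is obtained by composing $\sigma$ with the $\psh$-algebra structure of an infinitary pretopos, not by invoking cocompleteness and lax-idempotence directly.
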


% As a byproduct of our interests we introduce and study the general theory of Kan injectity in topoi and descibe a lot of stuff in this way.

\subsection*{Results and structure of the paper}

In \Cref{sec:framework} we start by recalling the framework of Kan injectivity and Kan injective object from \cite{dlsolo}. Throughout the paper a class of geometric morphism will always be called $\Hcal$, and its associated class of weak right Kan injectives will be called $\WRInj(\Hcal)$. Then we move to show that some toy -- and yet relevant -- classes of topoi can be described as classes of right Kan injectives (up to retract completion).

\begin{thm*}[\ref{chargeom}, \ref{class:essalg}, \ref{class:prop}] 
The following $2$-categories are all categories of weak Kan injectives up to coadjoint retract completion: topoi, free topoi\footnote{And ethereal geometric morphisms between them; see \Cref{defeth}.}, localic topoi.
\end{thm*}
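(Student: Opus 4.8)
The plan is to prove the three statements in parallel: for each target $2$-category I would exhibit a class $\Hcal$ of geometric morphisms, show that every object of the target lies in $\WRInj(\Hcal)$, identify which geometric morphisms are $\Hcal$-Kan (i.e.\ preserve the chosen right Kan extensions along maps of $\Hcal$), and conversely show that an arbitrary object of $\WRInj(\Hcal)$ is a coadjoint retract of an object of the target. Since the general theory recalled in \Cref{sec:framework}, following \cite{dlsolo}, already guarantees that $\WRInj(\Hcal)$ is closed under coadjoint retracts of objects and of $\Hcal$-Kan $1$-cells, the best one can hope for is equality \emph{after} closing the target under coadjoint retracts --- which is precisely what ``up to coadjoint retract completion'' records --- so the substance is (a) producing the right $\Hcal$, (b) the two inclusions, and (c) pinning down the admissible $1$-cells.

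For localic topoi I would take $\Hcal$ to be the class of hyperconnected geometric morphisms. The forward inclusion is the easy half: by the hyperconnected--localic orthogonal factorization system a localic topos $\mathcal L$ is right orthogonal to every hyperconnected $p$, so precomposition with $p$ is an equivalence on hom-categories into $\mathcal L$; in particular $\mathcal L\in\WRInj(\Hcal)$ (even with invertible counit), and every geometric morphism between localic topoi is vacuously $\Hcal$-Kan. For the converse, given $\mathcal X\in\WRInj(\Hcal)$ I would factor $\id_{\mathcal X}$ as $\mathcal X\xrightarrow{h}\mathcal X_{\mathrm{loc}}\xrightarrow{\ell}\mathcal X$ with $h$ hyperconnected (the localic reflection, i.e.\ the hyperconnected part of $\mathcal X\to\mathbf 1$) and $\ell$ localic, and feed $h$ together with $\id_{\mathcal X}$ into the weak Kan injectivity of $\mathcal X$ to extract a morphism $\mathcal X_{\mathrm{loc}}\to\mathcal X$; the aim is then to massage this into an adjoint retraction, exhibiting $\mathcal X$ as a coadjoint retract of the localic topos $\mathcal X_{\mathrm{loc}}$. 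For localic topoi --- and, trivially, for all topoi --- I expect the retract completion to contribute little, the gap between the weak and the strong notions of Kan injectivity being exactly what it absorbs.

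For free topoi the role of ``localic'' is played by ``presheaf topos'': one uses $\Topoi(\mathcal E,\widehat C)\simeq\mathrm{Flat}(C,\mathcal E)$ and the fact that presheaf topoi are the free cocompletions. I would take $\Hcal$ to be the appropriate class of ``localization''-type geometric morphisms witnessing a topos as a retract of a presheaf topos (whichever precise class makes $\WRInj(\Hcal)$ the $2$-category of continuous toposes), build the right Kan extensions into a presheaf topos out of Kan extensions along the inducing functors, and check that the $\Hcal$-Kan $1$-cells are exactly the ethereal geometric morphisms of \Cref{defeth}. The forward inclusion $\{\text{presheaf topoi}\}\subseteq\WRInj(\Hcal)$ is then a direct computation with flat functors; for the converse I would present an arbitrary $\mathcal X\in\WRInj(\Hcal)$ as a coadjoint retract of a presheaf topos, which by the Johnstone--Joyal theorem on continuous (equivalently, exponentiable) toposes is exactly the coadjoint-retract completion of the free topoi. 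The case of all topoi is the degenerate endpoint: with $\Hcal=\varnothing$ one has $\WRInj(\Hcal)=\Topoi$ outright, so nothing remains but to observe that $\Topoi$ is its own coadjoint-retract completion; this is a sanity check rather than a theorem with content.

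The hard part will be the converse inclusions together with the \emph{coadjoint} bookkeeping: not merely that a weak right Kan injective is a retract of a nice topos, but that the retraction manufactured from the universal property of the Kan extension is one leg of an adjunction, so that the target --- closed under \emph{coadjoint}, not arbitrary, retracts --- is captured on the nose. For localic topoi this should be clean, since the hyperconnected--localic factorization is orthogonal and carries its own (co)units. For free topoi it is the genuinely delicate point: one must identify the correct $\Hcal$, verify that the associated nerve--realization construction lands among presheaf topoi, and show that it exhibits $\mathcal X$ as an \emph{adjoint} retract --- which is exactly where the identification of $\WRInj(\Hcal)$ with the continuous toposes via Johnstone--Joyal does the real work. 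Once that is in place, checking that the ethereal morphisms of \Cref{defeth} coincide with the $\Hcal$-Kan $1$-cells should be a routine unwinding of definitions.
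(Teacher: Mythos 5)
Your localic case is essentially the paper's argument: take $\mc H$ to be the hyperconnected morphisms, get the forward inclusion from the hyperconnected--localic orthogonality, and recover an arbitrary injective as a retract of the localic part of $\mc X \to \Set$ (cf.\ \Cref{thm:localrightKanhy} and \Cref{class:prop}). The ``all topoi'' case is harmless either way, though the paper deliberately uses the nonempty class $\mc H_{\mr{eth}}$ of ethereal morphisms rather than $\varnothing$, so that the statement $\Topoi = \WRInj(\mc H_{\mr{eth}})$ (\Cref{chargeom}) falls out of \Cref{lem:adjointiffallinj} and the class still plays a role later as the ``geometric'' fragment.

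The genuine gap is the free-topoi case. You leave the choice of $\mc H$ open (``whichever precise class makes $\WRInj(\Hcal)$ the $2$-category of continuous toposes''), but producing that class is the entire content of the statement. The paper's answer is the class $\mc H_{\mr{all}}$ of \emph{all} geometric morphisms (\Cref{defall}): the forward inclusion is Johnstone's observation that presheaf topoi on lex categories are weakly right Kan injective against every geometric morphism, via the explicit formula $(\ran_f x)^* \cong \lan_{\yo}(f_* x^* \yo)$ of \Cref{exm:ranfree} --- a computation with the \emph{direct image} $f_*$, not with ``inducing functors'' between sites, which do not exist for a general $f \in \mc H_{\mr{all}}$. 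For the converse you also skip the point where the real work happens: $\WRInj(\mc H_{\mr{all}})$ itself is too big (by \Cref{prop:alliffopen} it coincides with weak injectivity against open embeddings), and one must restrict to \emph{pointwise} Kan extensions, use \Cref{prop:ranemiffweak} to upgrade pointwise weak injectivity along embeddings to strict injectivity, and only then invoke the Elephant's C4.3 to identify $\RInj(\mc H_{\mr{emb}})$ with coadjoint retracts of free topoi (\Cref{thm:algebraic}). Finally, your identification of the retract completion of the free topoi with the ``continuous (equivalently, exponentiable) toposes'' is incorrect: retracts of presheaf topoi are the \emph{injective} topoi, a proper subclass of the exponentiable ones (every such retract is local, while, say, sheaves on Cantor space are exponentiable but not local), so the Johnstone--Joyal exponentiability theorem cannot do the work you assign to it.
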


In some cases the class $\Hcal$ can shrink, thanks to the properties of the $2$-category of topoi. This hinges on an analysis of the the theory of Kan extensions in Topoi importing techniques from formal category theory.

We carry this study in \Cref{sec:universal}, where we dive into the general theory of Kan extensions in the $2$-category of topoi. We build on an old observation by Wood (\cite{wood1982abstract}) that $\mathsf{Topoi}$ has a nice proarrow equipment provided by $\mathsf{Topoi}_{\mathsf{lex}}^{\mathsf{co}}$, given by mapping each topos to itself, and each geometric morphism to its direct image,
\[\mathsf{Topoi} \hookrightarrow \mathsf{Topoi}_{\mathsf{lex}}^{\mathsf{co}}.\]
Proarrow equipments allow to define a notion of fully faithful $1$-cell and pointwise Kan extension, so that we can recover typical behaviour of Kan extensions in $\CAT$. The following statement also motivates us to look at the 2-category $\WRInj_{\pw}(\mc H)$ of weak right Kan injectives with pointwise Kan extensions along $\mc H$.

\begin{prop*}[\ref{prop:ranemiffweak}]
    A pointwise Kan extension along a fully faithful morphism is indeed an extension. In this case, weak Kan injecitvity along embeddings reduces to Kan injectivity. 
\end{prop*}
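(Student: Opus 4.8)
The first assertion is the manifestation, inside the proarrow equipment $\Topoi \hookrightarrow \Topoi_{\lex}^{\mathsf{co}}$, of the classical $\CAT$-fact that $\ran_j g \circ j \cong g$ whenever $j$ is fully faithful and the Kan extension is pointwise; I would prove it by transporting the usual argument. In this equipment the companion of a geometric morphism $j$ is its direct image $j_*$ and its conjoint is its inverse image $j^*$, and ``$j$ is a fully faithful $1$-cell'' unwinds precisely to $j^* j_* \cong \id$, i.e.\ to $j$ being an embedding. Since the conjoint operation is locally fully faithful on $2$-cells, the Kan counit $\alpha \colon \ran_j g \circ j \Rightarrow g$ is invertible iff the induced comparison between the corresponding inverse-image functors is; and, the extension being pointwise, that comparison is precisely the (co)unit exhibiting $(\ran_j g)^*$ as a lifting of $g^*$ through $j^*$ in $\Topoi_{\lex}^{\mathsf{co}}$. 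So it remains to show that this (co)unit is invertible.

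For this, the plan is to use the companion--conjoint adjunction $j_* \dashv j^*$ (read in $\Topoi_{\lex}^{\mathsf{co}}$): post-composition gives $j_* \circ (-) \dashv j^* \circ (-)$, and fully-faithfulness $j^* j_* \cong \id$ is exactly invertibility of the unit of this adjunction, so $j_* \circ (-)$ is fully faithful. Together with the defining adjunction of the lifting one obtains an adjoint string whose outer left adjoint $j_* \circ (-)$ is fully faithful, and then --- by the standard fact that the two outer functors of an adjoint triple are fully faithful together --- the lifting functor is fully faithful, i.e.\ its (co)unit is invertible. A more hands-on alternative, closer to the preceding section, is to invoke the explicit description of pointwise Kan extensions in $\Topoi$: fixing a site $\mathcal{C}$ for $\mathcal{G}$, the inverse image of $\ran_j g$ extends the composite $\mathcal{C} \to \mathcal{G} \xrightarrow{g^*} \mathcal{E} \xrightarrow{j_*} \mathcal{F}$, so on representables the comparison with $g^*$ is the isomorphism $j^* j_* \cong \id$, and as the functors in play are lex and cocontinuous this spreads to an isomorphism everywhere.

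The second assertion is then immediate. If $\Hcal$ consists of embeddings and $X \in \WRInj_{\pw}(\Hcal)$, then for every $h \in \Hcal$ and every $g$ the pointwise right Kan extension $\ran_h g$ into $X$ exists and, by the first part, has invertible counit; hence the right adjoint to $\Topoi(h, X)$ is fully faithful, which is exactly the defining condition for $X$ to be right Kan injective along $h$ in the strict sense. Thus $\WRInj_{\pw}(\Hcal) = \RInj_{\pw}(\Hcal)$ whenever $\Hcal$ is a class of embeddings.

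The step I expect to require the most care is the passage through the adjoint string: in general the lifting functor need not exist globally on $\Topoi_{\lex}^{\mathsf{co}}$ (that would force $j^*$ to admit a left adjoint), so one must either prove its fully-faithfulness only at the object $g^*$ --- using that this is the conjoint of a geometric morphism rather than an arbitrary proarrow --- or else carry out the site-level computation; beyond that, the work is the routine but error-prone bookkeeping of matching up $2$-cells and their variances across the inclusion $\Topoi \hookrightarrow \Topoi_{\lex}^{\mathsf{co}}$.
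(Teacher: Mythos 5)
Your proposal is correct and follows essentially the same route as the paper: both reduce the statement to the two facts that embeddings are exactly the fully faithful $1$-cells of the equipment $(-)_* : \Topoi \to \Topoi\co_{\lex}$, and that pointwise right Kan extensions along fully faithful $1$-cells of an equipment are genuine extensions. The only real difference is that the paper disposes of the second fact by citing \cite[Prop.~14]{wood1982abstract}, whereas you sketch a proof of it; your sketch does go through once the adjoint-string argument is localised at the single object $x_*$, exactly the caveat you flag: from the adjunction of \emph{pre}-composition functors $(-)\circ i^* \dashv (-)\circ i_*$ (whose unit is invertible precisely because $i^*i_*\cong 1$) and the defining property of $\ran_{i_*}x_*$ one gets
\[ \mc M(z,\ran_{i_*}(x_*)\circ i_*)\;\cong\;\mc M(z\circ i^*,\ran_{i_*}x_*)\;\cong\;\mc M(z\circ i^*\circ i_*,x_*)\;\cong\;\mc M(z,x_*) \]
naturally in $z$, and Yoneda in the hom-category finishes it, with no need for a globally defined $\ran_{i_*}$. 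Apart from minor variance bookkeeping (``post-composition'' should be pre-composition with $i_*$), the argument is sound, and your derivation of the second assertion from the first matches the paper's use of the proposition.
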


Technical observations of a similar flavour are also investigated for various classes of geometric surjections. In this case the result has a less powerful conceptual meaning, but turns out to be extremely useful in many examples.

\Cref{sec:kanshowoff} showcases a collection of \textit{properties} of a topos that can be presented via Kan extensions. This section serves two purposes, the first one is fairly pedagogical, it holds the reader's hand in the process of building the correct intuition on how to use Kan injectivity. As a secondary purpose, it starts a taxonomy of some relevant classes of the $2$-category of topoi that is of independent interest. We prove that the following classes of topoi can be understood as Kan injectivity classes.

\begin{itemize}
    \item[$\star$] We give a very crisp proof/motivation of the fact that the category of points of a topos (equivalently the category of models of a geometric theory) has directed colimits (\ref{yesdirectedcolimits}).
    \item[$\star$] We show a topos is totally connected iff its category of points has a terminal object, and we characterise this as a Kan injectivity class (\ref{thm:totallyconnected}).
    \item[$\star$] We show that grouplike topoi are Kan injectives, we show they descend along etale surjections (\ref{prop:descentofgrouplike}), and are closed under coproducts (\ref{prop:gpcoprod}). This implies the presheaf topoi on groupoids are grouplike.
    \item[$\star$] We define preorderlike topoi to be those whose generalised points in arbitrary topoi are preorders. Via their Kan injective characterisation, we show that they are closed under localic geometric morphism (\ref{localicpreorder}). This shows that localic topoi are all preorder like. We also discuss the other implication (\ref{conjecturehenry}), contextualizing a recent Mathoverflow question (\cite{373346}).
\end{itemize}

% {
% \begin{table}[h!]
% % \centering
% % \caption{Kan Injectivity Characterisations of Structural Topos Properties}
% \label{tab:kaninjectivity}
% \renewcommand{\arraystretch}{1.3}
% \begin{tabular}{|c|c|c|}
% \hline
% Concept & $\Hcal$  & Class of Topoi \\
% \hline
% filtered Colimits & $A \otimes E \to E$ (filtered $A$) & all \\
% \hline
% terminal object & $\emptyset \otimes E \to E$  & tot. connected \\
% \hline
% groupoidal points & $2 \otimes E \to E$ & grouplike \\
% \hline
% preorder points & $P \otimes E \to 2 \otimes E$ & preorderlike \\
% \hline
% \end{tabular}
% \end{table}

\Cref{sec:logicshowoff} is in some sense a continuation of \Cref{sec:kanshowoff}, but we finally switch to examples of logical flavour, which is the intended focus of this paper. This section is largely inspired by \cite{diliberti2022geometry}, where the first author has observed the original connection between coherent topoi and Kan injectivity. In this section, for many of the usual fragments $\mathsf{L}$ of geometric logic, we exhibit a class of geometric morphisms $\Hcal^{\mathsf{L}}$ such that every topos in that fragment of logic is indeed $\Hcal^{\mathsf{L}}$ weakly injectivity.
 
\begin{table}[!h]\renewcommand{\arraystretch}{1.3}
\begin{tabular}{|c|c|c|}
\hline
\textbf{Logic} --  \textbf{$\mathcal{H}$}  & $\mathsf{WRInj}_\pw(\mathcal{H})$   & $f_*$ preserves \\ \hline

geometric \hfill  $\mathcal{H}_{\text{eth}}$ (\ref{defeth}) & $\mathsf{Topoi}$  \hfill (\ref{chargeom}) &  all colimits \\ \hline

integral \hfill  $\mathcal{H}_{\text{cl}}$ (\ref{exa:complementedfragment}) & $\mathsf{TocTopoi}$ \hfill  (\ref{cor:classtot}) & - \\ \hline

essentially algebraic \hfill  $\mathcal{H}_{\text{all}}$ (\ref{defall}) & $\mathsf{Free}^{*}$ \hfill  (\ref{class:essalg}) & nothing \\ \hline

ess. alg. w. falsum \hfill  $\mathcal{H}_{\text{dom}}$ (\ref{defdom}) & $\mathsf{ClFree}^{*}$  \hfill (\ref{thm:elsealgebraic}) &  initial object \\ \hline

disjunctive \hfill  $\mathcal{H}_{\text{inn}}$ (\ref{definn}) & $\mathsf{DisjTopoi}^{*}$ \hfill  (conj!) & all coproducts \\ \hline

finitary disjunctive \hfill  $\mathcal{H}_{\text{pure}}$ (\ref{defpure}) & $\mathsf{fDisjTopoi}^{*}$ \hfill  (conj!) & coproducts \\ \hline

regular \hfill   $\mathcal{H}_{\text{matte}}$ (\ref{defmatte}) & $\mathsf{RegTopoi}^{*}$ \hfill  (conj!) & epis \\ \hline

coherent \hfill $\mathcal{H}_{\text{flat}}$ (\ref{defflat}) & $\mathsf{CohTopoi}^{*}$ \hfill  (conj!)  & coprod. \& epis \\ \hline

\end{tabular}
    \label{tab:results}
    % \begin{tablenotes}
    %     \small \item 
    % \end{tablenotes}
\end{table}

 %\[Topoi = WRInj(Hsomethin)\]
 
In the table\footnote{This table is trying to compress a lot of information in very little space and we recommend the reader to appreciate it as a bird-eye-view over our work. A more technical understanding will require reading the relevant section (see \Cref{sec:logicshowoff} and \Cref{sec:future}). The apex $(-)^{*}$ stands for taking the closure under coadjoint retracts.} above we see on the left the name of the fragment, followed by the corresponding class of morphisms. The middle column indicates the 2-categories of weak injectives. In the last column we give a description of the class $\Hcal^{\mathsf{L}}$. Interestingly a geometric morphism $f$ belongs to $\Hcal^{\mathsf{L}}$ when $f_*$ preserves a relevant shape of colimit\footnote{Besides the fragment $\mc H_{\mr{cl}}$, where it can be chosen as a \emph{single} geometric morphism $\emptyset \hook \Set$; see e.g. \Cref{thm:totallyconnected}.}. For example, for coherent logic we identify the class $\mathcal{H}_{\text{flat}}$ of flat geometric morphisms, and a geometric morphisms $f$ is flat if its direct image preserve finite coproducts and epimorphisms.

This behaviour had already been observed for coherent topoi, which is our motivating example, and hints a connection with the theory of sound doctrines \cite{adamek2002classification}, which this paper does not investigate. Notice that during our analysis two new fragments of logic have emerged: \textit{essentially algebraic logic with falsum} and \textit{integral logic}, and we discuss some of their properties in this paper.

% The less familiar examples in the above table include: $\ms{ClFree}$, which are closed subtopoi of free topoi; $\ms{ConLex}$, which consists of those lex categories that can arise as the subcategory of connected objects in a totally connected topos; $\ms{siLex}$, which consists of lex categories with a strict initial object. Here the 2-categories under the column $\ms{Alg}(\ms T^{\mc H})$ are infinitary pretopoi, left exact categories, left exact categories with a strict initial object, infinitary lextensive categories, lextensive categories, effective regular categories, pretopoi, respectively.

Motivated by the previous examples, \Cref{sec:logic} begins with a general definition of \textit{(fragment of geometric) logic}, which for us starts with the data of a class of geometric morphisms $\Hcal$. A topos \textit{formally belongs} to the logic if it is weakly right injective to all morphisms in $\Hcal$. Then we proceed to show that every topos $\mathcal{E}$ which is formally in a logic admits a \textit{syntactic category} (\ref{syncat}), which should be understood as a tentative axiomatization of the theory classified by the topos in that specific fragment of logic. Such category also admits a canonical site structure (\ref{synsite}). The syntactic category plays a crucial role for us, by precomposing it with the presheaf construction as below,
% https://q.uiver.app/#q=WzAsMyxbMiwwLCJcXFJJbmooXFxIY2FsKV57XFx0ZXh0e29wfX0iXSxbMywwLCJcXExFWCJdLFswLDAsIlxcbGV4Il0sWzAsMSwiXFxtYXRoc2Z7U3lufV5cXEhjYWwiLDJdLFsyLDAsIlxcbWF0aHNme1BzaH0iLDJdLFsyLDEsIlxcbWF0aHNme1R9XntcXEhjYWx9IiwxLHsiY3VydmUiOi00fV1d
\[\begin{tikzcd}
	\lex && {\WRInj(\Hcal)^{\text{op}}} & \LEX
	\arrow["{\mathsf{Psh}}"', from=1-1, to=1-3]
	\arrow["{\mathsf{T}^{\Hcal}}"{description}, curve={height=-24pt}, from=1-1, to=1-4]
	\arrow["{\mathsf{Syn}^\Hcal}"', from=1-3, to=1-4]
\end{tikzcd}\]

To each logic $\Hcal$, we can thus associate a lax-idempotent relative pseudomonad $\mathsf{T}^{\Hcal}$ (\ref{thm:laxidempotentmonad}), which connects our notion of logic, to the specification of many \textit{doctrines of first order logic}, as discussed in \cite[introduction and last section]{di2025bi} or in \cite{lex}. The algebras for $\mathsf{T}^{\Hcal}$ are morally categories equipped with property-like natural operations (internal logic) respecting the semantic prescriptions made by ${\Hcal}$. On this topic, our most relevant (and sound for the consistency of the theory) result is the following.

\begin{thm*}[\ref{thm:cohalgebra}]
    $\mathsf{Pretopoi}$ is the 2-category of pseudo-algebras for $\mathsf{T}^{\mathcal{H}_{\text{flat}}}$. 
\end{thm*}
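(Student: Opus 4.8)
The plan is to derive the statement from the identification of $\mathsf{T}^{\mathcal{H}_{\text{flat}}}$ with the free pretopos construction (\ref{constr:fromlotodoc}, \ref{prop:synofcoh}) together with the formal theory of lax-idempotent relative pseudomonads, reducing the whole claim to one recognition result about lex-reflective subcategories of free pretoposes. Write $\mathsf{T}=\mathsf{T}^{\mathcal{H}_{\text{flat}}}$ and $\mathrm{Pretop}(\mathcal{C})$ for the free pretopos on a finitely complete category $\mathcal{C}$, so that $\mathsf{T}\mathcal{C}\simeq\mathrm{Pretop}(\mathcal{C})$ with unit $\eta_{\mathcal{C}}\colon\mathcal{C}\to\mathsf{T}\mathcal{C}$ the canonical lex functor, characterised by $\mathsf{Coh}(\mathrm{Pretop}(\mathcal{C}),\mathcal{D})\simeq\mathsf{Lex}(\mathcal{C},\mathcal{D})$ for every pretopos $\mathcal{D}$.

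First I would use lax-idempotence (\ref{thm:laxidempotentmonad}) to turn ``being a $\mathsf{T}$-algebra'' into a property: a finitely complete category $\mathcal{C}$ carries a pseudo-$\mathsf{T}$-algebra structure if and only if $\eta_{\mathcal{C}}$ admits a retraction in $\LEX$, i.e.\ a lex functor $a\colon\mathsf{T}\mathcal{C}\to\mathcal{C}$ with $a\,\eta_{\mathcal{C}}\cong\mathrm{id}_{\mathcal{C}}$ (any such $a$ is automatically a left adjoint of $\eta_{\mathcal{C}}$, and the induced structure is unique up to unique invertible $\mathsf{T}$-transformation). Similarly, every lex functor between two such algebras carries a unique lax $\mathsf{T}$-morphism structure, which is pseudo exactly when the functor commutes up to coherent isomorphism with the structure maps; under $\mathsf{T}\mathcal{C}\simeq\mathrm{Pretop}(\mathcal{C})$ this is precisely the condition of being a coherent functor, i.e.\ of preserving finite limits, finite coproducts and quotients of equivalence relations. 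Hence the forgetful $2$-functor exhibits $\mathsf{Alg}(\mathsf{T})$ as the sub-$2$-category of $\LEX$ on the objects admitting a retraction of the unit, with coherent functors as $1$-cells, and it remains to identify this with $\mathsf{Pretopoi}$.

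One inclusion is formal: if $\mathcal{C}$ is a pretopos then $\mathrm{id}_{\mathcal{C}}\in\mathsf{Lex}(\mathcal{C},\mathcal{C})$ corresponds, via the universal property of $\mathrm{Pretop}(\mathcal{C})$, to a coherent functor $a\colon\mathrm{Pretop}(\mathcal{C})\to\mathcal{C}$ with $a\,\eta_{\mathcal{C}}\cong\mathrm{id}_{\mathcal{C}}$, so $\mathcal{C}$ is a pseudo-algebra, and the same universal property identifies the coherent functors out of pretoposes with the pseudo-$\mathsf{T}$-morphisms — whence $\mathsf{Pretopoi}$ embeds into $\mathsf{Alg}(\mathsf{T})$ fully faithfully as a $2$-functor. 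For the converse I would start with a pseudo-algebra $\mathcal{C}$, so $\eta_{\mathcal{C}}$ has a left adjoint $L$ in $\LEX$; since $\eta_{\mathcal{C}}$ is fully faithful (the canonical functor from a lex category into its free pretopos is an embedding, a point to record separately from the construction of $\mathrm{Pretop}$ by iterated completions), $\mathcal{C}$ is a lex-reflective subcategory of the pretopos $\mathrm{Pretop}(\mathcal{C})$, with $L$ both lex (as a $\LEX$-morphism) and cocontinuous (as a left adjoint). I would then transport the pretopos structure along $L$ — finite coproducts as $X+Y:=L(\eta X\sqcup\eta Y)$, quotients of equivalence relations as $L$ applied to the corresponding quotient in $\mathrm{Pretop}(\mathcal{C})$ — using that $L$ preserves finite limits (so it carries over the pullbacks witnessing disjointness and pullback-stability of coproducts and effectivity of equivalence relations) and all colimits (so these remain colimits in $\mathcal{C}$). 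The delicate input is that the relevant reflection maps are monic — needed so that the exactness and stability axioms transport to $\mathcal{C}$ — and for this I would invoke the explicit generation of $\mathrm{Pretop}(\mathcal{C})$ by subquotients of finite coproducts of objects of $\eta(\mathcal{C})$, reducing the checks to diagrams living inside $\eta(\mathcal{C})$, where $L$ restricts to an inverse of $\eta$.

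The main obstacle is this final recognition step — that a lex-reflective subcategory of a free pretopos is again a pretopos — which is the one place where the formal pseudomonad calculus is not enough and the concrete description of the free pretopos has to enter. A fallback is to invoke the known (relative) pseudomonadicity of the pretopos completion and merely check that the pseudomonad at hand is the free-pretopos one, or to run a pseudo-Beck monadicity argument for the forgetful $2$-functor $\mathsf{Pretopoi}\to\LEX$ directly. Combining the two inclusions with the matching of $1$-cells from the second step then gives $\mathsf{Alg}(\mathsf{T}^{\mathcal{H}_{\text{flat}}})\simeq\mathsf{Pretopoi}$.
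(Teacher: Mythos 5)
There is a genuine gap at the very first step, and it is exactly where the paper's own proof does its work. You take as given that $\ms T^{\mc H_{\mr{flat}}}(\mc C)\simeq\mathrm{Pretop}(\mc C)$, citing \ref{prop:synofcoh}, but that proposition (applied to the coherent topos $\psh(\mc C)$) only identifies $\ms T^{\mc H_{\mr{flat}}}(\mc C)=\ms{Syn}^{\mc H_{\mr{flat}}}(\psh(\mc C))$ with the full subcategory of \emph{coherent objects} of $\psh(\mc C)$. The passage from ``coherent objects of $\psh(\mc C)$'' to ``free pretopos on $\mc C$'' is not contained in the cited results; it is the entire content of the paper's proof of \ref{thm:cohalgebra}: one forms the closure $P\mc C$ of $\mc C$ in $\psh(\mc C)$ under finite limits, finite coproducts and effective quotients, observes that $\mc C\hook(P\mc C,J)$ is a dense morphism of sites so that the comparison lemma yields $\psh(\mc C)\simeq\sh(P\mc C,J)$, and then invokes \cite[D3.3.7]{elephant2} to conclude that the coherent objects of this topos are precisely $P\mc C$, which is the free pretopos by \cite[Sec. 5]{lex}. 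Without this computation your argument never actually connects $\ms T^{\mc H_{\mr{flat}}}$ to pretoposes, so the universal property $\ms{Coh}(\mathrm{Pretop}(\mc C),\mc D)\simeq\ms{Lex}(\mc C,\mc D)$ on which everything else in your proposal rests is unavailable; the proposal is circular at its starting point.

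That said, the half you do develop in detail --- that the pseudo-algebras for the free-pretopos (relative) pseudomonad are exactly the pretoposes, with coherent functors as the algebra morphisms --- is precisely the half the paper outsources to \cite[Sec. 5]{lex} under the slogan that being a pretopos is an exactness property over $\LEX$; your recognition step (a lex-reflective subcategory of a free pretopos is again a pretopos) is the standard argument there, and your fallback of invoking the known pseudomonadicity of the pretopos completion is essentially what the paper does. Two cautions on that half: for a lax-idempotent pseudomonad the algebra structures are the left adjoints to the unit with invertible counit, and it is not automatic that an arbitrary lex retraction of $\eta_{\mc C}$ is such a left adjoint, so you should phrase the property accordingly; and since $\ms T^{\mc H_{\mr{flat}}}$ is only a \emph{relative} pseudomonad over $\lex\hook\LEX$, the reformulation via a left adjoint to the unit is only available for small $\mc C$ (for large algebras the condition is the existence of suitable left Kan extensions along $\eta$, as in the paper's definition).
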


We know that every pretopos has a topos completion that is nothing but the classifying topos of the associated theory. We manage to reproduce this result by proving that every algebra for $\mathsf{T}^{\Hcal}$ indeed admits a classifying topos (\cref{constr:classtopos}). 
\[ \mathsf{Cl}: \mathsf{alg}(\mathsf{T}^{\Hcal}) \to \mathsf{Topoi}\op .\]

For us, a logic is really the \emph{combination} of the semantics prescription $\mc H$, the corresponding class of Kan injectives $\WRInj(\mc H)$ consisting of the theories whose generalised points satisfy the semantic prescription, and the associated lax-idempotent (relative) pseudomonad $\ms T^{\Hcal}$ describing the more concrete syntax. Each of them provides an important perspective on the behaviour of a fragment of logic.

In \Cref{sec:sound}, we start an abstract study of logic, where we establish several Diaconescu-type results relating theories to the classifying topos construction. The first observation is that the classifying topos construction is \emph{conceptually sound}, in the sense that $\ms{Cl}$ in fact factors through $\WRInj_{\pw}(\mc H)$ (\ref{thm:conceptualsound}),
\[ \mathsf{Cl}: \mathsf{alg}(\mathsf{T}^{\Hcal}) \to \WRInj_{\pw}(\mc H)\op .\]
Furthermore, the classifying topos has the right universal property, where we show it is part of a relative pseudoadjunction reifying the classical Diaconescu's theorem (\ref{diaconescu}). Even more interestingly, it offers a new and modular framework to discuss Makkai's conceptual completeness (\ref{relationtoconceptual}) and give a general definition of conceptually complete logic. Finally, \Cref{sec:future} collects many reflections and future directions.

\subsection*{Notations and preliminary definitions}
\begin{defn*}[The $2$-category of topoi]
    Recall the $2$-category of topoi has objects (Grothendieck) topoi, 1-cells geometric morphisms between topoi and $2$-cells natural transformations between left (!) adjoints. We denote this 2-category as $\Topoi$.
\end{defn*}

\begin{defn*}
    We denote $\cat$ ($\lex$) the 2-category of small (!) (left exact) categories, and $\CAT$ ($\LEX$) to denote the 2-category of all (left exact) categories.
\end{defn*}

\subsection*{Acknowledgments}

Both authors are thankful to \textit{Nathanael Arkor} for a very careful analysis of a draft of this paper. We thank \textit{Keisuke Hoshino} for pointing us to \cite[Sec 3]{koudenburg2022formal} in handling pointwise Kan extensions and again to Arkor for confirming our conjecture about Artin gluing by referring to~\cite{wood1985proarrows}. The first author is thankful to \textit{Jérémie Marquès} for pointing out a small inaccuracy in \cite{diliberti2022geometry} which will be later discussed in the paper (\ref{jeremie}). He is also thankful to \textit{Fabian Ruch} for his interest in the project during its initial gestation period. The first named author received funding from Knut and Alice Wallenberg Foundation, grant no. 2020.0199.

\section{The framework of Kan injectivity}\label{sec:framework}

This section recalls the general framework of Kan injectivity and casts it in the $2$-category of topoi. We will also see three examples of Kan injectivity to \textit{warm up}. Kan injectivity was originally studied in the locally posetal case \cite{adamek2015kan} generalized in \cite{dlsolo}. Some \textit{ante litteram} cameos of this technology in the $2$-category of topoi were studied by Johnstone in \cite{johnstone1981injective} and by Bunge and Funk in a series of papers, see for example \cite{bunge1999bicomma}. The most recent application to topos theory is by the first author in \cite{diliberti2022geometry}.

\subsection{Kan injectivity}

Let $\mc K$ be a 2-category. We recall the definition of (weak) Kan injectivity from~\cite{dlsolo}. Given 1-cells $f : A \to B$ and $x : A \to X$, its right Kan extension in $\mc K$ is a 1-cell $\ran_fg$ with a 2-cell $\epsilon : \ran_fx \circ f \nt x$, which is universal for such data, 
\[\begin{tikzcd}
	A & X \\
	B
	\arrow[""{name=0, anchor=center, inner sep=0}, "x", from=1-1, to=1-2]
	\arrow["f"', from=1-1, to=2-1]
	\arrow[""{name=1, anchor=center, inner sep=0}, "{\ran_fx}"', from=2-1, to=1-2]
	\arrow["\epsilon", shorten <=2pt, shorten >=2pt, Rightarrow, from=1, to=0]
\end{tikzcd}\]
In other words, we have an isomorphism natural in $y : B \to X$,
\[ \mc K(A,X)(yf,x) \cong \mc K(B,X)(y,\ran_fx). \]
Dually, we can define the notion of left Kan extension. Furthermore, we say a 1-cell $h : X \to Y$ preserves the right Kan extension, if the pair $(h\circ\ran_fx,h\circ\epsilon)$ also forms a right Kan extension of $hx$ along $f$. For instance, it is straightforward to see that all right adjoints preserves right Kan extensions. Dually, one can define when a 1-cell preserves left Kan extension.

We record some well-known general facts about Kan extensions in a 2-category. Firstly, Kan extensions and adjunctions are mutually definable:

\begin{lem}\label{lem:adjointiffallinj}
    For any 1-cell $f : X \to Y$ in a 2-category $\mc K$, $f$ is a right adjoint iff the right Kan extension $\ran_f1$ exists, and is preserved by $f$, viz. $f\ran_f1 \cong \ran_ff$. In this case all objects are weakly right Kan injective w.r.t. $f$, and these right Kan extensions are absolute in the sense that they are preserved by any 1-cell.
\end{lem}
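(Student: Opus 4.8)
The plan is to prove the two implications of the ``iff'' separately, extracting the final sentence from the first implication. This is the $2$-categorical form of the slogan ``adjunctions are absolute right Kan extensions of the identity, and conversely''.

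First, assume $f$ is a right adjoint, say $g \dashv f$ with unit $\eta : 1_Y \nt fg$ and counit $\epsilon : gf \nt 1_X$. I would show that for \emph{every} $1$-cell $x : X \to Z$, the pair $(x \circ g,\ x\epsilon : xgf \nt x)$ is a right Kan extension of $x$ along $f$: the assignment $\theta \mapsto x\epsilon \circ \theta f$ is a map $\mc K(Y,Z)(h, xg) \to \mc K(X,Z)(hf, x)$ with inverse $\alpha \mapsto \alpha g \circ h\eta$, and checking that the two round-trips are identities is a routine whiskering-and-interchange computation powered by the triangle identities of $g \dashv f$. Replacing $x$ by $k \circ x$ for an arbitrary $k : Z \to W$ shows each such extension is preserved by every $1$-cell, i.e.\ absolute; in particular every object is weakly right Kan injective with respect to $f$, which is the content of the last sentence. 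Taking $x = 1_X$ identifies $\ran_f 1_X$ with $g$ (counit $\epsilon$), and applying absoluteness to the $1$-cell $f$ itself gives $f \circ \ran_f 1_X = fg = \ran_f f$, i.e.\ $f$ preserves $\ran_f 1_X$.

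For the converse, suppose $g := \ran_f 1_X$ exists with universal counit $\epsilon : gf \nt 1_X$ and that $f$ preserves it, so $(fg, f\epsilon)$ is a right Kan extension of $f$ along $f$. I would manufacture a left adjoint for $f$ as follows. The universal property of $fg = \ran_f f$ at the test $1$-cell $1_Y$ gives a bijection $\mc K(Y,Y)(1_Y, fg) \cong \mc K(X,Y)(f, f)$, $\gamma \mapsto f\epsilon \circ \gamma f$; let $\eta : 1_Y \nt fg$ be the preimage of $1_f$, so that $f\epsilon \circ \eta f = 1_f$ — one triangle identity, by construction. For the other, $\epsilon g \circ g\eta = 1_g$, I would invoke the universal property of $g = \ran_f 1_X$: since $\delta \mapsto \epsilon \circ \delta f$ is injective on $\mc K(Y,X)(g, g)$, it suffices to check $\epsilon \circ (\epsilon g \circ g\eta)f = \epsilon \circ 1_g f = \epsilon$. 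Rewriting the left-hand side with functoriality of whiskering and the interchange law turns $\epsilon \circ (\epsilon g)f$ into $\epsilon \circ g(f\epsilon)$, so the whole expression collapses to $\epsilon \circ g(f\epsilon \circ \eta f) = \epsilon \circ g(1_f) = \epsilon$ using the triangle identity already in hand. Then $g \dashv f$, so $f$ is a right adjoint.

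The only step with any friction is this last computation: one has to push the composite $\epsilon \circ (\epsilon g \circ g\eta)f$ through the interchange law into a form where the established triangle identity bites. It is also the step that shows the hypothesis is exactly right — we use only that $f$ preserves $\ran_f 1$, never the a priori stronger absoluteness, to recover the unit, and hence the adjunction.
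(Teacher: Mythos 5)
Your proposal is correct and follows the same route as the paper: the forward direction exhibits $\ran_f x$ as $x\circ g$ (hence absolute), and the converse extracts the unit from the universal property of $\ran_f f \cong f\ran_f 1$ applied to $1_f$ and then checks the triangle identities, which is precisely the verification the paper declares ``straightforward''. The only difference is that you spell out the interchange computation for the second triangle identity, which the paper omits.
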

\begin{proof}
    If $f$ has a left adjoint $l$, then the right Kan extension $\ran_f(-)$ exists for all objects in $\mc K$, which is simply given by pre-composing with $l$. These Kan extensions are thus preserved by any 1-cell. On the other hand, consider the following diagram,
    \[
    \begin{tikzcd}
        X \ar[d, "f"'] \ar[r, equal] & X \ar[d, "f"] \\ 
        Y \ar[ur, dashed, "\ran_f1"description] \ar[r, dashed, "\ran_ff"'] & Y
    \end{tikzcd}
    \]
    By assumption we get a counit $\epsilon : \ran_f1 \circ f \nt 1$. On the other hand, the identity $f \nt f$ induces a unit $\eta : 1 \nt \ran_ff \cong f\circ\ran_f1$. Verifying they satisfy the triangle identities is straight forward.
\end{proof}

 Various closure properties holds for the maps along which an object $X$ has left or right Kan extension:

\begin{lem}\label{lem:verticalcompose}
    For any $X$ in $\mc K$, the class of 1-cells in $\mc K$ along which $X$ has left Kan extension is closed under left adjoints, composition, bicommas and bipushouts. Similarly for right Kan extensions.
\end{lem}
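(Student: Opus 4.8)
The plan is to treat the "right Kan extension" case and obtain the "left" case by duality (passing to $\mc K\co$ or $\mc K\op$ as appropriate). Fix the object $X$ and let $\mathcal{J}_X$ denote the class of $1$-cells $f : A \to B$ in $\mc K$ along which every $1$-cell into $X$ admits a right Kan extension, i.e. such that $\mc K(-,X) : \mc K\co \to \CAT$ sends $f$ to a $1$-cell admitting a right adjoint... more precisely, such that the functor $\mc K(f,X) : \mc K(B,X) \to \mc K(A,X)$ has a right adjoint $\ran_f(-)$. With this reformulation each closure property becomes a statement about right adjoints surviving a $2$-functorial operation, so the strategy throughout is: apply $\mc K(-,X)$, use the relevant universal property in $\CAT$, and transport it back.

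First I would handle \emph{closure under left adjoints}: if $f$ is a left adjoint in $\mc K$, say $f \dashv g$, then $\ran_f x$ exists for \emph{every} $x : A \to X$ and is computed as $x \circ g$ — this is essentially the content of (one direction of) \Cref{lem:adjointiffallinj}, and the verification is the standard mate calculation. Next, \emph{composition}: given $f : A \to B$ and $f' : B \to C$ both in $\mathcal{J}_X$, for $x : A \to X$ one forms $\ran_{f'}(\ran_f x)$ and checks it is a right Kan extension of $x$ along $f' \circ f$; this is the familiar "iterated Kan extension" lemma, and in the $\mc K(-,X)$ picture it is just the fact that a composite of functors each having a right adjoint has a right adjoint, with the composite adjoint being the composite. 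For \emph{bicommas}: given a bicomma square with $f$ (in $\mathcal{J}_X$) as one of the legs, the pullback leg should again lie in $\mathcal{J}_X$, the Kan extension along it being obtained by pasting the given Kan extension with the bicomma $2$-cell (a Beck–Chevalley-type argument); the key point is that the defining $2$-dimensional universal property of the bicomma square lets one reduce the universal property of the candidate extension to that of $\ran_f$. For \emph{bipushouts}: a bipushout of $f \in \mathcal{J}_X$ along an arbitrary $1$-cell is in $\mathcal{J}_X$ — here one uses that $\mc K(-,X)$ sends a bipushout to a bipullback (iso-comma) in $\CAT$, and that the property "admits a right adjoint" transfers along the projection from an iso-comma square when it holds for the opposite leg.

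I expect the main obstacle to be the \emph{bicomma} (and to a lesser degree the \emph{bipushout}) case: unlike composition and left adjoints, which are bread-and-butter $2$-category theory, the bicomma case requires carefully pasting the bicomma $2$-cell with the counit $\epsilon : \ran_f x \circ f \nt x$ and then verifying that the resulting $2$-cell is universal, which amounts to checking that a certain square of hom-categories is a (pseudo)pullback and that right adjoints glue along it. One must be slightly careful that all of this is meant up to coherent isomorphism (everything here is a bi-notion), so rather than chase diagrams by hand I would phrase it as: apply the representable $2$-functor $\mc K(-,X)$, observe it sends bicommas to iso-comma objects and bipushouts to iso-comma objects (sending colimit-shaped cones to limit-shaped cones, contravariantly), and invoke the elementary fact that in $\CAT$ the property of a functor having a right adjoint is stable under the relevant iso-comma projections. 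Once the reformulation via $\mc K(-,X)$ is in place, each of the four closure properties is a short and essentially formal consequence, so I would not grind through the $2$-cell bookkeeping in the write-up beyond indicating the pasting.
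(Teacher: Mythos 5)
The paper does not actually prove this lemma: it cites \cite[Prop.~1.17]{dlsolo}. Your reduction -- apply $\mc K(-,X)$ and translate each closure property into a statement about adjoints in $\CAT$ -- is the right general strategy, but two of your steps are wrong as written. The first is a handedness error in the adjoint case: if $f \dashv g$, then $-\circ g \dashv -\circ f$, so precomposition with $g$ computes the \emph{left} Kan extension $\lan_f$, not $\ran_f$. The content of \Cref{lem:adjointiffallinj}, which you cite, is the opposite of what you state: $\ran_f x \cong x\circ l$ when $f$ is a \emph{right} adjoint with left adjoint $l$. So for the right Kan extension case the class contains all right adjoints (dually, the left Kan class contains all left adjoints, which is how the statement is phrased); your claim ``$f\dashv g$ implies $\ran_f x\cong x\circ g$'' is false, and this is exactly how the lemma is later used (e.g.\ in \Cref{prop:alliffopen}, where the composed-in map is ethereal, i.e.\ a right adjoint).

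The second problem sits in precisely the two cases you flag as the main obstacles and then defer to ``elementary facts.'' A bicomma object is a bi\emph{limit}, and the representable $\mc K(-,X)$ sends bicolimits to bilimits; it does nothing useful to a bicomma square, so ``observe it sends bicommas to iso-comma objects'' is not available (your own parenthetical about ``colimit-shaped cones'' shows you are implicitly treating the bicomma as a bicocomma). Where the reduction does apply -- bicocommas and bipushouts go to lax comma squares, resp.\ pseudo-pullbacks, of hom-categories -- the transfer of ``has a right adjoint'' to the opposite projection is not the elementary fact you assert. The natural candidate for the right adjoint to the projection $\mc K(B,X)\times^{ps}_{\mc K(A,X)}\mc K(C,X)\to\mc K(C,X)$ sends $c$ to the triple $(\ran_f(c u), c, \epsilon)$, whose structure $2$-cell is the counit of $\ran_f$; for \emph{weak} Kan injectivity that counit is not invertible, so the triple does not lie in the pseudo-pullback at all. (It does lie in the \emph{lax} comma category, which is why the bicocomma case goes through, and why the instance actually used in \Cref{thm:totallyconnected} works -- there $\mc K(A,X)\simeq\mb 1$ and the compatibility constraint is vacuous.) So the bicomma and bipushout cases need either a genuinely different construction of the adjoint or an appeal to the cited reference; as set up, your argument for them does not close.
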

\begin{proof}
    See~\cite[Prop. 1.17]{dlsolo}.
\end{proof}

% We recall the notion of (weak) Kan injectivity introduced in~\cite{dlsolo}.

\begin{defn}[(Weak) Kan injectivity, ~\cite{dlsolo}]
    For any morphism $f : A \to B$ in $\mc K$, we say an object $X$ is \emph{weakly right Kan injective} w.r.t. $f$, if for any $x : A \to X$, the right Kan extension of $x$ along $f$ exists in the 2-category $\mc K$. We say $X$ is \emph{right Kan injective} if furthermore, the counit $\epsilon : \ran_fx \circ f \cong x$ is invertible.
\end{defn}

Dually one can define (weak) left Kan injectives as (weak) right Kan injectives in $\mc K\co$. From the above definition, given any class of maps $\mc H$ in $\mc K$, we can form two locally full sub-2-categories of $\mc K$,
\[ \RInj(\mc H) \subseteq \WRInj(\mc H), \]
where objects are those (weakly) right Kan injective w.r.t. all maps in $\Hcal$, and morphisms are those preserving these right Kan extensions. Similarly, one can define 
\[ \LInj(\mc H) \subseteq \WLInj(\mc H). \]
These sub-2-categories will be called (weak) left/right Kan injectivity classes in $\mc K$. As for the 1-categorical orthogonality classes, the above categories have various closure properties w.r.t. $\mc K$:

\begin{prop}
    For any class of 1-cells $\mc H$ in $\mc K$, $\LInj(\mc H)$, $\WLInj(\mc K)$, $\RInj(\mc H)$ and $\WRInj(\mc H)$ are all closed under bi/pseudolimits in $\mc K$.
\end{prop}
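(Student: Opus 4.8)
The plan is to verify that each of the four sub-2-categories is closed under weighted bilimits computed in $\mc K$, and by duality it suffices to treat $\WRInj(\mc H)$ and $\RInj(\mc H)$. Let $W : \mc J \to \CAT$ be a weight and $D : \mc J \to \mc K$ a diagram landing in $\WRInj(\mc H)$ (resp. $\RInj(\mc H)$), and write $L = \{W, D\}$ for its bilimit in $\mc K$ together with the limiting cone $\lambda_j : L \to D(j)$. First I would recall that the defining property of the bilimit is a pseudonatural equivalence $\mc K(A, L) \simeq [\mc J, \CAT](W, \mc K(A, D-))$ for every object $A$, and in particular the cone $(\lambda_j)$ is \emph{jointly conservative and jointly "detecting"} in the precise sense that a 2-cell $\alpha : g \nt g' : A \to L$ is invertible iff each $\lambda_j \alpha$ is, and more importantly that a candidate right Kan extension in $L$ can be tested componentwise.

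The key step is the following componentwise criterion for Kan extensions in a bilimit. Given $f : A \to B$ in $\mc H$ and $x : A \to L$, each composite $\lambda_j x : A \to D(j)$ has a right Kan extension $\ran_f(\lambda_j x)$ along $f$ in $\mc K$, because $D(j) \in \WRInj(\mc H)$; moreover, since $D$ is a diagram of objects each \emph{morphism} $D(u) : D(i) \to D(j)$ lies in the Kan-injective subcategory, it preserves these right Kan extensions, so the family $(\ran_f(\lambda_j x))_j$ assembles into a pseudocone on $D$ weighted by $W$ — concretely, for each object $a$ of $\mc J(i,j)$ acted on by $W$ one checks the coherence isomorphism $D(u) \circ \ran_f(\lambda_i x) \cong \ran_f(\lambda_j x)$ obtained from preservation, and these are natural and satisfy the cocycle conditions because the original cone $(\lambda_j)$ did and Kan extensions are unique up to canonical isomorphism. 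By the universal property of $L$ this pseudocone is classified by a 1-cell $r : B \to L$ with equivalences $\lambda_j r \simeq \ran_f(\lambda_j x)$, and the counits $\lambda_j \epsilon_j : \ran_f(\lambda_j x) \circ f \nt \lambda_j x$ likewise glue (using that $\mc K(A,D-)$ sends the weighted-limit data to an actual limit of hom-categories, so the counit 2-cell for $L$ is determined by its components) to a 2-cell $\epsilon : r f \nt x$. One then verifies $\epsilon$ exhibits $r$ as $\ran_f x$: for any $y : B \to L$, the comparison map $\mc K(B,L)(y, r) \to \mc K(A,L)(yf, x)$ becomes, after applying the jointly-detecting family $\lambda_j$, the product (bilimit) of the comparison maps $\mc K(B, D(j))(\lambda_j y, \ran_f(\lambda_j x)) \to \mc K(A, D(j))(\lambda_j y f, \lambda_j x)$, each of which is an equivalence by hypothesis; hence the comparison for $L$ is an equivalence too. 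This shows $L \in \WRInj(\mc H)$. If moreover $D$ lands in $\RInj(\mc H)$, each counit $\epsilon_j$ is invertible, so each $\lambda_j \epsilon$ is invertible, whence $\epsilon$ is invertible by joint conservativity, giving $L \in \RInj(\mc H)$.

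Finally I would check closure under the projections and the relevant morphisms: each $\lambda_j : L \to D(j)$ preserves the Kan extensions constructed above essentially by fiat (we built $r$ so that $\lambda_j r \simeq \ran_f(\lambda_j x)$ compatibly with counits), so the $\lambda_j$ are 1-cells of $\WRInj(\mc H)$ (resp. $\RInj(\mc H)$), and the universal 1-cell $A \to L$ induced by a cone on $D$ through objects of the subcategory again preserves Kan extensions by the same componentwise argument; this is what it means for $L$, equipped with $(\lambda_j)$, to be the bilimit \emph{inside} the sub-2-category. The main obstacle is purely bookkeeping: assembling the componentwise right Kan extensions into a genuine $W$-weighted pseudocone requires unwinding the coherence data of the bilimit and checking that the canonical isomorphisms coming from "morphisms of $D$ preserve right Kan extensions" are compatible with the action of $W$ and with the original limiting cone — a diagram chase that is routine but notationally heavy, which is why I would either cite the analogous 1-categorical fact about orthogonality classes (as the text does) and its 2-categorical upgrade, or carry it out only in the representative case of bipullbacks, biproducts and bicotensors and invoke that these generate all bilimits.
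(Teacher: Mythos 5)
Your argument is correct and is the standard one: test Kan extensions componentwise along the (weighted) limiting cone, use the fact that the 1-cells of the diagram preserve the relevant extensions to assemble the componentwise $\ran_f(\lambda_j x)$ into a $W$-weighted pseudocone, and deduce the universal property of the glued extension from the description of $\mc K(B,L)$ as a bilimit of hom-categories; the strict case then follows by joint conservativity of the cone on 2-cells. The paper itself offers no argument here --- its proof is the single line ``See [dlsolo, Prop.\ 1.4]'' --- so there is nothing to contrast with; your write-up is precisely the proof that citation stands in for, including the point (easy to overlook) that the projections $\lambda_j$ and the induced 1-cells into $L$ are themselves morphisms of the injectivity class, which is what makes $L$ a bilimit \emph{inside} the locally full sub-2-category rather than merely an object of it. The only blemishes are notational (e.g.\ the indexing ``object $a$ of $\mc J(i,j)$ acted on by $W$'' should refer to objects $w \in W(i)$ and the pseudonaturality constraint $\mc K(L,Du)\circ\lambda_i \cong \lambda_j\circ Wu$), and the coherence verifications you defer are indeed routine consequences of the essential uniqueness of right Kan extensions.
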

\begin{proof}
    See~\cite[Prop. 1.4]{dlsolo}.
\end{proof}

In a 2-category $\mc K$, we say a map $r : X \to Y$ is a \emph{lali} (resp. \emph{rali}) if it is a left (resp. right) adjoint left inverse, i.e. there is another 1-cell $s : Y \to X$ with $rs \cong 1$ and $r \dashv s$ (resp. $s \dashv r$). We say $Y$ is an \emph{adjoint retract} of $X$, if there is a lali $X \to Y$, i.e. $Y$ arises as the splitting of an idempotent monad structure on $X$. Similarly we say $Y$ is a \emph{coadjoint retract} of $X$, if there is a rali $X \to Y$. 

\begin{lem}
    $\LInj(\mc H)$ and $\WLInj(\mc H)$ are closed under adjoint retracts, and $\RInj(\mc H)$ and $\WRInj(\mc H)$ are closed under coadjoint retracts.
\end{lem}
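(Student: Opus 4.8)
The plan is to prove the statement for weak right Kan injectives; the case of right Kan injectives is an easy refinement, and the left-handed versions follow by passing to $\mc K\co$. Suppose $Y$ is a coadjoint retract of $X$, so that there is a rali $r : X \to Y$ with section $s : Y \to X$, $rs \cong 1_Y$, and $s \dashv r$. Assume $X \in \WRInj(\mc H)$; we must show $Y \in \WRInj(\mc H)$, i.e. that for every $f : A \to B$ in $\mc H$ and every $x : A \to Y$, the right Kan extension $\ran_f x$ exists in $\mc K$.

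First I would produce a candidate. Given $x : A \to Y$, consider $sx : A \to X$. Since $X$ is weakly right Kan injective w.r.t. $f$, the right Kan extension $\ran_f(sx) : B \to X$ exists with counit $\epsilon : \ran_f(sx)\circ f \nt sx$. I then claim that $r \circ \ran_f(sx) : B \to Y$, equipped with the $2$-cell obtained by whiskering $\epsilon$ with $r$ and then post-composing with the isomorphism $rs \cong 1_Y$, namely
\[ r\circ\ran_f(sx)\circ f \;\overset{r\epsilon}{\nt}\; r s x \;\cong\; x, \]
is a right Kan extension of $x$ along $f$ in $\mc K$. The key input is that $r$, being a right adjoint, preserves right Kan extensions by the remark following \Cref{lem:adjointiffallinj}; hence $(r\circ\ran_f(sx), r\epsilon)$ is a right Kan extension of $r s x$ along $f$. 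Post-composing a right Kan extension with an invertible $2$-cell $rsx \cong x$ yields a right Kan extension of $x$, since the defining universal property transports across the isomorphism. This establishes $Y \in \WRInj(\mc H)$.

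For the morphism part, one checks that a $1$-cell $h : Y \to Y'$ between objects of $\WRInj(\mc H)$ that preserves the relevant right Kan extensions in the codomain is carried to such a $1$-cell; but in fact the statement as phrased only asserts closure of the class of \emph{objects} under coadjoint retracts, so this point is not strictly needed. For $\RInj(\mc H)$: if moreover $X \in \RInj(\mc H)$, then $\epsilon : \ran_f(sx)\circ f \cong sx$ is invertible, so $r\epsilon$ is invertible, and composing with the isomorphism $rs \cong 1$ shows the counit for $Y$ is invertible, whence $Y \in \RInj(\mc H)$. Finally, the left-Kan-injective assertions are obtained by applying everything proved so far to $\mc K\co$, noting that a coadjoint retract in $\mc K\co$ is exactly an adjoint retract in $\mc K$ (the direction of the adjunction $s \dashv r$ flips, while the retraction equation is self-dual).

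The only mild subtlety — and the step I would be most careful about — is the bookkeeping that "a right adjoint preserves right Kan extensions" is being used in the form stated in the excerpt (preservation by the $1$-cell $r$ of the Kan extension witnessed by the pair $(\ran_f(sx),\epsilon)$), together with the trivial but worth-stating fact that right Kan extensions are invariant under post-composition of the "extended" $1$-cell with an isomorphism. Neither is hard, but making explicit that the counit we hand back for $Y$ is precisely $(\text{iso})\circ r\epsilon$, and that this is what the universal property of $\ran_f x$ demands, is the heart of the argument. No genuine obstacle is expected.
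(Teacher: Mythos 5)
Your argument is correct: taking $\ran_f(sx)$ in $X$, pushing it forward along the rali $r$ (which, being a right adjoint, preserves right Kan extensions), and transporting along $rs \cong 1_Y$ is exactly the standard proof, and the refinements for strict injectivity and the dual statements via $\mc K\co$ are handled properly. The paper itself gives no argument here — it only cites \cite[Lem.~2.6]{dlsolo} — so your proposal simply supplies the expected proof of that lemma.
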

\begin{proof}
    This is~\cite[Lem. 2.6]{dlsolo}.
\end{proof}

\begin{rem}
    It is also shown in~\cite{dlsolo} that if a 2-category $\mc K$ has bicocomma objects, then any weak left/right Kan injectivity class is also a left/right Kan injectivity class. This is in particular true for $\Topoi$, which indeed has all finite bicolimits. On the other hand, Section~\ref{subsec:embedding} will show for some classes of maps, the notions of weak left/right Kan injectivity and left/right Kan injectivity coincide.
\end{rem}

We consider some simple examples of (weak) right Kan injectivity classes in $\Topoi$, which turns out to classify \textit{fragments} of geometric logic in some way.

\subsection{A trivial example: geometric logic}

\begin{defn}\label{defeth}
    We say a geometric morphism $f : \mc E \to \mc F$ is \emph{ethereal}, if $f_*$ preserves all colimits. Equivalently, $f$ is a right adjoint in $\Topoi$. The class of ethereal maps will be denoted as $\mc H_{\mr{eth}}$.
\end{defn}

\begin{rem}
    Clearly, every local geometric morphism is ethereal, as they correspond to those maps whose left adjoint is in fact a \emph{section}. On the other hand, it is easy to see a \emph{surjective} ethereal map will be \emph{connected}. By~\cite[C3.6.1]{elephant2}, local geometric morphisms are thus exactly surjective ethereal maps in $\Topoi$. Dually, a left adjoint in $\Topoi$ is totally connected iff it is also a surjection.
\end{rem}

\begin{prop}[(Tautological) characterization of geometric logic]\label{chargeom}
   \[ \mathsf{Topoi} = \mathsf{WRInj}(\mathcal{H}_{\mr{eth}}) \] 
\end{prop}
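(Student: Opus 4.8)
The plan is to recognise this equality as essentially a repackaging of \Cref{lem:adjointiffallinj}. Since $\WRInj(\mc H_{\mr{eth}})$ is by construction a locally full sub-2-category of $\Topoi$, the inclusion $\WRInj(\mc H_{\mr{eth}}) \subseteq \Topoi$ is automatic, and all the content lies in the reverse inclusion. Concretely I would check three things: (a) every topos is an \emph{object} of $\WRInj(\mc H_{\mr{eth}})$; (b) every geometric morphism is a \emph{1-cell} of $\WRInj(\mc H_{\mr{eth}})$; and then (c), since $\WRInj(\mc H_{\mr{eth}})$ is locally full, the 2-cells agree with no further work.

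For (a): by \Cref{defeth}, a map $f \in \mc H_{\mr{eth}}$ is precisely a right adjoint in $\Topoi$, say with left adjoint $l$. By \Cref{lem:adjointiffallinj}, every object $X$ of $\Topoi$ is then weakly right Kan injective with respect to $f$, with $\ran_f x \cong x \circ l$ for every $x$. As $f$ was an arbitrary member of $\mc H_{\mr{eth}}$, every topos lies in $\WRInj(\mc H_{\mr{eth}})$. For (b): the same lemma records that these Kan extensions are \emph{absolute}, hence preserved by every 1-cell of $\Topoi$; so every geometric morphism $h$ preserves $\ran_f x$ for all $f \in \mc H_{\mr{eth}}$ and all $x$, which is exactly the defining condition for $h$ to be a 1-cell of $\WRInj(\mc H_{\mr{eth}})$. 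Part (c) is then immediate.

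There is no genuine obstacle here — this is flagged as the \emph{tautological} characterization precisely because it only reshuffles \Cref{lem:adjointiffallinj}. The one point deserving a line of care is the passage to the 2-categorical level in (b)--(c): one must invoke absoluteness of the Kan extensions along right adjoints to see that the 1-cells (and hence 2-cells) of $\WRInj(\mc H_{\mr{eth}})$ are \emph{all} of $\Topoi$, rather than only checking the statement on objects.
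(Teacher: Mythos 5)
Your proof is correct and follows exactly the paper's route: the paper's own proof is the one-line observation that this is a consequence of \Cref{lem:adjointiffallinj}, and your expansion via the definition of ethereal maps as right adjoints, together with the absoluteness clause of that lemma to handle the 1-cells, is precisely the intended unpacking.
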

\begin{proof}
    This is a consequence of Lemma~\ref{lem:adjointiffallinj}.
\end{proof}

\begin{rem}
    This proposition introduces a glimpse of the leitmotiv of the paper. We shall characterize special topoi via their injectivity properties with respect to a class of morphisms $\Hcal$. Of course, this is the completely trivial example in which which choose a somewhat negligible class that amounts to not carving out any special topos.
\end{rem}
\subsection{A pet example: essentially algebraic logic}\label{subsec:essalg}

As a non-tautological example, we consider the fragment of essentially algebraic logic. We say a topos $\mc E$ is \emph{free}, if there is a left exact category $\mc C$ that $\mc E \simeq \psh(\mc C)$. The connection between free topoi and essentially algebraic theories is discussed in \cite[D3.1.1]{elephant2}. It is observed in~\cite{johnstone1981injective} that free topoi are weakly right Kan injective w.r.t. \emph{all} geometric morphisms. For reference, we repeat the concrete calculation of such right Kan extension given in~\cite{diliberti2022geometry} here:

\begin{exa}\label{exm:ranfree}
    For any geometric morphism $f : \mc E \to \mc F$ and any geometric morphism $x : \mc E \to \psh(\mc C)$ for a left exact $\mc C$, the right Kan extension $\ran_fx$ exists, and can be computed as follows,
    \[ (\ran_fx)^* \cong \lan_{\yo}(f_*x^*\yo), \quad (\ran_fx)_* \cong \lan_{f_*}x_*, \]
    where $\yo : \mc C \to \psh(\mc C)$ is the Yoneda embedding, and the left Kan extensions on the RHS are computed in $\CAT$, or equivalently in $\LEX$.
\end{exa}

\begin{exa}[Right Kan extension computes direct image]\label{exa:directimagerightKan}
 As a special case, we observe that right Kan extensions compute direct images of geometric morphisms. Let $\Set[\mbb O]$ be the object classifier, where for any topos $\mc E$ we have
    \[ \Topoi(\mc E,\Set[\mbb O]) \simeq \mc E. \]
    Then for any $f : \mc E \to \mc F$, and any $X : \mc E \to \Set[\mbb O]$ viewed as an object in $\mc E$, we have
    \[\begin{tikzcd}
    	{\mc E} & {\Set[\mbb O]} \\
    	{\mc F}
    	\arrow[""{name=0, anchor=center, inner sep=0}, "X", from=1-1, to=1-2]
    	\arrow["f"', from=1-1, to=2-1]
    	\arrow[""{name=1, anchor=center, inner sep=0}, "{\ran_fX \cong f_*X}"', dashed, from=2-1, to=1-2]
    	\arrow["{\epsilon_X}", shorten <=2pt, shorten >=2pt, Rightarrow, from=1, to=0]
    \end{tikzcd}\]
    where the 2-cell is given by the counit $\epsilon_X : f^*f_*X \to X$. This follows from the universal property of $\ran_fX$: For any $F \in \mc F$,
    \begin{align*}
        \mc F(F,\ran_fX) 
        &\cong \Topoi(\mc F,\Set[\mbb O])(F,\ran_fX) \\
        &\cong \Topoi(\mc E,\Set[\mbb O])(Ff,X) \\ 
        &\cong \mc E(f^*F,X).
    \end{align*}
\end{exa}

\begin{defn}\label{defall}
    Let $\mc H_{\mr{all}}$ be the class of all geometric morphisms, and let $\mc H_{\mr{emb}}$ be the class of geometric embeddings.
\end{defn}

\begin{thm}\label{thm:algebraic}
    The following are equivalent:
    \begin{enumerate}
        \item $\mathcal{E}$ is a (coadjoint) retract of a free topos.
        \item $\mathcal{E} \in \RInj(\mc H_{\mr{emb}})$.
        \item $\mc E \in \WRInj_{\pw}(\mc H_{\mr{all}})$.
    \end{enumerate}
    As mentioned, $\WRInj_{\pw}(\mc H)$ consists of topoi in $\WRInj(\mc H)$ having pointwise right Kan extensions along $\mc H$. We will explain the notion of pointwise Kan extension in $\Topoi$ in Section~\ref{sec:universal}.
\end{thm}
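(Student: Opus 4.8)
The plan is to prove the cycle of implications $(1) \Rightarrow (3) \Rightarrow (2) \Rightarrow (1)$, using the explicit formula for right Kan extensions into free topoi recorded in \Cref{exm:ranfree} together with the closure properties of Kan injectivity classes already established.

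\textbf{From (1) to (3).} First I would recall that every free topos $\psh(\mc C)$ lies in $\WRInj(\mc H_{\mr{all}})$ by \Cref{exm:ranfree}, and I would argue that the right Kan extensions computed there are in fact \emph{pointwise}. Since \Cref{sec:universal} sets up the proarrow equipment $\Topoi \hook \Topoi_\lex\co$ via direct images, pointwise-ness of $\ran_fx$ should be checked against this equipment; the formula $(\ran_fx)_* \cong \lan_{f_*}x_*$ exhibits the direct image as a genuine (pointwise, $\CAT$-enriched) left Kan extension in $\LEX$, and I would show this is exactly the condition for pointwise-ness in the equipment-theoretic sense to be recalled in \Cref{sec:universal} (this is where I would cite the Koudenburg framework mentioned in the acknowledgments). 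Then, since $\WRInj_\pw(\mc H_{\mr{all}})$ is closed under coadjoint retracts — this follows from the closure of $\WRInj$ under coadjoint retracts (the lemma after the rali/lali definition) together with the fact that right adjoints, in particular the rali's splitting map, preserve pointwise Kan extensions — any coadjoint retract of a free topos is again in $\WRInj_\pw(\mc H_{\mr{all}})$.

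\textbf{From (3) to (2).} This is immediate from the inclusions $\mc H_{\mr{emb}} \subseteq \mc H_{\mr{all}}$ and the forthcoming \Cref{prop:ranemiffweak}: a pointwise Kan extension along a fully faithful (i.e. embedding) morphism is an honest extension, so the counit is invertible, which is precisely the condition distinguishing $\RInj$ from $\WRInj$. Hence weak pointwise right Kan injectivity w.r.t.\ all maps restricts to genuine right Kan injectivity w.r.t.\ embeddings.

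\textbf{From (2) to (1).} Given $\mc E \in \RInj(\mc H_{\mr{emb}})$, the idea is to embed $\mc E$ into a free topos and split the resulting idempotent. Pick any site presentation and hence a geometric embedding $i : \mc E \hook \psh(\mc C)$ with $\mc C$ small and lex (e.g.\ take $\mc C$ to be the full subcategory of $\mc E$ on a generating set closed under finite limits, or any small lex site). By hypothesis the right Kan extension $\ran_i(\id_{\mc E})$ exists and its counit $\ran_i(\id_{\mc E}) \circ i \cong \id_{\mc E}$ is invertible. A standard argument shows that $r := \ran_i(\id_{\mc E}) : \psh(\mc C) \to \mc E$ is a right adjoint to $i$ with $r \circ i \cong \id_{\mc E}$ — indeed $i$ fully faithful plus the existence of $\ran_i(\id)$ preserved suitably gives $i \dashv r$ by the usual pointwise-extension-yields-adjoint argument, analogous to \Cref{lem:adjointiffallinj} — so $r$ exhibits $\mc E$ as a coadjoint retract of $\psh(\mc C)$. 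The one subtlety is checking that $r$ really is a right adjoint rather than merely a retraction: for this I would verify that $i$ preserves the Kan extension $\ran_i(\id)$ (which, since $i$ is an embedding hence its direct image is fully faithful, is the content of the pointwise calculation) and then invoke the Kan-extension-implies-adjoint direction.

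\textbf{Main obstacle.} The crux is the equivalence of (3) with the other two, specifically pinning down the notion of \emph{pointwise} Kan extension in $\Topoi$ via the Wood equipment and verifying that the Johnstone--Di~Liberti formula $(\ran_fx)_* \cong \lan_{f_*}x_*$ genuinely satisfies it — everything else is a repackaging of \Cref{exm:ranfree}, \Cref{prop:ranemiffweak}, and the closure lemmas. Since the machinery of pointwise Kan extensions is only developed in \Cref{sec:universal}, the honest version of this proof must forward-reference that section; I would state the pointwise-ness of \Cref{exm:ranfree} as the key lemma there and let the present theorem consume it.
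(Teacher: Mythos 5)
Your treatment of $(1) \Rightarrow (3)$ and $(3) \Rightarrow (2)$ coincides with the paper's: the first is \Cref{exm:ranfree} together with closure of pointwise right Kan extensions under coadjoint retracts (\Cref{lem:pointwiseretract}), and the second is exactly \Cref{prop:ranemiffweak}. The divergence is $(2) \Rightarrow (1)$, where the paper simply cites \cite[C4.3]{elephant2} for $(1) \Leftrightarrow (2)$ while you attempt a direct construction; that step contains a genuine gap.

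The problem is the direction of the adjunction. Writing $r := \ran_i(\id_{\mc E})$ for an embedding $i : \mc E \hook \psh(\mc C)$, hypothesis $(2)$ gives $r i \cong \id_{\mc E}$, but for $\mc E$ to be a \emph{coadjoint} retract the section must be \emph{left} adjoint to the retraction, i.e.\ you need $i \dashv r$. The ``Kan-extension-yields-adjoint'' argument you invoke (\Cref{lem:adjointiffallinj}) produces the opposite adjunction: when $i$ preserves $\ran_i(\id)$ one gets $\ran_i(\id) \dashv i$, which makes $r$ a \emph{lali} and exhibits $\mc E$ as an \emph{adjoint} retract --- and adjoint retracts are what \emph{left} Kan injectivity classes are closed under, so this would not close your cycle. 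Worse, neither adjunction is actually available from $(2)$: the adjunction $r \dashv i$ requires $i$ to preserve $\ran_i(\id)$, which by \Cref{lem:adjointiffallinj} forces $i$ to be ethereal; and the adjunction $i \dashv r$ you actually claim, once unwound in $\Topoi$ (where $2$-cells are transformations of inverse images), says that $r^*$ is left adjoint to $i^*$, i.e.\ that the sheafification functor $i^* : \psh(\mc C) \to \mc E$ admits a left exact left adjoint --- false for a general subtopos inclusion. Note also that the universal property of $\ran_i(\id)$ only classifies $2$-cells \emph{into} $r$, so it can never produce the counit $i r \Rightarrow \id_{\psh(\mc C)}$ that $i \dashv r$ requires. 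This missing content is precisely what the paper's appeal to \cite[C4.3]{elephant2} supplies, and it is not recoverable from the formal Kan-extension calculus alone.
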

\begin{proof}
    (1) $\eff$ (2) is proven in~\cite[C4.3]{elephant2}. (1) $\nt$ (3) is essentially the statement in Example~\ref{exm:ranfree} together with Lemma~\ref{lem:pointwiseretract}. (3) $\nt$ (2) will follow from Proposition~\ref{prop:ranemiffweak}.
\end{proof}

As a passing side note, we observe that the geometry in $\Topoi$ reduces the property of being weakly right Kan injective w.r.t. all geometric morphisms significantly:

\begin{prop}\label{prop:alliffopen}
    $\mc X$ is weakly right Kan injective w.r.t. all geometric morphisms iff $\mc X$ is weakly right Kan injective w.r.t. all open embeddings.
\end{prop}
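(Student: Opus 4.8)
The plan is to prove the non-trivial implication by a factorisation argument. One direction is immediate, since open embeddings are in particular geometric morphisms, so weak right Kan injectivity with respect to all geometric morphisms entails weak right Kan injectivity with respect to all open embeddings. For the converse, assume $\mc X$ is weakly right Kan injective with respect to every open embedding; given an arbitrary geometric morphism $f : \mc E \to \mc F$, I will factor $f$ as an open embedding followed by an ethereal map, and then conclude formally from \Cref{lem:adjointiffallinj} and \Cref{lem:verticalcompose}.

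The device is the Artin gluing of the \emph{direct} image $f_* : \mc E \to \mc F$. Let $\mc G := \mc F \downarrow f_*$, whose objects are triples $(F, E, \beta : F \to f_* E)$; since $f_*$ is left exact and accessible, $\mc G$ is a Grothendieck topos. The verifications I would carry out are: (i) the subterminal object $U_0 := (0, 1, !)$ presents $\mc E$ as the open subtopos $\mc G/U_0$, the resulting open embedding $u : \mc E \hookrightarrow \mc G$ having inverse image $(F, E, \beta) \mapsto E$; (ii) the first projection $p : \mc G \to \mc F$, $(F, E, \beta) \mapsto F$, is a geometric morphism whose inverse image is $F \mapsto (F, f^* F, \eta_F)$ (with $\eta$ the unit of $f^* \dashv f_*$), and whose direct image a short computation identifies with the underlying-$\mc F$-object functor $(F, E, \beta) \mapsto F$; this functor has the further right adjoint $F \mapsto (F, 1, !)$, so $p_*$ preserves all colimits, i.e.\ $p$ is ethereal (indeed local) in the sense of \Cref{defeth}, a right adjoint in $\Topoi$; (iii) $p \circ u \simeq f$, because $(p \circ u)^* = u^* p^* \cong f^*$.

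With the factorisation $f \simeq p \circ u$ in hand the result follows formally. By hypothesis $\mc X$ has right Kan extensions along the open embedding $u$. Since $p$ is a right adjoint in $\Topoi$, \Cref{lem:adjointiffallinj} shows that \emph{every} object of $\Topoi$ — in particular $\mc X$ — has (absolute) right Kan extensions along $p$. By \Cref{lem:verticalcompose} the class of $1$-cells along which $\mc X$ admits right Kan extensions is closed under composition, hence $\mc X$ has right Kan extensions along $p \circ u$, and therefore along $f$. As $f$ was arbitrary, $\mc X$ is weakly right Kan injective with respect to $\mc H_{\mr{all}}$.

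I expect the single load-bearing step to be part (ii): computing the direct image of the gluing projection, recognising it as a component functor, and exhibiting its right adjoint — this is precisely where the special geometry of $\Topoi$ is used, and it is what forces the gluing to be taken along $f_*$ rather than $f^*$ (gluing along $f^*$ would place $\mc E$ as the \emph{closed} subtopos and the projection would no longer be ethereal). The remaining points — that $\mc G$ is a Grothendieck topos and the identification of its open subtopos — are standard facts about Artin gluing.
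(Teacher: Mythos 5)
Your proof is correct and follows essentially the same route as the paper: the paper also factors an arbitrary geometric morphism as an open embedding followed by a local (hence ethereal) map, citing \cite[C3.6]{elephant2} for the factorisation through $\mb{sc}_{\mc F}\mc E$ — which is exactly the Artin gluing $\mc F \downarrow f_*$ you construct explicitly — and then concludes by the closure of the injectivity class under right adjoints and composition (Lemma~\ref{lem:verticalcompose}). The only difference is that you verify the scone factorisation by hand where the paper cites it.
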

\begin{proof}
    The only if direction is trivial. For the if direction, recall from~\cite[C3.6]{elephant2} any geometric morphism $f : \mc E \to \mc F$ factors as an open embedding followed by a local geometric morphism,
    \[
    \begin{tikzcd}
        \mc E \ar[rr, "f"] \ar[dr, hook, "i"'] & & \mc F \\
        & \mb{sc}_{\mc F}\mc E \ar[ur, two heads, "q"']
    \end{tikzcd}
    \]
    Since $q$ is in particular ethereal, if $\mc X$ is weakly right Kan injective w.r.t. $i$, then it is also weakly right Kan injective w.r.t. $f$ by Lemma~\ref{lem:verticalcompose}.
\end{proof}

Let $\ms{FreeTopoi}$ be the locally full sub-2-category of free topoi with \emph{ethereal} geometric morphisms between them. The presheaf construction gives us $\lex\op \to \ms{FreeTopoi}$, which in essense takes an essentially algebraic theory to its classifying topos. 

\begin{cor}[Characterisation of essentially algebraic logic]\label{class:essalg}
    \[ \ms{Lex}\op \simeq \ms{FreeTopoi} \subseteq \WRInj(\mathcal{H}_{\mr{all}}), \]
    where the inclusion is a 2-fully-faithful embedding, whose image up to coadjoint retracts are $\mc X \in \WRInj(\mc H_{\mr{all}})$ with pointwise right Kan extensions for $\mc H_{\mr{all}}$.
\end{cor}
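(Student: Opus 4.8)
The plan is to assemble \Cref{class:essalg} from the pieces already in place, treating it as essentially a corollary of \Cref{thm:algebraic} together with standard facts about the presheaf construction. First I would recall that the presheaf construction $\psh : \lex\op \to \Topoi$ is 2-fully-faithful onto its image: for left exact categories $\mc C, \mc D$, geometric morphisms $\psh(\mc C) \to \psh(\mc D)$ correspond (pseudonaturally, with the right handedness of 2-cells) to left exact functors $\mc D \to \mc C$, and this is exactly the content of the comparison between essentially algebraic theories and their classifying topoi; I would cite \cite[D3.1.1]{elephant2} or the relevant sections of the Elephant. Restricting the codomain to $\ms{FreeTopoi}$, which by definition has free topoi as objects, gives the biequivalence $\lex\op \simeq \ms{FreeTopoi}$ once we check that the 1-cells of $\ms{FreeTopoi}$ — the \emph{ethereal} geometric morphisms between free topoi — are precisely the ones hit by $\psh$. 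That is the one point requiring a small argument: a geometric morphism $\psh(\mc C) \to \psh(\mc D)$ arising from a left exact $\mc D \to \mc C$ has a direct image given by a right Kan extension along that functor, hence preserves all colimits, so is ethereal; conversely, every ethereal geometric morphism between free topoi is of this form — this should follow from the explicit formula in \Cref{exm:ranfree} (an ethereal map is a right adjoint, so its own right Kan extensions are absolute and given by precomposing with the left adjoint, which on free topoi corresponds to a left exact functor between the sites of definition).

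Next I would address the inclusion $\ms{FreeTopoi} \subseteq \WRInj(\mc H_{\mr{all}})$. By \Cref{exm:ranfree}, every free topos is weakly right Kan injective with respect to \emph{all} geometric morphisms, so free topoi are objects of $\WRInj(\mc H_{\mr{all}})$; and I must check that ethereal geometric morphisms between free topoi are 1-cells in $\WRInj(\mc H_{\mr{all}})$, i.e. that they preserve these right Kan extensions. An ethereal map is a right adjoint, and right adjoints preserve right Kan extensions — this is noted right after the definition of preservation in the excerpt, and also falls out of \Cref{lem:adjointiffallinj}. So the inclusion is well-defined, and it is 2-fully-faithful because $\WRInj(\mc H_{\mr{all}})$ is a locally full sub-2-category of $\Topoi$ and we already know $\ms{FreeTopoi} \hookrightarrow \Topoi$ is 2-fully-faithful.

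Finally, the identification of the image up to coadjoint retracts: by \Cref{thm:algebraic}, an object $\mc X$ lies in $\WRInj_{\pw}(\mc H_{\mr{all}})$ if and only if $\mc X$ is a coadjoint retract of a free topos, which is exactly saying $\mc X$ lies in the closure of $\ms{FreeTopoi}$ under coadjoint retracts inside $\Topoi$ (equivalently inside $\WRInj(\mc H_{\mr{all}})$, since the latter is closed under coadjoint retracts by \cite[Lem. 2.6]{dlsolo}). Conversely a coadjoint retract of a free topos is in $\WRInj(\mc H_{\mr{all}})$ by that same closure property, and has pointwise Kan extensions along $\mc H_{\mr{all}}$ by \Cref{lem:pointwiseretract} (pointwiseness is inherited along retracts). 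This pins down the image precisely. I would also remark that one should be careful to state ``image up to coadjoint retracts'' rather than ``image'', since a non-free topos can still be a retract of a free one.

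The main obstacle I anticipate is the converse direction of the 1-cell identification — showing every ethereal geometric morphism between free topoi comes from a left exact functor of sites — since this is where one genuinely uses the structure theory rather than formal nonsense; everything else is bookkeeping with results already established (\Cref{thm:algebraic}, \Cref{exm:ranfree}, \Cref{lem:adjointiffallinj}, and the closure lemmas). Depending on how much of this the authors want to offload, the cleanest route may be to simply cite the equivalence between $\lex$ and free topoi with appropriate morphisms from the literature, reducing the proof to the two sentences ``$\lex\op \simeq \ms{FreeTopoi}$ by [ref]; the inclusion into $\WRInj(\mc H_{\mr{all}})$ and the description of its image are then immediate from \Cref{exm:ranfree} and \Cref{thm:algebraic}.''
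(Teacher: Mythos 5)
Your skeleton is right, and the retract statement is handled exactly as the paper does (via \Cref{thm:algebraic} and \Cref{lem:pointwiseretract}), but two load-bearing steps are not actually proved and one premise is false. First, $\psh : \lex\op \to \Topoi$ is \emph{not} 2-fully-faithful: by Diaconescu, $\Topoi(\psh(\mc C),\psh(\mc D)) \simeq \LEX(\mc D,\psh(\mc C))$, which is far larger than $\LEX(\mc D,\mc C)$; for the same reason $\ms{FreeTopoi} \hookrightarrow \Topoi$ is not full on 1-cells (it contains only the ethereal morphisms), so your justification of 2-full-faithfulness of $\ms{FreeTopoi} \hookrightarrow \WRInj(\mc H_{\mr{all}})$ collapses. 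What is actually needed -- and is the first sentence of the paper's proof -- is that the 1-cells of $\WRInj(\mc H_{\mr{all}})$ are \emph{exactly} the ethereal geometric morphisms. You prove ``ethereal $\Rightarrow$ preserves the extensions'' but not the converse; that direction follows from \Cref{lem:adjointiffallinj} applied to a 1-cell $g$ of $\WRInj(\mc H_{\mr{all}})$: since $g$ itself lies in $\mc H_{\mr{all}}$, $g$ must preserve $\ran_g 1$, hence is a right adjoint, i.e.\ ethereal.

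Second, the step you flag as the ``main obstacle'' -- that every ethereal $f : \psh(\mc C) \to \psh(\mc D)$ is induced by a left exact functor $F : \mc D \to \mc C$ -- is precisely the mathematical content of the paper's proof, and your parenthetical sketch does not establish it: the left adjoint of $f$ in $\Topoi$ is merely another geometric morphism $\psh(\mc D) \to \psh(\mc C)$, and asserting that it ``corresponds to a left exact functor between the sites of definition'' is essentially the claim to be proved. The paper's argument is short but genuinely structural: etherealness gives an adjoint triple $f^* \dashv f_* \dashv f^!$, so $f_*$ is the inverse image of an \emph{essential} geometric morphism $\psh(\mc D) \to \psh(\mc C)$; since $\mc C$ and $\mc D$ are Cauchy complete, such a morphism is induced by a functor $F : \mc D \to \mc C$ by \cite[A4.1.5]{elephant1}, and left exactness of $f^* \cong F_!$ then forces $F$ to be left exact. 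Your fallback of citing the equivalence $\lex\op \simeq \ms{FreeTopoi}$ from the literature would be a legitimate alternative, but as written the proposal neither proves this step nor correctly locates a reference for it.
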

\begin{proof}
    By Lemma~\ref{lem:adjointiffallinj}, morphisms in $\ms{WRInj}(\mc H_{\mr{all}})$ are exactly the ethereal geometric morphisms in $\Topoi$. Thus it suffices to show any ethereal geometric morphism $f : \psh(\mc C) \to \psh(\mc D)$ between free topoi is induced by a left exact functor $F : \mc D \to \mc C$. Note that such an $f$ gives an adjoint triple
    \[
    \begin{tikzcd}
    	{\psh(\mc C)} & {\psh(\mc D)}
    	\arrow[""{name=0, anchor=center, inner sep=0}, "{f_*}"{description}, from=1-1, to=1-2]
    	\arrow[""{name=1, anchor=center, inner sep=0}, "{f^*}"', curve={height=18pt}, from=1-2, to=1-1]
    	\arrow[""{name=2, anchor=center, inner sep=0}, "{f^!}", curve={height=-18pt}, from=1-2, to=1-1]
    	\arrow["\dashv"{anchor=center, rotate=-90}, draw=none, from=0, to=2]
    	\arrow["\dashv"{anchor=center, rotate=-90}, draw=none, from=1, to=0]
    \end{tikzcd}
    \]
    $f^!$ above now is the direct image of an essential geometric morphism, thus is induced by a functor $F : \mc D \to \mc C$ since $\mc D,\mc C$ are Cauchy complete (cf.~\cite[A4.1.5]{elephant1}). Now by definition the inverse image $f^*$ is left exact, thus $F$ must be left exact.
\end{proof}

\subsection{A full example: propositional logic} \label{propositionallogic}

It is well-known that localic geometric morphisms are orthogonal to hyperconnected maps in $\Topoi$; see e.g.~\cite[A4.6]{elephant1}. We observe that this orthogonality condition is indeed a Kan injectivity condition:

\begin{thm}\label{thm:localrightKanhy}
    A topos $\mc L$ is localic iff it is right Kan injective w.r.t. hyperconnected geometric morphisms.
\end{thm}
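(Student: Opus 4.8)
The plan is to prove both implications by unwinding what right Kan injectivity with respect to hyperconnected maps says, and matching it against the known orthogonality characterization of localic topoi.

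For the \emph{only if} direction, suppose $\mc L$ is localic. Given a hyperconnected $f : \mc E \to \mc F$ and any $x : \mc E \to \mc L$, I want to produce $\ran_f x : \mc F \to \mc L$ with an \emph{invertible} counit. The key input is the orthogonality of localic maps to hyperconnected maps: the unique geometric morphism $\mc L \to \ms{Set}$ is localic, so for the square with $f$ on the left, $x$ on top, $\mc L \to \ms{Set}$ on the right and $\mc F \to \ms{Set}$ on the bottom, there is an essentially unique diagonal filler $y : \mc F \to \mc L$ with $yf \cong x$. I would then check that $(y, \text{iso})$ is in fact the right Kan extension, i.e.\ that for every $z : \mc F \to \mc L$ the map $\Topoi(\mc F, \mc L)(z, y) \to \Topoi(\mc E, \mc L)(zf, x)$ given by whiskering with $f$ is a bijection. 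Surjectivity follows from the existence part of orthogonality applied to the cell $zf \nt x$ (using that $f$ is hyperconnected, hence a surjection, and hyperconnected maps are also closed under appropriate manipulations), and injectivity from the uniqueness part; alternatively, this is precisely the content of hyperconnected–localic orthogonality expressed in terms of the comparison of hom-categories, so I would cite the 2-categorical form of orthogonality in \cite[A4.6]{elephant1} to get the isomorphism of hom-categories directly. Since the filler satisfies $yf \cong x$ on the nose, the counit is invertible, so $\mc L \in \RInj(\mc H_{\mr{hc}})$.

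For the \emph{if} direction, suppose $\mc L$ is right Kan injective w.r.t. all hyperconnected maps. I would use the hyperconnected–localic factorization of an arbitrary geometric morphism. Concretely, recall every topos $\mc E$ has a hyperconnected map $\mc E \to \ms{Sh}(\mc E)$ to its localic reflection (the localic part of the factorization of $\mc E \to \ms{Set}$), and more relevantly I would exhibit $\mc L$ itself as a retract, or rather as orthogonal to the right things. The cleanest route: given any hyperconnected $f : \mc E \to \mc F$ and any $x : \mc E \to \mc L$, right Kan injectivity gives $y := \ran_f x$ with $yf \cong x$, and a standard argument shows that the counit being invertible for \emph{all} such $x$ forces $\mc L$ to be orthogonal to $f$ in the 2-categorical sense (existence of fillers is immediate; uniqueness of fillers follows from the universal property of the Kan extension, since any filler $z$ with $zf \cong x$ induces via the counit a comparison $z \nt \ran_f x$ which one checks is invertible). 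Hence $\mc L$ is orthogonal to every hyperconnected map, and by the orthogonality characterization \cite[A4.6]{elephant1} this is equivalent to $\mc L$ being localic.

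The main obstacle is the \emph{if} direction, specifically upgrading "right Kan injective with invertible counit" to the full 2-categorical orthogonality (unique-up-to-unique-iso fillers) that characterizes localic topoi — one must be careful that the Kan extension's universal property really does force uniqueness of fillers and not merely existence, and that no size or smallness issue intervenes when quantifying over all hyperconnected maps (as opposed to a small generating set). I expect this to go through because a right Kan extension whose counit is invertible is in particular an \emph{absolute-looking} best filler, and the comparison 2-cell it induces against any competing filler is forced to be invertible by the defining bijection on hom-sets; but the bookkeeping of these 2-cells, and checking compatibility with the relevant result from \cite[A4.6]{elephant1} (which is stated as an orthogonality, hence already in the right form), is where the real work lies. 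A secondary check is that hyperconnected maps are indeed the correct left class — i.e.\ that they are closed under the operations (composition, etc.) used implicitly — which follows from their standard stability properties.
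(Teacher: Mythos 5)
Your overall strategy coincides with the paper's: both directions reduce to the known hyperconnected--localic orthogonality, and the content of the theorem is upgrading the essentially unique filler to a genuine right Kan extension by comparing hom-categories. However, the one step you flag as ``where the real work lies'' is exactly where your proposed justification fails. The fact that makes everything go through is that for any hyperconnected (indeed any connected) $f : \mc E \to \mc F$, the whiskering functor $- \circ f : \Topoi(\mc F,\mc L) \to \Topoi(\mc E,\mc L)$ is \emph{fully faithful}, because precomposition with $f$ corresponds to postcomposition with the fully faithful inverse image $f^*$ under $\Topoi(-,\mc X)\simeq \ms{Geom}(\mc X,-)$. You never derive this: in the only-if direction you attribute surjectivity of $\Topoi(\mc F,\mc L)(z,y) \to \Topoi(\mc E,\mc L)(zf,x)$ to ``the existence part of orthogonality applied to the cell $zf \nt x$'' and to $f$ being a surjection --- but orthogonality of $1$-cells does not lift $2$-cells, and surjectivity of $f$ only gives conservativity of $-\circ f$ (via $f^*$ conservative), not fullness. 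Once full faithfulness of $-\circ f$ is in hand, the universal property is the paper's one-line computation $\Topoi(\mc F,\mc L)(u,g)\cong\Topoi(\mc E,\mc L)(uf,gf)\cong\Topoi(\mc E,\mc L)(uf,x)$.

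The same gap is more serious in the if direction. Your claim that the comparison $2$-cell $z \nt \ran_f x$ induced by a competing filler is ``forced to be invertible by the defining bijection on hom-sets'' is false in general: right Kan extensions along fully faithful functors in $\CAT$ have invertible counits, yet admit non-isomorphic competing extensions, so an invertible counit alone never forces uniqueness of fillers. What forces it here is again that $-\circ f$ is fully faithful, hence reflects isomorphisms: the whiskered comparison $\tilde\alpha f = \epsilon^{-1}\circ\alpha$ is invertible, so $\tilde\alpha$ is. (This is precisely the paper's observation that hyperconnected morphisms are corepresentably fully surjective, for which strict Kan injectivity is equivalent to $-\circ f$ being an equivalence, i.e.\ to orthogonality; alternatively, the if direction follows directly by splitting the hyperconnected part of the hyperconnected--localic factorization of $\mc L \to \Set$ and using full faithfulness of $-\circ h$ to see the splitting is an equivalence.) Your fallback of citing the $2$-categorical orthogonality of \cite[A4.6]{elephant1} as an equivalence of hom-categories would also close the gap, but as stated your primary argument does not.
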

\begin{proof}
    As mentioned the if direction is automatic. On the other hand, let $\mc L$ be localic and $f : \mc E \surj \mc F$ hyperconnected. For any $x : \mc E \to \mc L$, we need to show the unique extension $g$ making the following diagram commute is indeed a right Kan extension in $\Topoi$,
    \[
    \begin{tikzcd}
        \mc E \ar[d, two heads, "f"'] \ar[r, "x"] & \mc L \\
        \mc F \ar[ur, dashed, "g"']
    \end{tikzcd}
    \]
    Since $f^*$ is fully faithful, for any $u : \mc F \to \mc L$ we have
    \[ \Topoi(\mc F,\mc L)(u,g) \cong \Topoi(\mc E,\mc L)(uf,gf) \cong \Topoi(\mc E,\mc L)(uf,x). \]
    This implies that $g$ is indeed the right Kan extension $\ran_fx$.
\end{proof}

In fact, the above proof evidently also applies to the dual case, and shows $g$ is also the \emph{left} Kan extension $\lan_fx$ in $\Topoi$. Indeed, in this case the left and right Kan injectivity \emph{coincide}. Let $\ms{LocTopoi}$ be the full sub-2-category of localic topoi, and define: 

\begin{defn}\label{defhp}
    Let $\mc H_{\mr{hy}}$ the class of hyperconnected geometric morphisms.
\end{defn}

\begin{cor}[Characterisation of geometric propositional logic] \label{class:prop}
    \[ \LInj(\mc H_{\mr{hy}}) = \RInj(\mc H_{\mr{hy}}) = \ms{LocTopoi}. \]
\end{cor}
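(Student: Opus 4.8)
The backbone of the argument is Theorem~\ref{thm:localrightKanhy} together with its dual. First I would record the following sharpening of what that proof establishes: if $f : \mc E \surj \mc F$ is hyperconnected and $\mc L$ is localic, then the precomposition functor $\Topoi(f,\mc L) : \Topoi(\mc F,\mc L) \to \Topoi(\mc E,\mc L)$ is an \emph{equivalence} of categories. Essential surjectivity is the existence part of the orthogonality of $f$ against $\mc L \to \Set$ recalled from~\cite[A4.6]{elephant1}, while full faithfulness is exactly the isomorphism $\Topoi(\mc F,\mc L)(u,v) \cong \Topoi(\mc E,\mc L)(uf,vf)$ used in the proof of Theorem~\ref{thm:localrightKanhy}, which holds because $f^*$ is fully faithful, so that applying $f^*$ componentwise is a bijection on $2$-cells. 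Hence for every $x : \mc E \to \mc L$ there is an essentially unique $g : \mc F \to \mc L$ with $gf \cong x$; by Theorem~\ref{thm:localrightKanhy} this $g$ is $\ran_fx$ with invertible counit, and by the dual of that theorem it is also $\lan_fx$ with invertible unit. So every localic topos lies in both $\RInj(\mc H_{\mr{hy}})$ and $\LInj(\mc H_{\mr{hy}})$, and conversely a topos in either class is localic by Theorem~\ref{thm:localrightKanhy} and its dual. This already gives the equality on objects, and in particular $\RInj(\mc H_{\mr{hy}})$ and $\LInj(\mc H_{\mr{hy}})$ have the same objects.

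Next I would promote this to an equality of $2$-categories by checking that every geometric morphism $h : \mc L \to \mc L'$ between localic topoi preserves the Kan extensions in question (both the right and the left ones, which coincide). Given $f : \mc E \surj \mc F$ hyperconnected and $x : \mc E \to \mc L$ with $g = \ran_fx = \lan_fx$ as above, the composite $hg : \mc F \to \mc L'$ satisfies $(hg)f \cong hx$; since $\mc L'$ is again localic, $\Topoi(f,\mc L')$ is an equivalence, so $hg$ is the essentially unique filler of $hx$ along $f$, that is $hg \cong \ran_f(hx) = \lan_f(hx)$. Therefore $\ms{LocTopoi}$, which is by definition the full sub-$2$-category of $\Topoi$ on the localic topoi, is contained in $\RInj(\mc H_{\mr{hy}})$ and in $\LInj(\mc H_{\mr{hy}})$ with \emph{all} its $1$-cells and $2$-cells; combined with the identification of objects, this yields $\LInj(\mc H_{\mr{hy}}) = \RInj(\mc H_{\mr{hy}}) = \ms{LocTopoi}$.

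The one point that deserves care is the claim that $\Topoi(f,\mc L)$ is a genuine equivalence of hom-categories, i.e.\ that the orthogonality between hyperconnected surjections and localic maps is $2$-dimensional (unique fillers \emph{and} unique invertible $2$-cells between them). Rather than invoke a $2$-categorical orthogonality statement as a black box, I would derive it from the two ingredients already at hand: the $1$-categorical lifting property from~\cite[A4.6]{elephant1} gives essential surjectivity, and the identity $\Topoi(\mc F,\mc L)(u,v) \cong \Topoi(\mc E,\mc L)(uf,vf)$, immediate from full faithfulness of $f^*$ (the direct image of a hyperconnected map), gives full faithfulness. Everything else is formal, and no computation beyond what is already in the proof of Theorem~\ref{thm:localrightKanhy} is needed.
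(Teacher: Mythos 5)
Your proof is correct and takes essentially the same route as the paper: the paper's proof simply invokes Propositions~\ref{prop:surjectionfull} and~\ref{prop:corepfullsurjequiv} (hyperconnected morphisms are corepresentably fully surjective since $f^*$ is fully faithful, so strict Kan injectivity reduces to $-\circ f$ being an equivalence, which forces fullness and the coincidence of the left and right classes) together with Theorem~\ref{thm:localrightKanhy}, whereas you inline exactly those arguments in the special case at hand. No gap.
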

\begin{proof}
    Proposition~\ref{prop:surjectionfull} and~\ref{prop:corepfullsurjequiv} will imply that $\RInj(\mc H_{\mr{hy}})$ and $\LInj(\mc H_{\mr{hy}})$ coincide as a full sub-2-category of $\Topoi$.
\end{proof}

\section{Universals in the 2-category of topoi}\label{sec:universal}

In this section we will explore more specific results that applies to Kan injectivity in the 2-category $\Topoi$. This will allow us to deliver some of the results we have promised in the previous section, including Theorem~\ref{thm:algebraic} and Corollary~\ref{class:prop}. We will start by connecting Kan extensions with proarrow equipments.

\subsection{A proarrow equipment}

Following~\cite{wood1982abstract}, we use the following notion of proarrow equipment:

\begin{defn}
    A 2-functor $(-)_* : \mc K \to \mc M$ is a \emph{proarrow equipment}, or \emph{equipment} in short, if it is bijective on objects, locally fully faithful, and for each 1-cell $f$ in $\mc K$, $f_*$ has a right adjoint in $\mc M$.
\end{defn}

As already observed in \emph{loc. cit.}, there is an equipment on $\Topoi$ given by 
\[ (-)_* : \Topoi \to \Topoi_{\lex}\co, \]
where the codomain $\Topoi_{\lex}\co$ is a full sub 2-category of $\LEX\co$ on topoi. $(-)_*$ takes each topos $\mc E$ to its underlying category, and takes a geometric morphism to its direct image. Notice that for each geometric morphism $f$, $f_*$ has a left exact left adjoint $f^*$, thus $f^*$ will be a right adjoint of $f_*$ in $\Topoi\co_{\lex}$. And by definition, this 2-functor is locally fully faithful and bijective on objects.

There is a general notion of pointwise Kan extensions for equipments in~\cite{wood1982abstract}:

\begin{defn}
    Let $(-)_* : \mc K \to \mc M$ be an equipment. For any 1-cells $f : A \to B$ and $g : A \to X$ in $\mc K$, if the right Kan extension $(\ran_fg,\epsilon)$ of $g$ along $f$ exists in $\mc K$, then we say it is \emph{pointwise} if $(-)_*$ takes it to a right Kan extension in $\mc M$. 
\end{defn}

\begin{rem}[Different notions of pointwise (coincide)] \label{pointwise}
    There is also a notion of pointwise Kan extensions expressible in any 2-category with comma objects~\cite{10.1007/BFb0063102}. From~\cite[Prop. 40 and 41]{wood1985proarrows}, the proarrow equipment $(-)_* : \Topoi \to \Topoi\co_{\lex}$ has the so-called ``cocartesian tabulations'' in the sense of~\cite{koudenburg2022formal}. Thus, by Prop. 3.22 of \emph{loc. cit.}, the two notions of pointwise Kan extension coincide.
\end{rem}

We observe that for all the examples of (weak) right Kan injectivity classes we have introduced in Section~\ref{sec:framework}, their corresponding right Kan extensions are indeed pointwise w.r.t. the equipment $(-)_* : \Topoi \to \Topoi\co_{\lex}$.

\begin{exa}
    Right Kan extensions into any topos against an ethereal geometric morphism is pointwise, since there they are computed by precomposing with the left adjoint of an ethereal morphism, and 2-functors preserve adjunctions.
\end{exa}

\begin{exa}
    Right Kan extensions into free topoi w.r.t. any geometric morphism is pointwise. This follows from the computation in Example~\ref{exm:ranfree}.
\end{exa}

\begin{exa}
    Right Kan extensions into a localic topos against a hyperconnected geometric morphism will also be pointwise. This follows from a more general fact as shown in Proposition~\ref{prop:injpointwise}.
\end{exa}

\begin{notat}
     For a class $\mc H$ of 1-cells, we will write $\WRInj_{\pw}(\mc H)$ to denote the full sub-2-category of $\WRInj(\mc H)$ with pointwise Kan extensions along $\mc H$. 
\end{notat}

The class $\WRInj_{\pw}(\mc H)$ has some similar closure properties as $\WRInj(\mc H)$. For a start, they are stable under coadjoint retracts:

\begin{lem}\label{lem:pointwiseretract}
    If $X$ has pointwise right Kan extension w.r.t. $f : A \to B$ for the equipment $(-)_* : \mc K \to \mc M$, then so does any coadjoint retract of $X$.
\end{lem}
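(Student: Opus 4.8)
The plan is to reduce everything to routine $2$-categorical bookkeeping, with the defining axiom of an equipment as the only load-bearing ingredient.

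First I would unwind the hypothesis. Write $Y$ as a coadjoint retract of $X$, witnessed by a rali $r : X \to Y$ with section $s : Y \to X$, so $s \dashv r$ and $rs \cong 1$; fix an arbitrary $1$-cell $x : A \to Y$. Since $X$ has pointwise right Kan extension w.r.t. $f$, the right Kan extension $(\ran_f(s x),\epsilon)$ exists in $\mc K$. As $s \dashv r$, the map $r$ is a right adjoint in $\mc K$ and hence preserves right Kan extensions, so $r \circ \ran_f(s x)$ — equipped with $r\epsilon$ composed with the isomorphism $rs \cong 1$ — is a right Kan extension of $x$ along $f$ in $\mc K$. (This is exactly the computation underlying the already-known closure of $\WRInj(\mc H)$ under coadjoint retracts, cf.~\cite[Lem. 2.6]{dlsolo}.) So $Y$ is weakly right Kan injective w.r.t. $f$, and the only new content is pointwiseness.

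For pointwiseness I would apply the $2$-functor $(-)_* : \mc K \to \mc M$ to the isomorphism $\ran_f x \cong r \circ \ran_f(s x)$, obtaining $(\ran_f x)_* \cong r_* \circ (\ran_f(s x))_*$. By hypothesis $(\ran_f(s x))_*$ is a right Kan extension of $(s x)_* \cong s_* x_*$ along $f_*$ in $\mc M$. Now the key point: $r$ is a $1$-cell of $\mc K$, so by the definition of an equipment $r_*$ has a right adjoint in $\mc M$, hence preserves right Kan extensions in $\mc M$. Therefore $r_* \circ (\ran_f(s x))_*$ is a right Kan extension of $r_* s_* x_* \cong x_*$ along $f_*$ in $\mc M$; i.e.\ $(-)_*$ carries $\ran_f x$ to a right Kan extension in $\mc M$, which is precisely what it means for $\ran_f x$ to be pointwise. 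Since $x$ was arbitrary, $Y$ has pointwise right Kan extension w.r.t. $f$.

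The one place that needs care — and the closest thing to an obstacle, though it is genuinely routine — is the $2$-cell bookkeeping: checking that $(-)_*$ sends the \emph{actual} counit of $\ran_f x$ (not merely an abstractly isomorphic $1$-cell) to the universal $2$-cell of the right Kan extension in $\mc M$, while tracking the variance flip built into $\mc M = \Topoi_{\lex}\co$. Since $(-)_*$ is a $2$-functor, ``$r_*$ preserves the right Kan extension $((\ran_f(s x))_*,\epsilon_*)$'' is an honest statement about that very pair, and the property of being a right Kan extension transports along invertible $2$-cells; so nothing beyond a short diagram chase is required.
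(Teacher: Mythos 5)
Your overall strategy matches the paper's: obtain the Kan extension on the retract as $r \circ \ran_f(sx)$ using that the rali $r$ is a right adjoint in $\mc K$, then push everything through $(-)_*$ and check that $r_*$ still preserves the right Kan extension in $\mc M$. But the justification you give at exactly that key step is wrong. You write that ``by the definition of an equipment $r_*$ has a right adjoint in $\mc M$, hence preserves right Kan extensions in $\mc M$.'' A $1$-cell that \emph{has} a right adjoint is a \emph{left} adjoint, and left adjoints preserve \emph{left} Kan extensions, not right ones. The equipment axiom (every $f_*$ has a right adjoint in $\mc M$) is therefore not the relevant ingredient here, and as stated this step does not go through.

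The correct argument --- and the one the paper uses --- is that a $2$-functor preserves adjunctions: from $s \dashv r$ in $\mc K$ one gets $s_* \dashv r_*$ in $\mc M$, so $r_*$ \emph{is} a right adjoint in $\mc M$ (with left adjoint $s_*$), and right adjoints preserve right Kan extensions. With that substitution your computation $r_*(\ran_f(sx))_* \cong (\ran_f x)_*$ being a right Kan extension of $r_*s_*x_* \cong x_*$ along $f_*$ is fine, and the rest of your write-up (including the remark about transporting the universal $2$-cell along the isomorphism) is correct. So the gap is a single misattributed justification rather than a structural flaw, but as written the load-bearing step is unsupported.
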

\begin{proof}
    This follows from the fact that any 2-functor $(-)_*$ preserves adjunctions, and right adjoints preserves right Kan extensions.
\end{proof}

Another typical situation in $\Topoi$ providing pointwise right Kan extension is the case when we have a \emph{presentation}:

\begin{defn}
    Let $\mc H$ be a class of geometric morphisms, and $\mc X \in \WRInj(\mc H)$. We say $\mc X$ has an \emph{$\mc H$-presentation} if there is an embedding $j : \mc X \hook \psh(\mc C)$ in $\WRInj(\mc H)$ into a free topos.
\end{defn}

The upshot is that an object admitting an $\mc H$-presentation will have induced pointwise right Kan extensions against maps in $\mc H$:

\begin{lem}\label{lem:presentationpointwise}
    If $\mc X\in\WRInj(\mc H)$ admits an $\mc H$-presentation, then $\mc X\in\WRInj_\pw(\mc H)$.
\end{lem}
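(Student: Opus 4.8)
The plan is to reduce pointwise-ness of the induced right Kan extension along $f \in \mc H$ into $\mc X$ to pointwise-ness of the right Kan extension into the ambient free topos $\psh(\mc C)$, which we already know holds by Example~\ref{exm:ranfree}. So let $j : \mc X \hook \psh(\mc C)$ be the assumed $\mc H$-presentation, which by hypothesis lies in $\WRInj(\mc H)$, meaning $j$ preserves right Kan extensions along maps in $\mc H$. Fix $f : \mc A \to \mc B$ in $\mc H$ and $x : \mc A \to \mc X$; we want to show that the right Kan extension $(\ran_f x, \epsilon)$ — which exists since $\mc X \in \WRInj(\mc H)$ — is pointwise, i.e. that $(-)_*$ sends it to a right Kan extension in $\Topoi\co_{\lex}$.

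The key steps, in order. First, since $j \in \WRInj(\mc H)$, the pair $(j\circ\ran_f x, j\circ\epsilon)$ is the right Kan extension of $jx$ along $f$ in $\Topoi$. Second, by Example~\ref{exm:ranfree} (and the remark in Section~\ref{sec:universal} that such right Kan extensions into free topoi are pointwise), this right Kan extension $\ran_f(jx)$ is pointwise, so $(-)_*$ carries $(j\ran_f x, j\epsilon)$ to a right Kan extension $((j\ran_f x)_*, (j\epsilon)_*) = (j_*(\ran_f x)_*, j_*\epsilon_*)$ in $\Topoi\co_{\lex}$. Third, $j$ is an embedding, so $j_* : \mc X \to \psh(\mc C)$ is fully faithful, and hence (being a morphism in $\Topoi\co_{\lex} \subseteq \LEX\co$ that is in fact fully faithful as a functor) it reflects right Kan extensions: if $j_*$ applied to a candidate cone yields a right Kan extension and $j_*$ is fully faithful, then the original candidate was already a right Kan extension. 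Applying this to the candidate $((\ran_f x)_*, \epsilon_*)$ in $\Topoi\co_{\lex}$, we conclude that it is a right Kan extension there, which is exactly the statement that $(\ran_f x, \epsilon)$ is pointwise.

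The one point requiring care is the third step — the claim that a fully faithful functor reflects right Kan extensions. This is a standard fact in $\CAT$ (equivalently $\LEX$): if $k : X \to Y$ is fully faithful and $(k r, k \theta)$ is a right Kan extension of $k h$ along $f$, then $(r,\theta)$ is a right Kan extension of $h$ along $f$, because the natural bijection $Y(ky, kr) \cong Y(ky, \overline{kh})$ transports along the isos $Y(ky,kr)\cong X(y,r)$ and $Y(kyf, kh)\cong X(yf,h)$ given by full faithfulness. One has to observe that $\Topoi\co_{\lex}$ inherits this: a right Kan extension in the full sub-2-category $\Topoi\co_{\lex}$ of $\LEX\co$ is computed as in $\LEX\co$, and $j_*$ fully faithful in $\Topoi\co_{\lex}$ means fully faithful as a functor. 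I expect this to be the main (though minor) obstacle: making sure the reflection statement is correctly oriented (we are reflecting \emph{along} $f$, with the functor $j_*$ acting on the \emph{codomain} of $x$, which is precisely the variance for which fully faithful functors reflect Kan extensions) and that passing to the full sub-2-category $\Topoi\co_{\lex}$ does not disturb it. Everything else is bookkeeping with the universal properties already recorded in Section~\ref{sec:framework} and Example~\ref{exm:ranfree}.
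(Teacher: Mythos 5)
Your proposal is correct and follows essentially the same route as the paper: push the Kan extension into the free topos via the presentation $j$ (using that $j$ preserves it), invoke Example~\ref{exm:ranfree} for pointwiseness there, and then descend along the fully faithful $j_*$. The only cosmetic difference is that the paper cancels $j_*$ against the explicit formula $\lan_{f_*}x_*$, whereas you invoke the (correct, and arguably cleaner with respect to tracking the universal 2-cell) fact that fully faithful functors reflect Kan extensions.
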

\begin{proof}
    Let $f : \mc E \to \mc F$ be a map in $\mc H$ and let $j : \mc X \hook \psh(\mc C)$ be an $\mc H$-presentation of $\mc X$. Let us look at the following situation,
    \[
    \begin{tikzcd}
        \mc E \ar[d, "f"'] \ar[r, "x"] & \mc X \ar[d, hook, "j"] \\ 
        \mc F \ar[r, dashed, "h"'] \ar[ur, dashed, "g"description] & \psh(\mc C)
    \end{tikzcd}
    \]
    where $g,h$ are the right Kan extension of $x$ and $jx$ along $f$, respectively. By assumption, $jg \cong h$. By Example~\ref{exm:ranfree}, $h$ is computed by
    \[ h_* \cong \lan_{f_*}j_*x_*. \]
    Since $h$ factors through $j$, it then follows that
    \[ j_*g_* \cong h_* \cong j_*j^*h_* \cong j_*\lan_{f_*}x_*. \]
    Now since $j_*$ is fully faithful, it follows that
    \[ g_* \cong \lan_{f_*}x_*, \]
    which implies the right Kan extension $g$ is pointwise.
\end{proof}

The reason we are interested in pointwise Kan extensions is that they interact well with fully faithful maps and surjective maps in a suitable 2-categorical sense. To account for Theorem~\ref{thm:algebraic}, let us first consider embeddings.

\subsection{On the role of embeddings}\label{subsec:embedding}

There is a general notion of fully faithful maps for proarrow equipments given in~\cite{wood1982abstract}:

\begin{defn}
    For a proarrow equipment $(-)_* : \mc K \to \mc M$, a map $f$ in $\mc K$ is fully faithful iff the unit of the adjunction $\eta : 1 \to f^*f_*$ in $\mc M$ is an isomorphism.
\end{defn}

In $\Topoi$, a geometric morphism $f$ is fully faithful iff it is an embedding. The crucial point here is that for any proarrow equipment, the comparison 2-cells for pointwise Kan extensions along fully faithful maps are necessarily invertible. In particular, this implies the following:

\begin{prop}\label{prop:ranemiffweak}
    For a geometric embedding $i : \mc U \hook \mc E$, and for any $x : \mc U \to \mc X$, if the right Kan extension $\ran_ix$ exists in $\Topoi$ and is pointwise, then $\ran_ix \circ i \cong x$.
\end{prop}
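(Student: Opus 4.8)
The plan is to deduce this from the general equipment-theoretic fact that the comparison 2-cell of a pointwise right Kan extension along a fully faithful 1-cell is invertible, applied to the equipment $(-)_* : \Topoi \to \Topoi\co_{\lex}$, in which geometric embeddings are exactly the fully faithful 1-cells.

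Concretely, let $i : \mc U \hook \mc E$ be a geometric embedding and $x : \mc U \to \mc X$ with $\ran_ix$ existing in $\Topoi$ and pointwise. Applying $(-)_*$, pointwiseness says that $((\ran_ix)_*, \epsilon_*)$ is a right Kan extension of $x_*$ along $i_*$ in $\Topoi\co_{\lex}$, i.e.\ a \emph{left} Kan extension in $\LEX$ (or $\CAT$); write $L := (\ran_ix)_*$, so $L \dashv i^*$ exhibits $L$ as $\lan_{i_*}x_*$ with unit $\eta : x_* \to L \circ i_*$ (the mate of $\epsilon$). Since $i$ is an embedding, the equipment-theoretic unit $1 \to i^*i_*$ is an isomorphism; because $(-)_*$ is locally fully faithful and $L$ is a left adjoint of $i_*$ in $\Topoi\co_{\lex}$, precomposition with $i_*$ reflects isomorphisms onto the essential image of $i_*$, and one checks the comparison $\eta$ is an isomorphism: indeed the pointwise left Kan extension $\lan_{i_*}x_*$ along the fully faithful $i_*$ restricts back to $x_*$, which is the standard fact that left Kan extension along a full and faithful functor is genuinely an extension. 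Transporting this isomorphism $\eta : x_* \xrightarrow{\sim} (\ran_ix)_* \circ i_* = (\ran_ix \circ i)_*$ back through local full faithfulness of $(-)_*$ yields an invertible 2-cell $x \cong \ran_ix \circ i$ in $\Topoi$; since $(-)_*$ reflects invertibility of 2-cells (being locally fully faithful), it remains to see that this isomorphism is the counit $\epsilon$ itself, which follows because $\eta$ was defined as the mate of $\epsilon$ under the adjunction and the triangle identities force the mate of an isomorphism, when the relevant unit is invertible, to agree with the original 2-cell. Hence $\epsilon : \ran_ix \circ i \to x$ is invertible, as claimed.

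**The main obstacle** is to pin down precisely why the comparison 2-cell becomes invertible, i.e.\ the passage ``pointwise left Kan extension along a fully faithful functor is an extension'' inside the target equipment, and to make sure the identification is with $\epsilon$ and not merely with \emph{some} isomorphism. This is where one uses that $i_*$ being fully faithful means the unit $1 \to i^*i_*$ is invertible, so $i^*$ is a retraction exhibiting $\mc U$ (underlying category) as a coreflective-type piece; the mate calculus then shows $\eta$ has an explicit inverse built from this unit. I would either cite the general statement for equipments from~\cite{wood1982abstract} (fully faithful 1-cells and pointwise Kan extensions) or, since $\Topoi\co_{\lex}$ sits inside $\LEX\co$ and pointwise right Kan extensions there are ordinary left Kan extensions in $\CAT$, simply invoke the classical $1$-categorical fact that $\lan_J$ along a fully faithful $J$ satisfies $\lan_J F \circ J \cong F$, which is exactly~\cite[X.3]{maclane} in spirit — but as the paper hasn't introduced that citation I would phrase it self-containedly via the invertibility of $\eta$. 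The remaining verifications (that this reconstructs $\epsilon$, via local full faithfulness of $(-)_*$) are routine mate-calculus and I would not spell them out in detail.
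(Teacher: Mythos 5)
Your proposal is correct and follows essentially the same route as the paper: the paper's proof simply cites \cite[Prop.~14]{wood1982abstract} for the fact that pointwise Kan extensions along fully faithful 1-cells in an equipment are genuine extensions, together with the observation that geometric embeddings are exactly the fully faithful 1-cells for the equipment $(-)_* : \Topoi \to \Topoi\co_{\lex}$. Your further unpacking via left Kan extensions along the fully faithful $i_*$ in $\LEX$ is a correct elaboration of that citation (modulo the slip ``$L \dashv i^*$'', which should read $i_* \dashv i^*$).
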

\begin{proof}
    By~\cite[Prop. 14]{wood1982abstract}, if $i$ is fully faithful in an equipment, then pointwise weak right Kan extension w.r.t. $i$ will also be strict right Kan extension. Now the claim follows from the fact that an embedding $i$ is fully faithful w.r.t. the equipment $(-)_* : \Topoi \to \Topoi\co_{\lex}$.
\end{proof}

\subsection{On the role of corepresentable surjections}

On the other hand, the Kan injectivity class w.r.t. various notions of surjections will also have certain special properties, which will allow us to prove the promised result on the characterisation of localic topoi in Corollary~\ref{class:essalg}.

\begin{defn}
    We say $f : A \to B$ in $\mc K$ is \emph{corepresentably surjective}, if for all $X$ in $\mc K$ the induced functor is conservative
    \[ - \circ f: \mc K(B,X) \to \mc K(A,X). \]
    $f$ is corepresentably \emph{fully surjective}, if $- \circ f$ is fully faithful.
\end{defn}

In $\Topoi$, given any geometric morphism $f : \mc E \to \mc F$, precomposition with $f$ is equivalently post-composition with the inverse image,
\[
\begin{tikzcd}
    \Topoi(\mc F,\mc X) \ar[r, "-\circ f"] \ar[d, "\simeq"'] & \Topoi(\mc E,\mc X) \ar[d, "\simeq"] \\
    \ms{Geom}(\mc X,\mc F) \ar[r, "f^*\circ-"'] & \ms{Geom}(\mc X,\mc E)
\end{tikzcd}
\]
where $\ms{Geom}(\mc X,\mc F)$ denotes the category of left exact, cocontinuous functors. Given this, we have the following examples:

\begin{exa}\label{exm:surjsurj}
    Surjections are corepresentable surjections in $\Topoi$, since their inverse images are conservative. Connected geometric morphisms are corepresentable full surjections in $\Topoi$, since their inverse images are fully faithful.
\end{exa}

\begin{prop}\label{prop:surjectionfull}
    If $\mc H$ is a family of corepresentable surjections in $\mc K$, then $\LInj(\mc H)$ and $\RInj(\mc H)$ are \emph{full} sub-2-categories of $\mc K$. 
\end{prop}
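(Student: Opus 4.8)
The plan is to show that if $\mc H$ consists of corepresentably surjective maps, then the inclusions $\LInj(\mc H) \hookrightarrow \mc K$ and $\RInj(\mc H) \hookrightarrow \mc K$ are locally fully faithful \emph{and} locally essentially surjective, i.e. every 2-cell between objects of the Kan injectivity class automatically preserves the relevant Kan extensions. By the standing convention in the excerpt, $\LInj(\mc H)$ and $\RInj(\mc H)$ are already \emph{locally full} sub-2-categories of $\mc K$: their $1$-cells are exactly those $1$-cells of $\mc K$ that preserve the chosen left/right Kan extensions, and they carry \emph{all} $2$-cells between such $1$-cells. So the only thing to prove is that the condition ``preserves the Kan extensions'' is vacuous, i.e. \emph{every} $1$-cell $h : X \to Y$ between objects $X, Y \in \RInj(\mc H)$ preserves right Kan extensions along every $f : A \to B$ in $\mc H$ (and dually for $\LInj$).

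First I would fix $f : A \to B$ in $\mc H$, objects $X, Y \in \RInj(\mc H)$, an arbitrary $1$-cell $h : X \to Y$, and an arbitrary $x : A \to X$. Let $(\ran_f x, \epsilon)$ be the right Kan extension in $\mc K$, which exists and has invertible counit since $X \in \RInj(\mc H)$; likewise let $(\ran_f(hx), \epsilon')$ be the right Kan extension of $hx$ along $f$, which exists with invertible counit since $Y \in \RInj(\mc H)$. The pair $(h \circ \ran_f x, h \circ \epsilon)$ is a cone over $hx$ along $f$, so by the universal property of $\ran_f(hx)$ there is a unique comparison $2$-cell $\theta : h \circ \ran_f x \Rightarrow \ran_f(hx)$ with $\epsilon' \cdot (\theta f) = h\epsilon$. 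I must show $\theta$ is invertible. Precompose the relevant morphisms with $f$: since the counits $\epsilon$ and $\epsilon'$ are both \emph{isomorphisms}, the pasted $2$-cell $(\theta f)$ fits into $\epsilon' \cdot (\theta f) = h\epsilon$ with $\epsilon', h\epsilon$ both invertible, hence $\theta f := \theta \ast \id_f$ (the whiskering of $\theta$ by $f$) is itself invertible. The key step is now to transfer invertibility of $\theta f$ back to invertibility of $\theta$: this is exactly where corepresentable surjectivity of $f$ enters. By definition, $- \circ f : \mc K(B, Y) \to \mc K(A, Y)$ is conservative; a conservative functor reflects isomorphisms between $1$-cells, hence in particular reflects invertibility of a $2$-cell $\theta$ between parallel $1$-cells $h \circ \ran_f x$ and $\ran_f(hx)$ from its image $\theta \ast \id_f$. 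Therefore $\theta$ is invertible, so $(h \circ \ran_f x, h \circ \epsilon)$ is a right Kan extension, i.e. $h$ preserves $\ran_f x$. Since $x$, $f$, and $h$ were arbitrary, every $1$-cell between objects of $\RInj(\mc H)$ lies in $\RInj(\mc H)$, so the inclusion is full on $1$-cells; being locally full by definition, $\RInj(\mc H)$ is a full sub-2-category of $\mc K$. Applying the same argument in $\mc K\co$ (where left Kan extensions become right Kan extensions, and corepresentable surjections in $\mc K$ are still corepresentable surjections in $\mc K\co$ since the condition only refers to hom-categories $\mc K(B,X) \to \mc K(A,X)$, which are unchanged) gives the statement for $\LInj(\mc H)$.

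I expect the main subtlety — not a genuine obstacle, but the point that needs care — to be the precise bookkeeping of what "conservative functor reflects isomorphisms" buys us at the level of $2$-cells: one should note that $- \circ f$ sends the $1$-cells $h \circ \ran_f x$ and $\ran_f(hx)$ of $\mc K(B,Y)$ to parallel $1$-cells of $\mc K(A,Y)$ and sends the $2$-cell $\theta$ to $\theta \ast \id_f$, and then invoke that a $2$-cell in a $2$-category is invertible iff it is an isomorphism in the relevant hom-category, so that conservativity of the functor $\mc K(B,Y) \to \mc K(A,Y)$ literally applies. A secondary point worth a sentence is that invertibility of $\theta \ast \id_f$ indeed follows from $\epsilon' \cdot (\theta \ast \id_f) = h \epsilon$ with $\epsilon'$ and $h\epsilon$ invertible (the latter because $\epsilon$ is invertible, as $X \in \RInj(\mc H)$, and whiskering by $h$ preserves invertibility). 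No computation beyond these diagram-chases is required.
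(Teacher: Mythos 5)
Your proof is correct and follows essentially the same route as the paper's: both use that the counits of the two strict right Kan extensions are invertible to deduce that the comparison $2$-cell becomes invertible after whiskering with $f$, and then invoke conservativity of $-\circ f$ to conclude. Your write-up is simply a more careful expansion of the paper's one-line isomorphism chain, making explicit the canonical comparison $\theta$ and the reflection of invertibility.
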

\begin{proof}
    Let $X,Y \in \RInj(\mc H)$ and let $f : A \to B$ be any map in $\mc H$. Suppose we have $g : X \to Y$. To show it lies in $\RInj(\mc H)$, consider any $x : A \to X$. By assumption,
    \[ g \circ \ran_fx \circ f \cong g \circ x \cong \ran_f(gx) \circ f. \]
    By assumption $-\circ f$ is conservative, thus $g \circ \ran_fx \cong \ran_f(gx)$, which means $g\in\RInj(\mc H)$. Similarly for left Kan extension.
\end{proof}

\begin{prop}\label{prop:corepfullsurjequiv}
    If $\mc H$ is a family of corepresentable full surjections, then $\LInj(\mc H)$ and $\RInj(\mc H)$ coincide as a full sub-2-category. This way, the left/right Kan injectivity condition reduces to an \emph{orthogonality} condition: For any $f : A \to B$ in $\mc H$, $X$ is in the left/right Kan injectivity class iff the functor below is an equivalence,
    \[ - \circ f : \mc K(B,X) \to \mc K(A,X). \]
\end{prop}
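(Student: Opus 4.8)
The plan is to show two things: first, that $\LInj(\mc H)$ and $\RInj(\mc H)$ coincide as full sub-$2$-categories of $\mc K$ (fullness already being guaranteed by Proposition~\ref{prop:surjectionfull}, since a corepresentable full surjection is in particular a corepresentable surjection), and second, that membership in this common class is equivalent to the orthogonality condition that $-\circ f : \mc K(B,X) \to \mc K(A,X)$ is an equivalence for every $f : A\to B$ in $\mc H$. I would organize everything around the single functor $-\circ f$ and unwind what each condition says about it.

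First I would fix $f : A\to B$ in $\mc H$ and an object $X$, and recall what right Kan injectivity at $f$ means: for every $x : A\to X$ the right Kan extension $\ran_f x$ exists, equipped with its counit $\epsilon : \ran_f x\circ f \Rightarrow x$. The universal property says precisely that $\ran_f x$ is a right adjoint object to $x$ under the functor $-\circ f$; more conceptually, asking that $\ran_f x$ exists for all $x$ is exactly asking that $-\circ f : \mc K(B,X)\to\mc K(A,X)$ have a right adjoint. So $X\in\WRInj(\{f\})$ iff $-\circ f$ admits a right adjoint $R_f$, with $\epsilon$ the counit of $R_f\dashv(-\circ f)$ — wait, orientation: $-\circ f$ is left adjoint to $\ran_f(-)$. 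Now the \emph{strict} right Kan injectivity $X\in\RInj(\{f\})$ adds that each counit $\epsilon_x : \ran_f x\circ f\cong x$ is invertible, i.e. that the counit of the adjunction $(-\circ f)\dashv\ran_f(-)$ is an isomorphism, which is the standard criterion for $-\circ f$ to be fully faithful. But $f$ corepresentably full surjective means exactly that $-\circ f$ \emph{is} fully faithful for all $X$. Hence for such $f$, once $\ran_f x$ exists at all, its counit is automatically invertible: $\WRInj(\{f\})$ and $\RInj(\{f\})$ coincide on objects, and a fully faithful functor with a right adjoint is one half of an adjoint equivalence precisely when the right adjoint's unit is also invertible.

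Next I would run the dual computation: $X\in\WLInj(\{f\})$ iff $-\circ f$ admits a \emph{left} adjoint $\lan_f(-)$, and $X\in\LInj(\{f\})$ iff moreover the unit $\eta : x\cong \lan_f x\circ f$ is invertible, which again by corepresentable fullness of $f$ is automatic. So $\LInj(\{f\})$ on objects is the class of $X$ for which $-\circ f$ is fully faithful and has a left adjoint. The final step is to see that for a fully faithful functor, having a left adjoint with invertible unit is the same as having a right adjoint with invertible counit, and both are equivalent to the functor being an equivalence. Indeed, a fully faithful $G$ is an equivalence iff it is essentially surjective; if $G$ has a right adjoint whose counit is invertible then $G$ is essentially surjective (every object is hit, up to iso, by $G$ applied to its image under the right adjoint), so $G$ is an equivalence, and conversely an equivalence has both adjoints with invertible unit and counit. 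Intersecting over all $f\in\mc H$ gives $\LInj(\mc H) = \RInj(\mc H)$ as full sub-$2$-categories, characterised by: $-\circ f : \mc K(B,X)\to\mc K(A,X)$ is an equivalence for all $f\in\mc H$. The main subtlety to be careful about — and the only place an actual argument rather than a citation is needed — is the translation of weak Kan injectivity at $f$ into ``$-\circ f$ has an adjoint'' uniformly in $x$, together with the standard fact that the Kan-extension counit being invertible for all $x$ is equivalent to fully-faithfulness of $-\circ f$; everything after that is the elementary fact that a fully faithful adjoint functor is an equivalence iff one of the (co)unit is invertible, which I would state and use without belaboring.
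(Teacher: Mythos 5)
There is a genuine error in the middle of your argument: you have the unit/counit dictionary for fully faithful adjoints backwards. For an adjunction $L \dashv R$, it is the \emph{left} adjoint $L$ that is fully faithful iff the \emph{unit} $1 \nt RL$ is invertible, and the \emph{right} adjoint $R$ that is fully faithful iff the \emph{counit} $LR \nt 1$ is invertible. In the adjunction $(-\circ f) \dashv \ran_f(-)$, the counit $\epsilon_x : \ran_f x \circ f \nt x$ being invertible is therefore the criterion for $\ran_f(-)$ to be fully faithful, \emph{not} for $-\circ f$ to be. What corepresentable fullness of $f$ (i.e.\ $-\circ f$ fully faithful) buys you for free is the invertibility of the \emph{unit} $y \nt \ran_f(yf)$; the counit condition is genuinely extra. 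Consequently your claim that ``once $\ran_f x$ exists at all, its counit is automatically invertible,'' hence $\WRInj(\{f\}) = \RInj(\{f\})$ (and dually $\WLInj(\{f\}) = \LInj(\{f\})$), is false --- and the paper explicitly warns against exactly this in the remark following the proposition. A concrete counterexample: the object classifier $\Set[\mbb O]$, being free, is weakly right Kan injective w.r.t.\ every geometric morphism, in particular w.r.t.\ every connected one, with $\ran_f X \cong f_*X$; but the counit $f^*f_*X \to X$ is not invertible for general connected $f$, so $\Set[\mbb O] \notin \RInj(\{f\})$. Your intermediate characterisation ``$\LInj(\{f\})$ is the class of $X$ for which $-\circ f$ is fully faithful and has a left adjoint'' is likewise too weak (a reflective-subcategory-type situation satisfies it without being an equivalence), and it is actually inconsistent with your own final conclusion.

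The irony is that your closing sentence --- for a fully faithful functor, having a left adjoint with invertible unit, having a right adjoint with invertible counit, and being an equivalence are all the same --- is both correct and precisely the paper's proof. The repair is to feed the hypotheses in with the right orientation: strict right Kan injectivity says $-\circ f$ has a right adjoint whose \emph{counit} is invertible; fullness of $-\circ f$ supplies the invertible \emph{unit}; together these make $-\circ f$ an equivalence, and conversely an equivalence yields both adjoints with invertible (co)units. Dually for $\LInj$. Once corrected this way, the weak/strict collapse should be deleted entirely --- it is not needed, and it is not true.
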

\begin{proof}
    Suppose $f$ is corepresentably fully surjective. By definition, $X$ is left Kan injective w.r.t. $f$ iff $-\circ f$ has a left adjoint, whose unit is invertible. If $-\circ f$ is fully faithful, this happens iff it is an equivalence. The same applies to right Kan injectivity, and in this case they coincide. 
\end{proof}

\begin{rem}
    Note that the proofs for the above two proposition crucially relies on strict Kan injectivity. Hence, the results in Proposition~\ref{prop:surjectionfull} and Proposition~\ref{prop:corepfullsurjequiv} does \emph{not} hold for weak Kan injective classes.
\end{rem}

If $\mc H$ entirely consists of corepresentably fully faithful maps, we will write the left/right Kan injectivity class w.r.t. $\mc H$ uniformly as $\ms{Inj}(\mc H)$. In $\Topoi$, they always have pointwise right Kan extensions:

\begin{prop}\label{prop:injpointwise}
    Let $\mc H$ is a family of connected geometric morphisms in $\Topoi$. For any $f : \mc E \to \mc F$ in $\mc H$ and $\mc X\in\Inj(\mc H)$, given $x : \mc E \to \mc X$, the right Kan extension $\ran_fx$ is pointwise, and given by the formula below, 
    \[ (\ran_fx)_* \cong x_*f^*. \]
\end{prop}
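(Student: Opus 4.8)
The plan is first to exhibit $\ran_fx$ together with the asserted formula, and then to check that the equipment $(-)_* : \Topoi \to \Topoi_{\lex}\co$ carries it to a right Kan extension. Since $f$ is connected it is a corepresentable full surjection (Example~\ref{exm:surjsurj}), so by Proposition~\ref{prop:corepfullsurjequiv} the class $\Inj(\mc H)$ coincides with $\RInj(\mc H)$ (and with $\LInj(\mc H)$) and, for $\mc X\in\Inj(\mc H)$, the functor $- \circ f : \Topoi(\mc F,\mc X)\to\Topoi(\mc E,\mc X)$ is an equivalence. In particular $g := \ran_fx$ exists in $\Topoi$ with an \emph{invertible} counit $\epsilon : g \circ f\nt x$; taking direct images yields $g_*f_*\cong x_*$. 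Connectedness of $f$ also means that $f^*$ is fully faithful, i.e.\ the unit $\eta : \id_{\mc F}\nt f_*f^*$ of $f^*\dashv f_*$ is invertible, and whiskering the isomorphism $g_*f_*\cong x_*$ on the right with $f^*$ gives $g_*\cong g_*(f_*f^*)=(g_*f_*)f^*\cong x_*f^*$, which is the claimed formula (the composite $x_*f^*$ is left exact: $f^*$ is, and $x_*$ preserves all limits).

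For pointwiseness, unwinding the definition of the equipment shows that $\ran_fx$ is pointwise exactly when $(g_*,\epsilon^\flat)$ — with $\epsilon^\flat : x_*\nt g_*f_*$ the mate of $\epsilon$ under $(-)_*$ — is a right Kan extension of $x_*$ along $f_*$ in $\Topoi_{\lex}\co$, equivalently the \emph{left} Kan extension $\lan_{f_*}x_*$ in $\Topoi_{\lex}$. So I would verify that, for every left exact $k : \mc F\to\mc X$, whiskering by $f_*$ and composing with $\epsilon^\flat$ defines a bijection
\[ \Topoi_{\lex}(\mc F,\mc X)(g_*,k)\;\xrightarrow{\ \cong\ }\;\Topoi_{\lex}(\mc E,\mc X)(x_*,kf_*),\qquad \sigma\mapsto(\sigma\ast f_*)\circ\epsilon^\flat. \]
Its inverse should send $\tau : x_*\nt kf_*$ to the composite $g_*\cong x_*f^*\xRightarrow{\,\tau\ast f^*\,}kf_*f^*\xRightarrow{\,k\ast\eta^{-1}\,}k$, built using the isomorphism $g_*\cong x_*f^*$ of the first part. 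Checking that the two composites are identities is a diagram chase: one direction combines the interchange law with the compatibility of the isomorphism $g_*\cong x_*f^*$ with $\epsilon^\flat$; the other rewrites $k\ast(\eta^{-1}\ast f_*)$ as $k\ast(f_*\ast\varepsilon)$ — legitimate since $\eta$ is invertible, by a triangle identity for $f^*\dashv f_*$ with $\varepsilon$ the counit — after which the two triangle identities finish the computation.

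The hard part is exactly this 2-cell bookkeeping: keeping straight $\epsilon$, its mate $\epsilon^\flat$, the isomorphism $g_*\cong x_*f^*$, and the unit $\eta$ and counit $\varepsilon$ of $f^*\dashv f_*$, and invoking interchange and the triangle identities in the right order. One point worth flagging is that the displayed bijection is required against \emph{all} left exact $k$, not merely those that are direct images of geometric morphisms; this causes no trouble, since every step of the verification is a formal $2$-categorical manipulation that never uses cocontinuity of $k$ — in fact the same argument shows $(g_*,\epsilon^\flat)$ is already the pointwise left Kan extension in $\CAT$. Alternatively, one can avoid the chase via Remark~\ref{pointwise}, reducing pointwiseness to the comma-object formulation and then exploiting the bicomma objects of $\Topoi$ together with the invertibility of $\epsilon$ and of $\eta$.
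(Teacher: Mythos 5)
Your proposal is correct and follows essentially the same route as the paper: both halves hinge on the strict injectivity isomorphism $\ran_fx\circ f\cong x$ together with the invertibility of the unit $1\nt f_*f^*$ for connected $f$, and your verification of the universal property of $\lan_{f_*}x_*$ uses exactly the paper's two maps (whisker with $f_*$ one way, precompose with $f^*$ and cancel $f_*f^*\cong 1$ the other). The only cosmetic difference is in deriving the formula $(\ran_fx)_*\cong x_*f^*$: the paper exhibits $x_*f^*$ directly as the right adjoint of $(\ran_fx)^*$ by a hom-set computation, whereas you obtain it by whiskering $(\ran_fx)_*f_*\cong x_*$ with $f^*$ — both are fine.
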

\begin{proof}
    For any $F\in\mc F$ and $X\in\mc X$, we have
    \[ \mc X(X,x_*f^*F) \cong \mc E(x^*X,f^*F) \cong \mc E(f^*(\ran_fx)^*X,f^*E) \cong \mc F((\ran_fx)^*X,E). \]
    The second isomorphism uses the fact that $X \in \Inj(\mc H)$, thus $\ran_fx \circ f \cong x$, and in particular $f^*(\ran_fx)^* \cong x^*$; the last isomorphism uses the fact that $f$ is connected, and thus $f^*$ is fully faithful. This shows $(\ran_fx)_* \cong x_*f^*$.

    Now for any $F : \mc F \to \mc X$ in $\LEX$, we have
    \[ \LEX(x_*f^*,F) \cong \LEX(x_*,Ff_*). \]
    The right to left direction is by precomposing with $f^*$, and by the fact that $f_*f^* \cong 1$; the left to right direction is by precomposing with $f_*$, and by the fact that $x_*f^*f_* \cong (\ran_fx)_*f_* \cong x_*$, since $X\in\Inj(\mc H)$. It is easy to verify the two direction are inverses to each other, hence $(\ran_fx)_* = x_*f^*$ is the left Kan extension $\lan_{f_*}x_*$ in $\LEX$, which implies $\ran_fx$ is pointwise.
\end{proof}

Finally, we show that if we have an orthogonality condition, with $\mc H$ a family of corepresentably fully faithful maps in a 2-category $\mc K$, then the Kan injectivity class $\Inj(\mc H)$ satisfies a further closure property. In a 2-category $\mc K$, we say a map $g : Y \to X$ is \emph{orthogonal} to $f : A \to B$, if the following is an equivalence,
\[ \mc K(B,Y) \simeq \mc K(A,Y) \times_{\mc K(A,X)} \mc K(B,X), \]
where the codomain is the pseudopullback of categories. Of course when we take $g$ to be the terminal morphism $X \to 1$, this reduces to the condition of $X$ being orthogonal to $f$.

\begin{lem}\label{lem:orthogalinj}
    Let $\mc H$ be a family of corepresentably full surjections in a 2-category $\mc K$. If $X \in \Inj(\mc H)$ and $g : Y \to X$ is orthogonal to all maps $f : A \to B\in\mc H$, then $Y \in \Inj(\mc H)$.
\end{lem}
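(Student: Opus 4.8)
The plan is to verify directly that for any $f : A \to B$ in $\mc H$, the functor $-\circ f : \mc K(B,Y) \to \mc K(A,Y)$ is an equivalence, using Proposition~\ref{prop:corepfullsurjequiv} to reduce the Kan injectivity condition to this orthogonality condition. We are given two inputs: $X \in \Inj(\mc H)$, so $-\circ f : \mc K(B,X) \to \mc K(A,X)$ is an equivalence; and $g : Y \to X$ is orthogonal to $f$, so $\mc K(B,Y) \simeq \mc K(A,Y)\times_{\mc K(A,X)}\mc K(B,X)$. The key observation is that when one leg of a pseudopullback, namely $-\circ f : \mc K(B,X)\to \mc K(A,X)$, is an equivalence, the pseudopullback $\mc K(A,Y)\times_{\mc K(A,X)}\mc K(B,X)$ becomes equivalent, via the projection, to $\mc K(A,Y)$ itself. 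Indeed, pseudopullbacks are invariant under replacing a functor by an equivalent one, and the pseudopullback of an identity (or equivalence) is just the other factor.

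Concretely, I would argue as follows. Since $-\circ f : \mc K(B,X)\to\mc K(A,X)$ is an equivalence, the projection $\pi : \mc K(A,Y)\times_{\mc K(A,X)}\mc K(B,X) \to \mc K(A,Y)$ is also an equivalence: an inverse sends $y : A \to Y$ to the triple consisting of $y$, a chosen pseudoinverse image of $gy\in\mc K(A,X)$ under $-\circ f$, and the canonical coherence isomorphism. Composing the orthogonality equivalence $\mc K(B,Y)\simeq \mc K(A,Y)\times_{\mc K(A,X)}\mc K(B,X)$ with $\pi$, and checking that the resulting equivalence $\mc K(B,Y)\to\mc K(A,Y)$ is (isomorphic to) the functor $-\circ f$, gives that $-\circ f : \mc K(B,Y)\to\mc K(A,Y)$ is an equivalence. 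That last compatibility is essentially immediate from how the orthogonality square is set up: the first projection of the orthogonality equivalence is precisely precomposition with $f$, since the pseudopullback cone over $\mc K(A,Y)\to\mc K(A,X)\leftarrow\mc K(B,X)$ has first component the $\mc K(A,Y)$-leg and the comparison functor $\mc K(B,Y)\to$ pseudopullback is induced by $(-\circ f, g\circ -)$ together with the coherence 2-cell. Hence $Y\in\Inj(\mc H)$ by Proposition~\ref{prop:corepfullsurjequiv}.

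The main obstacle I anticipate is purely bookkeeping: making the 2-categorical coherences precise, in particular identifying the composite $\mc K(B,Y)\simeq(\text{pseudopullback})\xrightarrow{\pi}\mc K(A,Y)$ with $-\circ f$ up to natural isomorphism, rather than on the nose. One must be careful that "the pseudopullback along an equivalence is trivial" is stated for pseudopullbacks (not strict ones), which is the correct notion here since $\mc K(B,Y)$ only sits in it up to equivalence anyway. None of this requires any special feature of $\Topoi$; it is a general fact about 2-categories, so the argument is entirely formal. A slicker phrasing, which I might adopt to avoid the explicit pseudoinverse, is: orthogonality of $g$ to $f$ says the naturality square
\[
\begin{tikzcd}
    \mc K(B,Y) \ar[r, "{-\circ f}"] \ar[d, "{g\circ-}"'] & \mc K(A,Y) \ar[d, "{g\circ-}"] \\
    \mc K(B,X) \ar[r, "{-\circ f}"'] & \mc K(A,X)
\end{tikzcd}
\]
is a (homotopy/pseudo) pullback square; since the bottom arrow is an equivalence by $X\in\Inj(\mc H)$, the top arrow is an equivalence too, because in any (2-)category pullbacks preserve equivalences. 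This is the cleanest route and I expect the final write-up to follow it.
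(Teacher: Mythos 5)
Your proposal is correct and follows essentially the same route as the paper: both use that $-\circ f:\mc K(B,X)\to\mc K(A,X)$ being an equivalence collapses the pseudopullback $\mc K(A,Y)\times_{\mc K(A,X)}\mc K(B,X)$ onto $\mc K(A,Y)$, so the orthogonality equivalence identifies $-\circ f:\mc K(B,Y)\to\mc K(A,Y)$ as an equivalence and Proposition~\ref{prop:corepfullsurjequiv} concludes. The extra coherence bookkeeping you flag is fine but not needed beyond what the paper already records.
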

\begin{proof}
    When $X\in\Inj(\mc H)$, by definition for any $f : A \to B$ in $\mc H$, $\mc K(B,X) \to \mc K(A,X)$ is an equivalence, thus for the pseudopullback we have
    \[ \mc K(A,Y) \times_{\mc K(A,X)} \mc K(B,X) \simeq \mc K(A,Y). \]
    This way, $\mc K(B,Y) \to \mc K(A,Y)$ is an equivalence for all $f\in\mc H$, thus $Y \in \Inj(\mc H)$.
\end{proof}

In particular, this implies that the Kan injectivity class for a family of connected geometric morphisms in $\Topoi$ is always closed under slicing:

\begin{cor}\label{cor:etalelocal}
    Let $\mc H$ be a family of connected geometric morphisms in $\Topoi$. If $\mc X \in \Inj(\mc H)$ and $\mc Y \to \mc X$ is etale, then $\mc Y \in \Inj(\mc H)$. Similarly, if maps in $\mc H$ are all hyperconnected, then $\mc X\in\Inj(\mc H)$ and $\mc Y \to \mc X$ being localic implies $\mc Y\in\Inj(\mc H)$.
\end{cor}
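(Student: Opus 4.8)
The plan is to obtain both claims by applying Lemma~\ref{lem:orthogalinj} to the given map $\mc Y \to \mc X$: since $\mc X \in \Inj(\mc H)$ is already assumed, all that remains is to check that $\mc Y \to \mc X$ is orthogonal, in the $2$-categorical sense defined just above that lemma, to every map in $\mc H$. Note first that connected geometric morphisms, and a fortiori hyperconnected ones, are corepresentably full surjections (Example~\ref{exm:surjsurj}), so $\mc H$ satisfies the hypothesis of Lemma~\ref{lem:orthogalinj} in either case. Thus it suffices to prove two orthogonality statements: an étale morphism is orthogonal to every connected geometric morphism, and a localic morphism is orthogonal to every hyperconnected one. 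The second of these is exactly the orthogonality half of the (hyperconnected, localic) factorization system on $\Topoi$ recalled in Section~\ref{propositionallogic}; I would simply cite~\cite[A4.6]{elephant1}, observing that a $2$-categorical orthogonal factorization system yields precisely the pseudo-pullback equivalences required by the definition of orthogonality.

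For the étale case, fix a connected $f : \mc E \to \mc F$ and an étale $p : \mc Y \to \mc X$, and consider the comparison functor
\[ \Topoi(\mc F,\mc Y) \longrightarrow \Topoi(\mc E,\mc Y) \times_{\Topoi(\mc E,\mc X)} \Topoi(\mc F,\mc X). \]
First I would observe that this functor is fully faithful for purely formal reasons, from the fact that $-\circ f$ is fully faithful on both $\Topoi(-,\mc Y)$ and $\Topoi(-,\mc X)$ (Example~\ref{exm:surjsurj} again): a short diagram chase shows a $2$-cell between two fillers is determined by its restriction along $f$, and any compatible pair of $2$-cells over $f$ and over $p$ arises in this way. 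The substantive point is essential surjectivity. Here I would invoke the universal property of étale morphisms: write $\mc Y \simeq \mc X/A$ with $p = \pi_A$, and recall that a geometric morphism $\mc Z \to \mc X/A$ lying over $g : \mc Z \to \mc X$ is the same datum as a global section $1_{\mc Z} \to g^*A$ in $\mc Z$. Given $u : \mc E \to \mc Y$, $v : \mc F \to \mc X$ and an iso $pu \cong vf$, transporting the section corresponding to $u$ along this iso yields a section $s : 1 \to f^*v^*A$ in $\mc E$, and a filler $w : \mc F \to \mc Y$ amounts precisely to a section $t : 1 \to v^*A$ in $\mc F$ with $f^*(t)$ matching $s$. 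Since $f$ is connected, $f^*$ is fully faithful and preserves the terminal object, so it induces a bijection $\mc F(1,v^*A) \cong \mc E(1,f^*v^*A)$; this produces the unique required $t$, hence the filler.

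I expect the main obstacle to be the routine but fiddly bookkeeping in the étale case: promoting the object-level bijection on sections to an actual equivalence of hom-categories, which means tracking the coherence isomorphisms (the comparison iso $pu \cong vf$ and the canonical iso $1_{\mc E} \cong f^*1_{\mc F}$) and checking their compatibility with $2$-cells under the identification $\mc Y \simeq \mc X/A$. None of this is conceptually delicate — full-faithfulness of the comparison functor is already formal, and essential surjectivity reduces to a single application of connectedness — but it is where the care must go. Assembling the étale and localic cases with Lemma~\ref{lem:orthogalinj} then yields the corollary.
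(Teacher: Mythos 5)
Your proposal matches the paper's proof: the paper likewise deduces both claims from Lemma~\ref{lem:orthogalinj} together with the orthogonality of étale maps against connected maps and of localic maps against hyperconnected maps (citing \cite[C3.3.4]{elephant2} for the former and the hyperconnected--localic factorization for the latter). The only difference is that you prove the étale--connected orthogonality by hand via global sections of $g^*A$ and full faithfulness of $f^*$ instead of citing it, and that argument is correct.
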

\begin{proof}
    This follows from the fact that etale maps are orthogonal to connected maps (cf.~\cite[C3.3.4]{elephant2}), and as mentioned hyperconnected maps are orthogonal to localic maps.
\end{proof}

\section{All concepts are Kan extensions} \label{sec:kanshowoff}

The aim of this section is twofold. On one side it offers the reader a collection of examples to familiarise with Kan injectivity and understand how to use and what is its expressive power. On the other side we initiate a descriptive analysis of the $2$-category of topoi and its inhabitants. 

The title of the section is an evident tribute to the very celebrated \cite[X.7]{mac2013categories}, where the theory of Kan extensions is strongly justified because it can encode a vast majority of categorical properties. In \Cref{colimits} (below) we shall see the simplest example of this blueprint in our context. This technology has been generalized in ways that do not have the usual flavour that the category theorist may expect in \cite[Sec. 2]{diliberti2022geometry} to account the construction of ultraproducts, we shall recall in \Cref{prescribinguniversals}.

It follows from our presentation that group-like topoi, totally connected topoi, and other classes with these flavours are closed under pseudolimits, which is a priori everything but evident. 

\begin{rem}[The blueprint of \textit{prescribing universals}] \label{colimits}
The theory of weak left and right Kan injectivity in $\Topoi$ can be used to express the existence of colimits and limits in the category of (generalised) points. Let $\mc A$ be any small category. Recall $\Topoi$ is tensored with $\cat$, i.e. for any topos $\mc E$ there exists a topos $\mc A \otimes \mc E$, where for any $\mc X$ there is a natural equivalence
\[ \Topoi(\mc A\otimes\mc E,\mc X) \simeq \CAT(\mc A,\Topoi(\mc E,\mc X)). \]
In other words, geometric morphisms out of $\mc A \otimes \mc E$ detects $\mc A$-shaped $\mc E$-points of $\mc X$. In fact, the tensor $\mc A \otimes \mc E$ is simply $[\mc A,\mc E]$; see e.g.~\cite[B3.4.7]{elephant1}.

There is a natural map $\mc A \otimes \mc E \to \mc E$ induced by the functor $\mc A \to \mb 1$ into the terminal category. We could then look at the following Kan injectivity problem,
\[
\begin{tikzcd}
    \mc A \otimes \mc E \ar[d] \ar[r] & \mc X \\
    \mc E \ar[ur, dashed]
\end{tikzcd}
\]
The observation is that the Kan injectivity problem w.r.t. $\mc A \otimes \mc E \to \mc E$ detects limits and colimits in $\mc E$-points of $\mc X$, as we shall see in the proposition below.
\end{rem}

\begin{prop}\label{prop:limitsaskan}
    For any category $\mc A$ and topoi $\mc E,\mc X$:
    \begin{itemize}
        \item $\Topoi(\mc E,\mc X)$ has $\mc A$-limits (resp. $\mc A$-colimits) iff $\mc X$ is weakly right (resp. left) Kan injective w.r.t. $\mc A \otimes \mc E \to \mc E$.
    \end{itemize}
    If both $\mc X,\mc Y$ are weakly left (resp. right) Kan injective w.r.t. $\mc A \otimes \mc E \to \mc E$, then for a geometric morphism $f : \mc X \to \mc Y$:
    \begin{itemize}
        \item It preserves left (resp. right) Kan extension along this map, iff the functor 
        \[ f\circ-:\Topoi(\mc E,\mc X) \to \Topoi(\mc E,\mc Y) \] 
        preserves $\mc A$-shaped colimits (resp. limits).
    \end{itemize}
\end{prop}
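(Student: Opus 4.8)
\section*{Proof proposal}

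The plan is to reduce both statements to the tensor--hom equivalence
\[ \Topoi(\mc A\otimes\mc E,-)\;\simeq\;\CAT(\mc A,\Topoi(\mc E,-)) \]
and to track what the relevant $1$- and $2$-cells become under it. First I would record two compatibilities. Since $p:\mc A\otimes\mc E\to\mc E$ is the tensor of $\mc E$ with the unique functor $\mc A\to\mb 1$, precomposition $-\circ p:\Topoi(\mc E,\mc X)\to\Topoi(\mc A\otimes\mc E,\mc X)$ corresponds, under $\mb 1\otimes\mc E\simeq\mc E$ and $\CAT(\mb 1,\Topoi(\mc E,\mc X))\simeq\Topoi(\mc E,\mc X)$, to the constant-diagram functor $\Delta:\Topoi(\mc E,\mc X)\to\CAT(\mc A,\Topoi(\mc E,\mc X))$. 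And since the equivalence is pseudonatural in the target topos, postcomposition with a geometric morphism $f:\mc X\to\mc Y$ on the left corresponds to postcomposing $\mc A$-diagrams with $f\circ-:\Topoi(\mc E,\mc X)\to\Topoi(\mc E,\mc Y)$ on the right.

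For the first bullet, fix $d:\mc A\otimes\mc E\to\mc X$ with corresponding diagram $D_d:\mc A\to\Topoi(\mc E,\mc X)$. By definition $\ran_p d$, when it exists, represents $z\mapsto\Topoi(\mc A\otimes\mc E,\mc X)(z\circ p,d)$; transporting along the equivalence and the identification $-\circ p\simeq\Delta$ turns this into $z\mapsto\CAT(\mc A,\Topoi(\mc E,\mc X))(\Delta z,D_d)$, the functor of cones over $D_d$. Hence $\ran_p d$ exists iff $\lim D_d$ does, and then $\ran_p d\cong\lim D_d$. As every diagram $\mc A\to\Topoi(\mc E,\mc X)$ is of this form up to isomorphism, $\mc X$ is weakly right Kan injective w.r.t.\ $p$ iff $\Topoi(\mc E,\mc X)$ has all $\mc A$-limits. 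The evident dual argument (left Kan extensions and cocones in place of right Kan extensions and cones) gives the $\mc A$-colimit / weakly left Kan injective statement.

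For the second bullet I would treat the left/colimit case, the right/limit case being dual. Assume $\mc X,\mc Y$ are weakly left Kan injective w.r.t.\ $p$, so by the first part both $\Topoi(\mc E,\mc X)$ and $\Topoi(\mc E,\mc Y)$ have $\mc A$-colimits and $\lan_p d\cong\operatorname{colim} D_d$ with unit $\eta_d$ the colimiting cocone. Unwinding the definition, ``$f$ preserves the left Kan extension of $d$ along $p$'' means $(f\circ\lan_p d,\,f\circ\eta_d)$ is a left Kan extension of $f\circ d$ along $p$. Under the two compatibilities above, $f\circ\lan_p d$ corresponds to $(f\circ-)(\operatorname{colim} D_d)$, the $2$-cell $f\circ\eta_d$ to the image under $f\circ-$ of the colimiting cocone of $D_d$, and the actual left Kan extension of $f\circ d$ along $p$ (which exists since $\mc Y$ has $\mc A$-colimits) to $\operatorname{colim}\big((f\circ-)\circ D_d\big)$ with its colimiting cocone; so the displayed condition says exactly that $f\circ-$ carries the colimiting cocone of $D_d$ to a colimiting one. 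Letting $d$ range over $\Topoi(\mc A\otimes\mc E,\mc X)$ — equivalently, $D_d$ over all $\mc A$-diagrams in $\Topoi(\mc E,\mc X)$ — this is precisely preservation of $\mc A$-shaped colimits by $f\circ-$. The dual argument handles right Kan extensions and $\mc A$-limits.

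The only genuine work is in the first paragraph: making precise that the tensor--hom equivalence is pseudonatural in both slots compatibly with $2$-cells, and that $-\circ p$ is, up to canonical isomorphism, $\Delta$. This is where I expect the effort to go; once that dictionary is set up both bullets are formal, with no strictness worries, since right/left Kan extensions and (co)limits are each cut out by a representability property that transports along equivalences of hom-categories.
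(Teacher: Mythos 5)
Your proposal is correct and follows essentially the same route as the paper: both reduce everything to the tensor--hom equivalence $\Topoi(\mc A\otimes\mc E,\mc X)\simeq\CAT(\mc A,\Topoi(\mc E,\mc X))$ and the identification of $-\circ p$ with the constant-diagram functor $\Delta$, so that weak Kan injectivity becomes the existence of an adjoint to $\Delta$, i.e.\ of $\mc A$-(co)limits. The paper states this in one line (and leaves the preservation bullet implicit), whereas you unwind the same representability argument in detail and spell out the second bullet explicitly; there is no substantive difference in method.
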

\begin{proof}
    Recall that $\mc X$ is weakly left Kan injective w.r.t. $\mc A \otimes \mc E \to \mc E$ iff the following restriction functor has a left adjoint,
    \[ \Delta : \Topoi(\mc E,\mc X) \to \Topoi(\mc A \otimes \mc E,\mc X) \simeq \CAT(\mc A,\Topoi(\mc E,\mc X)). \]
    This exactly says $\Topoi(\mc E,\mc X)$ has $\mc A$-shaped colimits. The dual case is completely similar.
\end{proof}

% From this perspective, we can view Kan injectivity as generalised notion of limits and colimits in the category of generalised points of a topos. In this section, we will investigate the several notions from a Kan injectivity perspective and prove the following results:
% \begin{itemize}
%     \item directed colimits
%     \item terminal object
%     \item groupoidal points
%     \item preorder points
% \end{itemize}

\subsection{Filtered colimits, a tautological example}

Let $\mc A$ be a filtered category. 

\begin{prop}
    For any topos $\mc E$, \emph{all} topoi are weakly left Kan injective w.r.t. $\mc A \otimes \mc E \to \mc E$. In fact, $\WLInj(\stt{\mc A \otimes \mc E \to \mc E}_{\mc E \in \Topoi}) = \Topoi$.
\end{prop}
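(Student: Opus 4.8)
The plan is to reduce the statement to Proposition~\ref{prop:limitsaskan} and then invoke the standard behaviour of filtered colimits in a Grothendieck topos. Concretely, for a fixed topos $\mc E$ and an arbitrary topos $\mc X$, Proposition~\ref{prop:limitsaskan} tells us that $\mc X$ is weakly left Kan injective with respect to $\mc A \otimes \mc E \to \mc E$ exactly when the hom-category $\Topoi(\mc E,\mc X)$ has $\mc A$-shaped colimits. Since $\mc A$ is filtered and $\mc E,\mc X$ range over all topoi, the whole statement — including the displayed equality $\WLInj(\{\mc A\otimes\mc E\to\mc E\}_{\mc E\in\Topoi}) = \Topoi$ — reduces to the single assertion that $\Topoi(\mc E,\mc X)$ admits all filtered colimits, for every pair of topoi $\mc E,\mc X$.

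To establish that, I would use the identification $f \mapsto f^*$ of $\Topoi(\mc E,\mc X)$ with $\ms{Geom}(\mc X,\mc E)$, the full subcategory of the functor category $[\mc X,\mc E]$ spanned by the left exact cocontinuous functors, with natural transformations as morphisms. Because $\mc E$ is a Grothendieck topos it is cocomplete, so $[\mc X,\mc E]$ has all small colimits, computed pointwise; in particular it has $\mc A$-colimits. It then remains to check that $\ms{Geom}(\mc X,\mc E)$ is closed under filtered colimits inside $[\mc X,\mc E]$: a pointwise filtered colimit of cocontinuous functors is cocontinuous because colimits commute with colimits, and a pointwise filtered colimit of left exact functors is left exact because filtered colimits commute with finite limits in $\mc E$. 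Hence such a colimit, computed in $[\mc X,\mc E]$, already lands in $\ms{Geom}(\mc X,\mc E)$ and is therefore the colimit there, so $\Topoi(\mc E,\mc X)$ has filtered colimits as required. Equivalently, the left adjoint to the diagonal $\Delta : \Topoi(\mc E,\mc X) \to \CAT(\mc A,\Topoi(\mc E,\mc X))$ demanded by Proposition~\ref{prop:limitsaskan} is simply the pointwise filtered colimit functor.

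The only ingredient that is not pure bookkeeping is the commutation of filtered colimits with finite limits in a Grothendieck topos; this is classical (reduce to the presheaf topos, where filtered colimits are computed pointwise in $\Set$ and the elementary fact applies, then use that sheafification preserves finite limits), and I expect it to be the one point worth spelling out. Everything else — cocompleteness of $\mc E$, the pointwise computation of colimits in $[\mc X,\mc E]$, the equivalence $\Topoi(\mc E,\mc X)\simeq\ms{Geom}(\mc X,\mc E)$, and the reduction through Proposition~\ref{prop:limitsaskan} — is formal.
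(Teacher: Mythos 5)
Your argument is correct in substance but follows a genuinely different route from the paper. The paper's proof is a one-liner at the level of the 2-category: it cites \cite[C3.6.17]{elephant2} for the fact that $[\mc A,\mc E]\to\mc E$ is totally connected when $\mc A$ is filtered -- i.e.\ a left adjoint in $\Topoi$ -- and then invokes (the dual of) Lemma~\ref{lem:adjointiffallinj}, so that the left Kan extensions are given by precomposition with the adjoint and are \emph{absolute}. You instead unfold everything pointwise: via $\Topoi(\mc E,\mc X)\simeq\ms{Geom}(\mc X,\mc E)$ you check that left exact cocontinuous functors are closed under pointwise filtered colimits in $[\mc X,\mc E]$. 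The two proofs hinge on the same fact -- commutation of filtered colimits with finite limits in $\mc E$, which is exactly what makes $\ct_{\mc A}:[\mc A,\mc E]\to\mc E$ left exact and hence the map totally connected -- so yours is essentially the paper's argument with the adjunction unpacked. What the paper's packaging buys is that the 2-categorical statement $\WLInj(\stt{\mc A\otimes\mc E\to\mc E}_{\mc E})=\Topoi$ comes for free, since absolute Kan extensions are preserved by \emph{every} 1-cell; what yours buys is an explicit description of the colimits.

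One point to patch: the displayed equality does \emph{not} reduce only to the existence of filtered colimits in the hom-categories, because $\WLInj(\mc H)$ is a \emph{locally full} sub-2-category whose 1-cells are required to preserve the left Kan extensions. By the second clause of Proposition~\ref{prop:limitsaskan} you must also check that $f\circ-:\Topoi(\mc E,\mc X)\to\Topoi(\mc E,\mc Y)$ preserves $\mc A$-colimits for every geometric morphism $f$. In your setup this is immediate -- under the identification with $\ms{Geom}$, the functor $f\circ-$ becomes precomposition with $f^*$, which preserves pointwise colimits -- but it should be said. Finally, a cosmetic caveat: $\mc X$ is large, so $[\mc X,\mc E]$ is not a locally small category; this is harmless since you only need colimits of small diagrams, computed pointwise in the cocomplete $\mc E$.
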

\begin{proof}
    Recall from~\cite[C3.6.17]{elephant2}, for any filtered category $\mc A$ the geometric morphism
    \[ \mc A \otimes \mc E \simeq [\mc A,\mc E] \to \mc E \]
    is \emph{totally connected}, in particular a left adjoint in $\Topoi$. Hence, every topoi will be weakly left Kan injective w.r.t. this map by Lemma~\ref{lem:adjointiffallinj}.
\end{proof}

\begin{rem}
    In fact, we have an iff condition: For any internal category $\mbb A$ in $\mc E$, the geometric morphism $[\mbb A,\mc E] \to \mc E$ is totally connected iff $\mbb A$ is filtered in $\mc E$ (cf.~\cite[B2.6.8]{elephant1}).
\end{rem}

\begin{cor}  \label{yesdirectedcolimits}
    For any topos $\mc X$, the category $\Topoi(\mc E,\mc X)$ of $\mc E$-points of $\mc X$ has filtered colimits, and any geometric morphism $f : \mc X \to \mc Y$ preserves them.
\end{cor}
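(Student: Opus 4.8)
The plan is to read the claim directly off Proposition~\ref{prop:limitsaskan} together with the preceding proposition. Fix a small filtered category $\mc A$. The preceding proposition tells us that every topos — in particular $\mc X$ — is weakly left Kan injective with respect to the canonical map $\mc A \otimes \mc E \to \mc E$, since for filtered $\mc A$ this map is totally connected (by~\cite[C3.6.17]{elephant2}), hence in particular a left adjoint in $\Topoi$. By the first bullet of Proposition~\ref{prop:limitsaskan}, weak left Kan injectivity w.r.t.\ $\mc A \otimes \mc E \to \mc E$ is exactly the assertion that $\Topoi(\mc E,\mc X)$ has $\mc A$-shaped colimits. Letting $\mc A$ range over all small filtered categories, we conclude that $\Topoi(\mc E,\mc X)$ has all filtered colimits.

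For the preservation statement, fix again a small filtered $\mc A$. Both $\mc X$ and $\mc Y$ are weakly left Kan injective w.r.t.\ $\mc A \otimes \mc E \to \mc E$, so by the second bullet of Proposition~\ref{prop:limitsaskan} it suffices to check that $f$ preserves the left Kan extension along this map. But $\mc A \otimes \mc E \to \mc E$ is a left adjoint in $\Topoi$, so applying Lemma~\ref{lem:adjointiffallinj} in $\Topoi\co$ (i.e.\ its dual statement about left Kan extensions along left adjoints), these left Kan extensions are \emph{absolute}: they are preserved by every 1-cell, and in particular by $f$. Hence $f \circ - : \Topoi(\mc E,\mc X) \to \Topoi(\mc E,\mc Y)$ preserves $\mc A$-shaped colimits; since $\mc A$ was arbitrary, it preserves all filtered colimits.

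There is essentially no obstacle here: all the mathematical content sits in the preceding proposition (total connectedness of $[\mc A,\mc E] \to \mc E$ for filtered $\mc A$, cited from the Elephant) and in Proposition~\ref{prop:limitsaskan}. The only point to keep straight is the bookkeeping between two notions of ``preserved by $f$'': the hypothesis of the second bullet of Proposition~\ref{prop:limitsaskan} is met precisely because (the dual of) Lemma~\ref{lem:adjointiffallinj} guarantees that Kan extensions along a left adjoint are absolute.
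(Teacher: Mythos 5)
Your proof is correct and follows exactly the route the paper intends: the preceding proposition (total connectedness of $[\mc A,\mc E]\to\mc E$ for filtered $\mc A$, hence a left adjoint in $\Topoi$) combined with Proposition~\ref{prop:limitsaskan} and the absoluteness of Kan extensions along adjoints from (the dual of) Lemma~\ref{lem:adjointiffallinj}. The paper leaves the corollary without an explicit proof precisely because this is the intended argument, and your bookkeeping of the preservation statement is the right way to make it explicit.
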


\subsection{Terminal object and Totally Connected Topoi}\label{subsec:termandtot}

For the simplest example of a notion of limit, let us consider the case for terminal objects. From our general discussion at the beginning of this section, a topos $\mc X$ where $\Topoi(\mc E,\mc X)$ has a terminal for all $\mc E$ is one that is right Kan injective w.r.t. maps of the following form,
\[
\begin{tikzcd}
    \emptyset \otimes \mc E \simeq \emptyset \ar[d, hook] \ar[r] & \mc X \\
    \mc E \ar[ur, dashed]
\end{tikzcd}
\]

In fact, we have the following result. Let $\ms{pt}(\mc X)$ denote the category of points of a topos $\mc X$, i.e. $\ms{pt}(\mc X) = \Topoi(\Set,\mc X)$.
\begin{thm}\label{thm:totallyconnected}
    The following are equivalent for a topos $\mc X$:
    \begin{enumerate}
        \item For any $\mc E$, $\Topoi(\mc E,\mc X)$ has a terminal object.
        \item $\mc X$ is (weakly) right Kan injective w.r.t. $\emptyset \hook \mc E$ for all $\mc E$.
        \item $\mc X$ is (weakly) right Kan injective w.r.t. complemented embeddings.
        \item $\ms{pt}(\mc X)$ has a terminal object.
        \item $\mc X$ is (weakly) right Kan injective w.r.t. $\emptyset \hook \Set$.
        \item $\mc X$ is totally connected.
    \end{enumerate}
\end{thm}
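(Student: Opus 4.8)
The plan is to prove the six conditions equivalent by threading a single cycle through them, exploiting the closure properties of Kan-injectivity classes from Section~\ref{sec:framework}. I would begin with the two soft equivalences. Take $\mc A=\emptyset$ in Proposition~\ref{prop:limitsaskan}: since $\emptyset\otimes\mc E\simeq\emptyset$ (the initial topos) and $\Topoi(\emptyset,\mc Y)\simeq\mathbf 1$ for every topos $\mc Y$, the comparison functor $\Topoi(\mc E,\mc X)\to\Topoi(\emptyset,\mc X)\simeq\mathbf 1$ has a right adjoint precisely when $\Topoi(\mc E,\mc X)$ has a terminal object; moreover $\Topoi(\emptyset,\mc X)$ being terminal forces the counit to be invertible, which is why ``weakly'' is harmless throughout. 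This gives $(1)\Leftrightarrow(2)$, and its specialisation to $\mc E=\Set$ gives $(4)\Leftrightarrow(5)$; also $(1)\Rightarrow(4)$ trivially.

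Next I would close the loop through the propositional conditions. For $(1)\Rightarrow(3)$: if $i\colon\mc U\hookrightarrow\mc E$ is a complemented embedding, then $\mc E$ is built from $\mc U$ and its complementary subtopos $\mc U^{c}$ over the initial topos (a coproduct when $i$ is clopen, a gluing/recollement in general), and unwinding the universal property shows that the right Kan extension of any $x\colon\mc U\to\mc X$ along $i$ exists as soon as $\Topoi(\mc U^{c},\mc X)$ has a terminal object, which is $(1)$; one can equally phrase this through the closure of right-Kan-extendable maps under bicolimits in Lemma~\ref{lem:verticalcompose}. For $(3)\Rightarrow(5)$ it suffices that $\emptyset\hookrightarrow\Set$ is itself a complemented embedding (the bottom and top subtopoi of $\Set$ are complementary). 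The step that genuinely uses the shape of $\Topoi$ is $(5)\Rightarrow(2)$: the comma object in $\Topoi$ of $\emptyset\to\Set\xleftarrow{\gamma_{\mc E}}\mc E$ is again the initial topos, with projection to $\mc E$ the canonical inclusion $\emptyset\hookrightarrow\mc E$ --- because every $1$-cell into the initial topos has initial domain --- so $\emptyset\hookrightarrow\mc E$ is a bicomma of $\emptyset\hookrightarrow\Set$, and Lemma~\ref{lem:verticalcompose} promotes $(5)$ to $(2)$. At this point $(1)$–$(5)$ are all equivalent.

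Finally I would splice in $(6)$. For $(6)\Rightarrow(1)$: total connectedness says $\gamma_{\mc X}^{*}$ has a finite-limit-preserving left adjoint $\gamma_{\mc X!}$; being lex and cocontinuous, $\gamma_{\mc X!}$ is the inverse image of a point $t\colon\Set\to\mc X$, and $\gamma_{\mc X!}\dashv\gamma_{\mc X}^{*}$ says exactly that $\gamma_{\mc X}\dashv t$ in $\Topoi$. Applying the $2$-functor $\Topoi(\mc E,-)$ to this adjunction and using $\Topoi(\mc E,\Set)\simeq\mathbf 1$ shows $\gamma_{\mc X}\circ-\colon\Topoi(\mc E,\mc X)\to\mathbf 1$ has a right adjoint, i.e.\ $\Topoi(\mc E,\mc X)$ has a terminal object (namely $t\circ\gamma_{\mc E}$). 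The converse $(4)\Rightarrow(6)$ is the heart of the theorem and, I expect, the main obstacle. One has the terminal point $t$, and $\gamma_{\mc X}\circ t\cong 1_{\Set}$ supplies the counit of a putative adjunction $\gamma_{\mc X}\dashv t$ for free; everything then reduces to producing a unit $1_{\mc X}\Rightarrow t\circ\gamma_{\mc X}$ satisfying the triangle identities --- equivalently, by Lemma~\ref{lem:adjointiffallinj} applied to $f=\gamma_{\mc X}$, to showing that $\ran_{\gamma_{\mc X}}1_{\mc X}$ exists in $\Topoi$, coincides with $t$, and is preserved by $\gamma_{\mc X}$. This is precisely where the geometry of $\Topoi$ must enter (that lex cocontinuous functors into $\Set$ are inverse images of points, and that the a priori merely lax-natural family of terminal objects of the categories $\Topoi(-,\mc X)$ can be promoted to a pseudonatural one); alternatively one may simply invoke the classical characterisation of totally connected geometric morphisms from~\cite[C3.6]{elephant2}. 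Together with $(6)\Rightarrow(1)$ and the equivalences above, this closes the circle.
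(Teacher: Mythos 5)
Your proposal is correct and, for most of the cycle, runs parallel to the paper's proof: $(1)\Leftrightarrow(2)$ and $(4)\Leftrightarrow(5)$ are the same tautological unwindings via $\emptyset\otimes\mc E\simeq\emptyset$, your $(6)\Rightarrow(1)$ is the same whiskering of the adjunction $\Gamma_{\mc X}\dashv t$ using that $\Set$ is biterminal, and your $(1)\Rightarrow(3)$ is the paper's $(2)\Leftrightarrow(3)$ (complemented embeddings as bipushouts of maps $\emptyset\to\mc E$, plus Lemma~\ref{lem:verticalcompose}). The genuine divergence is your step $(5)\Rightarrow(2)$: you observe that $\emptyset\hookrightarrow\mc E$ is the bicomma projection of $\emptyset\hookrightarrow\Set$ along $\Gamma_{\mc E}$ (because $\emptyset$ is strict initial in $\Topoi$ and $\Set$ is biterminal, so the comma object collapses to $\emptyset$ with a unique $2$-cell) and invoke the bicomma clause of Lemma~\ref{lem:verticalcompose}. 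The paper never proves $(5)\Rightarrow(2)$ directly; it closes the loop through $(6)$, i.e.\ $(5)\Leftrightarrow(6)\Rightarrow(1)$. Your route buys the equivalence of $(1)$--$(5)$ with no topos-theoretic input beyond strict initiality of $\emptyset$, which is a nice structural bonus; the cost is that you still owe $(1)\Rightarrow(6)$ separately, which the paper absorbs into the recalled chain ``totally connected $\Leftrightarrow$ $\Gamma_{\mc X}$ has a right adjoint in $\Topoi$ $\Leftrightarrow$ $\ms{pt}(\mc X)$ has a terminal object''. You correctly identify that the only real content there is promoting the bare terminal point to a unit $1_{\mc X}\Rightarrow t\Gamma_{\mc X}$, and your fallback to~\cite[C3.6]{elephant2} is precisely what the paper does (its proof of $(5)\Leftrightarrow(6)$ is a one-line ``recall''), so you are no less self-contained than the source. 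One caution on $(1)\Rightarrow(3)$: the presentation of $\mc E$ as a bipushout of $\mc U\leftarrow\emptyset\to\mc U^{c}$, and hence the splitting $\Topoi(\mc E,\mc X)\simeq\Topoi(\mc U,\mc X)\times\Topoi(\mc U^{c},\mc X)$, holds only for clopen (coproduct-summand) embeddings; a general open embedding is complemented in the lattice of subtopoi but its recollement is a lax gluing, not a pushout over $\emptyset$, and the hom-category does not decompose as a product. The theorem's ``complemented embeddings'' must therefore be read as coproduct inclusions --- the paper's own appeal to bipushouts requires the same reading --- so your hedge ``a gluing/recollement in general'' should be deleted rather than pursued.
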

\begin{proof}
    (1) $\eff$ (2) is automatic. (2) $\eff$ (3) follows from Lemma~\ref{lem:verticalcompose}, since any complemented embedding is a pushout of one of the form $\emptyset \to \mc E$. (1) $\nt$ (4) is trivial, and (4) $\eff$ (5) is again automatic. For (5) $\eff$ (6), recall that $\mc X$ is totally connected iff the unique map $\Gamma_{\mc X} : \mc X \to \Set$ has a right adjoint in $\Topoi$, which is equivalently a terminal object in $\ms{pt}(\mc X)$ since $\Set$ is terminal. Hence it suffices to show (6) $\nt$ (1). Suppose $\mc X$ is totally connected with the point $x : \Set \to \mc X$. By assumption we have $\Gamma_{\mc X} \dashv x$ in $\Topoi$. Now consider an arbitrary topos $\mc E$. $x\Gamma_{\mc E}$ must be the terminal object in $\Topoi(\mc E,\mc X)$: For any map $f : \mc E \to \mc X$,
    \[ \Topoi(\mc E,\mc X)(f,x\Gamma_{\mc E}) \cong \Topoi(\mc E,\Set)(\Gamma_{\mc X}f,\Gamma_{\mc E}) \cong 1, \]
    again due to the fact that $\Set$ is terminal in $\Topoi$.
\end{proof}

For a topological space $X$, $\ms{Sh}(X)$ is totally connected iff it has a dense point, i.e. a point $x$ whose closure is $X$. More generally, for any totally connected topoi we might define the terminal object in $\ms{pt}(\mc X)$ to be the dense point of $\mc X$. Let $\ms{TocTopoi}$ be the locally full sub-2-category of totally connected topoi with geometric morphisms preserving the dense point. Since the right Kan extension of $\emptyset \hook \Set$ to a totally connected topos $\mc X$ simply picks out the dense point, we have:

\begin{cor}[Characterisation of totally connected topoi]\label{cor:classtot}
    \[ \WRInj(\emptyset \hook \Set) = \RInj(\emptyset \hook \Set) = \ms{TocTopoi}. \]
\end{cor}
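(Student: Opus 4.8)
The plan is to combine the characterisation of totally connected topoi from Theorem~\ref{thm:totallyconnected} with the general machinery on corepresentable surjections from Section~\ref{sec:universal}. First I would observe that the single geometric morphism $\emptyset \hook \Set$ is not a surjection, so Proposition~\ref{prop:surjectionfull} and Proposition~\ref{prop:corepfullsurjequiv} do \emph{not} apply directly; instead the equality $\WRInj(\emptyset \hook \Set) = \RInj(\emptyset \hook \Set)$ must be extracted from the specific geometry. By Theorem~\ref{thm:totallyconnected}, condition (5) (weak right Kan injectivity w.r.t. $\emptyset \hook \Set$) is equivalent to condition (6), that $\mc X$ is totally connected, i.e. $\Gamma_{\mc X} : \mc X \to \Set$ has a right adjoint $x : \Set \to \mc X$. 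The key point I would check is that in this situation the counit of the right Kan extension is automatically invertible: the right Kan extension $\ran_{(\emptyset\hook\Set)}(!)$ of the unique map $! : \emptyset \to \mc X$ picks out the dense point $x$, and since $\emptyset$ is initial there is nothing for the counit $\epsilon : \ran_{(\emptyset\hook\Set)}(!) \circ (\emptyset\hook\Set) \to\,!$ to be non-invertible about — both sides are the unique geometric morphism $\emptyset \to \mc X$, and $\Topoi(\emptyset,\mc X)$ is equivalent to the terminal category, so $\epsilon$ is an isomorphism for trivial reasons. This gives $\WRInj(\emptyset \hook \Set) = \RInj(\emptyset \hook \Set)$ at the level of objects.

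Next I would pin down the morphisms and the 2-cells, so that the equality becomes one of sub-2-categories of $\Topoi$, not just of object classes. A geometric morphism $f : \mc X \to \mc Y$ between totally connected topoi lies in $\RInj(\emptyset\hook\Set)$ precisely when it preserves the relevant right Kan extension, i.e. when $f \circ x_{\mc X} \cong x_{\mc Y}$, where $x_{\mc X}, x_{\mc Y}$ are the dense points; this is exactly the condition defining morphisms of $\ms{TocTopoi}$. To see $\RInj = \WRInj$ also on morphisms, I would note that preservation of a right Kan extension along $\emptyset \hook \Set$ is again a condition that only involves the component at the (essentially unique) object of $\Topoi(\emptyset,\mc X)$, so the weak and strict preservation conditions coincide. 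Since $\RInj$ and $\WRInj$ are defined as locally full sub-2-categories, the 2-cells are inherited from $\Topoi$ on both sides, giving the identification as 2-categories.

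Finally I would assemble the chain $\WRInj(\emptyset \hook \Set) = \RInj(\emptyset \hook \Set) = \ms{TocTopoi}$: the first equality by the counit-invertibility argument above, and the second by matching the object description (totally connected topoi, via Theorem~\ref{thm:totallyconnected}) and the morphism description (geometric morphisms preserving the dense point, which is the defining condition of $\ms{TocTopoi}$ and also the preservation-of-Kan-extension condition). The main obstacle I anticipate is the bookkeeping around the counit: one must be careful that ``weakly right Kan injective w.r.t. $\emptyset \hook \Set$'' as used in Theorem~\ref{thm:totallyconnected} already forces strictness here, rather than this being an extra hypothesis — but this is exactly the degenerate feature of extending along a map out of the initial topos, so it should go through cleanly. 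A secondary point worth a sentence is that, by Lemma~\ref{lem:verticalcompose} together with clause (3) of Theorem~\ref{thm:totallyconnected}, one could equally well phrase the result with all complemented embeddings in place of the single map $\emptyset\hook\Set$; I would remark on this but keep the statement as is.
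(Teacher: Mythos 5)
Your proposal is correct and follows essentially the same route as the paper, which derives the corollary directly from Theorem~\ref{thm:totallyconnected} together with the observation that the right Kan extension along $\emptyset \hook \Set$ picks out the dense point, so that morphisms preserving the Kan extension are exactly those preserving the dense point. Your extra observation that the counit is automatically invertible because $\Topoi(\emptyset,\mc X)$ is equivalent to the terminal category is exactly the (implicit) justification for the parenthetical ``(weakly)'' in the theorem, and is a welcome clarification rather than a deviation.
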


\begin{rem}
    From~\cite[C3.6.19]{elephant2}, we furthermore know that $\ms{TocTopoi}$ is \emph{reflective} in $\Topoi$, which means that there is an idempotent KZ-pseudomonad on $\Topoi$, whose pseudoalgebras are exactly totally connected topoi (cf.~\cite{dlsolo}). Completely dual to the above development, one can show that
    \[ \WLInj(\emptyset \hook \Set) = \LInj(\emptyset \hook \Set) = \ms{LocTopoi}, \]
    where $\ms{LocTopoi}$ is the locally full sub-2-category of local topoi with geometric morphisms preserving the focal point. This is again reflective in $\Topoi$.
\end{rem}

\subsection{Groupoidal points}\label{subsec:gp}

Let $\mb 2$ and $\mbb I$ be the ``walking morphism'' and ``walking isomorphism'' respectively. Notice that $\mbb I$ is in fact equivalent to the terminal category $\mb 1$, hence we might consider the geometric morphism induced by tensoring with $\mb 2 \to \mb 1$ for a topos $\mc E$,
\[ \mb 2 \otimes \mc E \to \mc E. \]
Such geometric morphisms are especially special, since they are \emph{both totally connected and local}. In particular, they are connected, hence corepresentably fully surjective. Let $\mc H_{\mr{gp}}$ be the class of maps $\stt{\mb 2 \otimes \mc E \to \mc E}_{\mc E\in\Topoi}$.

\begin{thm}
    $\ms{Inj}(\mc H_{\mr{gp}})$ as a full sub-2-category of $\Topoi$ consists of \emph{grouplike} topoi, i.e. those $\mc X$ that $\Topoi(\mc E,\mc X)$ is a groupoid for all $\mc E$.
\end{thm}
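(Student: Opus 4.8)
The plan is to combine the general theory of corepresentable full surjections from Section~\ref{subsec:embedding} (specifically Propositions~\ref{prop:surjectionfull} and~\ref{prop:corepfullsurjequiv}) with a direct analysis of what Kan injectivity w.r.t. $\mathbf 2 \otimes \mc E \to \mc E$ actually says. First I would observe that, as already noted, each map $\mathbf 2 \otimes \mc E \to \mc E$ is connected (being both totally connected and local forces connectedness), hence by Example~\ref{exm:surjsurj} it is a corepresentable full surjection. Therefore, by Proposition~\ref{prop:corepfullsurjequiv}, $\Inj(\mc H_{\mr{gp}})$ is a \emph{full} sub-2-category of $\Topoi$, and a topos $\mc X$ lies in it iff for every $\mc E$ the restriction functor
\[ -\circ (\mathbf 2 \otimes \mc E \to \mc E) : \Topoi(\mc E,\mc X) \to \Topoi(\mathbf 2 \otimes \mc E,\mc X) \]
is an equivalence of categories.

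Next I would identify the two sides of this functor concretely. By the tensor–cotensor adjunction recalled in Remark~\ref{colimits}, $\Topoi(\mathbf 2 \otimes \mc E,\mc X) \simeq \CAT(\mathbf 2,\Topoi(\mc E,\mc X))$, which is just the arrow category $\Topoi(\mc E,\mc X)^{\mathbf 2}$; and under this identification the restriction functor is precisely the ``diagonal'' or ``identity-morphism'' functor $\Topoi(\mc E,\mc X) \to \Topoi(\mc E,\mc X)^{\mathbf 2}$ sending an object to its identity arrow, induced by $\mathbf 2 \to \mathbf 1$. (Here one uses that $\mathbb I \simeq \mathbf 1$ so that $\mathbb I \otimes \mc E \simeq \mc E$.) The key elementary fact is then purely categorical: for a category $\mc D$, the functor $\mc D \to \mc D^{\mathbf 2}$ picking out identities is an equivalence iff every morphism of $\mc D$ is invertible, i.e.\ iff $\mc D$ is a groupoid. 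Applying this with $\mc D = \Topoi(\mc E,\mc X)$ gives: $\mc X \in \Inj(\mc H_{\mr{gp}})$ iff $\Topoi(\mc E,\mc X)$ is a groupoid for every topos $\mc E$, which is exactly the definition of $\mc X$ being grouplike.

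One small point worth spelling out is the compatibility of the equivalence $\Topoi(\mathbf 2 \otimes \mc E,\mc X) \simeq \Topoi(\mc E,\mc X)^{\mathbf 2}$ with the two relevant functors out of $\Topoi(\mc E,\mc X)$: the functor induced by $\mathbf 2 \to \mathbf 1$ on the one hand, and the identity-inclusion $\mc D \to \mc D^{\mathbf 2}$ on the other. This is just naturality of the tensor–cotensor adjunction in the $\cat$-variable, applied to the map $\mathbf 2 \to \mathbf 1$ in $\cat$, together with $\CAT(\mathbf 1, \mc D) \simeq \mc D$. I do not expect any genuine obstacle here; the only thing requiring a little care is keeping the 2-categorical bookkeeping straight, in particular that we are genuinely reducing Kan injectivity to orthogonality (licensed by Proposition~\ref{prop:corepfullsurjequiv}, since the maps are corepresentably \emph{fully} surjective) rather than merely to a one-sided adjoint condition — this is what lets us read off ``equivalence of categories'' and hence ``groupoid'' rather than something weaker. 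Once this identification is in place, the statement follows immediately.
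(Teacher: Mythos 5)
Your proposal is correct and follows essentially the same route as the paper: reduce membership in $\Inj(\mc H_{\mr{gp}})$ to the orthogonality condition via Proposition~\ref{prop:corepfullsurjequiv} (using that the maps $\mb 2 \otimes \mc E \to \mc E$ are connected, hence corepresentably fully surjective), identify the restriction functor with the identity-inclusion $\Topoi(\mc E,\mc X) \hook \Topoi(\mc E,\mc X)^{\to}$, and note that this is an equivalence iff the hom-category is a groupoid. The paper's proof is just a terser version of the same argument; your extra care about the naturality of the tensor--cotensor adjunction is fine but not a substantive difference.
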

\begin{proof}
    By Proposition~\ref{prop:corepfullsurjequiv}, $\mc X \in \ms{Inj}(\mc H_{\mr{gp}})$ iff the following induced functor is an equivalence
    \[ \Topoi(\mc E,\mc X) \hook \Topoi(\mc E,\mc X)^{\to}. \]
    This holds iff $\Topoi(\mc E,\mc X)$ is a groupoid.
\end{proof}

We will write $\ms{GroupTopoi}$ as the full sub-2-category of grouplike topoi. This notion had already appeared in the literature, see cf.~\cite{johnstone1989local} and~\cite[C3.6]{elephant2}. In fact, grouplike topoi can be equivalently characterised as the injectivity classes $\ms{Inj}(\mc H_{\mr{loc}})$ or $\ms{Inj}(\mc H_{\mr{toc}})$, where $\mc H_{\mr{loc}}$ and $\mc H_{\mr{toc}}$ are the class of local and totally connected maps, respectively: 

\begin{cor}[Characterisation of grouplike topoi]\label{cor:chgrouplike}
    \[ \mr{Inj}(\mc H_{\mr{loc}}) = \mr{Inj}(\mc H_{\mr{toc}}) = \mr{Inj}(\mc H_{\mr{gp}}) = \ms{GroupTopoi}. \]
\end{cor}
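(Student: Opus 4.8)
\textbf{Proof plan for Corollary~\ref{cor:chgrouplike}.}

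The strategy is to prove the two outer equalities $\mr{Inj}(\mc H_{\mr{loc}}) = \ms{GroupTopoi}$ and $\mr{Inj}(\mc H_{\mr{toc}}) = \ms{GroupTopoi}$, since the middle equality $\mr{Inj}(\mc H_{\mr{gp}}) = \ms{GroupTopoi}$ has just been established in the preceding theorem. First I would observe that all three classes $\mc H_{\mr{loc}}$, $\mc H_{\mr{toc}}$, $\mc H_{\mr{gp}}$ consist of connected geometric morphisms — for $\mc H_{\mr{gp}}$ this was noted in the discussion above (the maps $\mb 2 \otimes \mc E \to \mc E$ are both local and totally connected, hence connected), while a local or totally connected map is by definition one whose inverse image has a (fully faithful) further adjoint, hence is connected. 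So by Example~\ref{exm:surjsurj} all three families are corepresentably fully surjective, and by Proposition~\ref{prop:corepfullsurjequiv} the injectivity class for each reduces to the orthogonality condition: $\mc X$ lies in $\mr{Inj}(\mc H)$ iff $-\circ f : \Topoi(\mc F,\mc X) \to \Topoi(\mc E,\mc X)$ is an equivalence for every $f : \mc E \to \mc F$ in $\mc H$.

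Next, the key containment. Since $\mc H_{\mr{gp}} \subseteq \mc H_{\mr{loc}}$ — indeed each $\mb 2 \otimes \mc E \to \mc E$ is local — and likewise $\mc H_{\mr{gp}} \subseteq \mc H_{\mr{toc}}$, being injective to the larger families is a priori stronger, giving the inclusions $\mr{Inj}(\mc H_{\mr{loc}}) \subseteq \mr{Inj}(\mc H_{\mr{gp}})$ and $\mr{Inj}(\mc H_{\mr{toc}}) \subseteq \mr{Inj}(\mc H_{\mr{gp}}) = \ms{GroupTopoi}$. So the content is the reverse: I must show a grouplike topos $\mc X$ is orthogonal to every local map and every totally connected map. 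For a local map $f : \mc E \to \mc F$, the inverse image $f^*$ has a fully faithful right adjoint $f_!$ (the "section"); dually for a totally connected map $f^*$ has a fully faithful left adjoint. In either case $f$ is connected, so by Corollary~\ref{cor:etalelocal}-type reasoning — or more directly — I would analyze the functor $-\circ f$ on hom-categories. Since $\mc X$ is grouplike, $\Topoi(\mc E,\mc X)$ and $\Topoi(\mc F,\mc X)$ are groupoids, and $-\circ f$ is automatically faithful (being a functor between groupoids whose action on objects I can control via the adjoint of $f^*$) and full; the remaining point is essential surjectivity, i.e. every geometric morphism $\mc E \to \mc X$ extends along $f$ up to isomorphism.

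The main obstacle — and the step that needs genuine input rather than formal nonsense — is essential surjectivity of $-\circ f$ for $f$ local (resp.\ totally connected), not just for the special maps $\mb 2\otimes\mc E \to \mc E$. The plan here is to exploit that a connected geometric morphism $f$ fits into a factorization or that its inverse image $f^*$ is fully faithful, so that a geometric morphism $x : \mc E \to \mc X$ corresponds to a lex cocontinuous functor $x^* : \mc X \to \mc E$; one wants to factor $x^*$ through $f^*$. When $f$ is totally connected, $f^*$ has a fully faithful left adjoint $f_{\#}$, and the candidate extension has inverse image $f_{\#}\circ x^*$, which one checks remains lex (using total connectedness) and cocontinuous, and satisfies $f^* f_{\#} x^* \cong x^*$. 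When $f$ is local, one uses instead the fully faithful right adjoint $f_!$ of $f^*$ together with the fact — available because $\mc X$ is grouplike, so that the relevant comparison 2-cells are invertible — that the unit/counit obstructions vanish. Alternatively, and perhaps more cleanly, I would factor any local (resp.\ totally connected) map using~\cite[C3.6]{elephant2} and Lemma~\ref{lem:verticalcompose}'s closure properties to reduce to building blocks already handled, or cite the known characterization that a topos whose generalized points form groupoids is precisely one inert under the local and totally connected reflections; the grouplike hypothesis is exactly what collapses the lax comparison cells to isomorphisms, upgrading weak injectivity to the orthogonality equivalence. Assembling these gives $\ms{GroupTopoi} \subseteq \mr{Inj}(\mc H_{\mr{loc}})$ and $\ms{GroupTopoi} \subseteq \mr{Inj}(\mc H_{\mr{toc}})$, closing the chain of equalities.
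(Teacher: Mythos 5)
Your proposal is correct and follows essentially the same route as the paper: the inclusions $\Inj(\mc H_{\mr{loc}}),\Inj(\mc H_{\mr{toc}}) \subseteq \Inj(\mc H_{\mr{gp}}) = \ms{GroupTopoi}$ come for free from $\mc H_{\mr{gp}} \subseteq \mc H_{\mr{loc}} \cap \mc H_{\mr{toc}}$, and the converse is the statement that a grouplike topos is orthogonal to every local and every totally connected morphism, which the paper simply quotes from \cite[C3.6]{elephant1} --- exactly the fallback you offer. Your direct sketch of that converse is vaguer than it needs to be, but the essential mechanism you gesture at is right: a local (resp.\ totally connected) $f$ is a right (resp.\ left) adjoint in $\Topoi$, so $-\circ f$ acquires an adjoint between hom-categories, and an adjunction between groupoids is automatically an equivalence since its unit and counit are invertible.
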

\begin{proof}
    One direction simply follows from the fact $\mc H_{\mr{loc}} \supseteq \mc H_{\mr{gp}} \subseteq \mc H_{\mr{toc}}$. For the other direction, it follows from~\cite[C3.6]{elephant1} that if $\mc X$ is grouplike, then 
    \[ - \circ f : \mc K(\mc F,\mc X) \simeq \mc K(\mc E,\mc X) \]
    is an equivalence for any local or totally connected $f : \mc E \to \mc F$.
\end{proof}

We observe that the above characterisation allows us to establish some stability properties of grouplike topoi. As an easy observation, recall from Corollary~\ref{cor:etalelocal} we know that grouplike topoi will be closed under slicing. However, the more interesting direction is the following statement, which says being grouplike also ``descends along'' etale surjections:

\begin{prop}\label{prop:descentofgrouplike}
    Let $\mc Y \surj \mc X$ be an etale surjection. If $\mc Y$ is grouplike, so is $\mc X$.
\end{prop}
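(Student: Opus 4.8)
The plan is to reduce the statement to its hom-category form — recall that a topos $\mc Z$ is grouplike exactly when $\Topoi(\mc E,\mc Z)$ is a groupoid for every topos $\mc E$ — and then to transport an arbitrary $2$-cell into $\mc X$ across the étale surjection by passing to a slice of the test topos. Since $\mc Y \surj \mc X$ is étale, it is (equivalent to) a slice projection $\pi : \mc X/A \to \mc X$ for some $A \in \mc X$, and its being a surjection says exactly that $A$ is well-supported, $A \surj 1$ in $\mc X$ (see e.g.~\cite[C3.3]{elephant2}). So I would fix a topos $\mc E$ and a $2$-cell $\theta : f \Rightarrow f'$ in $\Topoi(\mc E,\mc X)$, with the goal of showing $\theta$ invertible.

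The first move is to build an étale surjection over $\mc E$ along which $\theta$ lifts to $\mc Y$. Because $f^*$ is a left exact left adjoint it preserves the terminal object and epimorphisms, so $f^*A$ is well-supported in $\mc E$ and $p : \mc E' := \mc E/f^*A \to \mc E$ is an étale surjection, carrying a generic point $\delta : 1 \to p^*f^*A = (fp)^*A$. Now I would invoke the universal property of the slice $\mc X/A$: a geometric morphism $\mc E'\to\mc X/A$ amounts to a pair $(g : \mc E' \to \mc X,\; x : 1 \to g^*A)$, and a $2$-cell of such pairs is a $2$-cell of the underlying geometric morphisms compatible with the chosen points. Thus $(fp,\delta)$ and $(f'p,\delta')$, with $\delta' := p^*(\theta_A)\circ\delta$ and $\theta_A : f^*A \to f'^*A$ the component of $\theta$, determine geometric morphisms $q,q' : \mc E' \to \mc Y$; and the whiskered $2$-cell $\theta\circ p : fp \Rightarrow f'p$, whose component at $A$ is $p^*(\theta_A)$, satisfies the required compatibility $p^*(\theta_A)\circ\delta = \delta'$ by construction, so it lifts to a $2$-cell $\tilde\theta : q \Rightarrow q'$ in $\Topoi(\mc E',\mc Y)$.

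Then I would invoke the hypothesis and descend back. Since $\mc Y$ is grouplike, $\Topoi(\mc E',\mc Y)$ is a groupoid, so $\tilde\theta$ is invertible; and under the slice equivalence just used, invertibility of a morphism of $\Topoi(\mc E',\mc Y)$ is detected on its underlying $2$-cell in $\Topoi(\mc E',\mc X)$ (an inverse of the underlying $2$-cell automatically respects the chosen points), so $\theta\circ p$ is invertible in $\Topoi(\mc E',\mc X)$. Finally, $p$ is a surjection, hence a corepresentable surjection by Example~\ref{exm:surjsurj}, so $-\circ p : \Topoi(\mc E,\mc X) \to \Topoi(\mc E',\mc X)$ is conservative; as it sends $\theta$ to the invertible $2$-cell $\theta\circ p$, the $2$-cell $\theta$ is itself invertible. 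Since $\mc E$ and $\theta$ were arbitrary, $\Topoi(\mc E,\mc X)$ is a groupoid for all $\mc E$, i.e.\ $\mc X$ is grouplike.

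The step I expect to be the main obstacle is the middle paragraph: marshalling the universal property of étale geometric morphisms carefully enough to be certain that $\theta\circ p$ genuinely underlies a $2$-cell of $\Topoi(\mc E',\mc Y)$ — this means pinning down the generic point $\delta$, verifying the compatibility $p^*(\theta_A)\circ\delta=\delta'$, and confirming that isomorphism-ness transfers back and forth between $\Topoi(\mc E',\mc Y)$ and $\Topoi(\mc E',\mc X)$ along the slice equivalence. None of this is conceptually deep, but it is where all the bookkeeping lives; the opening reduction and the closing conservativity argument are each a one-line appeal to something already in hand.
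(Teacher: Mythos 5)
Your proof is correct, but it takes a genuinely different route from the paper's. The paper proves descent through the characterisation $\ms{GroupTopoi} = \Inj(\mc H_{\mr{toc}})$ of Corollary~\ref{cor:chgrouplike}: it fixes a totally connected map $r$ with centre $c$, pulls the \'etale surjection $\mc X/X \to \mc X$ back along $x$ and $c$ to obtain a totally connected map between slices, runs a diagram chase to show the relevant square commutes with $r$, and concludes $xcr \cong x$ from the grouplikeness of $\mc X/X$ together with the surjectivity of $x^*X$. You instead work directly with the definition (every hom-category is a groupoid): you pull the \'etale surjection back along the test morphism $f$ to an \'etale surjection $p : \mc E/f^*A \to \mc E$ (using that $f^*$ preserves epis and the terminal object, so $f^*A$ stays well-supported), lift the 2-cell $\theta \circ p$ to $\Topoi(\mc E',\mc X/A)$ via the universal property of the slice by decorating $fp$ and $f'p$ with the generic point $\delta$ and its transport $p^*(\theta_A)\circ\delta$, invoke grouplikeness of $\mc Y$ upstairs, and descend invertibility using that $-\circ p$ is conservative for the surjection $p$ (Example~\ref{exm:surjsurj}). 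Both arguments are sound; yours is more elementary in that it bypasses Corollary~\ref{cor:chgrouplike} and the totally-connected machinery entirely, at the cost of invoking the full 2-categorical universal property of $\mc X/A$ (pairs $(g, s : 1 \to g^*A)$ with point-compatible 2-cells), whereas the paper's version stays inside the Kan-injectivity formalism it is developing and reuses the $\Inj(\mc H_{\mr{toc}})$ description. One small stylistic point: in your third paragraph you only need the easy direction that the forgetful functor $\Topoi(\mc E',\mc Y) \to \Topoi(\mc E',\mc X)$ \emph{preserves} invertibility (functors preserve isomorphisms), not that it detects it, so the parenthetical about inverses automatically respecting the chosen points can be dropped.
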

\begin{proof}
    We use the characterisation in Corollary~\ref{cor:chgrouplike}. Notice that for any totally connected map $r : \mc E \surj \mc F$ with centre $c : \mc F \to \mc E$, $\mc X$ is left Kan injective w.r.t. $r$ iff for all $x : \mc E \to \mc X$, $xcr \cong x$. Suppose $X : \mc Y = \mc X/X \surj \mc X$ is an etale surjection with $\mc Y$ grouplike. We form the following pullbacks of $X$ along $x$ and $c$,
    \[
    \begin{tikzcd}
    	{\mc F/c^*x^*X} & {\mc E/x^*X} & {\mc X/X} \\
    	{\mc F} & {\mc E} & {\mc X}
        \arrow["\lrcorner"{anchor=center, pos=0.125}, draw=none, from=1-1, to=2-2]
        \arrow["\lrcorner"{anchor=center, pos=0.125}, draw=none, from=1-2, to=2-3]
    	\arrow[""{name=0, anchor=center, inner sep=0}, "{c/x^*X}"', from=1-1, to=1-2]
    	\arrow["{c^*x^*X}"', two heads, from=1-1, to=2-1]
    	\arrow[""{name=1, anchor=center, inner sep=0}, "{r/x^*X}"', curve={height=12pt}, two heads, from=1-2, to=1-1]
    	\arrow["{x/X}", from=1-2, to=1-3]
    	\arrow["{x^*X}"{description}, two heads, from=1-2, to=2-2]
    	\arrow["X", two heads, from=1-3, to=2-3]
    	\arrow[""{name=2, anchor=center, inner sep=0}, "c"', from=2-1, to=2-2]
    	\arrow[""{name=3, anchor=center, inner sep=0}, "r"', curve={height=12pt}, two heads, from=2-2, to=2-1]
    	\arrow["x"', from=2-2, to=2-3]
    	\arrow["\dashv"{anchor=center, rotate=-90}, draw=none, from=1, to=0]
    	\arrow["\dashv"{anchor=center, rotate=-90}, draw=none, from=3, to=2]
    \end{tikzcd}
    \]
    In particular, we again get a totally connected map
    \[ r/x^*X : \mc E/x^*X \surj \mc F/c^*x^*X, \]
    where its inverse image takes any $A \to c^*x^*X$ to the transpose $r^*A \to x^*X$. Since $\mc X/X$ is grouplike, it follows that
    \[ x/X \cong (x/X) \circ (c/x^*X) \circ (r/x^*X), \]
    which translates to the fact that
    \[ x^*X \cong r^*c^*x^*X. \]
    This implies the left square also commutes with $r$, i.e. we have
    \[ (c^*x^*X) \circ (r/x^*X) \cong r \circ (x^*X), \]
    since for any $F\in\mc F$, the inverse image of the LHS takes it to the transpose of $F \times c^*x^*X \to c^*x^*X$, which is
    \[ r^*F \times r^*c^*x^*X \cong r^*F \times x^*X \to x^*X. \]
    which is the same as $(x^*X)^*r^*F$. Now we can observe that
    \begin{align*}
        xcr \circ (x^*X)
        &\cong xc \circ (c^*x^*X) \circ (r/x^*X) \\
        &\cong x \circ (x^*X) \circ (c/x^*X) \circ (r/x^*X) \\
        &\cong X \circ (x/X) \circ (c/x^*X) \circ (r/x^*X) \\
        &\cong X \circ x/X \\
        &\cong x \circ (x^*X)
    \end{align*}
    The third isomorphism holds since $\mc X/X$ is grouplike, and all the others follows from the commutativity of the above diagram. Finally, since $x^*X$ is a surjection, it follows that $xcr \cong x$,
    which implies $\mc X$ is grouplike.
\end{proof}

\begin{cor}
    For any group $G$, $[G,\Set]$ is grouplike.
\end{cor}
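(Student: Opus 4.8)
The plan is to realise $[G,\Set]$ as an etale space over the point, and then invoke Proposition~\ref{prop:descentofgrouplike}. First I would recall that for a discrete group $G$ the presheaf topos $[G,\Set]$ is the topos of $G$-sets, and the ``underlying set'' geometric morphism $[G,\Set] \to \Set$ (induced by the functor $G \to \mb 1$) is both essential and connected; in fact it is etale, being the slice $\Set/G$ over the object $G \in \Set$ viewed with its left $G$-action, since $[G,\Set] \simeq \Set/G$ canonically (a $G$-set is the same thing as a set equipped with an equivariant map to $G$ with its translation action, and the latter is just a set). Concretely, the object $G \in \Set$ has a canonical point only after we specify an element $e \in G$, so the geometric morphism $\Set \to [G,\Set]$ picking out $G$ with its regular representation, followed by the structure map, recovers the identity; this exhibits $[G,\Set] \to \Set$ as a (split) etale surjection over $\Set$.

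Second, I would check that $[G,\Set]$ is itself grouplike in the sense required, i.e. that $\Topoi(\mc E,[G,\Set])$ is a groupoid for every topos $\mc E$. This follows because $\Topoi(\mc E,[G,\Set]) \simeq \Topoi(\mc E,\Set/G) \simeq \Topoi(\mc E,\Set)/G$ — here using that $\Set$ is terminal so $\Topoi(\mc E,\Set) \simeq \mb 1$ and slicing the terminal category by the set $G$ yields the \emph{discrete} category on $G$, which is a groupoid. (Equivalently: maps of $\mc E$ into $[G,\Set]$ are torsors-with-trivialisation-type data whose only morphisms are isomorphisms.) Alternatively, one can observe directly that since $\mb 2 \otimes \mc E \to \mc E$ is totally connected and the relevant corepresentable functor is conservative and full, every 2-cell between two points of $[G,\Set]$ is invertible; but the slice description makes this immediate.

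Third, with both facts in hand, the corollary follows: $[G,\Set] \surj \Set$ is an etale surjection, its domain $[G,\Set]$ is grouplike, hence by Proposition~\ref{prop:descentofgrouplike} the codomain $\Set$ is grouplike — but wait, that is the wrong direction. Let me reconsider: what we actually want is to \emph{deduce} $[G,\Set]$ is grouplike, not $\Set$. So the right move is not descent but the \emph{slicing} closure property: by Corollary~\ref{cor:etalelocal}, $\ms{Inj}(\mc H_{\mr{gp}})$ is closed under etale maps into grouplike topoi. Since $\Set$ is trivially grouplike (its points into any $\mc E$ form the terminal category, which is a groupoid) and $[G,\Set] \to \Set$ is etale, Corollary~\ref{cor:etalelocal} gives $[G,\Set] \in \ms{GroupTopoi}$ directly.

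\textbf{Main obstacle.} The only real content is the identification $[G,\Set] \simeq \Set/G$ as topoi over $\Set$ and the verification that the canonical projection is genuinely etale — this is standard (it is the statement that $BG$ as a topos has the ``universal $G$-torsor'' as an etale cover splitting it), but one must be careful that ``etale'' here means \emph{local homeomorphism of topoi}, i.e. a slice projection, rather than merely essential; for a \emph{discrete} group this holds because $G$ is a decidable object and the action is free, so no subtlety beyond bookkeeping remains. Everything else is a one-line appeal to Corollary~\ref{cor:etalelocal} (or, if one prefers the descent route, to Proposition~\ref{prop:descentofgrouplike} applied to a different etale surjection onto $[G,\Set]$).
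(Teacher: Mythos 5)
There is a genuine error at the heart of your argument: the identification $[G,\Set] \simeq \Set/G$ is false (unless $G$ is trivial), and the direction of the etale map is backwards. A slice of $\Set$ over a set $X$ is $\Set/X \simeq \Set^X$, a power of $\Set$; the topos of $G$-sets is not of this form. Your parenthetical claim that ``a $G$-set is the same thing as a set equipped with an equivariant map to $G$ with its translation action'' conflates two different categories: a $G$-set equipped with an equivariant map to the regular representation is a \emph{free} $G$-set together with a section of the orbit map, and that category is equivalent to $\Set$. The correct statement is therefore $\Set \simeq [G,\Set]/G$, which exhibits the canonical point $\Set \surj [G,\Set]$ as an \emph{etale surjection into} $[G,\Set]$, not a projection out of it. Consequently your second step also fails: $\Topoi(\mc E,[G,\Set])$ is the groupoid of $G$-torsors in $\mc E$, not the discrete category on $G$, and your third step cannot invoke Corollary~\ref{cor:etalelocal}, since $[G,\Set] \to \Set$ is not etale (it is connected, indeed hyperconnected, for nontrivial $G$).

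Ironically, the route you considered and discarded is the right one, and is exactly the paper's proof: take the etale surjection $\Set \surj [G,\Set]$ coming from $\Set \simeq [G,\Set]/G$ (freeness of the regular action is what makes this etale), note that $\Set$ is trivially grouplike, and apply the descent statement Proposition~\ref{prop:descentofgrouplike}, which says grouplikeness passes from the domain to the codomain of an etale surjection. Your hesitation (``that is the wrong direction'') arose only because you had oriented the etale map incorrectly; with the correct orientation, descent is precisely the direction needed.
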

\begin{proof}
    This follows from the observation that the surjective point $\Set \surj [G,\Set]$ is in fact \emph{etale}, since this point is induced by $\Set \simeq [G,\Set]/G$. 
\end{proof}

\begin{rem}
    Notice that for a monoid $M$ there is also a surjective point $\Set \surj [M,\Set]$, but this is \emph{not} etale. In fact, it is etale iff $M$ is a group.
\end{rem}

We may also observe that grouplike topoi are closed under coproducts:

\begin{prop}\label{prop:gpcoprod}
    If we have a family of grouplike topoi $\stt{\mc X_i}_{i\in I}$, then the coproduct $\coprod_{i\in I}\mc X_i$ is also grouplike.
\end{prop}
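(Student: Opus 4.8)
The plan is to describe the hom-category $\Topoi(\mc E,\coprod_{i\in I}\mc X_i)$ explicitly for an arbitrary topos $\mc E$, and then to read off directly that it is a groupoid. First I would invoke the standard description of geometric morphisms into a coproduct of topoi: the underlying category of $\coprod_{i\in I}\mc X_i$ is $\prod_{i\in I}\mc X_i$, and for each $j\in I$ there is a complemented subobject $U_j$ of its terminal object, namely the tuple which is terminal in coordinate $j$ and initial elsewhere; the family $\stt{U_j}_{j\in I}$ is pairwise disjoint with join the terminal object. Given $f:\mc E\to\coprod_{i\in I}\mc X_i$, applying the lex cocontinuous functor $f^*$ to this family produces an $I$-indexed clopen decomposition $\mc E\simeq\coprod_{i\in I}\mc E_i$ (with $\mc E_i=\mc E/f^*U_i$), and $f$ is reconstructed from the tuple of composites $\mc E_i\hook\mc E\xrightarrow{f}\coprod_{i\in I}\mc X_i$, each of which factors uniquely through $\mc X_i$; conversely any such data assembles into a geometric morphism into the coproduct.

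Next I would analyse $2$-cells. A $2$-cell $f\Rightarrow g$, restricted along the $U_j$, gives morphisms $f^*U_j\to g^*U_j$ of subobjects of $1_{\mc E}$; since both $\stt{f^*U_j}_{j\in I}$ and $\stt{g^*U_j}_{j\in I}$ partition $1_{\mc E}$, this forces $f^*U_j=g^*U_j$ for every $j$, so $f$ and $g$ induce the \emph{same} decomposition $\coprod_i\mc E_i$, and the $2$-cell is exactly a tuple of $2$-cells between the induced morphisms $\mc E_i\to\mc X_i$. Putting this together yields an equivalence
\[ \Topoi\Bigl(\mc E,\coprod_{i\in I}\mc X_i\Bigr)\;\simeq\;\coprod_{(\mc E_i)_{i\in I}}\;\prod_{i\in I}\Topoi(\mc E_i,\mc X_i), \]
where the outer coproduct ranges over the set of $I$-indexed clopen decompositions $\mc E\simeq\coprod_{i\in I}\mc E_i$. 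Since each $\mc X_i$ is grouplike, every factor $\Topoi(\mc E_i,\mc X_i)$ is a groupoid; as groupoids are closed under arbitrary products and arbitrary coproducts in $\CAT$, the right-hand side is a groupoid. As $\mc E$ was arbitrary, $\coprod_{i\in I}\mc X_i$ is grouplike.

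The main obstacle is the first step: justifying the description of the hom-category into the coproduct, and in particular that a $2$-cell forces the two inducing decompositions to coincide and that the decomposition may always be taken indexed by the full set $I$ (with possibly empty components) even when $I$ is infinite. This is a manifestation of the extensivity of $\Topoi$ together with the fact that in a topos the join of a disjoint complemented family is its coproduct, so that $f^*$, being cocontinuous, preserves $\bigvee_j U_j=1$; none of this is deep, but it is the only point needing care, and one might alternatively phrase the whole argument through the equivalence $\coprod_{i\in I}\mc X_i\simeq$ the "partition-indexed" gluing so as to make the componentwise behaviour manifest.
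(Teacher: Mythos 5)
Your proof is correct, but it takes a genuinely different route from the paper's. You compute the hom-category $\Topoi(\mc E,\coprod_{i\in I}\mc X_i)$ directly via infinitary extensivity: a geometric morphism into the coproduct is a clopen $I$-indexed decomposition of $\mc E$ together with componentwise maps, and a $2$-cell (being a natural transformation of inverse images, hence giving comparisons between the two partitions of $1_{\mc E}$ by complemented subterminals) forces the two decompositions to coincide, so the hom-category splits as a coproduct over decompositions of products of the $\Topoi(\mc E_i,\mc X_i)$, which is a groupoid since groupoids are closed under products and coproducts in $\CAT$. The paper instead stays inside its Kan-injectivity formalism: it uses the characterisation of grouplike topoi as $\Inj(\mc H_{\mr{toc}})$ from \Cref{cor:chgrouplike}, pulls back the etale inclusions $\mc X_i \inj \coprod_i\mc X_i$ along a totally connected map $r : \mc E \surj \mc F$ and its centre $c$ (reusing the diagram from the proof of \Cref{prop:descentofgrouplike}) to decompose $r$ itself into a coproduct of totally connected maps $r_i$, and then assembles the isomorphisms $x_i \cong x_ic_ir_i$ into $x \cong xcr$. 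Your argument is more elementary and strictly more general — it shows that \emph{any} property of hom-categories closed under products and coproducts of categories (e.g.\ being a preorder, so preorderlike topoi are likewise closed under coproducts) passes to coproducts of topoi — at the cost of having to justify the extensivity description of maps into a coproduct and the rigidity of $2$-cells across decompositions, which is exactly the point you flag and which does go through. The paper's argument buys coherence with its running theme (everything is phrased as preservation of the composite $cr$) and reuses already-established machinery, but is tailored to the totally connected case.
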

\begin{proof}
    Suppose we have a totally connected map $r : \mc E \to \mc F$ with centre $c$. Consider a map $x : \mc E \to \coprod_{i\in\mc I}\mc X_i$. Since the inclusion $\mc X_i \inj \coprod_{i\in I}\mc X_i$ is again etale, the first half of the proof of Proposition~\ref{prop:descentofgrouplike} still works, i.e. we now have a commutative diagram
    \[
    \begin{tikzcd}
    	{\mc F_i} & {\mc E_i} & {\mc X_i} \\
    	{\mc F} & {\mc E} & {\coprod_i\mc X_i}
    	\arrow[""{name=0, anchor=center, inner sep=0}, "{c_i}"', from=1-1, to=1-2]
    	\arrow[two heads, from=1-1, to=2-1]
    	\arrow[""{name=1, anchor=center, inner sep=0}, "{r_i}"', curve={height=12pt}, two heads, from=1-2, to=1-1]
    	\arrow["{x_i}", from=1-2, to=1-3]
    	\arrow[two heads, from=1-2, to=2-2]
    	\arrow[two heads, from=1-3, to=2-3]
    	\arrow[""{name=2, anchor=center, inner sep=0}, "c"', from=2-1, to=2-2]
    	\arrow[""{name=3, anchor=center, inner sep=0}, "r"', curve={height=12pt}, two heads, from=2-2, to=2-1]
    	\arrow["x"', from=2-2, to=2-3]
    	\arrow["\dashv"{anchor=center, rotate=-91}, draw=none, from=1, to=0]
    	\arrow["\dashv"{anchor=center, rotate=-90}, draw=none, from=3, to=2]
    \end{tikzcd}
    \]
    In other words, the totally connected geometric morphism $r$ has also been decomposed into a coproduct family of totally connected geometric morphisms. Since each $x_i$ is grouplike, we have $x_i \cong x_ic_ir_i$, which implies 
    \[ x = \coprod_ix_i \cong \coprod x_ic_ir_i \cong xcr, \]
    hence $\coprod_i\mc X_i$ is also grouplike.
\end{proof}

\begin{cor}
    For any groupoid $\mc G$, $[\mc G,\Set]$ is grouplike.
\end{cor}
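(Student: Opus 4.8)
The plan is to reduce to the case of a single group, already handled in the previous corollary, by decomposing $\mc G$ into its connected components, and then to invoke Proposition~\ref{prop:gpcoprod}. First I would use that any small groupoid $\mc G$ is equivalent to the disjoint union $\coprod_{i\in I}\mc G_i$ of its connected components, with $I$ the set of components and each $\mc G_i$ connected. Choosing an object $c_i\in\mc G_i$ for each $i$, the full inclusion of the one-object subgroupoid on $c_i$ exhibits the delooping $\mb B G_i$ of the group $G_i=\mr{Aut}_{\mc G_i}(c_i)$ as equivalent to $\mc G_i$, so that $\mc G\simeq\coprod_{i\in I}\mb B G_i$.

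Next I would transport this along the presheaf construction. Since $\psh(-)$ is invariant under equivalence of categories and carries a disjoint union of small categories to the corresponding coproduct of topoi --- the underlying category of $\psh(\coprod_i\mc C_i)$ is $\prod_i\psh(\mc C_i)$, which is precisely the underlying category of $\coprod_i\psh(\mc C_i)$ in $\Topoi$, and the universal properties match (cf.~\cite{elephant2}) --- we obtain
\[ [\mc G,\Set]\;\simeq\;\Big[\textstyle\coprod_{i\in I}\mb B G_i,\,\Set\Big]\;\simeq\;\coprod_{i\in I}[\mb B G_i,\Set]\;\simeq\;\coprod_{i\in I}[G_i,\Set]. \]
Each $[G_i,\Set]$ is grouplike by the previous corollary, and a coproduct of grouplike topoi is grouplike by Proposition~\ref{prop:gpcoprod}; hence $[\mc G,\Set]$ is grouplike.

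The only step needing care --- the ``main obstacle'', modest as it is --- is the bookkeeping just used: that $\psh(-)$ sends coproducts of categories to coproducts of topoi and is Morita-invariant. Both facts are standard. Should one wish to avoid it, one may instead note that the canonical geometric morphism $\coprod_{g\in\mr{ob}\,\mc G}\Set\;\simeq\;[\mc G,\Set]\big/\big(\coprod_{g}\mc G(g,-)\big)\;\surj\;[\mc G,\Set]$ is an etale surjection whose domain is a coproduct of copies of $\Set$ and is therefore grouplike (by Proposition~\ref{prop:gpcoprod}, as $\Set$ is terminal in $\Topoi$, hence grouplike); then $[\mc G,\Set]$ is grouplike directly by Proposition~\ref{prop:descentofgrouplike}. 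This route requires only a quick inspection of the category of elements of $\coprod_g\mc G(g,-)$ --- each $\int\mc G(g,-)$ being a contractible groupoid since $\mc G$ is --- so it is an equally clean alternative.
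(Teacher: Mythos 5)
Your main argument is exactly the paper's proof: decompose $\mc G$ into connected components, identify $[\mc G,\Set]$ with the coproduct of the topoi $[G_i,\Set]$, and conclude by the previous corollary together with Proposition~\ref{prop:gpcoprod}. The proposal is correct, just more detailed than the paper's one-line proof (and your alternative route via the etale surjection $\coprod_g\Set\surj[\mc G,\Set]$ and Proposition~\ref{prop:descentofgrouplike} is also sound, generalising how the paper handles the single-group case).
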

\begin{proof}
    $[\mc G,\Set]$ is a coproduct in $\Topoi$ of group actions $[G_i,\Set]$, for each connected component $G_i$ of $\mc G$.
\end{proof}

\subsection{Preorder points}

Similar to the above case, consider the category $\mbb P$ of the ``walking parallel morphisms'', with the quotient $q : \mbb P \surj \mb 2$ identifying the parallel pair. For any $\mc E$, it induces a geometric morphism
\[ q \otimes \mc E : \mbb P \otimes \mc E \to \mb 2 \otimes \mc E, \]
which in this case is \emph{hyperconnected}, hence also corepresentably fully surjective. Let $\mc H_{\mr{po}}$ be the class of maps $q \otimes \mc E$ for all $\mc E$.

\begin{thm}
    $\ms{Inj}(\mc H_{\mr{po}})$ as a full sub-2-categories of $\Topoi$ consists of \emph{preorderlike} topoi, i.e. those $\mc X$ that $\Topoi(\mc E,\mc X)$ is a preorder for all $\mc E$.
\end{thm}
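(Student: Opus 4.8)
The plan is to unwind the definition of $\ms{Inj}(\mc H_{\mr{po}})$ via Proposition~\ref{prop:corepfullsurjequiv} together with the tensor--hom equivalence, reducing everything to an elementary fact about precomposition with $q$. Since each $q \otimes \mc E$ is hyperconnected it is corepresentably fully surjective, so $\mc H_{\mr{po}}$ is a family of such maps, and Proposition~\ref{prop:corepfullsurjequiv} then tells us that $\ms{Inj}(\mc H_{\mr{po}})$ is a full sub-2-category of $\Topoi$, and that a topos $\mc X$ lies in it precisely when, for every topos $\mc E$, the functor
\[ - \circ (q \otimes \mc E) \colon \Topoi(\mb 2 \otimes \mc E, \mc X) \longrightarrow \Topoi(\mbb P \otimes \mc E, \mc X) \]
is an equivalence of categories.

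Next I would transport this condition across the natural equivalence $\Topoi(\mc A \otimes \mc E, \mc X) \simeq \CAT(\mc A, \Topoi(\mc E, \mc X))$ of Remark~\ref{colimits}. Writing $\Ccal \coloneqq \Topoi(\mc E, \mc X)$, the displayed functor is identified with precomposition $q^{*} \colon \CAT(\mb 2, \Ccal) \to \CAT(\mbb P, \Ccal)$ with $q \colon \mbb P \surj \mb 2$. Here $\CAT(\mb 2, \Ccal) \simeq \Ccal^{\to}$ is the arrow category of $\Ccal$, $\CAT(\mbb P, \Ccal)$ is the category whose objects are parallel pairs $f, g \colon a \to b$ in $\Ccal$ (morphisms being the evident commuting squares), and $q^{*}$ sends an object $f$ of $\Ccal^{\to}$ to the pair $(f, f)$ and acts as the identity on morphisms.

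Now the two halves. On one hand, $q^{*}$ is always fully faithful: this is a restatement of $q \otimes \mc E$ being corepresentably fully surjective, but it can also be seen directly from the facts that $q$ is bijective on objects and full, so that any natural transformation $Fq \Rightarrow Gq$ between the restrictions of $F, G \colon \mb 2 \to \Ccal$ is forced to be $q^{*}$ of a unique transformation $F \Rightarrow G$. Hence $q^{*}$ is an equivalence if and only if it is essentially surjective. On the other hand, a parallel pair $f, g \colon a \to b$ lies in the essential image of $q^{*}$ if and only if there is an isomorphism $(u, v)$ in $\CAT(\mbb P, \Ccal)$ from $(f,g)$ to some $(h, h)$, and since $v$ is invertible this happens exactly when $f = g$. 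Therefore $q^{*}$ is essentially surjective if and only if $\Ccal$ has no nonidentical parallel pair, i.e. if and only if $\Ccal$ is a preorder.

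Putting these together, $\mc X \in \ms{Inj}(\mc H_{\mr{po}})$ if and only if $\Topoi(\mc E, \mc X)$ is a preorder for every topos $\mc E$, which is exactly the definition of $\mc X$ being preorderlike. The argument has no genuinely hard step; the only points requiring care are the faithful identification of the two functors under the tensor--hom equivalence and the observation that full faithfulness of $q^{*}$ (hence the collapse of the orthogonality condition to essential surjectivity) holds for an arbitrary $\Ccal$, both of which I expect to be entirely routine.
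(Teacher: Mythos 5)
Your proposal is correct and follows essentially the same route as the paper: reduce via Proposition~\ref{prop:corepfullsurjequiv} (using that $q \otimes \mc E$ is hyperconnected, hence corepresentably fully surjective) to the condition that $\Topoi(\mc E,\mc X)^{\to} \to \Topoi(\mc E,\mc X)^{\rightrightarrows}$ is an equivalence, and then observe this holds iff $\Topoi(\mc E,\mc X)$ is a preorder. The paper leaves the last step as a one-line assertion, whereas you spell out the (correct) elementary argument that $q^*$ is automatically fully faithful and is essentially surjective exactly when every parallel pair coincides.
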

\begin{proof}
    A topos $\mc X$ is Kan injective w.r.t. $q \otimes \mc E$ iff we have an equivalence
    \[ \Topoi(\mc E,\mc X)^\to \simeq \Topoi(\mc E,\mc X)^{\rightrightarrows}, \]
    which happens iff $\Topoi(\mc E,\mc X)$ is a preorder.
\end{proof}

\begin{cor} \label{localicpreorder}
    If $\mc X$ is preorderlike, and $\mc Y \to \mc X$ is localic, then so is $\mc Y$.
\end{cor}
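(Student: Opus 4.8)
The plan is to read this statement off from \Cref{cor:etalelocal}. By the characterisation just established, $\mc X$ being preorderlike is exactly the assertion that $\mc X \in \Inj(\mc H_{\mr{po}})$, where $\mc H_{\mr{po}}$ is the class of geometric morphisms $q \otimes \mc E : \mbb P \otimes \mc E \to \mb 2 \otimes \mc E$. The structural feature that does the work is that every map in $\mc H_{\mr{po}}$ is hyperconnected, not merely corepresentably fully surjective. The second half of \Cref{cor:etalelocal} says precisely that when $\mc H$ is a family of hyperconnected maps, the class $\Inj(\mc H)$ absorbs localic morphisms into its objects, i.e. $\mc X \in \Inj(\mc H)$ together with $\mc Y \to \mc X$ localic forces $\mc Y \in \Inj(\mc H)$. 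Applying this with $\mc H = \mc H_{\mr{po}}$ yields $\mc Y \in \Inj(\mc H_{\mr{po}})$, that is, $\mc Y$ is preorderlike.

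If one wants to see the mechanism unwound, it is \Cref{lem:orthogalinj} together with the classical orthogonality of hyperconnected and localic geometric morphisms (see e.g.~\cite[A4.6]{elephant1}): for $p : \mc Y \to \mc X$ localic and $f : \mc A \to \mc B$ in $\mc H_{\mr{po}}$, one has a pseudopullback
\[ \Topoi(\mc B, \mc Y) \simeq \Topoi(\mc A, \mc Y) \times_{\Topoi(\mc A, \mc X)} \Topoi(\mc B, \mc X), \]
and since $\mc X \in \Inj(\mc H_{\mr{po}})$ makes $\Topoi(\mc B, \mc X) \to \Topoi(\mc A, \mc X)$ an equivalence, the pseudopullback collapses to $\Topoi(\mc A, \mc Y)$; hence $\Topoi(\mc B, \mc Y) \to \Topoi(\mc A, \mc Y)$ is an equivalence, which is the orthogonality condition defining $\Inj(\mc H_{\mr{po}})$.

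There is no real obstacle here. The statement is an immediate consequence of \Cref{cor:etalelocal}, and the only nontrivial input — orthogonality of hyperconnected against localic morphisms — has already been recorded and used in the paper. The sole point one must make sure of is that each $q \otimes \mc E$ is genuinely hyperconnected, which is exactly what licenses using the ``localic'' half of \Cref{cor:etalelocal} rather than the ``etale'' half; this is the analogue, for the quotient $q : \mbb P \surj \mb 2$, of the fact that $\mb 2 \otimes \mc E \to \mc E$ is local and totally connected.
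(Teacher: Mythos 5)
Your proof is correct and is exactly the paper's argument: the paper's proof of this corollary is the single line ``This follows from Corollary~\ref{cor:etalelocal}'', invoking the hyperconnected/localic half of that statement, and your unwinding via Lemma~\ref{lem:orthogalinj} and the hyperconnected--localic orthogonality is precisely how that corollary is itself established. The one point you rightly flag — that each $q \otimes \mc E$ is genuinely hyperconnected — is recorded in the paper immediately before the characterisation of preorderlike topoi, so nothing is missing.
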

\begin{proof}
    This follows from Corollary~\ref{cor:etalelocal}.
\end{proof}

\begin{rem}
    It is worth mentioning that the local properties of the class of grouplike topoi developed in Section~\ref{subsec:gp} is \emph{not} applicable here. Although for any $\mc E$ the geometric morphism $q \otimes \mc E : \mbb P \otimes \mc E \to \mb 2 \otimes \mc E$ has a section $i \otimes \mc E$, induced by a section $i : \mb 2 \to \mbb P$ of $q$, $q$ and $i$ are \emph{not} adjoints to each other. Hence, one cannot in particular infer $\mc X$ belongs to $\Inj(\mc H)$, even if for all $x : \mbb P \otimes \mc E \to \mc X$, $xiq \cong x$.
\end{rem}

However, one can still try to establish a descent property for preorderlike topoi as well. For instance, from the answer to the MathOverflow~\cite{373346} question, the following seems to be true:

\begin{conj} \label{conjecturehenry}
    Let $\mc X$ be a topos which is preorderlike, and let $\mc G$ be a group object in $\Topoi$. If there is a \emph{free} group action $\mu : \mc X \times \mc G \to \mc X$, then the descent object $\mc X/\mc G$ of $\mc G$-equivariant sheaves on $\mc X$ is also preorderlike. 
\end{conj}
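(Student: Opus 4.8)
We will use the characterisation that $\mc X/\mc G$ is preorderlike if and only if $\Topoi(\mc E,\mc X/\mc G)$ is a preorder for every topos $\mc E$, equivalently if and only if $\mc X/\mc G \in \ms{Inj}(\mc H_{\mr{po}})$. The first step is to unwind the freeness hypothesis: a \emph{free} action is precisely one for which the quotient map $\pi : \mc X \surj \mc X/\mc G$ is a surjection whose kernel pair is the action span, i.e. $\mc X \times_{\mc X/\mc G} \mc X \simeq \mc X \times \mc G$ (sending $(x,x')$ to the unique $h$ with $x\cdot h \cong x'$). Iterating, the Čech nerve of $\pi$ is the nerve of the action groupoid, with $n$-th term $\mc X \times \mc G^{\times n}$, and $\mc X/\mc G$ is its bicolimit in $\Topoi$.

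The plan is then to show that $\Topoi(\mc E,-)$ carries this codescent presentation to the $2$-categorical descent object (totalisation) of the cosimplicial category $[n] \mapsto \Topoi(\mc E, \mc X \times \mc G^{\times n})$; that is, that surjections in $\Topoi$ are of \emph{corepresentable effective descent}. Granting this, the proof concludes quickly. Using that $\Topoi$ has finite products with $\Topoi(\mc E,\mc A \times \mc B) \simeq \Topoi(\mc E,\mc A) \times \Topoi(\mc E,\mc B)$, the $n$-th term is $\Topoi(\mc E,\mc X) \times \Topoi(\mc E,\mc G)^{\times n}$; if $\mc G$ is preorderlike --- which holds whenever $\mc G$ is a localic group, and which one may add as a hypothesis --- then every term is a preorder, since a product of preorders is a preorder and $\Topoi(\mc E,\mc X)$ is a preorder by assumption. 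It then suffices to observe that the descent object of a cosimplicial diagram of preorders is again a preorder: a descent datum is a vertex together with a gluing morphism which, when it exists, is unique and whose cocycle condition over the $2$-simplex is automatic, and a morphism of descent data is a morphism of vertices whose compatibility square is automatic, so there is at most one morphism between any two descent data. Hence $\Topoi(\mc E,\mc X/\mc G)$ is a preorder for all $\mc E$, i.e. $\mc X/\mc G$ is preorderlike.

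The crux --- and the reason this remains a conjecture --- is the corepresentable effective descent step: $\Topoi(\mc E,-)$ does not preserve colimits, so this does not follow formally. The natural line of attack is to leverage that a surjection $\pi$ has comonadic inverse image $\pi^*$, so that $\mathrm{Sh}(\mc X/\mc G)$ is the category of $\mc G$-descent data inside $\mathrm{Sh}(\mc X)$, and then to run the same argument internally to $\mc E$ --- the delicate point being to match this internal description with the external hom-category $\Topoi(\mc E,\mc X/\mc G)$. Alternatively one can try to imitate Proposition~\ref{prop:descentofgrouplike}: given $f,g : \mc E \to \mc X/\mc G$ and two $2$-cells between them, pull back along a surjection $p : \mc E' \surj \mc E$ trivialising both torsors $f^{*}\mc X$ and $g^{*}\mc X$, so that $fp$ and $gp$ factor through $\pi$, say as $\pi a$ and $\pi b$; since $p^{*}$ is faithful it suffices to see that a $2$-cell $\pi a \Rightarrow \pi b$ is unique, which by freeness amounts to pinning down the $\mc G$-translation relating $a$ and $b$. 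Here, as the remark following Corollary~\ref{localicpreorder} warns, the section of $q \otimes \mc E$ is not an adjoint, so one cannot reuse the grouplike argument verbatim and must instead exploit that $\mc G$ itself is preorderlike to see that this translation is forced. Making either route precise, and deciding whether preorderlikeness of $\mc G$ is genuinely needed or is automatic under freeness, is the remaining work.
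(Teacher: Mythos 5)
The statement you are trying to prove is Conjecture~\ref{conjecturehenry}: the paper itself gives \emph{no proof} of it, only a pointer to a MathOverflow answer suggesting it ``seems to be true,'' so there is no argument of the authors' to compare yours against. Your proposal is likewise not a proof, and you say so yourself; the honest framing is appreciated, but the gap you flag is not a technicality --- it is the entire content of the conjecture. Concretely: $\mc X/\mc G$ is a \emph{bicolimit} (codescent object of the action groupoid's nerve) in $\Topoi$, and for maps \emph{into} a bicolimit there is no formal identity $\Topoi(\mc E,\mathrm{bicolim}\,D)\simeq \mathrm{bilim}\,\Topoi(\mc E,D)$; that equivalence holds for homs \emph{out of} a colimit, not into one. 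So the reduction of $\Topoi(\mc E,\mc X/\mc G)$ to the descent object of the cosimplicial category $[n]\mapsto\Topoi(\mc E,\mc X\times\mc G^{\times n})$ --- the step on which everything downstream (products of preorders, descent data in a preorder) depends --- is unsupported, and freeness of the action has not actually been used to supply it. Your second route, imitating Proposition~\ref{prop:descentofgrouplike}, runs into exactly the obstruction the paper's own remark after Corollary~\ref{localicpreorder} records: the section $i\otimes\mc E$ of $q\otimes\mc E$ is not an adjoint, so the lali/rali mechanism that makes the grouplike descent argument work is unavailable, and you do not replace it with anything.

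Two further points. First, you quietly strengthen the hypotheses by assuming $\mc G$ is preorderlike (or localic); the conjecture as stated assumes only that $\mc G$ is a group object acting freely, so even if your descent step were established you would have proved a weaker statement unless you also show this extra hypothesis is automatic. Second, your identification of ``free action'' with ``the kernel pair of $\mc X\surj\mc X/\mc G$ is $\mc X\times\mc G$'' is itself something that needs an argument in the $2$-category $\Topoi$ (effective descent for the quotient, plus the shear map being an equivalence), and you assert it rather than derive it. In short: the proposal is a reasonable plan of attack and correctly locates the difficulty, but it does not constitute a proof, and the conjecture remains open both in the paper and in your write-up.
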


\section{Logic as concept} \label{sec:logicshowoff}

In Section~\ref{sec:framework} we have seen that geometric logic, essentially algebraic logic, and propositional logic can be described as (weak) right Kan injectivity classes in $\Topoi$. In this section we will furthermore describe the following fragments in essentially the same way:

\begin{itemize}
    \item[(\ref{else})] essentially algebraic logic with falsum,
    \item[(\ref{disj})] (finitary) disjunctive logic,
    \item[(\ref{regular})] regular logic,
    \item[(\ref{coherent})] coherent logic.
\end{itemize}

This section is largely preparatory, and the general definition of logic and its study will come in the next section, taking inspiration from the taxonomy that we run here.
The proof that these fragments can be described by weak right Kan injectivity follows a generic pattern which will be used repeatedly. The argument is essentially contained in~\cite{diliberti2022geometry}, and we record it here:

\begin{prop}\label{prop:genericproof}
    Let $\mc H$ be a class of geometric morphisms. Let $\mc X \simeq \sh(\mc C,J)$ be a topos with $\mc C$ left exact. If for any $x : \mc E \to \mc X$ the induced functor $f_*x^*y : \mc C \to \mc F$ is $J$-continuous where $y = j^*\yo$, then $\mc X \in \WRInj(\mc H)$ and $j : \mc X\hook\psh(\mc C)$ is an $\mc H$-presentation of $\mc X$.
\end{prop}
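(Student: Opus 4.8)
The plan is to reduce everything to \Cref{exm:ranfree} and \Cref{lem:presentationpointwise} (or at least the fully-faithfulness part of the argument there), by showing that for each $x : \mc E \to \mc X$ and each $f : \mc E \to \mc F$ in $\mc H$, the right Kan extension of $jx : \mc E \to \psh(\mc C)$ along $f$ in $\Topoi$ — which exists by \Cref{exm:ranfree} since $\psh(\mc C)$ is free — actually factors through the embedding $j : \mc X \hook \psh(\mc C)$. Once that factorisation is in place, precomposing the universal property of $\ran_f(jx)$ with $j$ and using that $j$ is fully faithful (so that $\Topoi(-,\mc X) \hookrightarrow \Topoi(-,\psh(\mc C))$ is fully faithful along $j$) will upgrade it to a universal property exhibiting the factor as $\ran_f x$ in $\Topoi$, i.e. it will show $\mc X \in \WRInj(\mc H)$ with $j$ an $\mc H$-presentation.

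Concretely, first I would recall from \Cref{exm:ranfree} that $(\ran_f(jx))_* \cong \lan_{f_*}(j_*x_*)$ and $(\ran_f(jx))^* \cong \lan_{\yo}(f_* x^* j^* \yo) = \lan_{\yo}(f_* x^* y)$, the left Kan extensions computed in $\LEX$. The point $\mc X \simeq \sh(\mc C, J)$ sits inside $\psh(\mc C)$ as the $J$-sheaves, and a geometric morphism $\mc F \to \psh(\mc C)$ factors through $\sh(\mc C,J) \hook \psh(\mc C)$ precisely when the corresponding flat functor $\mc C \to \mc F$ sends $J$-covering sieves to colimit (i.e. is $J$-continuous), by Diaconescu's theorem \cite[C2.3]{elephant2}. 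The flat functor classifying $\ran_f(jx)$ is exactly $f_* x^* y : \mc C \to \mc F$ — since the inverse image of $\ran_f(jx)$ is $\lan_{\yo}(f_* x^* y)$, its restriction along $\yo$ is $f_* x^* y$. So the hypothesis that $f_* x^* y$ is $J$-continuous is precisely what guarantees the factorisation $\ran_f(jx) \simeq j \circ g$ for a (unique up to iso) $g : \mc F \to \mc X$.

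It then remains to check $g \cong \ran_f x$ in $\Topoi$. For any $u : \mc F \to \mc X$,
\[ \Topoi(\mc F,\mc X)(u,g) \cong \Topoi(\mc F,\psh(\mc C))(ju,jg) \cong \Topoi(\mc F,\psh(\mc C))(ju,\ran_f(jx)) \cong \Topoi(\mc E,\psh(\mc C))(juf,jx) \cong \Topoi(\mc E,\mc X)(uf,x), \]
where the first and last isomorphisms use that $j$ is fully faithful (an embedding), the second uses the factorisation, and the third is the universal property of $\ran_f(jx)$. This exhibits $g = \ran_f x$, so $\mc X \in \WRInj(\mc H)$, and since $j$ is an embedding into a free topos lying in $\WRInj(\mc H)$ it is by definition an $\mc H$-presentation. (One may also then invoke \Cref{lem:presentationpointwise} to conclude $\mc X \in \WRInj_{\pw}(\mc H)$, though the statement as given only asks for the first two conclusions.)

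The main obstacle I anticipate is purely bookkeeping: making precise the equivalence between ``geometric morphism $\mc F \to \psh(\mc C)$ factors through $\sh(\mc C,J)$'' and ``the classifying flat functor is $J$-continuous'', and checking that the flat functor classifying $\ran_f(jx)$ is genuinely $f_* x^* y$ and not some twist thereof — this requires unwinding how $\lan_{\yo}$ interacts with the canonical functor $\mc C \to \psh(\mc C) \to \sh(\mc C,J)$ and the fact that $y = j^* \yo$ is the composite $\mc C \xrightarrow{\yo} \psh(\mc C) \xrightarrow{j^*} \mc X$ (so that $y$ is the ``sheafified Yoneda''). There is no deep difficulty, but the naturality and the direction of the various canonical 2-cells (in particular the counit of $\ran_f$) need care; the statement is essentially a repackaging of the computation in \cite{diliberti2022geometry}, so I would lean on that reference for the delicate diagram chases.
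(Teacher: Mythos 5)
Your proposal is correct and follows essentially the same route as the paper: compute $\ran_f(jx)$ into the free topos via \Cref{exm:ranfree}, observe its classifying flat functor is $f_*x^*y$, and use the $J$-continuity hypothesis plus Diaconescu to factor it through $j$. The paper simply asserts that this factorisation ``suffices''; your chain of isomorphisms using full faithfulness of $j$ is exactly the justification it leaves implicit.
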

\begin{proof}
    Let $f : \mc E \to \mc F$ be a map in $\mc H$. Following~\cite{diliberti2022geometry}, consider the following situation for any $x : \mc E \to \mc X$,
    \[
    \begin{tikzcd}
        \mc E \ar[d, "f"'] \ar[r, "x"] & \mc X \ar[d, hook, "j"] \\ 
        \mc F \ar[r, "g = \ran_{f}xj"'] \ar[ur, dashed] & \psh(\mc C)
    \end{tikzcd}
    \]
    It suffices to show the right Kan extension $g$ factors through $j$. By Diaconescu's, $g$ is induced by the following left exact functor
    \[ g^*\yo \cong \lan_{\yo}(f_*x^*j^*\yo) \circ \yo \cong f_*x^*y, \]
    where we have used the computation in Example~\ref{exm:ranfree}. By assumption, this functor is $J$-continuous, thus $g$ indeed factors through $j$.
\end{proof}

In particular, such topoi $\mc X$ belongs to $\WRInj_{\pw}(\mc H)$ by Lemma~\ref{lem:presentationpointwise}.

\subsection{Essentially algebraic logic with falsum} \label{else}

Recall from~\cite{SGA4} the notion of dominant geometric morphisms: 

\begin{defn}\label{defdom}
    A geometric morphism $f$ is \emph{dominant} if $f_*$ preserves the initial object. We use $\mc H_{\mr{dom}}$ to denote the class of dominant geometric morphisms.
\end{defn}

Immediately from the above definition, we can conclude that the category of generalised points of $\mc X \in \WRInj(\mc H_{\mr{dom}})$ has non-empty limits:

\begin{lem}
    If $\mc X \in \WRInj(\mc H_{\mr{dom}})$, then $\Topoi(\mc E,\mc X)$ has non-empty limits for all $\mc E$.
\end{lem}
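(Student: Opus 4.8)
The plan is to exploit the same machinery already used in the essentially algebraic case, namely Proposition~\ref{prop:limitsaskan} together with a closure property for the class of maps along which a given topos has weak right Kan extensions. Concretely, a category $\mc A$ has non-empty limits iff it has a terminal object and binary products; dually, it suffices to check equalizers and binary products, or — most economically for our purposes — it suffices to check cofiltered limits together with finite products. But the cleanest route is: $\mc A$ has all non-empty finite limits iff it has binary products and equalizers, and $\mc A$ has all non-empty limits iff it has non-empty finite limits and cofiltered limits (and by Corollary~\ref{yesdirectedcolimits}, every category of generalised points already has filtered colimits, so one should be slightly careful — we want \emph{limits}, and these must be supplied by the injectivity hypothesis). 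So what I would actually do is reduce ``non-empty limits'' to a small family of diagram shapes $\mc A$, each of which is connected (equivalently: such that $\mc A \to \mb 1$ is not the relevant obstruction), and show $\mc X$ is weakly right Kan injective w.r.t. $\mc A \otimes \mc E \to \mc E$ for each such $\mc A$.

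First I would recall from Proposition~\ref{prop:limitsaskan} that $\Topoi(\mc E,\mc X)$ having $\mc A$-limits is precisely weak right Kan injectivity of $\mc X$ w.r.t. $\mc A \otimes \mc E \to \mc E$. Next I would observe that when $\mc A$ is a \emph{connected} category, the canonical geometric morphism $\mc A \otimes \mc E \simeq [\mc A,\mc E] \to \mc E$ is \emph{dominant}: its direct image is the functor sending an object to the limit of the corresponding $\mc A$-diagram of its components — more directly, $[\mc A,\mc E] \to \mc E$ is a \emph{connected} geometric morphism when $\mc A$ is connected (its inverse image is the constant-diagram functor, which is fully faithful), and connected geometric morphisms are in particular dominant since their direct images preserve the initial object (the initial object of a presheaf-like diagram category is the constant-$0$ diagram, whose limit is $0$). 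Hence for every connected $\mc A$ and every $\mc E$, the map $\mc A \otimes \mc E \to \mc E$ lies in $\mc H_{\mr{dom}}$, so $\mc X \in \WRInj(\mc H_{\mr{dom}})$ forces $\Topoi(\mc E,\mc X)$ to have $\mc A$-limits.

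Finally I would assemble these into ``non-empty limits''. Every non-empty (small) diagram category $\mc A$ decomposes as a coproduct $\coprod_{i} \mc A_i$ of its connected components with $I \neq \emptyset$; a limit over $\mc A$ is the product over $i \in I$ of the limits over the $\mc A_i$. So it remains to also secure these possibly-infinite products. For that I would note that a product $\prod_{i\in I}$ with $I\neq\emptyset$ can be built from a cofiltered limit of finite non-empty products, and finite non-empty products from binary ones; alternatively, and more slickly, I can take $\mc A$ itself to range over \emph{all} non-empty connected categories \emph{and} arbitrary non-empty discrete categories, noting a discrete $\mc A = I$ (nonempty) still gives a connected-enough situation only if $|I|=1$, so the discrete case genuinely needs a separate argument: for $I$ an arbitrary non-empty set, $[I,\mc E] \simeq \coprod_I \mc E \to \mc E$ (the codiagonal) has direct image the $I$-fold product functor, which preserves $0$ precisely because $\prod_I 0 \cong 0$ in any topos when — hmm, this holds for all $I$ — so again this map is dominant, giving $I$-indexed products in $\Topoi(\mc E,\mc X)$. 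Combining connected limits with arbitrary non-empty products yields all non-empty limits. The main obstacle is the bookkeeping in the previous sentence: one must be careful that $\mc A \otimes \mc E$ for non-connected non-discrete $\mc A$ need not be dominant over $\mc E$ (indeed $\mc A \to \mb 1$ is not connected), which is exactly why the decomposition into connected pieces plus discrete pieces is forced, rather than treating general $\mc A$ directly.
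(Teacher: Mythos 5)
Your proof is correct, but it takes a longer route than necessary, and the detour is driven by a claim that is actually false. You assert near the end that for a non-empty $\mc A$ which is neither connected nor discrete, the map $\mc A \otimes \mc E \to \mc E$ ``need not be dominant,'' and you conclude that the decomposition into connected components plus a separate discrete-product argument is forced. In fact this map is dominant for \emph{every} non-empty $\mc A$: its direct image is the $\mc A$-limit functor (right adjoint to the constant-diagram inverse image), and the limit of the constant-$0$ diagram over a non-empty shape admits a projection to $0$ by evaluating at any object of $\mc A$; since the initial object of a topos is strict, this limit is $\cong 0$. This one observation --- that non-empty limits commute with the empty colimit in a topos --- is the paper's entire proof, and it makes the single class of maps $\{\mc A \otimes \mc E \to \mc E\}_{\mc A \text{ non-empty}}$ land in $\mc H_{\mr{dom}}$ directly, with Proposition~\ref{prop:limitsaskan} finishing the job. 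You appear to have conflated ``dominant'' (direct image preserves $0$) with ``connected'' (inverse image fully faithful): the latter does fail for disconnected $\mc A$ and does imply the former, but dominance is much weaker and is all that is needed here. That said, your assembled argument does go through: connected shapes give connected (hence dominant) projections, non-empty discrete shapes give the codiagonal $\coprod_I \mc E \to \mc E$ whose direct image preserves $0$ by the same strictness argument, and every non-empty limit is a non-empty product of connected limits --- so the conclusion is sound, just reached by decomposing a case that never needed to be split.
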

\begin{proof}
    For any non-empty category $\mc C$, the geometric morphism
    \[ \mc C \otimes \mc E \to \mc E \]
    is dominant, since its direct image is computed by taking the limit along $\mc C$, and non-empty limits commutes with empty colimit in $\mc E$.
\end{proof}

Just like the case for essentially algebraic logic described in Section~\ref{subsec:essalg}, this weak right Kan injectivity class corresponds to a fragment of logic:

\begin{defn}
    We call a geometric theory $\mbb T$ an \emph{essentially algebraic theory with falsum}, if the signature of $\mbb T$ is algebraic, and the axioms of $\mbb T$ only involves $\top,\wedge,=,\bot$ and provably unique existential quantifiers.
\end{defn}

\begin{rem}
    Every geometric formula $\Phi$ is equivalent to one of the form $\bigvee_{i\in I}\exists \vec x\phi_i$, where each $\phi_i$ is a finite conjunction of atomic formulas. Hence, any geometric sequent of the form $\Phi \vdash \bot$ is equivalent to a family of sequents $\stt{\phi_i \vdash \bot}_{i\in I}$. This way, an essentially algebraic theory with falsum is exactly an essentially algebraic theory where we add a single geometric sequent of the form $\Phi \vdash \bot$. 
\end{rem}

\begin{exa}
    The theory of characteristic 0 rings is an example. It is the theory of rings plus the axiom $\bigvee_{1\le n}n = 0 \vdash \bot$.
\end{exa}

From the definition, one can characterise the classifying topoi for essentially algebraic theories with falsum. Let $\mc C$ be any category. We say a full subcategory $\mc D \subseteq \mc C$ is a \emph{cosieve} in $\mc C$, if for any $d\in\mc D$, if there is a morphism $d \to c$, then $c \in \mc D$. Recall from~\cite[A4.5.5]{elephant1}, cosieves on $\mc C$ corresponds exactly to \emph{closed subtopoi} of $\psh(\mc C)$, where we may identify $\psh(\mc D) \simeq \sh(\mc C,J)$ with $J$ assigning an empty covering sieve for any $c\in\mc C$ with $c\not\in\mc D$.

\begin{lem}\label{lem:classifyingofelsealgebraic}
    A topos is the classifying topos for an essentially algebraic theory with falsum iff it is of the form $\psh(\mc D)$ for $\mc D$ a cosieve in a left exact category $\mc C$, i.e. iff it is a closed subtopos of a free one.
\end{lem}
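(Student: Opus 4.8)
The plan is to deduce both implications from three facts that are already available: (i) presheaf toposes on small lex categories are precisely the classifying toposes of essentially algebraic (cartesian) theories, and for such a theory $\mbb T_0$ one may take the syntactic cartesian category $\mc C_{\mbb T_0}$, so that $\mc B[\mbb T_0] \simeq \psh(\mc C_{\mbb T_0})$ carries each object $c = \{\vec x_c.\phi_c\}$ to the representable $\yo(c)$, i.e. to the interpretation of the cartesian formula $\phi_c$ in the generic $\mbb T_0$-model (\cite[D3.1.1]{elephant2}); (ii) the identification recalled just before the statement of cosieves $\mc D$ in a lex $\mc C$ with closed subtoposes of $\psh(\mc C)$, under which $\psh(\mc D) \simeq \sh(\mc C,J)$ is the closed subtopos $\mc C(U)$ complementary to the open subtopos $\mc O(U)$ cut out by the subterminal $U \hookrightarrow 1$ with $U(c) = \{*\}$ iff $c \notin \mc D$ (\cite[A4.5.5]{elephant1}); and (iii) the remark above, that an essentially algebraic theory with falsum is an essentially algebraic $\mbb T_0$ together with a single falsum sequent $\Phi \vdash \bot$, equivalently a family $\{\phi_i \vdash \bot\}_{i\in I}$ of cartesian falsum sequents.

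The bridge between (ii) and (iii) is a dictionary I would set up first. Fix an essentially algebraic $\mbb T_0$ with $\mc C := \mc C_{\mbb T_0}$, so $\mc B[\mbb T_0] \simeq \psh(\mc C)$. For a geometric formula $\Phi$ over $\mbb T_0$ let $U := \mr{im}(\llbracket\Phi\rrbracket \to 1)$ be its support, a subterminal of $\psh(\mc C)$. Since inverse images of geometric morphisms preserve images, $g^*\llbracket\Phi\rrbracket \cong 0$ iff $g^*U \cong 0$; hence the quotient theory $\mbb T_0 + (\Phi \vdash \bot)$ is classified exactly by the closed subtopos $\mc C(U)$. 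Conversely, every subterminal $U \hookrightarrow 1$ of $\psh(\mc C)$ is the support of the geometric sentence $\Phi_U := \bigvee_{U(c)=\{*\}} \exists\vec x_c.\, \phi_c$ — here one uses that $\{c : U(c) = \{*\}\}$ is closed under precomposition, so that $U = \bigcup_{U(c)=\{*\}}\mr{im}(\yo(c) \to 1)$ — and then $\mc C(U)$ classifies $\mbb T_0 + (\Phi_U \vdash \bot)$.

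With this in place both directions are immediate. If $\mc E$ classifies an essentially algebraic theory with falsum, write it as $\mbb T_0 + (\Phi \vdash \bot)$ by (iii); then by the dictionary $\mc E \simeq \mc C(U)$ is a closed subtopos of the free topos $\psh(\mc C_{\mbb T_0})$, and by (ii) $\mc C(U) \simeq \psh(\mc D)$ for the cosieve $\mc D = \{c : U(c) = \emptyset\}$. Conversely, if $\mc E \simeq \psh(\mc D)$ for a cosieve $\mc D$ in a lex $\mc C$, then by (i) $\psh(\mc C)$ classifies some essentially algebraic $\mbb T_0$, by (ii) $\psh(\mc D) \simeq \mc C(U)$ for the associated subterminal $U$, and by the dictionary $\mc C(U)$ classifies $\mbb T_0 + (\Phi_U \vdash \bot)$, which by (iii) is an essentially algebraic theory with falsum.

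The only genuine work is the dictionary of the second paragraph, i.e. keeping the three descriptions of the same datum aligned: a cosieve $\mc D \subseteq \mc C$, the subterminal object (equivalently, the precomposition-closed class of objects) of $\psh(\mc C)$ it determines, and the falsum sequent forcing that subterminal to vanish. Its single non-formal ingredient is the reduction of ``$\llbracket\Phi\rrbracket \cong 0$'' to ``$\mr{supp}\,\llbracket\Phi\rrbracket \cong 0$'' via preservation of images under $g^*$; everything else is routine translation between syntax and the recalled facts (i) and (ii), requiring no further computation.
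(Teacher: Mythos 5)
Your proof is correct, but it is packaged differently from the paper's. The paper argues entirely on the level of sites: from the syntactic description, the classifying topos of an essentially algebraic theory with falsum is $\sh(\mc C,J)$ for a topology $J$ that only assigns empty covers; stability of covers under pullback makes $\mc D = \{c : \emptyset \notin J(c)\}$ a cosieve, and the comparison lemma (i.e.\ the recalled identification $\psh(\mc D)\simeq\sh(\mc C,J)$) finishes the forward direction, with the converse left implicit. You instead route both directions through the closed-subtopos picture: a falsum sequent $\Phi\vdash\bot$ determines the support subterminal $U=\mr{im}(\llbracket\Phi\rrbracket\to 1)$, the quotient theory is classified by $\mc C(U)$ because $g^*\llbracket\Phi\rrbracket\cong 0$ iff $g^*U\cong 0$ (preservation of images plus strictness of $0$), and conversely every subterminal of $\psh(\mc C)$ is the support of a disjunction of the cartesian sentences $\exists\vec x_c.\phi_c$ indexed by its (sieve of) supporting objects. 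This buys you two things the paper's proof does not spell out: an explicit proof of the converse implication, and a proof of the correspondence between quotients by $\Phi\vdash\bot$ and closed subtopoi, which the paper only states as a remark afterwards with a citation to Caramello. The cost is that your argument leans on the syntactic-category presentation of $\psh(\mc C)$ and the universal property of closed subtopoi, where the paper gets by with the single observation that the relevant Grothendieck topology is generated by empty covers. Both proofs ultimately rest on the same cosieve/closed-subtopos dictionary from \cite[A4.5.5]{elephant1}, so I would regard yours as a more explicit, logically self-contained variant rather than a fundamentally new argument.
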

\begin{proof}
    From the syntactic description of an essentially algebraic theory with falsum, its classifying topos will be of the form $\sh(\mc C,J)$, where $J$ only assigns empty covers. Let $\mc D$ be the full subcategory of $\mc C$ where $c\in\mc D$ iff the empty sieve is not a $J$-cover of $c$. Since $J$-covers are closed under pullback, $\mc D$ is a cosieve. By the comparison lemma, we have $\psh(\mc D) \simeq \sh(\mc C,J)$.
\end{proof}

\begin{rem}
    Alternatively, from a logical perspective, closed subtopoi of $\Set[\mbb T]$ for the classifying topos of a geometric theory $\mbb T$ corresponds exactly to quotients of $\mbb T$ by adding a sequent of the form $\Phi \vdash \bot$; see e.g.~\cite[Sec. 4.2.2]{caramello2018theories}.
\end{rem}

\begin{prop}\label{prop:cosievegenericallyalgebraic}
    For any essentially algebraic theory with falsum $\mbb T$, $\Set[\mbb T] \in \WRInj(\mc H_{\mr{dom}})$, and any such topos admits an $\mc H_{\mr{dom}}$-presentation.
\end{prop}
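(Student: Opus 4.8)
The plan is to deduce everything from Proposition~\ref{prop:genericproof}, so that the only real work is to verify its $J$-continuity hypothesis for the topology coming from an essentially algebraic theory with falsum. First I would use Lemma~\ref{lem:classifyingofelsealgebraic} to present $\mc X := \Set[\mbb T]$ as $\sh(\mc C,J)$ with $\mc C$ left exact and $\mc D \subseteq \mc C$ a cosieve, where $J$ assigns the empty covering sieve to every $c \notin \mc D$ and only the maximal (trivial) sieve to objects of $\mc D$; write $j : \mc X \hook \psh(\mc C)$ for the resulting closed embedding and $y := j^*\yo$ for the composite of Yoneda with sheafification.

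Next I would unwind what ``$f_*x^*y$ is $J$-continuous'' amounts to for this particular $J$. Since the only nontrivial $J$-covers are empty, and since a left exact functor automatically sends a maximal sieve to a jointly epimorphic family, a left exact functor $G : \mc C \to \mc F$ into a topos is $J$-continuous precisely when, for each $c \notin \mc D$, the canonical map $\coprod_{\emptyset} G(d) \to G(c)$ — that is, $0_{\mc F} \to G(c)$ — is an epimorphism; in a topos this holds iff $G(c) \cong 0_{\mc F}$. So the hypothesis of Proposition~\ref{prop:genericproof} reduces to: $f_* x^* y(c) \cong 0_{\mc F}$ for all $c \notin \mc D$, all $f : \mc E \to \mc F$ in $\mc H_{\mr{dom}}$, and all $x : \mc E \to \mc X$.

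The verification of this is then a one-line consequence of dominance. Fix such $f$, $x$, and $c \notin \mc D$. Because the empty sieve is a $J$-cover of $c$, the sheafification $y(c) = j^*\yo(c)$ is the initial object of $\mc X$ (any sheaf has value the terminal set on $c$, so $y(c)$ represents the functor constant at $1$). As $x^*$ is the inverse image of a geometric morphism it preserves all colimits, in particular $x^* y(c) \cong 0_{\mc E}$; and as $f$ is dominant, $f_*$ preserves the initial object, so $f_* x^* y(c) \cong 0_{\mc F}$. By the reduction of the previous paragraph, $f_*x^*y$ is $J$-continuous, hence Proposition~\ref{prop:genericproof} applies and delivers both $\mc X \in \WRInj(\mc H_{\mr{dom}})$ and that $j : \mc X \hook \psh(\mc C)$ is an $\mc H_{\mr{dom}}$-presentation.

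The only mildly delicate point is the bookkeeping in the middle step: matching the ``$J$-continuous'' terminology of Proposition~\ref{prop:genericproof} with the concrete condition $G(c) \cong 0$, i.e. recalling both the covering-family form of the sheaf condition and the fact that $0 \to X$ is epi in a topos iff $X \cong 0$. Everything else — that sheafification kills representables over objects with an empty cover, that $x^*$ preserves $0$, and that dominant direct images preserve $0$ — is immediate, so I do not expect a genuine obstacle here.
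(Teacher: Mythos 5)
Your argument is correct and follows essentially the same route as the paper's proof: reduce via Lemma~\ref{lem:classifyingofelsealgebraic} and Proposition~\ref{prop:genericproof}, observe that the only nontrivial $J$-covers are empty so that $y(c) \cong \emptyset$ for $c \notin \mc D$, and conclude by dominance of $f_*$. The paper states this more tersely, but your extra bookkeeping (why $J$-continuity reduces to $G(c) \cong 0$, and why $x^*$ preserves the initial object) is exactly the content being elided there.
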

\begin{proof}
    By Lemma~\ref{lem:classifyingofelsealgebraic} it suffices to show $\psh(\mc D) \simeq \sh(\mc C,J)$ lies in $\WRInj(\mc H_{\mr{dom}})$ for any cosieve $\mc D$ in a left exact category $\mc C$. Let $j : \psh(\mc D) \hook \psh(\mc C)$ be the embedding. By Proposition~\ref{prop:genericproof}, it suffices to show for any dominant map $f : \mc E \to \mc F$ and $x : \mc E \to \psh(\mc D)$, the induced map $f_*x^*y$ is $J$-continuous with $y = j^*\yo$. Now suppose $c\in\mc C$ has an empty $J$-covering, then $y c \cong \emptyset$. Since $f$ is dominant, $f_*x^*y c \cong \emptyset$, hence it is $J$-continuous.
\end{proof}

In this case, we can indeed completely describe the relationship between the classifying topoi of essentially algebraic theories with falsum, and the formal objects in $\WRInj(\mc H)$. As a similar result to Theorem~\ref{thm:algebraic}, we have:

\begin{thm}[Characterisation of essentially algebraic logic with falsum]\label{thm:elsealgebraic}
    TFAE:
    \begin{enumerate}
        \item $\mc X \in \WRInj_{\pw}(\mc H_{\mr{dom}})$.
        \item $\mc X$ is a coadjoint retract of $\Set[\mbb T]$ for an essentially algebraic theory with falsum $\mbb T$.
    \end{enumerate}
\end{thm}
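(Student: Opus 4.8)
The plan is to mimic the proof of Theorem~\ref{thm:algebraic} as closely as possible, so that the only new ingredient is the identification of the free objects in the relevant Kan injectivity class. The implication (2) $\nt$ (1) is the easy direction: by Proposition~\ref{prop:cosievegenericallyalgebraic}, every $\Set[\mbb T]$ for an essentially algebraic theory with falsum lies in $\WRInj(\mc H_{\mr{dom}})$ and admits an $\mc H_{\mr{dom}}$-presentation, hence lies in $\WRInj_{\pw}(\mc H_{\mr{dom}})$ by Lemma~\ref{lem:presentationpointwise}; and $\WRInj_{\pw}(\mc H_{\mr{dom}})$ is closed under coadjoint retracts by Lemma~\ref{lem:pointwiseretract}. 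So the real content is (1) $\nt$ (2).

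For (1) $\nt$ (2), suppose $\mc X \in \WRInj_{\pw}(\mc H_{\mr{dom}})$. First I would extract a candidate presentation: consider a surjection–embedding factorization, or more directly an embedding $j : \mc X \hook \psh(\mc C)$ into a suitable free topos (e.g. take $\mc C$ a small lex generating site for $\mc X$, or use that $\mc X$ embeds into some presheaf topos). The key step is to show that this embedding lies in $\WRInj(\mc H_{\mr{dom}})$ and that the right Kan extensions of $\mc X$ against dominant maps are computed pointwise by the formula $(\ran_fx)_* \cong \lan_{f_*}x_*$ — this is exactly the role of pointwiseness in hypothesis (1). Then I would argue, using Proposition~\ref{prop:ranemiffweak} (pointwise weak right Kan extension along the fully faithful $j$ is strict) together with a Beck–Chevalley / idempotent-comparison style argument, that $\mc X$ is a coadjoint retract of $\psh(\mc C)$ cut out by an appropriate "closedness" condition. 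Concretely: the reflection of $\mc X$ inside $\psh(\mc C)$ along dominant embeddings forces $\mc X$ to be a closed subtopos of $\psh(\mc C)$ up to coadjoint retract, i.e. $\mc X$ is (a retract of) $\psh(\mc D)$ for a cosieve $\mc D \subseteq \mc C$; then invoke Lemma~\ref{lem:classifyingofelsealgebraic} to conclude $\psh(\mc D) \simeq \Set[\mbb T]$ for an essentially algebraic theory with falsum $\mbb T$.

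Alternatively — and this may be cleaner — one reduces directly to Theorem~\ref{thm:algebraic}. Given $\mc X \in \WRInj_{\pw}(\mc H_{\mr{dom}})$, one shows $\mc X$ has a \emph{smallest} open subtopos $\mc X_0 \hook \mc X$ whose complement is the closed part, and that $\mc X_0 \in \WRInj_{\pw}(\mc H_{\mr{all}})$ (using that every geometric morphism factors through a dominant one after restricting to the open part, together with Lemma~\ref{lem:verticalcompose}); by Theorem~\ref{thm:algebraic}, $\mc X_0$ is a coadjoint retract of a free topos $\psh(\mc C)$, and the closed complement then exhibits $\mc X$ as a coadjoint retract of the corresponding $\psh(\mc D)$ for a cosieve $\mc D$, whence $\Set[\mbb T]$ by Lemma~\ref{lem:classifyingofelsealgebraic}.

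The main obstacle, I expect, is the same gap that appears in Theorem~\ref{thm:algebraic}: going from "$\mc X$ has pointwise weak right Kan extensions against all dominant maps" back to a concrete \emph{presentation} of $\mc X$ as a closed subtopos of a free one — i.e. producing the cosieve $\mc D$. This requires carefully choosing the free cover $\psh(\mc C)$ and checking that the dominant-map Kan injectivity of $\mc X$ propagates to an $\mc H_{\mr{dom}}$-presentation, so that Proposition~\ref{prop:genericproof} and Lemma~\ref{lem:presentationpointwise} apply in reverse. The role of \emph{dominance} (preservation of the initial object) is precisely what makes empty $J$-covers the only obstruction, matching the cosieve description; verifying that no other covering sieves can be forced is the delicate point. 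Everything else — the retract bookkeeping, the translation to syntax via Lemma~\ref{lem:classifyingofelsealgebraic} — is routine given the earlier results.
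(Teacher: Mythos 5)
Your handling of (2) $\Rightarrow$ (1) is correct and matches the paper, and you rightly locate the whole difficulty of (1) $\Rightarrow$ (2) in ``producing the cosieve'' --- but the proposal then stops exactly where the real work begins, and the mechanism you gesture at would misfire. The paper's argument is: start from \emph{any} inclusion $k : \mc X \hook \psh(\mc C)$ into a free topos, define the cosieve $\mc D = \{c \in \mc C \mid k^*c \not\cong \emptyset\}$ explicitly, check that each $k_*X$ is a sheaf for the closed topology determined by $\mc D$ (so $k$ factors as $\mc X \hook \psh(\mc D) \hook \psh(\mc C)$ through $i : \mc X \hook \psh(\mc D)$), and then --- the decisive point --- verify that $i$ is itself a \emph{dominant} geometric morphism, via strictness of the initial object: $i_*\emptyset(d) \cong \mc X(k^*d,\emptyset) \cong \emptyset$ because $k^*d \not\cong \emptyset$ for $d\in\mc D$. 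Only once $i$ is known to be a \emph{member of the class} $\mc H_{\mr{dom}}$ can hypothesis (1) be fired at it: the pointwise right Kan extension of $1_{\mc X}$ along $i$ then exists and is a genuine retraction by Proposition~\ref{prop:ranemiffweak}, exhibiting $\mc X$ as a coadjoint retract of $\psh(\mc D) \simeq \Set[\mbb T]$. Your first route instead asks that the presentation ``lies in $\WRInj(\mc H_{\mr{dom}})$'' --- i.e.\ is a \emph{morphism of injectives} --- which is the ingredient for the easy direction, and it proposes applying Proposition~\ref{prop:ranemiffweak} to the ambient embedding into $\psh(\mc C)$; but that embedding need not be dominant, so hypothesis (1) gives no Kan extension along it at all, and the claim that $\mc X$ is a coadjoint retract of $\psh(\mc C)$ itself would force $\mc X \in \WRInj_{\pw}(\mc H_{\mr{all}})$ by Theorem~\ref{thm:algebraic}, which is strictly too strong.

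The alternative route is not repairable as stated: there is no reason for a topos in $\WRInj_{\pw}(\mc H_{\mr{dom}})$ to contain an open subtopos lying in $\WRInj_{\pw}(\mc H_{\mr{all}})$, and the open/closed roles are inverted --- the target is that $\mc X$ is (a retract of) a \emph{closed} subtopos $\psh(\mc D)$ of a free topos, and a cosieve $\mc D$ in a lex category need not be lex, so $\psh(\mc D)$ is not free and Theorem~\ref{thm:algebraic} cannot be applied to it. In short: the skeleton and the citations are right, but the explicit cosieve, the factorization of $k$ through it, and the dominance of the factored embedding constitute the actual proof, and they are exactly what is missing.
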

\begin{proof}
    (2) $\nt$ (1) follows from Proposition~\ref{prop:cosievegenericallyalgebraic}. For (1) $\nt$ (2), suppose $\mc X \in \WRInj(\mc H)$, and let $k : \mc X \hook \psh(\mc C)$ be an inclusion into a free topos. Define a cosieve $\mc D$ of $\mc C$ where $c\in\mc D$ iff $k^*c \not\cong \emptyset$. Now we claim that $k$ factors through the inclusion $j : \psh(\mc D) \hook \psh(\mc C)$ as
    \[ j \circ i : \mc X \hook \psh(\mc D) \hook \psh(\mc C). \]
    Since both $k$ and $j$ are embeddings, it suffices to show that for any $X\in\mc X$, $k_*X$ is a $j$-sheaf, i.e. $j_*j^*k_*X \cong k_*X$. Let us use $i_*$ to denote the functor $j^*k_*$. For any $X\in\mc X$, since $j_*$ is given by right Kan extension, we have for any $c\in\mc C$,
    \[ j_*i_*X(c) \cong \lt_{d:\mc D/c}i_*X(d). \]
    There are now two cases: (1) If $c\in\mc D$, then $j_*i_*X(c) \cong i_*X(c) \cong k_*X(c)$; (2) If $c\not\in\mc D$, which by definition $k^*c \cong \emptyset$, then $j_*i_*X(c) \cong 1$. But in this case we must also have $k_*X(c) \cong 1$, since we have
    \[ k_*X(c) \cong \psh(\mc C)(c,k_*X) \cong \mc X(k^*c,X) \cong 1. \]
    Thus indeed $k$ factors through $j$ along $i : \mc X \hook \psh(\mc D)$. 
    
    By Lemma~\ref{lem:classifyingofelsealgebraic}, $\psh(\mc D)$ is the classifying topos $\Set[\mbb T]$ for an essentially algebraic theory with falsum. Since $\mc X$ has pointwise right Kan extension against dominant maps, by Proposition~\ref{prop:ranemiffweak} it thus suffices to show $i$ is dominant. This is indeed the case, since for any $d\in\mc D$,
    \[ i_*\emptyset(d) \cong k_*\emptyset(d) \cong \psh(\mc C)(d,k_*\emptyset) \cong \mc X(k^*d,\emptyset) \cong \emptyset, \]
    because by definition $k^*d\not\cong\emptyset$ in $\mc X$, and the initial object in any topos is strict.
\end{proof}

\subsection{Disjunctive logic} \label{disj}

Below we start by recalling the notion of pure geometric morphisms, and introduce the related notion of innocent geometric morphism.

\begin{defn}\label{defpure}
    A geometric morphism $f$ is \emph{pure} if $f_*$ preserves finite coproducts. We write $\mc H_{\mr{pure}}$ for the class of pure geometric morphisms.
\end{defn}

\begin{defn}\label{definn}
    A geometric morphism is \emph{innocent} if $f_*$ preserves all coproducts. We write $\mc H_{\mr{inn}}$ for the class of innocent geometric morphisms. 
\end{defn}

\begin{rem}
    Pure geometric morphisms were introduced in~\cite{johnstone1981factorization}, and are usually studied in the context of closed maps between topoi, cf.~\cite[III.5]{moerdijk2000proper} and~\cite[C3.4]{elephant2}. There seems to be no study of innocent maps, except we know connected geometric morphisms are innocent (cf.~\cite[C3.4.14]{elephant2}).
\end{rem}

\begin{defn}
    A topos $\mathcal{X}$ is \emph{formally disjunctive} (resp. \emph{formally finitary disjunctive}), if it is weakly right Kan injective w.r.t. \emph{innocent} (resp. \emph{pure}) geometric morphisms.
\end{defn}

Again, we can immediately conclude that the generalised category of points of a formally disjunctive topos has connected limits:

\begin{lem} \label{connected limits}
    If $\mc X$ is formally disjunctive, then $\Topoi(\mc E,\mc X)$ has connected limits for all $\mc E$.
\end{lem}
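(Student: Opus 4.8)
The plan is to apply the ``prescribing universals'' blueprint of Proposition~\ref{prop:limitsaskan}, which says that $\Topoi(\mc E,\mc X)$ has $\mc A$-limits if and only if $\mc X$ is weakly right Kan injective with respect to the canonical geometric morphism $\mc A \otimes \mc E \simeq [\mc A,\mc E] \to \mc E$. Since $\Topoi$ is tensored with $\cat$, it is therefore enough to prove that for every small \emph{connected} category $\mc A$ and every topos $\mc E$, this canonical morphism $p : [\mc A,\mc E] \to \mc E$ — the one induced by $\mc A \to \mb 1$ — lies in $\mc H_{\mr{inn}}$; formal disjunctiveness of $\mc X$ then supplies the required weak right Kan injectivity, and Proposition~\ref{prop:limitsaskan} returns all connected limits in $\Topoi(\mc E,\mc X)$.

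First I would unwind $p$: its inverse image $p^*$ is the constant-diagram functor $\Delta : \mc E \to [\mc A,\mc E]$ (left exact, as it preserves pointwise limits), and its direct image $p_*$ is $\lim_{\mc A} : [\mc A,\mc E] \to \mc E$. The key observation is that $\Delta$ is fully faithful precisely because $\mc A$ is connected: naturality along any morphism of $\mc A$ forces a transformation $\Delta X \Rightarrow \Delta Y$ to have equal components at that morphism's source and target, so all its components agree along any zig-zag, whence $[\mc A,\mc E](\Delta X,\Delta Y) \cong \mc E(X,Y)$. Thus $p$ is a \emph{connected} geometric morphism.

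Next I would invoke the fact recorded in the remark after Definition~\ref{definn} that connected geometric morphisms are innocent (\cite[C3.4.14]{elephant2}); so $p_* = \lim_{\mc A}$ preserves all coproducts, i.e. $p \in \mc H_{\mr{inn}}$. (This is just the classical statement that connected limits commute with coproducts in a Grothendieck topos; if one preferred a self-contained argument, it can be read off from extensivity — coproducts in a topos being disjoint and pullback-stable, the summand in which a compatible family representing a point of $\lim_{\mc A}D$ sits is locally constant along $\mc A$, hence globally constant by connectedness.) Combining the pieces, since $\mc X$ is formally disjunctive it is weakly right Kan injective with respect to every innocent morphism, in particular with respect to each such $p$; so by Proposition~\ref{prop:limitsaskan}, $\Topoi(\mc E,\mc X)$ has $\mc A$-limits for every small connected $\mc A$, i.e. has connected limits, for every $\mc E$.

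The argument is short and there is no serious obstacle: the only genuine content is the remark that $\mc A \otimes \mc E \to \mc E$ is a connected — not merely dominant — geometric morphism when $\mc A$ is connected. The one subtlety to keep in mind is that ``connected'' for a category means nonempty \emph{together with} zig-zag-connectedness; the nonemptiness is what makes $\Delta$ fully faithful, and correspondingly the empty limit (a terminal object) falls outside this lemma and is instead governed by $\emptyset \hookrightarrow \Set$, exactly as the weaker dominant-map condition of the preceding lemma already hinted.
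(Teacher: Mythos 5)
Your proposal is correct and is essentially the paper's own argument: the paper likewise reduces via Proposition~\ref{prop:limitsaskan} to showing that $\mc A \otimes \mc E \to \mc E$ is innocent for connected $\mc A$, and deduces this from the fact that the morphism is connected together with the cited fact that connected geometric morphisms are innocent. You merely spell out in more detail why the morphism is connected (full faithfulness of $\Delta$ via zig-zag connectedness and nonemptiness), which the paper leaves implicit.
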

\begin{proof}
    For any connected category $\mc C$, the geometric morphism will be innocent, $ \mc C \otimes \mc E \to \mc E$ since this geometric morphism is in particular connected.
\end{proof}

Following~\cite{johnstone1979syntactic}, we call a geometric theory $\mbb T$ \emph{disjunctive} (resp. \emph{finitary disjunctive}), if the axioms in $\mbb T$ only involves provably unique existential quantifier and provably disjoint disjunctions (resp. provably disjoint finite disjunctions). We say a topos is (finitary) disjunctive if it is the classifying topos of a (finitary) disjunctive theory.

\begin{exa}
    For any category $\mc C$ with finite multilimits, the theory of flat $\mc C$-functors is disjunctive (cf.~\cite{johnstone1979syntactic}). The coherent theory of fields is an important example of a finitary disjunctive theory.
\end{exa}

To illustrate the idea that the classifying topoi of a (finitary) disjunctive theory will be (finitary) disjunctive, we first consider the classifying topos of flat $\mc C$-functors for a multicomplete category $\mc C$:

\begin{prop}\label{preshdisj}
    If $\mc C$ has finite multilimits, then $\psh(\mc C)$ is formally disjunctive, with pointwise right Kan extensions against innocent geometric morphisms.
\end{prop}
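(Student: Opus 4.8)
The plan is to apply Proposition~\ref{prop:genericproof} directly, since it is tailor-made for exactly this kind of statement. We need to exhibit $\psh(\mc C)$ as a sheaf topos $\sh(\mc C',J)$ with $\mc C'$ left exact, and then verify the continuity hypothesis of that proposition for every innocent geometric morphism. The subtlety is that $\mc C$ itself is only assumed to have finite multilimits, not finite limits, so we cannot take $\mc C' = \mc C$. Instead I would pass to a left exact completion: let $\mc C' = \mc C_{\mr{lex}}$ be the free finite-limit completion of $\mc C$ (equivalently, the full subcategory of $\psh(\mc C)$ on finite multilimits of representables, or the opposite of the category of finitely presentable flat functors). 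There is a canonical topology $J$ on $\mc C'$ for which $\sh(\mc C',J) \simeq \psh(\mc C)$: concretely, $J$ is generated by the finite families exhibiting each object of $\mc C'$ as a multilimit cone of objects coming from $\mc C$, so that the $J$-sheaves are precisely the presheaves on $\mc C'$ that send these multilimit cones to the corresponding coproduct decompositions. This is the standard description of the classifying topos of flat $\mc C$-functors as a presheaf topos when $\mc C$ is multicomplete, and is implicit in~\cite{johnstone1979syntactic}.

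Next I would unwind what Proposition~\ref{prop:genericproof} asks of us. Write $j : \psh(\mc C) \hook \psh(\mc C')$ for the embedding and $y = j^*\yo : \mc C' \to \psh(\mc C)$. Fix an innocent $f : \mc E \to \mc F$ and any $x : \mc E \to \psh(\mc C)$; we must check that the composite $f_* x^* y : \mc C' \to \mc F$ is $J$-continuous, i.e. sends the generating $J$-covers (the multilimit cones) to the right colimit/coproduct decompositions in $\mc F$. Now $x^* y$ is already $J$-continuous, since $y$ is essentially the inclusion into sheaves and $x^*$ is left exact and cocontinuous; concretely, $x^* y$ sends a finite multilimit cone in $\mc C'$ to a finite coproduct decomposition of an object of $\mc E$. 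The only thing left is that $f_*$ preserves these finite coproduct decompositions — and that is exactly the definition of $f$ being innocent (indeed even pure would suffice here since the decompositions are finite, but innocence certainly gives it). Hence $f_* x^* y$ is $J$-continuous, so Proposition~\ref{prop:genericproof} applies: $\psh(\mc C) \in \WRInj(\mc H_{\mr{inn}})$, and $j$ is an $\mc H_{\mr{inn}}$-presentation. The pointwise assertion is then immediate from Lemma~\ref{lem:presentationpointwise}.

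The step I expect to be the main obstacle is the first one: pinning down the site $(\mc C', J)$ and the equivalence $\sh(\mc C',J) \simeq \psh(\mc C)$ precisely enough that "the generating $J$-covers are the finite multilimit cones" is literally true, and that $J$-continuity of a functor out of $\mc C'$ unwinds to "sends finite multilimit cones to finite coproduct decompositions." Once that bookkeeping is in place, the rest is a one-line invocation of innocence plus Proposition~\ref{prop:genericproof}. A cleaner alternative, avoiding explicit sites, would be to argue directly: by Example~\ref{exm:ranfree} the right Kan extension $g = \ran_f(jx)$ into $\psh(\mc C')$ has inverse image classified by $f_* x^* y$, and one shows this left exact functor $\mc C' \to \mc F$ preserves the relevant finite coproduct decompositions (because $x^* y$ does, being a flat functor valued in $\mc E$, and $f_*$ does, being innocent), hence factors through flat $\mc C$-functors, i.e. $g$ factors through $j$ — which is precisely what is needed.
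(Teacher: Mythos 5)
Your route is viable but genuinely different from the paper's, and it front-loads the work into a step the paper avoids entirely. The paper never passes to a lex completion or invokes Proposition~\ref{prop:genericproof} (which requires a left exact site): it works directly with $\psh(\mc C)$ for the multicomplete but non-lex $\mc C$, using the fact that geometric morphisms $\mc F \to \psh(\mc C)$ correspond to \emph{flat} functors $\mc C \to \mc F$. Following Example~\ref{exm:ranfree}, it suffices to check that $f_*x^*\yo$ is flat, and this is done via the Hu--Tholen criterion: a functor out of a multicomplete category is flat iff it merges finite multilimits. Since $\lt\yo_{A_i} \cong \coprod_s\yo_{B_s}$ in $\psh(\mc C)$, and $x^*$ preserves limits and colimits while $f_*$ preserves limits and (by innocence) all coproducts, the merging property follows. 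Your approach instead reduces the statement to the general disjunctive-site theorem by exhibiting $\psh(\mc C) \simeq \sh(\mc C_{\mr{lex}},J)$ with $J$ generated by the multilimit cones; this is correct in principle and is essentially the content of Johnstone's syntactic-method description of the classifying topos of flat $\mc C$-functors, but the comparison-of-sites bookkeeping you defer is the bulk of the argument in that route, whereas the paper's direct flatness check is a few lines.

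There is one genuine error you should fix: the parenthetical claim that ``even pure would suffice here since the decompositions are finite.'' A finite multilimit is the multilimit of a \emph{finite diagram}, but the solution family indexing the cone need not be finite (the multi-initial object of the category of fields, indexed by the prime fields, is the standard example). Consequently the decomposition $\lt\yo_{A_i} \cong \coprod_s\yo_{B_s}$ is an arbitrary small coproduct, and preservation by $f_*$ genuinely requires innocence, not purity. This is not a cosmetic point: it is exactly the distinction the paper draws between disjunctive logic (innocent maps, arbitrary disjoint disjunctions) and finitary disjunctive logic (pure maps, finite disjoint disjunctions), and it is why the proposition pairs multicomplete $\mc C$ with $\mc H_{\mr{inn}}$ rather than $\mc H_{\mr{pure}}$. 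Your main line of argument survives because you do invoke innocence, but the phrase ``finite coproduct decompositions'' should be corrected throughout.
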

\begin{proof}
    Let $f : \mc E \to \mc F$ be an innocent geometric morphism, and consider any $x : \mc E \to \psh(\mc C)$. To show the following right Kan extension exists,
    \[
    \begin{tikzcd}
        \mc E \ar[d, "f"'] \ar[r, "x"] & \psh(\mc C) \\
        \mc F \ar[ur, dashed, "h"']
    \end{tikzcd}
    \]
    following the computation for free topoi given in Example~\ref{exm:ranfree}, it suffices to show the following left Kan extension in $\CAT$
    \[ h^* = \lan_{\yo} f_*x^*\yo \]
    is left exact, i.e. $f_*x^*\yo$ is flat. Since $\mc C$ has finite multilimits, by \cite[Prop. 1.1]{hu1995limits} it is flat iff it merges finite multilimits. Suppose we have a finite diagram $\stt{A_i}$ in $\mc C$. By~\cite[Cor. 2.5]{hu1995limits}, the finite limit $\lt{\yo_{A_i}}$ is a coproduct of representables, which we write as $\coprod_{s}\yo_{B_s}$. This way, we have
    \[ \coprod_{s} f_*x^*\yo_{B_s} \cong f_*x^*\coprod\yo_{B_s} \cong f_*x^*\lt\yo_{A_i} \cong \lt f_*x^*\yo_{A_i}. \]
    The first isomorphism uses the fact that $f$ is innocent. Hence, the composite $f_*x^*\yo$ merges finite multilimits, which implies $h^*$ as the above left Kan extension is left exact, thus provides a geometric morphism $\mc F \to \psh(\mc C)$.
\end{proof}

For a left exact category $\mc C$, we say a Grothendieck topology $J$ on $\mc C$ is \emph{disjunctive} (resp. \emph{finitary disjunctive}), if $J$ is generated by covers (resp. finite covers) of the form $\stt{X_i \inj X}_{i\in I}$, where $X_i \inj X$ are monomorphisms and the empty sieve is a $J$-cover for $X_i \wedge X_j$ whenever $i \neq j$. A site $(\mc C,J)$ of such form is called a disjunctive (resp. finitary disjunctive) site.

\begin{lem}
    A topos is (finitary) disjunctive iff it is of the form $\sh(\mc C,J)$ for a (finitary) disjunctive site.
\end{lem}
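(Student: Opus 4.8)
The plan is to prove both implications through the standard dictionary between disjunctive geometric theories and disjunctive sites, leaning on Diaconescu's theorem throughout: for a small left exact category $\mc C$ and a Grothendieck topology $J$ on it, $\sh(\mc C,J)$ classifies the theory of $J$-continuous flat functors out of $\mc C$, and flatness of a functor defined on a lex category is the same as left exactness.

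For the ``if'' direction I would start from a (finitary) disjunctive site $(\mc C,J)$, with $J$ generated by a basis of covers $\{X_i\inj X\}_{i\in I}$ (finite $I$ in the finitary case) consisting of monomorphisms such that $X_i\wedge X_j$ receives the empty cover whenever $i\neq j$, and read off the associated theory $\mbb T$ of $J$-continuous left exact functors in the usual way: a sort for each object of $\mc C$, a function symbol for each morphism, Horn axioms for functoriality, and provably unique existential axioms expressing preservation of the terminal object and of pullbacks — all of which live in the essentially algebraic fragment, hence a fortiori in the disjunctive one. The key observation is then that a left exact model $M$ sends the pullback $X_i\times_X X_j=X_i\wedge X_j$ to a subobject which the empty-cover axiom forces to be initial, and sends each $X_i\inj X$ to a subobject; so $J$-continuity at the basis element $\{X_i\inj X\}_{i\in I}$ (jointly epimorphic images) is equivalent to the single sequent $\top\vdash_{x}\bigvee_{i\in I}\exists y_i\,(f_i(y_i)=x)$, a provably disjoint disjunction of provably unique existentials. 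Hence $\mbb T$ is disjunctive, and finitary disjunctive exactly when $J$ is, so $\sh(\mc C,J)\simeq\Set[\mbb T]$ is a (finitary) disjunctive topos.

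For the ``only if'' direction, starting from $\mc X\simeq\Set[\mbb T]$ with $\mbb T$ a (finitary) disjunctive theory, I would form the syntactic category $\mc C_{\mbb T}$ of the $\{\top,\wedge,=,\text{provably unique }\exists\}$-subfragment of $\mbb T$; since that subfragment is essentially algebraic, $\mc C_{\mbb T}$ is left exact (this is exactly the fact used in Section~\ref{subsec:essalg}). After absorbing the provably unique existentials, every axiom of $\mbb T$ takes the form $\phi\vdash_{\vec x}\bigvee_{i\in I}\psi_i$ with each $[\psi_i]\inj[\phi]$ a subobject in $\mc C_{\mbb T}$ and $[\psi_i\wedge\psi_j]$ provably $\bot$; I would then let $J_{\mbb T}$ be the topology generated by the sieves on $[\phi]$ these families generate, together with the empty cover on every object that is provably false. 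The families $\{[\psi_i]\inj[\phi]\}_{i\in I}$ are precisely covers by monomorphisms whose pairwise meets carry the empty cover, so $(\mc C_{\mbb T},J_{\mbb T})$ is a (finitary) disjunctive site, and the classifying-topos construction for this fragment (cf.~\cite{johnstone1979syntactic} and~\cite[D1.4, D3.1]{elephant2}) identifies $\Set[\mbb T]\simeq\sh(\mc C_{\mbb T},J_{\mbb T})$.

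The main obstacle is bookkeeping on the syntactic side: confirming that the essentially algebraic subfragment of disjunctive logic has a left exact syntactic category, and — in the converse direction — that a left exact functor on a disjunctive site is $J$-continuous precisely when it carries each generating family to a jointly epimorphic family of pairwise disjoint subobjects, which is exactly the content of a provably disjoint disjunction axiom. Both facts are already available from~\cite{johnstone1979syntactic}; the only genuinely new point is the routine matching of the ``disjointness'' datum of the site (empty cover on $X_i\wedge X_j$) with the ``provable disjointness'' datum of the theory, together with the observation that finiteness of the covers corresponds to finiteness of the disjunctions.
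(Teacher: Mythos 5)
Your proof is correct and follows essentially the same route as the paper, which simply cites the syntactic description of disjunctive sequents in Johnstone's \emph{loc.\ cit.}\ (Cor.~3.5) for exactly the dictionary you spell out. What you have written is an unfolding of that citation — the translation between generating covers by pairwise-disjoint monomorphisms and axioms given by provably disjoint disjunctions of provably unique existentials — so no new idea is needed beyond what the paper delegates to the reference.
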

\begin{proof}
    This follows from the syntactic description of disjunctive sequents given in~\cite[Cor. 3.5]{johnstone1979syntactic}.
\end{proof}

\begin{exa}
    Recall that a \emph{Gaeta topos} is the topos of the form $\sh(\mc C,J)$ for some extensive category $\mc C$ equipped with the extensive topology. If $\mc C$ is in fact \emph{lextensive}, then $\sh(\mc C,J)$ is finitary disjunctive.
\end{exa}

\begin{thm}
    Any (finitary) disjunctive topos is formally (finitary) disjunctive, and it admits an $\mc H_{\mr{inn}}$- ($\mc H_{\mr{pure}}$-)presentation.
\end{thm}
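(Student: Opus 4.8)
The plan is to invoke Proposition~\ref{prop:genericproof} with $\mc H = \mc H_{\mr{inn}}$ in the disjunctive case and $\mc H = \mc H_{\mr{pure}}$ in the finitary disjunctive case. By the preceding lemma we may present $\mc X \simeq \sh(\mc C,J)$ for a (finitary) disjunctive site, so $\mc C$ is left exact; write $j : \mc X \hook \psh(\mc C)$ for the inclusion and $y = j^*\yo = a\yo$ for the sheafified Yoneda embedding. Fixing a geometric morphism $f : \mc E \to \mc F$ in $\mc H$ and an arbitrary $x : \mc E \to \mc X$, it then suffices to check that $F := f_*x^*y : \mc C \to \mc F$ is $J$-continuous. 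Since $F$ is flat (it is $g^*\yo$ for the left exact $g^* = (\ran_f(xj))^*$, exactly as in the proof of Proposition~\ref{prop:genericproof}), it is enough to verify that $F$ sends a generating family of $J$-covers --- namely the disjunctive covers --- to jointly epimorphic families.

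The crux is to convert disjunctivity of the site into an honest coproduct decomposition in $\mc X$. Fix a disjunctive cover $\stt{X_i \inj X}_{i\in I}$, so the $X_i \inj X$ are monomorphisms, $I$ is finite in the finitary case, and the empty sieve $J$-covers $X_i \wedge X_j$ for $i \neq j$. As $y$ is left exact it preserves these monomorphisms and the pullbacks $X_i \wedge X_j = X_i \times_X X_j$; and as $a = j^*$ inverts $J$-covers, the family $\stt{y(X_i) \inj y(X)}_{i\in I}$ is jointly epimorphic, while $y(X_i) \wedge y(X_j) \cong \emptyset$ for $i \neq j$ (the empty cover of $X_i \wedge X_j$ forcing $y(X_i \wedge X_j) \cong \emptyset$). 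In a topos a jointly epimorphic family of pairwise-disjoint subobjects realizes its codomain as their coproduct, so $y(X) \cong \coprod_{i\in I} y(X_i)$ with the $y(X_i) \inj y(X)$ as coproduct injections.

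It remains to push this decomposition through $f_*x^*$. The inverse image $x^*$ is cocontinuous, so $x^*y(X) \cong \coprod_{i\in I} x^*y(X_i)$ in $\mc E$; and $f_*$ preserves arbitrary coproducts when $f$ is innocent, finite coproducts when $f$ is pure, so in either case (using finiteness of $I$ in the pure case) $F(X) \cong \coprod_{i\in I} F(X_i)$ with the maps $F(X_i) \to F(X)$ equal to the coproduct injections. These are jointly epimorphic, so $F$ is $J$-continuous; Proposition~\ref{prop:genericproof} then yields $\mc X \in \WRInj(\mc H)$ together with the claimed $\mc H_{\mr{inn}}$- (resp. $\mc H_{\mr{pure}}$-)presentation $j : \mc X \hook \psh(\mc C)$. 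The only step carrying genuine content is the coproduct decomposition of the middle paragraph --- that sheafified Yoneda turns the site-level disjunctivity data ($X_i \inj X$ mono, $X_i \wedge X_j$ covered by $\emptyset$) into a disjoint jointly-epic family in $\mc X$, and that such a family is a coproduct cocone in a topos; everything after that is the exactness bookkeeping of $x^*$ and the defining coproduct-preservation of innocent and pure geometric morphisms.
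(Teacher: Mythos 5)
Your proof is correct and follows the same route as the paper: both reduce to Proposition~\ref{prop:genericproof} and check $J$-continuity of $f_*x^*y$ on the disjunctive covers via the coproduct decomposition $yX \cong \coprod_{i\in I} yX_i$ together with coproduct preservation by $x^*$ and $f_*$. The only difference is that you spell out the justification for that decomposition (joint epimorphy plus pairwise disjointness of the $yX_i$), which the paper asserts with ``by construction''.
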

\begin{proof}
    By Proposition~\ref{prop:genericproof}, again consider an innocent map $f : \mc E \to \mc F$ with $x : \mc E \to \sh(\mc C,J)$ into a disjunctive topos. Let $j : \sh(\mc C,J) \hook \psh(\mc C)$ be the inclusion and $y = j^*\yo$. Then $f_*x^*y$ is $J$-continuous iff whenever we have a $J$-covering $\stt{X_i \inj X}_{i\in I}$ in $\mc C$, it sends it to a coproduct diagram. But this indeed holds since by construction $\coprod_{i\in I}yX_i \cong yX$, and $f_*$ by definition preserves coproducts. The finitary case is completely similar.
\end{proof}

\subsection{Regular Logic} \label{regular}

To characterise the regular fragment, we introduce the following family of geometric morphisms:

\begin{defn}\label{defmatte}
    We say a geometric morphism $f$ is \emph{matte} if $f_*$ preserves epimorphisms. The class of matte geometric morphisms will be denoted as $\mc H_{\mr{matte}}$.
\end{defn}

In fact, being matte is equivalent to a seemingly stronger condition:

\begin{lem}\label{lem:inductiveiffeffective}
    $f : \mc E \to \mc F$ is matte iff $f_*$ preserves effective quotients.
\end{lem}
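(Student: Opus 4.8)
The plan is to lean on the fact that every Grothendieck topos is Barr-exact, so that in both $\mc E$ and $\mc F$ the epimorphisms coincide with the regular (= effective) epimorphisms, and every epimorphism is the coequalizer of its own kernel pair. The only other ingredient needed is that $f_*$, being the direct image of a geometric morphism, is left exact, hence preserves kernel pairs and equivalence relations. Granting these, both implications become essentially formal, and the substance of the argument is just bookkeeping about which topos each exactness statement is applied in.

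For the direction ``matte $\Rightarrow$ preserves effective quotients'', I would start with an equivalence relation $R \rightrightarrows X$ in $\mc E$ together with its quotient $q : X \twoheadrightarrow Q$; by exactness of $\mc E$, the pair $R \rightrightarrows X$ is the kernel pair of $q$. Applying the left exact functor $f_*$ turns this into the statement that $f_*R \rightrightarrows f_*X$ is the kernel pair of $f_*q : f_*X \to f_*Q$. Since $q$ is epic and $f$ is matte, $f_*q$ is an epimorphism in $\mc F$; and since $\mc F$ is exact, an epimorphism there is always the coequalizer of its kernel pair. Hence $f_*q$ is the coequalizer of $f_*R \rightrightarrows f_*X$, i.e.\ $f_*$ carries the effective-quotient diagram to an effective-quotient diagram.

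For the converse, given an epimorphism $e : X \twoheadrightarrow Y$ in $\mc E$, exactness of $\mc E$ again exhibits $e$ as the coequalizer of its kernel pair $R \rightrightarrows X$, which is an equivalence relation; so $e$ presents an effective quotient, and by hypothesis $f_*e$ is the coequalizer of $f_*R \rightrightarrows f_*X$. In particular $f_*e$ is a regular epimorphism, hence an epimorphism, in $\mc F$, so $f_*$ preserves epis and $f$ is matte. The closest thing to an obstacle is simply to keep the two exactness arguments straight: exactness of $\mc E$ is used upstream (equivalence relation $\leftrightarrow$ kernel pair, epi $=$ coequalizer of its kernel pair), while exactness of $\mc F$ is used downstream to turn the preserved kernel pair back into a coequalizer — and left exactness of $f_*$ is exactly what bridges the two. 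No further input is required.
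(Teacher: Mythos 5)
Your proof is correct and follows essentially the same route as the paper's, which likewise rests on the two facts that every epimorphism in a topos is the coequaliser of its kernel pair (Barr-exactness) and that $f_*$ is left exact, hence preserves kernel pairs. You have merely spelled out in both directions the bookkeeping that the paper leaves implicit.
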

\begin{proof}
    This follows from the fact that every epi in a topos is effective, i.e. it is the coequaliser of its kernel pair, and the fact that $f_*$ always preserves pullbacks.
\end{proof}

\begin{rem}
    The notion of a matte geometric morphism has occurred in the literature, though without a name. For instance,~\cite[Prop. 3.2]{marmolejo2005locale} has characterised such geometric morphisms between localic topoi. As another example, it appears in~\cite[Prop. 3.5]{blechschmidt2018flabby} showing that the direct image of a matte geometric morphism preserves the so-called flabby objects.
\end{rem}

\begin{defn}
    We say a topos $\mc X$ is \emph{formally regular} if it is weakly right Kan injective w.r.t. matte geometric morphisms.
\end{defn}

By looking at canonical examples of matte geometric morphisms, we can again infer that certain limits exist in the category of points. Firstly, notice that $\emptyset \hook \mc E$ for any topos $\mc E$ is matte, since the initial topos $\emptyset$ as a category is the terminal category, and all maps in it are already isomorphisms. This implies that any formally regular topos $\mc X$ is totally connected, by Theorem~\ref{thm:totallyconnected}. More generally, its category of points have \emph{all} products:

\begin{lem} \label{products}
    For any formally regular topos $\mc X$, its category of points $\ms{pt}(\mc X)$ has all products.
\end{lem}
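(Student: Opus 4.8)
The plan is to mimic the strategy used for terminal objects in \Cref{thm:totallyconnected} and for non-empty limits in the essentially algebraic with falsum case, identifying the right class of ``semantic prescription'' maps whose associated Kan injectivity detects products in the category of points. For a (discrete) index set $I$, consider $I$ as a discrete category; tensoring with $\mc E$ gives $I \otimes \mc E \simeq \coprod_{i\in I}\mc E \to \mc E$, and by \Cref{prop:limitsaskan} weak right Kan injectivity w.r.t. this map is exactly the statement that $\Topoi(\mc E,\mc X)$ has $I$-indexed products. Specialising to $\mc E = \Set$, it suffices to show that $\coprod_{i\in I}\Set \to \Set$ is matte for every set $I$, so that a formally regular $\mc X$ is weakly right Kan injective w.r.t. it and hence $\ms{pt}(\mc X) = \Topoi(\Set,\mc X)$ has all $I$-products.

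First I would unpack the direct image of $p : \coprod_{i\in I}\Set \to \Set$. Since $\coprod_{i\in I}\Set \simeq \psh(I) = \Set^I$ with $p^*$ the diagonal $\Delta$, the direct image $p_*$ is the $I$-fold product functor $(X_i)_{i\in I} \mapsto \prod_{i\in I}X_i$ in $\Set$. So I must check that an $I$-indexed product of epimorphisms of sets is again an epimorphism. This is immediate in $\Set$ (where epis are surjections and a product of surjections is surjective, using that each fibre is non-empty so the product of fibres is non-empty — this is exactly where the axiom of choice enters, which is fine in the ambient set theory). Hence $p$ is matte, and by the definition of formally regular together with \Cref{prop:limitsaskan} we conclude $\ms{pt}(\mc X)$ has $I$-indexed products for every $I$.

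To finish, I would combine these with the already-noted fact that $\emptyset \hook \Set$ is matte — giving a terminal object in $\ms{pt}(\mc X)$ via \Cref{thm:totallyconnected} — and observe that arbitrary products are built from the terminal object and $I$-indexed powers-of-a-family in the evident way; more directly, the map $I \otimes \Set \to \Set$ already handles an arbitrary $I$-indexed family $(x_i)_{i\in I}$ of points simultaneously, so one does not even need to assemble them. The main obstacle, such as it is, is purely bookkeeping: one must be careful that ``products'' here means \emph{small} products indexed by an arbitrary set, and that the relevant tensor $I \otimes \mc E$ with $I$ a (small) discrete category is legitimate — which it is, since $\cat$ contains all small categories and \Cref{colimits} records that $\Topoi$ is tensored over $\cat$. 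No genuinely hard step arises; the content is entirely in recognising $\prod_{i\in I} : \Set^I \to \Set$ as the direct image of a matte morphism.
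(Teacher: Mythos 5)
Your argument is correct and is essentially the paper's own proof: both reduce the statement to showing that the geometric morphism $I \otimes \Set \simeq [I,\Set] \to \Set$ is matte, which holds because an $I$-indexed product of surjections in $\Set$ is surjective (using choice, as the paper notes in Remark~\ref{rem:choicereg}). Your additional unpacking of the direct image as $\prod_{i\in I}$ and the appeal to Proposition~\ref{prop:limitsaskan} only makes explicit what the paper leaves implicit.
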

\begin{proof}
    This follows from the fact that for any set $I$, the geometric morphism
    \[ I \otimes \Set \simeq [I,\Set] \to \Set \]
    is matte, since $I$-indexed product of surjections is again a surjection in $\Set$.
\end{proof}

\begin{rem}\label{rem:choicereg}
    Notice that the above proof does rely on the fact that choice holds in $\Set$, so that arbitrary products of epimorphisms is again an epi.
\end{rem}

On the logical side, the story is more familiar. We say a site $(\mc C,J)$ is \emph{regular} if $\mc C$ is a regular category with $J$ its regular topology.

\begin{defn}
    We call a topos \emph{regular} if it is of the form $\sh(\mc C,J)$ for a regular site. Equivalently, it is the classifying topos of a regular theory.
\end{defn}

Our goal is to show that regular topoi are also formally regular. This is again immediate by Proposition~\ref{prop:genericproof}: 

\begin{thm}
    A regular topos is formally regular, and admits $\mc H_{\mr{matte}}$-presentation.
\end{thm}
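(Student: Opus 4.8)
The plan is to invoke the generic pattern already isolated in Proposition~\ref{prop:genericproof}. Write $\mc X \simeq \sh(\mc C,J)$ for a regular site, so that $\mc C$ is a regular (hence left exact) category and $J$ is the regular topology; let $j : \mc X \hook \psh(\mc C)$ be the sheaf inclusion and $y = j^*\yo : \mc C \to \mc X$ the associated canonical functor. By Proposition~\ref{prop:genericproof}, it then suffices to check, for an arbitrary matte geometric morphism $f : \mc E \to \mc F$ and an arbitrary $x : \mc E \to \mc X$, that the induced functor $f_*x^*y : \mc C \to \mc F$ is $J$-continuous; this single verification yields \emph{both} that $\mc X \in \WRInj(\mc H_{\mr{matte}})$ and that $j$ is an $\mc H_{\mr{matte}}$-presentation.

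First I would unwind $J$-continuity for the regular topology: since $J$ is generated by single covers $\stt{e : A \surj B}$ with $e$ a regular epimorphism (equivalently, in a regular category, an arbitrary epimorphism), $J$-continuity of a functor $\mc C \to \mc F$ amounts exactly to the requirement that it send every regular epi of $\mc C$ to an epimorphism of $\mc F$. I would then chase a fixed regular epi $e$ through the three functors in turn: (i) $y$ is itself $J$-continuous — this is the defining property of the canonical functor into a subcanonical sheaf topos — so $y(e)$ is an epimorphism in $\mc X$; (ii) $x^*$ is an inverse image, hence cocontinuous and left exact, so it preserves epimorphisms, whence $x^*y(e)$ is an epimorphism in $\mc E$; (iii) $f$ is matte, so $f_*$ preserves epimorphisms by Definition~\ref{defmatte}, whence $f_*x^*y(e)$ is an epimorphism in $\mc F$. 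Therefore $f_*x^*y$ is $J$-continuous, and Proposition~\ref{prop:genericproof} concludes the argument.

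I do not expect a serious obstacle, since the heavy lifting is packaged into Proposition~\ref{prop:genericproof} (in particular the left-exactness of the induced functor and the factorisation of the Kan extension through $j$ are handled there, so no separate check is needed). The one step that deserves care is (i) — that the canonical functor $y : \mc C \to \sh(\mc C,J)$ carries regular epis to epis — which is precisely where subcanonicity of the regular topology and compatibility of the regular structure with sheafification are used; this is the fact one should cite or spell out. Everything else is a routine diagram chase of the shape above.
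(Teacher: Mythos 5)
Your proof is correct and follows exactly the paper's argument: both reduce to Proposition~\ref{prop:genericproof} and verify $J$-continuity of $f_*x^*y$ by chasing a regular-epi cover through $y$, $x^*$ and $f_*$ (the paper elides the middle step that the inverse image $x^*$ preserves epimorphisms, which you make explicit). One small caveat: your parenthetical asserting that in a regular category regular epimorphisms coincide with arbitrary epimorphisms is false in general (e.g.\ for monoids), but since the argument only uses the regular-epi direction this does not affect the proof.
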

\begin{proof}
    Let $f : \mc E \to \mc F$ be matte, and $x : \mc E \to \mc X$ a map into a regular topos. Let $j : \mc X \simeq \sh(\mc C,J) \hook \psh(\mc C)$ be a site presentation of $\mc X$ with $(\mc C,J)$ a regular site. By Proposition~\ref{prop:genericproof}, it suffices for $f_*x^*y$ with $y = j^*\yo$ to be $J$-continuous. For any $J$-cover $q : Y \to X$ in $\mc C$, we know that $yq$ is epi, thus $f_*x^*yq$ is also an epi since $f$ is by assumption matte. Hence, $\mc X$ is formally regular, and $\mc X \hook \psh(\mc C)$ is an $\mc H_{\mr{matte}}$-presentation of $\mc X$.
\end{proof}

\subsection{Coherent Logic} \label{coherent}

As our last example, we recall the results obtained in~\cite{diliberti2022geometry}, which captures coherent logic via right Kan injectivity. The relevant notion of geometric morphisms is the following:

\begin{defn}\label{defflat}
    We say a geometric morphism $f$ is \emph{flat} if it is both pure and matte, i.e. it preserves both finite coproducts and epimorphisms.
\end{defn}

\begin{defn}
    We say a topos $\mc X$ is formally coherent if it is weakly right Kan injective w.r.t. flat maps.
\end{defn}

Of course, as shown in~\cite{diliberti2022geometry}, any coherent topos will be formally coherent:

\begin{thm}\label{thm:cohformal}
    A coherent topos is formally coherent, and also admits an $\mc H_{\mr{flat}}$-presentation.
\end{thm}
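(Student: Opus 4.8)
The claim is that a coherent topos is formally coherent and admits an $\mc H_{\mr{flat}}$-presentation. The plan is to apply the generic pattern of Proposition~\ref{prop:genericproof}, just as was done for the regular and disjunctive fragments above, taking the site presentation to be a coherent site. Recall that a coherent topos is of the form $\sh(\mc C,J)$ where $\mc C$ is a (small) coherent category and $J$ is its coherent topology; in particular $\mc C$ is left exact, so Proposition~\ref{prop:genericproof} applies and it suffices to verify its hypothesis: for every flat geometric morphism $f : \mc E \to \mc F$ and every $x : \mc E \to \mc X$, the composite $f_*x^*y : \mc C \to \mc F$ is $J$-continuous, where $y = j^*\yo$ for the canonical embedding $j : \mc X \hook \psh(\mc C)$.

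The key step is thus the verification of $J$-continuity. The coherent topology $J$ is generated by two kinds of covers: the empty cover of the initial object $0$ of $\mc C$, and finite epimorphic families $\stt{q_k : Y_k \to X}_{k=1}^n$ arising from jointly covering finite families of subobjects; equivalently, one may generate $J$ from single regular-epimorphism covers $q : Y \surj X$ together with finite unions of subobjects. Since $y = j^*\yo$ sends $J$-covers to jointly epimorphic families (indeed $yX \cong \bigcup_k \operatorname{im}(yq_k)$ in $\mc X$) and sends $0$ to the initial object $\emptyset$ of $\mc X$, the task reduces to showing that $f_*x^*$ preserves: (i) finite coproduct decompositions coming from the disjoint-subobject structure, (ii) regular epimorphisms, and (iii) the initial object. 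Now $x^*$ is the inverse image of a geometric morphism so it preserves all colimits and finite limits; thus $x^*y$ already sends $J$-covers to the appropriate colimit-type diagrams in $\mc E$ and sends $0$ to $\emptyset$. Applying $f_*$: since $f$ is flat, $f_*$ preserves finite coproducts (purity) and epimorphisms (matteness), hence also preserves the initial object (as the empty coproduct) and, by Lemma~\ref{lem:inductiveiffeffective}, effective quotients. Concretely, for a $J$-cover presented via images of a finite family, one writes $yX$ as a finite colimit built from images of the $yq_k$ and their pairwise intersections; $x^*$ carries this to the analogous finite colimit in $\mc E$ involving only finite coproducts, pullbacks, and regular-epi quotients, all of which $f_*$ preserves. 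Therefore $f_*x^*y$ is $J$-continuous, so by Proposition~\ref{prop:genericproof} we conclude $\mc X \in \WRInj(\mc H_{\mr{flat}})$ and $j : \mc X \hook \psh(\mc C)$ is an $\mc H_{\mr{flat}}$-presentation; in particular $\mc X \in \WRInj_{\pw}(\mc H_{\mr{flat}})$ by Lemma~\ref{lem:presentationpointwise}.

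The main obstacle is bookkeeping the exact generating set of the coherent topology and checking that the relevant diagrams which $x^*y$ produces in $\mc E$ are built only out of the colimit shapes that a flat $f_*$ is guaranteed to preserve — namely finite coproducts and (effective) epimorphisms — rather than arbitrary finite colimits. The cleanest route is to use the standard fact that a functor out of a coherent category into a topos is $J$-continuous for the coherent topology precisely when it preserves finite coproducts of subobjects (disjointly), the initial object, and covers/images; this decomposes $J$-continuity into exactly the two clauses ``preserves finite coproducts'' and ``preserves epis'' that define flatness, making the verification essentially a formal consequence of combining the corresponding checks already carried out for the disjunctive and regular fragments. Since this argument is entirely parallel to those cases and is precisely the content extracted from~\cite{diliberti2022geometry}, no genuinely new difficulty arises.
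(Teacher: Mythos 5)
Your proposal is correct and follows essentially the same route as the paper: both invoke Proposition~\ref{prop:genericproof} for the canonical coherent site presentation and reduce $J$-continuity of $f_*x^*y$ to the fact that $y$ and $x^*$ are coherent while $f_*$ preserves finite coproducts and (effective) epimorphisms by flatness. The only cosmetic difference is that the paper takes the site to be the pretopos of coherent objects, which lets it state the verification in one line (``the composite is coherent''), whereas you work with a general coherent site and unpack the generating covers explicitly; nothing of substance changes.
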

\begin{proof}
    Consider a flat geometric morphism $f : \mc E \to \mc F$ with $x : \mc E \to \sh(\mc C,J)$ where $(\mc C,J)$ is a pretopos with the coherent topology. Let $j : \sh(\mc C,J) \hook \psh(\mc C)$ be the canonical embedding and $y = j^*\yo$. Again by Proposition~\ref{prop:genericproof}, it suffices to show $f_*x^*y$ is always $J$-continuous. In other words, we need to show $f_*x^*y$ is coherent. This indeed holds since $y$ is coherent, and $f_*$ by assumption preserves coproducts and effective epis, hence is again coherent. 
\end{proof}

\begin{rem} \label{jeremie}
    In the first draft of~\cite{diliberti2022geometry}, the notion of flat maps is defined as those geometric morphisms $f$ whose direct image $f_*$ preserves finite colimits (as opposed to finite coproducts and epimorphisms). From the above proof one can see that this is indeed a bit stronger than what we need to show Theorem~\ref{thm:cohformal}. The first author will soon update the ArXiv entry of his paper to address this mismatch in definition.
\end{rem}

At this point, let us recall the original motivation of investigating Kan injectivity in $\Topoi$ as a characterisation of coherent logic presented in~\cite{diliberti2022geometry}. As we have demonstrated in previous sections, a right Kan injectivity condition in $\Topoi$ represents a general notion of limits in the category of generalised points of a topoi. The notion of limit as specified by flat geometric morphisms though is more subtle. The observation, already in some sense contained in~\cite{marmolejo2005locale} and certainly expanded in~\cite{diliberti2022geometry}, is that the \emph{ultraproduct structure} for the points of coherent topoi is encoded this way. 

\begin{rem}[Again on prescribing universals] \label{prescribinguniversals}
To repeat a part of discussion in~\cite{diliberti2022geometry}, let $I$ be a discrete set, and consider the inclusion $I \hook \beta I$, where $\beta I$ is the Stone-\v{C}ech compatification of $I$, viz. the space of ultrafilters on $I$. This induces a flat embedding $i : \sh(I) \simeq \Set^I \hook \sh(\beta I)$. Given any formally coherent topos $\mc X$, we may identify the right Kan extension of $\mc X$ against $i$ as a formal ultraproduct in $\ms{pt}(\mc X)$,
\[
\begin{tikzcd}
    \Set^I \ar[d, hook, "i"'] \ar[r, "x"] & \mc X \\ 
    \sh(\beta I) \ar[ur, dashed, "\ran_ix := \int xd(-)"']
\end{tikzcd}
\]
In other words, for any $I$-indexed family of points $x : I \to \ms{pt}(\mc X)$ and for any ultrafilter $\mu\in\beta I$, viewed as a point $\mu : \Set \to \sh(\beta I)$, we may define the ultraproduct of $x$ along $\mu$ as the composite,
\[ \int xd\mu := \ran_ix \circ \mu. \]
When $\mc X$ is a coherent topos, then the above computes literally the ultraproducts in $\ms{pt}(\mc X)$ in the sense of model theory.

In~\cite{diliberti2022geometry}, the class of maps $\stt{\Set^I \hook \sh(\beta I)}_{I\in\Set}$ are called $\beta$-maps, and we may denote them as $\mc H_{\beta}$. Let $\ms{Ult}$ be the 2-category of ultracategories and ultrafunctors in the sense of~\cite{makkai1987stone}; see also~\cite{lurieultracategories}. In fact, it is shown in~\cite[Prop. 3.1.3]{diliberti2022geometry} that taking the points of a topos produces a 2-functor
\[ \RInj(\mc H_\beta) \to \ms{Ult}, \]
Since for coherent topoi, by Theorem~\ref{thm:cohformal} they all admit an $\mc H_{\mr{flat}}$-presentation hence are indeed right Kan injective w.r.t. $\beta$-maps, this explains the ultrastructure on the category of points of a coherent topos.
\end{rem}

\begin{rem}[Semantic prescriptions, first encounter]\label{rem:semanticcoherent}
    At this point, we reflect on our approach of characterising logic via Kan injectivity. From the above discussion, it is clear that the 2-category $\WRInj(\mc H_{\mr{flat}})$ has objects those topoi whose category of points admits a generalised form of ultraproducts, and morphisms those geometric morphisms preserving such generalised ultraproducts. From this perspective, $\mc H_{\mr{flat}}$ should be understood as a \emph{semantic prescription}, which forces the generalised category of points of topoi in $\WRInj(\mc H_{\mr{flat}})$ to behave as expected. We will return to a more detailed discussion of this perspective in Section~\ref{sec:logic}. 
\end{rem}

At the end of this section, let us stress that there are reasons to believe the right notion of semantic prescription for the coherent fragment is the class of flat maps, as opposed to the more restrictive collection of $\beta$-maps. In particular, we address one question raised in~\cite{diliberti2022geometry}, which is the comparison between the right Kan injectivity classes w.r.t. $\mc H_{\beta}$ and $\mc H_{\mr{flat}}$:

\begin{prop}\label{prop:betanotflat}
    For any topos $\mc X$ without any point, $\mc X \in \RInj(\mc H_{\beta})$. In particular, $\RInj(\mc H_{\mr{flat}})$ is strictly included in $\RInj(\mc H_{\beta})$.
\end{prop}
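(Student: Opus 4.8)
The plan is to treat the two assertions in turn: the first is essentially formal, and the second reduces, after a trivial inclusion, to exhibiting a single flat morphism on which the two classes disagree.

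\emph{First assertion.} I would argue that weak right Kan injectivity against a $\beta$-map is vacuous whenever its domain has a point. For $I=\emptyset$ there are no ultrafilters on $\emptyset$, so $\beta\emptyset=\emptyset$ and $\Set^{\emptyset}\hook\sh(\beta\emptyset)$ is an equivalence, so nothing is required. For $I\neq\emptyset$, recall $\Set^{I}\simeq\coprod_{i\in I}\Set$ in $\Topoi$, so every coproduct coprojection $\iota_{i}:\Set\to\Set^{I}$ is a point of $\Set^{I}$; hence for any $x:\Set^{I}\to\mc X$ the composite $x\iota_{i}:\Set\to\mc X$ would be a point of $\mc X$. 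Since $\mc X$ has no point, there is no $1$-cell $\Set^{I}\to\mc X$ at all, so the condition defining $\RInj(\mc H_{\beta})$ — that every such $x$ admit a right Kan extension along the $\beta$-map with invertible counit — holds vacuously. Thus $\mc X\in\RInj(\mc H_{\beta})$.

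\emph{Strictness.} By Remark~\ref{prescribinguniversals} each $\beta$-map is a flat embedding, so $\mc H_{\beta}\subseteq\mc H_{\mr{flat}}$ and hence $\RInj(\mc H_{\mr{flat}})\subseteq\RInj(\mc H_{\beta})$; it remains to make the inclusion strict. The plan is to use the maps $\mb 2\otimes\mc E\to\mc E$ of Section~\ref{subsec:gp}, which are \emph{not} $\beta$-maps but are flat: they are local, hence ethereal (every local geometric morphism is ethereal), so their direct image preserves all colimits, in particular finite coproducts and epimorphisms. Thus $\mc H_{\mr{gp}}\subseteq\mc H_{\mr{flat}}$, and therefore $\RInj(\mc H_{\mr{flat}})\subseteq\RInj(\mc H_{\mr{gp}})=\ms{GroupTopoi}$ — every topos in $\RInj(\mc H_{\mr{flat}})$ is grouplike (computing the Kan injectivity condition against $\mb 2\otimes\mc E\to\mc E$ via the tensor adjunction as in Remark~\ref{colimits}, invertibility of the counit is exactly the statement that every arrow of $\Topoi(\mc E,\mc X)$ is invertible). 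It now suffices to produce a pointless topos that is \emph{not} grouplike. I would take $\mc X:=\sh(\mc B)\times\Set[\mbb O]$, where $\mc B$ is a nondegenerate atomless complete Boolean algebra, so that $\sh(\mc B)$ is a nondegenerate topos with no points (the absence of points for atomless complete $\mc B$ being classical). Using $\Topoi(\mc E,\Set[\mbb O])\simeq\mc E$ from Example~\ref{exa:directimagerightKan} and that $\Topoi(\mc E,-)$ preserves products, $\Topoi(\mc E,\mc X)\simeq\Topoi(\mc E,\sh(\mc B))\times\mc E$. For $\mc E=\Set$ this is the empty category, so $\mc X$ has no point and $\mc X\in\RInj(\mc H_{\beta})$ by the first assertion; for $\mc E=\sh(\mc B)$, the second factor contributes a non-invertible morphism (any non-monic arrow of $\sh(\mc B)$, e.g. $1+1\to 1$), so $\Topoi(\sh(\mc B),\mc X)$ is not a groupoid and $\mc X\notin\ms{GroupTopoi}\supseteq\RInj(\mc H_{\mr{flat}})$. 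This gives $\RInj(\mc H_{\mr{flat}})\subsetneq\RInj(\mc H_{\beta})$.

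The main obstacle is the strictness half, specifically locating a flat morphism on which the two classes can actually be told apart: the $\beta$-maps, and more generally flat \emph{embeddings}, tend to have pointed domains, which makes their injectivity condition vacuous for pointless targets and hence useless for separation; the maps $\mb 2\otimes\mc E\to\mc E$ succeed precisely because they are ethereal, so weak right Kan injectivity is automatic for every topos and only invertibility of the counit — equivalently, groupoidalness of the generalised points — is at stake.
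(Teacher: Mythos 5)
Your proof is correct, and while the first assertion is argued exactly as in the paper (for $I=\emptyset$ the $\beta$-map is an equivalence; for $I\neq\emptyset$ a pointless $\mc X$ admits no morphism $\Set^I\to\mc X$ at all, so injectivity is vacuous), your strictness argument takes a genuinely different route. The paper separates the two classes by citing~\cite[Prop. 1.3.6]{diliberti2022geometry}, namely that every object of $\RInj(\mc H_{\mr{flat}})$ has \emph{enough} points, and then implicitly invokes the existence of a nondegenerate pointless topos. You instead stay inside the paper's own machinery: from $\mc H_{\mr{gp}}\subseteq\mc H_{\mr{flat}}$ (the maps $\mb 2\otimes\mc E\to\mc E$ being local, hence ethereal, hence flat) you get $\RInj(\mc H_{\mr{flat}})\subseteq\Inj(\mc H_{\mr{gp}})=\ms{GroupTopoi}$, and then you exhibit a pointless non-grouplike topos. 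Two remarks on what each approach buys. Your route is self-contained relative to Section~\ref{subsec:gp} and avoids the external citation, at the price of a slightly more elaborate counterexample; it also records in passing the (not entirely obvious) fact that every strict flat-injective topos is grouplike, which is consistent with, and complementary to, the ``enough points'' constraint the paper uses. The product trick $\mc X=\sh(\mc B)\times\Set[\mbb O]$ is not mere caution but genuinely necessary: for a complete Boolean algebra $\mc B$ the hom-categories $\Topoi(\mc E,\sh(\mc B))$ are discretely ordered preorders (frame maps out of a Boolean algebra admit no nontrivial pointwise inequalities), so $\sh(\mc B)$ itself \emph{is} grouplike and would not separate the classes; tensoring with the object classifier destroys groupoidalness while $\Topoi(\Set,\sh(\mc B)\times\Set[\mbb O])\simeq\emptyset\times\Set=\emptyset$ preserves pointlessness. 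Both proofs are valid; yours is arguably preferable in the context of \Cref{6.3.6}, where the authors lament reliance on external completeness-type results.
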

\begin{proof}
    Consider a set $I$. If $I$ is empty, then the Stone-\v{C}ech compatification is given by $\emptyset \hook \beta\emptyset \cong \emptyset$, which induces an equivalence between their sheaf topoi. Hence, $\mc X$ must be right Kan injective w.r.t. this map. If $I$ is not empty, then by assumption there are no geometric morphisms $\Set^I \to \mc X$, which implies that $\mc X$ is trivially right Kan injective w.r.t. $\Set^I \to \sh(\beta I)$. It is furthermore shown in~\cite[Prop. 1.3.6]{diliberti2022geometry} that any $\mc X\in\RInj(\mc H_{\mr{flat}})$ has enough points, hence the two right Kan injectivity classes are distinct.
\end{proof}

\section{Logic and Doctrine}\label{sec:logic}

With enough examples at hand, in this section we will investigate the formal consequences of \emph{identifying} a fragment of logic as a weak right Kan injectivity class in the 2-category $\Topoi$. We will thus impose the following definition:

\begin{defn}
    A \textit{(fragment of geometric) logic} is just a class of geometric morphisms $\Hcal$. A topos $\mc X$ is formally in the logic $\mc H$ if it belongs to the weak right Kan injectivity class $\WRInj(\mc H)$. 
\end{defn}

\begin{rem}
    Every subsection of Section~\ref{sec:logic} established and discusses a fragment of geometric logic according to our definition.
\end{rem} 

\begin{rem}[Semantic prescription, second encounter] \label{prescriptiononelasttime}
    Similar to the discussion for the coherent fragment in Remark~\ref{rem:semanticcoherent}, a general fragment $\mc H$ in our sense specifies a fragment of logic via imposing semantic constraints on its generalised category of points. The 2-category $\WRInj(\mc H)$ will thus consist of those topoi (theories) whose generalised points behave as specified by $\mc H$, and morphisms preserving these structures. Without any specific commitment to its logical meaning, we have already used this point of view many times already throughout the paper. The simplest examples were given in \Cref{sec:logicshowoff} when we wanted to \textit{prescribe} our categories of models of have \textit{connected limits} (\ref{connected limits}) or \textit{products} (\ref{products}). The case of \textit{ultraproducts} that we have discussed at the end of the previous section is more sophisticated, but of the same nature.  
\end{rem}

\begin{rem}[Three aspects of logic]
    As mentioned in the introduction, though we define a fragment of logic as the semantic prescription $\mc H$, the behaviour of the fragment is really reflected through the combination of $\mc H$, the objects in $\WRInj(\mc H)$, and the lax-idempotent (relative) pseudomonad $\ms T^{\Hcal}$ which will be constructed later in this section.
\end{rem}

\begin{rem}[A comment on propositional logic]\label{rem:prop}
The above definition of logic is crafted around \textit{predicate} logic (as opposed to propositional). Indeed, because free topoi are weakly injective with respect to all geometric morphisms, every free topos will belong to every fragment of geometric logic with our definition. This matches our expectation: of course an essentially algebraic theory is also a coherent theory where we are not using the whole expressive power of coherent logic. Yet, for this reason we are ruling out what we called \textit{propositional logic} in \Cref{propositionallogic}, which could only be describe by an injectivity class (as opposed to weak (!) injectivity class). This is a descriptive choice because we are mostly interested in predicate logic and other choices could have been possible.
\end{rem}

\begin{rem}[Limitations of our framework]\label{rem:limitation}
    The fragments in our framework are of course \emph{property like}. In particular, our framework does not cover any modal logic. The fragments are also fragments of \emph{geometric logic}, so that, say, first order intuitionistic logic is not covered by our framework either.
\end{rem}

\begin{exa}[An exotic example: integral geometric logic]\label{exa:complementedfragment}
    The \emph{single} geometric morphism $\emptyset \hook \Set$ as discussed in Section~\ref{subsec:termandtot} will now also be a fragment of geometric logic according to the above definition. We will now use $\mc H_{\mr{cl}}$ to denote this fragment, and call the corresponding logic \emph{integral geometric logic}. We have completely characterised the 2-category $\WRInj(\mc H_{\mr{cl}})$, and it would be interesting to work out a syntactic description of this fragment.
\end{exa}

Let us fix a logic $\mc H$ from the start. In the following texts we will see, surprisingly, how such a ``na\"ive'' definition connects the story about \emph{doctrines}, \emph{syntactic categories}, \emph{classifying topoi}, and \emph{conceptual completeness}.

\subsection{Syntactic categories and syntactic sites}

\begin{constr}[The syntactic category] \label{syncat}
    We shall define a $2$-functor 
    \[ \mathsf{Syn}^{\mathcal{H}}:\WRInj(\mathcal{H})^{\text{op}} \to \LEX, \]
    mapping $\mathcal{X} \mapsto \WRInj(\mathcal{H})(\mathcal{X}, \mathsf{Set}[\mathbb{O}])$. Notice that since  $\mathsf{Set}[\mathbb{O}]$ is the object classifier and $\WRInj(\mc H)$ is locally fully faithful, $\mathsf{Syn}^{\mathcal{H}}(\mc X)$ is a full subcategory of $\mc X$.
\end{constr}

We shall prove soon (\Cref{synislex}) that the construction above indeed lands in $\LEX$. But before doing that, we shall compute such association in a couple of examples to get used to its behaviour.

\begin{rem}
   Observe that in full generality it is hard to provide a complete classification of the morphisms of injectives with respect to a general class $\Hcal$. Hence, computing $\mathsf{Syn}^{\mathcal{H}}(\mc X)$ explicitly can be very challenging. In \Cref{conceptual} we will see that such a complete description is indeed a form of conceptual completeness for our logic.
\end{rem}

\begin{exa} \label{examplesofsyns}
    For the reason above, and since we just gave the definition, we can't leave the realm of very simple examples:
    \begin{itemize}
        \item[$\mathcal{H}_{\text{eth}}$] Let $\mc X$ be any topos. Since $\WRInj(\mc H_{\mr{eth}}) = \Topoi$ by Proposition~\ref{chargeom}, every geometric morphism $\mc X \to \Set[\mbb O]$ will lie in $\WRInj(\mc H_{\mr{eth}})$. It follows that $\mathsf{Syn}^{\mathcal{H_{\mr{eth}}}}(\mc X)$ is $\mc X$ itself, and $\ms{Syn}^{\mc H_{\mr{eth}}}$ is the forgetful functor
        \[ \ms U : \Topoi\op \to \LEX, \]
        now mapping a geometric morphism to its inverse image.
        \item[$\mathcal{H}_{\text{all}}$] On the opposite side of the spectrum, for $\mc X \in \WRInj(\mc H_{\mr{all}})$, a geometric morphism $\mc X \to \Set[\mbb O]$ belongs to $\WRInj(\mc H_{\mr{all}})$ iff it is ethereal by Lemma~\ref{lem:adjointiffallinj}. In particular, for a free topos $\psh(\mc C)$, by Corollary~\ref{class:essalg} $\mathsf{Syn}^{\mathcal{H}}(\psh(\mc C))$ coincides precisely with the full subcategory of representables, a.k.a. $\mc C$ itself.
        \item[$\mc H_{\mr{cl}}$] Let $\mc X$ be a totally connected topos, with $\Pi : \mc X \to \Set$ the inverse image of the dense point, which is the left adjoint of the constant functor $\Delta : \Set \to \mc X$. By Corollary~\ref{cor:classtot}, a simple computation shows $\ms{Syn}^{\mc H_{\mr{cl}}}(\mc X)$ consists of those objects $X$ where $\Pi X \cong 1$.
        \item[$\mathcal{H}$] For each one of the examples we gave in the previous section, $\ms{Syn}^{\mc H}(\psh(\mc C))$ will land somewhere in between the representables and the whole topos.
    \end{itemize}
\end{exa}

\begin{prop} \label{synislex}
    For any $\mc X$ in $\WRInj(\mc H)$, the category $\ms{Syn}^{\mc H}(\mc X)$ is closed under finite limits in $\mc X$.
\end{prop}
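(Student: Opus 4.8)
The plan is to show that $\ms{Syn}^{\mc H}(\mc X) = \WRInj(\mc H)(\mc X,\Set[\mbb O])$, viewed as a full subcategory of $\mc X$ via the identification $\Topoi(\mc X,\Set[\mbb O]) \simeq \mc X$, contains the terminal object and is closed under binary products and equalizers in $\mc X$. Since $\Set[\mbb O]$ is the object classifier, an object $X$ of $\mc X$ lies in $\ms{Syn}^{\mc H}(\mc X)$ precisely when the corresponding geometric morphism $X : \mc X \to \Set[\mbb O]$ belongs to $\WRInj(\mc H)$, i.e. when $X$ preserves the relevant right Kan extensions along every $f \in \mc H$. So the real content is: the collection of those $X : \mc X \to \Set[\mbb O]$ that preserve the $\mc H$-right-Kan-extensions is closed under finite limits computed in $\Topoi(\mc X,\Set[\mbb O]) \simeq \mc X$.

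The key step is to unwind what ``$X$ preserves the right Kan extension $\ran_f x$'' means concretely. Fix $f : \mc E \to \mc F$ in $\mc H$ and $x : \mc E \to \mc X$, with $\ran_f x$ its right Kan extension in $\Topoi$ (which exists since $\mc X \in \WRInj(\mc H)$). By Example~\ref{exa:directimagerightKan}, for a morphism $Y : \mc X \to \Set[\mbb O]$, i.e. an object $Y\in\mc X$, the composite $Y \circ \ran_f x$ is computed by the right Kan extension of $Y\circ x$ along $f$, and this right Kan extension is nothing but the direct image: $\ran_f(Y\circ x) \cong f_*(x^*Y)$, with counit the canonical map $f^*f_*(x^*Y) \to x^*Y$. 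Saying $Y$ preserves $\ran_f x$ amounts to saying that $Y\circ(\ran_f x)$, computed as the object $(\ran_f x)^* Y \in \mc F$, together with the structure 2-cell, is again the right Kan extension $\ran_f(Yx)\cong f_*x^*Y$; that is, the canonical comparison map $(\ran_f x)^* Y \to f_*(x^*Y)$ is an isomorphism. Thus $X\in\ms{Syn}^{\mc H}(\mc X)$ iff for every $f\in\mc H$ and every $x : \mc E\to\mc X$, the comparison $(\ran_f x)^* X \xrightarrow{\sim} f_*(x^* X)$ is invertible.

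Given this reformulation, closure under finite limits follows by a standard limit-interchange argument. The functors $(\ran_f x)^*$ and $f_*$ are both right adjoints (being inverse images, resp. direct images of geometric morphisms), hence preserve all limits; and $x^*$ preserves finite limits (it is the inverse image of a geometric morphism), as does $(-)^*$ applied to any geometric morphism into $\Set[\mbb O]$. Concretely, if $X = \lim_i X_i$ is a finite limit in $\mc X$ with each $X_i\in\ms{Syn}^{\mc H}(\mc X)$, then since $x^*$ preserves this finite limit and $f_*$ preserves limits, $f_*(x^* X) \cong \lim_i f_*(x^* X_i)$; likewise $(\ran_f x)^* X \cong \lim_i (\ran_f x)^* X_i$ because $(\ran_f x)^*$ preserves limits and, crucially, the geometric morphism $\lim_i X_i : \mc X \to \Set[\mbb O]$ is computed in $\Topoi(\mc X,\Set[\mbb O])\simeq \mc X$ so its inverse image is $\lim_i$ of the inverse images of $X_i$. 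The comparison map for $X$ is then the limit of the comparison maps for the $X_i$, each of which is an isomorphism by hypothesis; a limit of isomorphisms is an isomorphism, so $X\in\ms{Syn}^{\mc H}(\mc X)$. For the terminal object $1\in\mc X$, one checks directly that $x^*1 \cong 1$ and $f_*1\cong 1$ while $(\ran_f x)^*1\cong 1$, so the comparison is trivially invertible.

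The main obstacle I anticipate is bookkeeping the identification $\Topoi(\mc X,\Set[\mbb O])\simeq\mc X$ carefully enough to be sure that a finite limit of objects, regarded as a limit of geometric morphisms into $\Set[\mbb O]$, really does have inverse image the pointwise finite limit — equivalently, that the equivalence $\Topoi(-,\Set[\mbb O])\simeq(-)$ is compatible with the finite-limit structure and with whiskering by $\ran_f x$ and $f$. This is true essentially by the universal property of the object classifier together with the fact that finite limits in a functor 2-category are computed pointwise, but it deserves to be spelled out; once it is in place, the rest is the routine ``right adjoints preserve limits, limits of isos are isos'' computation sketched above.
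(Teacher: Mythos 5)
Your proposal is correct and is essentially the paper's argument: both proofs come down to the interchange of finite limits with $(\ran_fx)^*$, $x^*$ and $f_*$, using the object classifier to identify morphisms into $\Set[\mbb O]$ with objects. The only (cosmetic) difference is that you check that the canonical comparison map $(\ran_fx)^*X \to f_*x^*X$ is invertible, invoking Example~\ref{exa:directimagerightKan} to identify its codomain, whereas the paper runs the same computation at the level of hom-sets to verify the universal property of the right Kan extension directly.
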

\begin{proof}
    Consider a map $f : \mc E \to \mc F$ in $\mc H$, and any $x : \mc E \to \mc X$. Suppose we have a finite diagram $(X_i)_{i\in I}$ of objects in $\mc X$, such that for each $X_i : \mc X \to \ms{Set}[\mbb O]$, $X_i$ is right Kan injective w.r.t. $\mc H$. We show that $(\lt_{i\in I}X_i) \circ \ran_fx$ is the right Kan extension of $(\lt_{i\in I}X_i)\circ x$ along $f$. This follows from the following computation: For any $F$ in $\mc F$,
    \begin{align*}
        &\Topoi(\mc F,\Set[\mbb O])(F,(\lt_{i\in I}X_i)\circ\ran_fx) \\
        \cong& \mc F(F,(\ran_fx)^*\lt_{i\in I}X_i) \\
        \cong& \mc F(F,\lt_{i\in I}(\ran_fx)^*X_i) \\
        \cong& \lt_{i\in I} \Topoi(\mc F,\Set[\mbb O])(F,X_i \circ \ran_fx) \\
        \cong& \lt_{i\in I} \Topoi(\mc F,\Set[\mbb O])(F,\ran_f(X_ix)) \\
        \cong& \lt_{i\in I} \Topoi(\mc E,\Set[\mbb O])(Ff,X_ix) \\
        \cong& \Topoi(\mc E,\Set[\mbb O])(Ff,(\lt_{i\in I}X_i) \circ x)
    \end{align*}
    Most steps follow formally from the fact that inverse images preserves finite limits, and the fact that $\Set[\mbb O]$ is the object classifier. Crucially, the third isomorphism holds by the assumption that each $X_i$ preserves the right Kan extension along $f$. Thus, it follows that the composite $\lt_{i\in I}X_i \circ \ran_fx$ satisfies the universal property of being the right Kan extension, hence the finite limit $\lt_{i\in I}X_i$ also preserves this right Kan extension.
\end{proof}

% \begin{rem}
%     If we formulate a good theory of explaining preserving colimits and preserving right Kan injectivity previously, then the above proof is an abstract proof of many theorems at once: For instance, terminal objects in coherent topoi will be compact projective; terminal objects in regular topoi will be supercompact and projective; ...
% \end{rem}

\begin{constr}[The syntactic site] \label{synsite}
    Let $\mc X$ be a topos in $\WRInj(\mc H)$. We shall now equip its syntactic category $\mathsf{Syn}^\Hcal(\mc X)$ with a canonical site structure. This provides us with a lifting of the syntactic construction to the $2$-category of (possibly large) sites.
    % https://q.uiver.app/#q=WzAsMyxbMCwxLCJcXFJJbmooXFxIY2FsKV57XFx0ZXh0e29wfX0iXSxbMSwxLCJcXExFWCJdLFsxLDAsIlxcbWF0aHNme1NJVEVTfSJdLFsyLDFdLFswLDEsIlxcbWF0aHNme1N5bn1eXFxIY2FsIiwyXSxbMCwyLCJcXG1hdGhiYntTeW59XlxcSGNhbCIsMCx7InN0eWxlIjp7ImJvZHkiOnsibmFtZSI6ImRhc2hlZCJ9fX1dXQ==
    \[\begin{tikzcd}
    	& {\mathsf{SITES}} \\
    	{\WRInj(\Hcal)^{\text{op}}} & \LEX
    	\arrow[from=1-2, to=2-2]
    	\arrow["{\mathbb{Syn}^\Hcal}", dashed, from=2-1, to=1-2]
    	\arrow["{\mathsf{Syn}^\Hcal}"', from=2-1, to=2-2]
    \end{tikzcd}\]

    So, we shall define a topology $D_{\mc X}^{\Hcal}$ on $\mathsf{Syn}^\Hcal(\mc X)$ as follows, which will provide us with the lifting $\mathbb{Syn}^\Hcal(\mc X) = (\mathsf{Syn}^\Hcal(\mc X), D_{\mc X}^{\Hcal})$. We know from the previous discussion that $\mathsf{Syn}^\Hcal(\mc X)$ can be understood as a full subcategory of $\mc X$, with inclusion $i: \mathsf{Syn}^\Hcal(\mc X) \to \mc X$. Of course, this embedding is witnessing a pseudonatural transformation as below,
    % https://q.uiver.app/#q=WzAsMixbMCwwLCJcXFdSSW5qKFxcSGNhbClee1xcdGV4dHtvcH19Il0sWzIsMCwiXFxMRVgiXSxbMCwxLCJ7XFxtYXRoc2Z7U3lufV5cXEhjYWx9IiwwLHsib2Zmc2V0IjotMn1dLFswLDEsIntcXG1hdGhzZntVfX0iLDIseyJvZmZzZXQiOjJ9XSxbMiwzLCJpIiwwLHsic2hvcnRlbiI6eyJzb3VyY2UiOjIwLCJ0YXJnZXQiOjIwfSwic3R5bGUiOnsidGFpbCI6eyJuYW1lIjoibW9ubyJ9fX1dXQ==
    \[\begin{tikzcd}
    	{\WRInj(\Hcal)^{\text{op}}} & \LEX
    	\arrow[""{name=0, anchor=center, inner sep=0}, "{{\mathsf{Syn}^\Hcal}}", shift left=3, from=1-1, to=1-2]
    	\arrow[""{name=1, anchor=center, inner sep=0}, "{{\mathsf{U}}}"', shift right=3, from=1-1, to=1-2]
    	\arrow["i", shorten <=1pt, shorten >=1pt, Rightarrow, 2tail, from=0, to=1]
    \end{tikzcd}\]

    Now we use the fact that the forgetful functor from sites to lex categories is $2$-topological in the sense of \cite[Sec 2.1]{di2024sketches}, thus we can pullback the canonical site structure that $\mc X$ has over the syntactic category; see also \cite[C2.3]{elephant2}. Hence, we define
    \[ D_{\mc X}^{\Hcal} = i^*(J_{\mc X}^{\text{can}}). \]
    It is very easy to see that this construction is functorial because inverse images preserve canonical covers.
\end{constr}

\subsubsection{Bounded Logic}

Both the syntactic category and the syntactic site of a topos in a fragment of geometric logic may be large categories. In practice though, we do not have that many examples of \textit{large} syntactic categories, and for many examples we can prove or we conjecture that the syntactic category is indeed small. Let us give two additional definitions that are useful in this taxonomy.

\begin{defn}[Bounded logic]
   A logic $\mathcal{H}$ will be called \textit{bounded} if its syntactic functor lands in small (lex) categories/sites.  We say that a logic $\mathcal{H}$ is \textit{Morita-bounded} if its syntactic functor lands in Morita-small sites\footnote{Recall that a (lex) site is Morita-small if it is Morita equivalent to a small (lex) site.}. Bounded fragments are Morita-bounded, clearly.
\end{defn}

Bounded fragments are easier to treat as they eliminate several size issues that will have technical repercussions in the next sections. Besides the technical relevance, there is some heuristic difference between bounded and unbounded fragments, even if at the moment we cannot provide a full treatment of them. Bounded fragments are \textit{bounded} in the sense that the size of allowed disjunctions is uniformly bounded by a cardinal. At this stage, we can provide two polarised examples of this phenomenon.
 
\begin{exa}
 We saw in \Cref{examplesofsyns} that $\Hcal_{\text{eth}}$ is bounded. This matches our intuition as essentially algebraic logic allows for no disjunctions at all. \Cref{prop:synofcoh} shows that the logic associated to $\Hcal_{\text{flat}}$ is also bounded, but the proof hinges on Makkai's conceptual completeness. So, while in practice quite a number of fragments are bounded, to actually show that they are bounded seems very technical at this stage of maturity of the theory. $\Hcal_{\text{all}}$ is Morita-bounded\footnote{Every topos -- seen as a site with canonical topology -- is Morita-equivalent to a small site of definition.} but it is not bounded. This example is on the opposite side of the spectrum, indeed for full fledged geometric logic one cannot bound the size of disjunctions. We have no example of a logic that is not Morita-bounded and we believe they don't exist.
\end{exa}

\subsection{From Logic to Doctrine ($\Hcal \rightsquigarrow \mathsf{T}^{\Hcal}$)}
The aim of this section is to connect logic to the so-called \textit{doctrine} introduced (independently) by Kock and Zöberlein \cite{kock1977doctrines,zoberlein1976doctrines}. Yet, for our purposes, we need a specific flavour of such notion. On one side we need to generalise to \textit{relative} lax-idempotent pseudomonads \cite{fiore2018relative}, and on the other side, we are only interested in some doctrines defined on the $2$-category of left exact categories. Hence, for the sake of this paper, we shall use the following definition of doctrine.

\begin{defn}[Doctrine]
    A doctrine $(\ms{T}, \sigma)$ is a lax-idempotent relative pseudomonad $\mathsf{T}$ over the inclusion $j: \mathsf{lex} \to \LEX$ equipped with a locally fully faithful pseudonatural transformation\footnote{It would be more appropriate to require this pseudonatural transformation to be a morphism of relative pseudomonads, possibly following \cite[Def. 61 and Lem. 62]{walker2019distributive}. Unfortunately this theory has not been developed yet and is falls outside of the scope of this paper. Later in \Cref{lessinterestingly} we will \textit{in practice} show that we deal with a morphism of relative pseudomonads (if such notion existed).} $\sigma: \ms T \Rightarrow \psh$ between $\ms T$ and the presheaf construction $\psh$. A doctrine is \textit{bounded} if the relative pseudomonad preserves smallness. 
\end{defn}

% \begin{rem}[Morphisms of lax-idempotent relative pseudomonads]
% We should remark that the general theory of relative lax-idempotent pseudomonads is not entirely developed. Lobbia recaps on the current state of art in \cite[Defn. 3.1 and the comments below]{lobbia2023distributive}, where the definition is given for the case of \textit{strict} $2$-monads. For our purposes, by morphism of lax-idempotent relative monads, we rely on Walker's definition of \cite[Lemma 61]{walker2019distributive} (which makes perfect sense also in the relative case). 

% {\color{red} add diagrams?}

% \cite[Lemma 62]{walker2019distributive}  shows that this definition can be used as a characterization of morphisms of monads between lax-idempotent ones. We won't dive in developing such a theory -- which would be completely perpendicular to the general interests of this paper.
% \end{rem}

\begin{rem}[A \textit{reasonable} definition of doctrine] \label{constr:fromlotodoc}
    The notion of \textit{doctrine} defined above matches the tradition of categorical logic. Indeed all the $2$-categories of \textit{theories} associated to fragments of predicate logic can be specified by a doctrine in the sense above (see the introduction of \cite{di2025bi} and its last subsection). It is also useful to see \cite[Example 1.1.8]{di2024sketches}. The $2$-category of -- say -- small coherent categories is indeed the category of algebras for a lax-idempotent pseudomonad on $\lex$ sitting inside the presheaf construction (\cite[Sec. 6]{di2025bi} and \cite[Sec. 5.6]{lex}). Relative pseudomonads are one possible solution to handle size issue: the $2$-category of -- say -- infinitary pretopoi is indeed the 2-category of algebras for the relative lax-idempotent pseudomonad on $\lex$ given by the presheaf construction \cite[Prop. 2.5]{lex}.
\end{rem}

\begin{constr}[The Doctrine associated to a Logic]
    For $\mathcal{H}$ a logic, consider the composition below. % https://q.uiver.app/#q=WzAsMyxbMCwwLCJcXGxleCJdLFsxLDAsIlxcUkluaihcXG1hdGhjYWx7SH0pXntcXHRleHR7b3B9fSJdLFsyLDAsIlxcTGV4Il0sWzAsMSwiXFxtYXRoY2Fse1B9IiwyXSxbMSwyLCJcXG1hdGhzZntTeW59XntcXG1hdGhjYWx7SH19IiwyXSxbMCwyLCJUXntcXG1hdGhjYWx7SH19IiwxLHsiY3VydmUiOi0zLCJzdHlsZSI6eyJib2R5Ijp7Im5hbWUiOiJkYXNoZWQifX19XV0=
    \[\begin{tikzcd}
    	\mathsf{lex} & {\WRInj(\mathcal{H})^{\text{op}}} & \LEX
    	\arrow["{\psh}"', from=1-1, to=1-2]
    	\arrow["{\mathsf{T}^{\Hcal}}"{description}, curve={height=-30pt}, dashed, from=1-1, to=1-3]
    	\arrow["{\mathsf{Syn}^{\mathcal{H}}}"', from=1-2, to=1-3]
    \end{tikzcd}\]
    We shall now prove that $\mathsf{T}^{\Hcal}$ is indeed a lax-idempotent relative pseudomonad. When $\mathcal{H}$ is bounded, this construction will produce a bounded doctrine.
\end{constr}

\subsubsection{$\ms T^\mathcal{H}$ is indeed a doctrine}

The proof of this result will require some preliminary lemmas and the introduction of some terminology. When $\mc D$ is a \emph{large} lex category, we will use $\psh(\mc D)$ to denote the category of \emph{small} presheaves on $\mc D$, i.e. those presheaves arising as small colimits of representables. $\psh(\mc D)$ has the correct universal property, which is the free cocompletion of $\mc D$ under all small colimits.

\begin{term}
    Given $F : \mc C \to \mc D$ in $\LEX$ with $\mc C$ small, it induces an adjoint triple between the corresponding (small) presheaf categories as follows,
    \[\begin{tikzcd}
    	{\psh(\mc C)} & {\psh(\mc D)}
    	\arrow[""{name=0, anchor=center, inner sep=0}, "{F_!}", curve={height=-12pt}, from=1-1, to=1-2]
    	\arrow[""{name=1, anchor=center, inner sep=0}, "{F_*}"', curve={height=12pt}, from=1-1, to=1-2]
    	\arrow[""{name=2, anchor=center, inner sep=0}, "{F^*}"{description}, from=1-2, to=1-1]
    	\arrow["\dashv"{anchor=center, rotate=-91}, draw=none, from=0, to=2]
    	\arrow["\dashv"{anchor=center, rotate=-89}, draw=none, from=2, to=1]
    \end{tikzcd}\]
    The left adjoint $F_!$ exists since $\psh(\mc D)$ is cocomplete, hence has pointwise left Kan extension; it is also left exact because $F$ is. The right adjoint $F_*$ exists follows from the special adjoint functor theorem (cf.~\cite[Prop. 2.2.10]{di2024formal}). We still use the notation of geometric morphism to denote them $\Sigma F : \psh(\mc C) \leftrightarrows \psh(\mc D) : \mc PF$, where
    \[ (\mc PF)_* = F^* = (\Sigma F)^*, \quad (\mc PF)^* = F_!, \quad (\Sigma F)_* = F_*. \]
\end{term}

\begin{lem}
    For any $\mc C$ in $\ms{lex}$, we have a fully faithful unit $\eta : \mc C \hook \ms T^{\mc H}(\mc C)$ in $\LEX$.
\end{lem}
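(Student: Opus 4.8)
Here is the plan. The key realization is that $\ms T^{\mc H}(\mc C)=\ms{Syn}^{\mc H}(\psh(\mc C))$ is, by Construction~\ref{syncat}, a \emph{full} subcategory of the free topos $\psh(\mc C)$ — namely the objects $X$ whose classifying geometric morphism $X:\psh(\mc C)\to\Set[\mbb O]$ preserves all right Kan extensions along maps in $\mc H$ — and that the unit $\eta_{\mc C}:\mc C\to\ms T^{\mc H}(\mc C)$ is nothing but the Yoneda embedding $\yo:\mc C\to\psh(\mc C)$ corestricted along the full inclusion $\ms{Syn}^{\mc H}(\psh(\mc C))\hookrightarrow\psh(\mc C)$. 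This is forced by the very definition of $\ms T^{\mc H}=\ms{Syn}^{\mc H}\circ\psh$: tracing the relevant relative‑pseudomonad data through the defining relative adjunction $\psh\dashv_{j}\ms{Syn}^{\mc H}$ and Diaconescu's theorem, the unit at $\mc C$ is the transpose of $\id_{\psh(\mc C)}$, whose Diaconescu mate is exactly $\yo$. So the whole lemma reduces to two claims: (a) $\yo$ actually \emph{factors} through the full subcategory $\ms{Syn}^{\mc H}(\psh(\mc C))$, and (b) a functor into a full subcategory whose composite with the inclusion is fully faithful is itself fully faithful. Claim (b) is immediate, so the content is (a).

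For (a) I would avoid a direct computation and instead reduce to the case $\mc H=\mc H_{\mr{all}}$, which is already handled. A logic is by definition a class of genuine geometric morphisms, so $\mc H\subseteq\mc H_{\mr{all}}$; hence $\WRInj(\mc H_{\mr{all}})\subseteq\WRInj(\mc H)$ with the evident inclusion on hom‑categories (a morphism preserving right Kan extensions along \emph{all} geometric morphisms a fortiori preserves those along $\mc H$). Since both $\psh(\mc C)$ and the object classifier $\Set[\mbb O]$ are free topoi, they lie in $\WRInj(\mc H_{\mr{all}})$ by Example~\ref{exm:ranfree}, so we obtain an inclusion of full subcategories of $\psh(\mc C)$,
\[ \ms{Syn}^{\mc H_{\mr{all}}}(\psh(\mc C))\ \subseteq\ \ms{Syn}^{\mc H}(\psh(\mc C))\ \subseteq\ \psh(\mc C). \]
By Example~\ref{examplesofsyns} (which rests on Corollary~\ref{class:essalg}) the left‑hand category is precisely the subcategory of representables, i.e. the image of $\yo$. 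Hence $\yo$ factors through $\ms{Syn}^{\mc H}(\psh(\mc C))$, as needed. Should one prefer a self‑contained argument, the same fact can be checked by hand: by the formula of Example~\ref{exm:ranfree} one has $(\ran_f x)^*\yo_c\cong f_*(x^*\yo_c)$ for any $f:\mc E\to\mc F$ and $x:\mc E\to\psh(\mc C)$, and by Example~\ref{exa:directimagerightKan} this is exactly the right Kan extension of the object $x^*\yo_c\in\mc E$ along $f$ into $\Set[\mbb O]$, with the comparison $2$‑cells matching via the explicit formulae for the counits — so every representable lies in $\WRInj(\mc H)(\psh(\mc C),\Set[\mbb O])$ for \emph{any} $\mc H$.

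With (a) in hand, (b) finishes the proof: $\eta_{\mc C}$ is the corestriction of the fully faithful $\yo$ along a fully faithful inclusion, hence fully faithful. The only genuinely delicate point in this argument is the identification of $\eta$ with the corestricted Yoneda embedding, which is purely a matter of bookkeeping in the relative‑pseudomonad structure on $\ms T^{\mc H}$ (to be set up in the surrounding construction); the containment‑of‑syntactic‑categories observation then does all the work painlessly, and everything else is formal.
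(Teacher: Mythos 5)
Your proposal is correct and is essentially the paper's own argument in lightly repackaged form: the paper proves (a) in one line by observing that each representable $\yo_c : \psh(\mc C) \to \Set[\mbb O]$ is a right adjoint (ethereal) in $\Topoi$ and therefore preserves \emph{all} right Kan extensions, which is exactly the fact underlying both your reduction to $\mc H_{\mr{all}}$ via $\ms{Syn}^{\mc H_{\mr{all}}}(\psh(\mc C)) \subseteq \ms{Syn}^{\mc H}(\psh(\mc C))$ and your self-contained computation with the formulas of Examples~\ref{exm:ranfree} and~\ref{exa:directimagerightKan}; fully faithfulness is then Yoneda, as you say. The only point you leave implicit that the statement also asks for is left exactness of $\eta$, which follows from Proposition~\ref{synislex} together with left exactness of $\yo$.
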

\begin{proof}
    For any $\mc C$ in $\ms{lex}$, it is easy to see we have a fully faithful unit $\eta : \mc C \hook \ms T^{\mc H}(\mc C)$, since the representable $\yo_c : \psh(\mc C) \to \Set[\mbb O]$ is a right adjoint in $\Topoi$ for any $c\in\mc C$, thus preserves all right Kan extensions. Fully faithfulness follows from Yoneda. It is left exact by construction.
\end{proof}

Now given $F : \mc C \to \ms T^{\mc H}(\mc D)$, to construct the extension $\qsi F : \ms T^{\mc H}(\mc C) \to \ms T^{\mc H}(\mc D)$ it would be equivalent to construct a geometric morphism $\psh(\mc D) \to \psh(\mc C)$ in $\WRInj(\mc H)$, since then we can then take $\qsi F$ to be given by pre-composition w.r.t. this map,
\[ \qsi F : \ms T^{\mc H}(\mc C) \simeq \WRInj(\psh(\mc C),\Set[\mbb O]) \to \WRInj(\psh(\mc D),\Set[\mbb O]) \simeq \ms T^{\mc H}(\mc D). \]
The only way to build this is as follows,
\[\begin{tikzcd}
	& {\mc D} \ar[drr, draw=none, "\mapsto"description] &&& {\psh(\mc D)} \\
	{\mc C} & {\ms T^{\mc H}(\mc D)} && {\psh(\mc C)} & {\psh(\ms T^{\mc H}(\mc D))}
	\arrow["\eta", hook, from=1-2, to=2-2]
	\arrow["F"', from=2-1, to=2-2]
	\arrow["{\mc PF}", from=2-5, to=2-4]
	\arrow["{\Sigma\eta}", hook, from=1-5, to=2-5]
\end{tikzcd}\]

\begin{lem}
    The composite geometric morphism $\mc PF \circ \Sigma \eta : \psh(\mc C) \to \psh(\mc C)$ always lies in $\WRInj(\mc H)$.
\end{lem}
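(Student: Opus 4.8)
The plan is to compute the composite $g := \mc PF \circ \Sigma\eta$ explicitly and recognise it as an honest geometric morphism $\psh(\mc D)\to\psh(\mc C)$ induced by the given Kleisli datum. Write $\iota : \ms T^{\mc H}(\mc D) \hook \psh(\mc D)$ for the full inclusion of the syntactic category, so that $\iota\eta = \yo$ is the Yoneda embedding of $\mc D$ and, by Proposition~\ref{synislex}, $\iota$ preserves finite limits. Unwinding the notation set up just above the statement, $(\mc PF)^* = F_!$ is left Kan extension along $F$ and $(\Sigma\eta)^* = \eta^*$ is restriction along $\eta$; hence for $c\in\mc C$
\[ g^*\yo_c \;\cong\; \eta^*(F_!\yo_c) \;\cong\; \eta^*(\yo_{F(c)}) \;\cong\; \ms T^{\mc H}(\mc D)(\eta(-),F(c)) \;\cong\; \iota F(c), \]
the last isomorphism using full faithfulness of $\iota$ together with $\iota\eta=\yo$. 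So $g$ is the geometric morphism classified, via Diaconescu, by the left exact functor $\iota F : \mc C \to \psh(\mc D)$ (left exact since $F$ is a morphism of $\LEX$ and $\iota$ preserves finite limits). As a variant, one could first observe that $\mc PF$ is ethereal — its direct image $F^*$ is a restriction functor, hence cocontinuous — so $\mc PF$ preserves all right Kan extensions, reducing the problem to $\Sigma\eta$ alone; I would still run essentially the same computation there.

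Next I would translate the hypothesis on $F$. Each object $F(c)$ lies in $\ms T^{\mc H}(\mc D) = \WRInj(\mc H)(\psh(\mc D),\Set[\mbb O])$, i.e. the geometric morphism $\psh(\mc D)\to\Set[\mbb O]$ classifying the object $\iota F(c)$ preserves right Kan extensions along every map of $\mc H$. Combining this with Example~\ref{exa:directimagerightKan}, which identifies right Kan extensions into $\Set[\mbb O]$ with direct images, this says precisely: for every $f : \mc E \to \mc F$ in $\mc H$ and every $x : \mc E \to \psh(\mc D)$, the canonical comparison
\[ (\ran_f x)^*(\iota F(c)) \longrightarrow f_*\bigl(x^*(\iota F(c))\bigr) \]
is an isomorphism.

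Now fix $f : \mc E \to \mc F$ in $\mc H$ and $x : \mc E \to \psh(\mc D)$. Since $\psh(\mc D)$ and $\psh(\mc C)$ are free topoi, $\ran_f x$ and $\ran_f(g\circ x)$ exist and are pointwise, computed by the formulas of Example~\ref{exm:ranfree}. Evaluating inverse images on a representable $\yo_c$,
\[ (g\circ\ran_f x)^*\yo_c \;=\; (\ran_f x)^*(\iota F(c)) \;\cong\; f_*\bigl(x^*(\iota F(c))\bigr) \;=\; f_*\bigl((g\circ x)^*\yo_c\bigr) \;=\; (\ran_f(g\circ x))^*\yo_c, \]
naturally in $c$, the middle isomorphism being the displayed comparison and the last equality being Example~\ref{exm:ranfree}. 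Since a geometric morphism into a free topos is determined by the flat functor $c\mapsto(-)^*\yo_c$, this gives $g\circ\ran_f x \cong \ran_f(g\circ x)$; a routine check that, under these identifications, the comparison $2$-cell $g\circ\ran_f x \Rightarrow \ran_f(g\circ x)$ built from the counit $\epsilon$ of $\ran_f x$ matches the universal $2$-cell of $\ran_f(g\circ x)$ — both arising from the counit of $f^*\dashv f_*$ — shows that $g$ genuinely preserves the right Kan extension $\ran_f x$. As $f\in\mc H$ and $x$ were arbitrary, $g\in\WRInj(\mc H)$.

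The step I expect to be the main obstacle is exactly this last bookkeeping: one must match the comparison $2$-cells rather than merely exhibit an abstract isomorphism of underlying $1$-cells. This is kept under control by the pointwise character of the Kan extensions into free topoi, so that everything in sight is computed by the explicit formulas of Example~\ref{exm:ranfree} and the counit description of Example~\ref{exa:directimagerightKan}. A secondary, purely bureaucratic point is that $\ms T^{\mc H}(\mc D)$ need not be small, so $\psh$ throughout must be read as small presheaves, exactly as fixed in the terminology preceding the statement; this does not affect the argument since all representables involved are genuine and Diaconescu's theorem applies to large (lex) sites as well.
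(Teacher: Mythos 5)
Your proposal is correct and follows essentially the same route as the paper's proof: both reduce to the computation $(\ran_fx)^*(\sigma F(c)) \cong f_*x^*(\sigma F(c))$, using the pointwise formula of Example~\ref{exm:ranfree}, the identification of right Kan extensions into $\Set[\mbb O]$ with direct images (Example~\ref{exa:directimagerightKan}), and the hypothesis that each $F(c)$ lies in $\ms T^{\mc H}(\mc D)$; the paper merely phrases the chain of isomorphisms as left Kan extensions along $\yo$ where you evaluate on representables and invoke Diaconescu. Your explicit attention to matching the comparison $2$-cells is a point the paper's proof leaves implicit.
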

\begin{proof}
    Let $f : \mc E \to \mc F$ be a map in $\mc H$ and $x : \mc E \to \psh(\mc D)$. Since the right Kan extensions for free topoi are pointwise as shown in Example~\ref{exm:ranfree}, we have
    \[ (\mc PF \circ \Sigma\eta \circ \ran_fx)^* \cong (\ran_fx)^* \circ \eta^* \circ F_!. \]
    Now we observe that $\eta^*$ is itself a left Kan extension of $\sigma : \ms T^{\mc H}(\mc D) \hook \psh(\mc D)$,
    \[
    \begin{tikzcd}
        \ms T^{\mc H}(\mc D) \ar[r, hook, "\sigma"] \ar[d, "\yo"'] & \psh(\mc D) \\
        \psh(\ms T^{\mc H}(\mc D)) \ar[ur, "\lan_{\yo}\sigma\cong\eta^*"']
    \end{tikzcd}
    \]
    Since $(\ran_fx)^*$ is a left adjoint, we then have
    \[ (\mc PF \circ \Sigma\eta \circ \ran_fx)^* \cong (\lan_{\yo}(\ran_fx)^*\sigma) \circ F_!. \]
    Since by assumption, objects lying in the image of $\sigma$ preserves right Kan extension along maps in $\mc H$, and by Example~\ref{exa:directimagerightKan} right Kan extensions for $\Set[\mbb O]$ are computed by direct images, we can perform the computation below, 
    \[ \lan_{\yo}(\ran_fx)^*\sigma \cong \lan_{\yo}f_*x^*\sigma \cong \lan_{\yo}f_*x^*\eta^*\yo. \]
    It follows that
    \begin{align*}
   (\lan_{\yo}(\ran_fx)^*\sigma) \circ F_!     \cong &(\lan_{\yo}f_*x^*\eta^*\yo) \circ F_! \\
        \cong &\lan_{\yo}f_*x^*\eta^*\yo F \\ 
        \cong &\lan_{\yo}f_*x^*F_!\yo \\ 
        \cong &(\ran_f\mc PF \circ \Sigma\eta \circ x)^*.
    \end{align*}
    Hence, $\mc PF \circ \Sigma\eta$ lies in $\WRInj(\mc H)$.
\end{proof}

% \begin{rem}
%     If you unfold the geometric morphism $\mc PF \circ \Sigma\eta$, it indeed recovers all the examples. For instance, if we view an object $X$ in $T^{\mc H}\mc C$ as a colimit $\ct_{i\in I}C_i$ of objects in $\mc C$ computed in $\psh(\mc C)$, then we would have
%     \[ \qsi F(X) \cong \eta^*F_!(\ct_{i\in I}C_i) \cong \eta^*\ct_{i\in I}FC_i, \]
%     which is what the na\"ive extension $\qsi F$ does.
% \end{rem}

% Once we have the contruction above, proving lax-idempotency of $T^{\mc H}$ is straight forward:

\begin{thm}\label{thm:laxidempotentmonad}
    $\ms T^{\mc H}$ is a lax-idempotent relative pseudomonad.
\end{thm}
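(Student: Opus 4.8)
The plan is to recognise the data assembled above as a single $j$-relative $2$-adjunction, to deduce the relative pseudomonad structure and its coherences from it, and then to verify the Kock--Zöberlein condition separately. Concretely, I would first show that $\psh : \lex \to \WRInj(\Hcal)\op$ is a left $2$-adjoint of $\ms{Syn}^{\Hcal} : \WRInj(\Hcal)\op \to \LEX$ relative to the inclusion $j : \lex \hook \LEX$. By Diaconescu's theorem there is a pseudonatural equivalence $\Topoi(\mc X, \psh\mc C) \simeq \LEX(\mc C, \mc X)$, $h \mapsto h^*\yo$, whenever $\mc C\in\lex$ (so that flat functors out of $\mc C$ are exactly lex ones). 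When $\mc X \in \WRInj(\Hcal)$, the formula for right Kan extensions into free topoi (Example~\ref{exm:ranfree}), together with the fact that each representable $\yo_c : \psh\mc C \to \Set[\mbb O]$ is a right adjoint, shows that $h$ preserves right Kan extensions along $\Hcal$ if and only if each $h^*\yo_c : \mc X \to \Set[\mbb O]$ does, i.e.\ iff $h^*\yo$ factors through $\ms{Syn}^{\Hcal}(\mc X) \hook \mc X$. Hence the equivalence restricts to a pseudonatural $\WRInj(\Hcal)(\mc X, \psh\mc C) \simeq \LEX(\mc C, \ms{Syn}^{\Hcal}(\mc X))$, which is exactly the hom-equivalence of the desired relative adjunction.

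The general passage from relative (pseudo)adjunctions to relative pseudomonads (cf.~\cite{fiore2018relative} and the companion~\cite{lex}) then equips the composite $\ms{Syn}^{\Hcal} \circ \psh = \ms T^{\Hcal}$ with a $j$-relative pseudomonad structure whose unit is the relative-adjunction unit — which unwinds to the fully faithful $\eta_{\mc C}$ constructed above — and whose extension of $F : \mc C \to \ms T^{\Hcal}\mc D$ is the mate of the identity, unwinding to precomposition with $\mc PF \circ \Sigma\eta : \psh\mc D \to \psh\mc C$; that this map lies in $\WRInj(\Hcal)$, so that the extension is well defined, is precisely the lemma above. All the structural isomorphisms ($\qsi F \circ \eta_{\mc C} \cong F$, $\qsi{\eta_{\mc C}} \cong \id$, $\qsi{(\qsi G \circ F)} \cong \qsi G \circ \qsi F$) and the pseudomonad coherence axioms are then supplied by this machinery; the only thing left to check at this stage is that the hand-built extension really is the mate, which is a short unwinding of the inverse-image identity $(\mc PF \circ \Sigma\eta)^* \cong \eta^* \circ F_!$ already used in the lemma.

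For lax-idempotency I would invoke the standard criterion that a $j$-relative pseudomonad is lax-idempotent exactly when, for every $F : \mc C \to \ms T^{\Hcal}\mc D$, the coherence isomorphism $F \cong \qsi F \circ \eta_{\mc C}$ exhibits $\qsi F$ as the left Kan extension of $F$ along $\eta_{\mc C}$ in $\LEX$. This should fall out of the explicit description of the extension: writing $\sigma$ for the inclusion $\ms T^{\Hcal}(-) \hook \psh(-)$, one has $F_! \cong \lan_{\yo}(\yo \circ F)$ and $\eta_{\mc D}^* \cong \lan_{\yo}\sigma$ (the latter is recorded in the preceding lemma), so that $\qsi F$ is a pointwise left Kan extension along Yoneda followed by the reflection into $\ms T^{\Hcal}\mc D$, which is precisely the universal lex functor out of $\ms T^{\Hcal}\mc C$ extending $F$ along $\eta_{\mc C}$.

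The main obstacle I anticipate is twofold. For the relative adjunction one must check genuine pseudonaturality — including the action on $2$-cells — of the restricted hom-equivalence, which hinges on $\ms{Syn}^{\Hcal}$ being locally fully faithful when restricted to presheaf topoi (true because the representables generate $\psh\mc C$ under colimits and inverse images are cocontinuous). For lax-idempotency, the delicate point is verifying the left Kan extension universal property \emph{inside} $\LEX$ rather than $\CAT$ — that $\qsi F$ is universal among \emph{lex} functors out of $\ms T^{\Hcal}\mc C$, not merely among all functors — which forces one to argue that every object of $\ms T^{\Hcal}\mc C$ is a colimit of representables of a kind that $\qsi F$, being assembled from $F_!$, necessarily preserves. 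The familiar size bookkeeping (large $\ms T^{\Hcal}\mc C$, small presheaves, and Morita-smallness in the bounded case) runs through both.
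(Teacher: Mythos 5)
Your proposal is correct, but it takes a genuinely different route from the paper's. The paper builds the unit and the extension by hand (the extension of $F$ being precomposition with $\mc PF\circ\Sigma\eta$, which lies in $\WRInj(\Hcal)$ by the preceding lemma) and then verifies the extension, unit and multiplication axioms directly, declaring the remaining coherences routine; lax-idempotency is not separately argued there. You instead package the same ingredients into a $j$-relative pseudoadjunction $\psh \dashv \ms{Syn}^{\Hcal}$ between $\lex$ and $\WRInj(\Hcal)\op$, obtained by restricting Diaconescu's equivalence $\Topoi(\mc X,\psh(\mc C))\simeq\LEX(\mc C,\mc X)$ along the observation that $h$ preserves right Kan extensions along $\Hcal$ iff $h^*\yo$ lands in $\ms{Syn}^{\Hcal}(\mc X)$ --- the ``only if'' because representables $\psh(\mc C)\to\Set[\mbb O]$ are right adjoints, the ``if'' by comparing $(\ran_fx)^*h^*$ with $\lan_{\yo}(f_*x^*h^*\yo)$ on representables via Examples~\ref{exm:ranfree} and~\ref{exa:directimagerightKan} and cocontinuity of inverse images. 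The general machinery of \cite{fiore2018relative} then produces the relative pseudomonad with all coherences for free, and your identification $(\mc PF\circ\Sigma\eta)^*\yo\cong\sigma F$ shows the mate of the identity is exactly the paper's hand-built extension, so the two constructions agree. What your route buys is that the coherence axioms the paper waves through become automatic, and that lax-idempotency receives an explicit argument: $\qsi F\cong\eta^*F_!\sigma$ is the pointwise formula for $\lan_{\eta}(\sigma F)$, it is lex (a composite of the lex $F_!$ with the limit-preserving $\eta^*$), so your worry about $\LEX$ versus $\CAT$ dissolves --- $\LEX\hookrightarrow\CAT$ is locally fully faithful, hence a $\CAT$-Kan extension that happens to be lex is a $\LEX$-Kan extension. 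What the paper's route buys is self-containedness, needing only the two explicit lemmas rather than the adjunction-to-monad theory for relative pseudomonads. One small caveat: there is no reflection $\psh(\mc D)\to\ms T^{\Hcal}(\mc D)$ in general, so your phrase ``followed by the reflection'' should read ``followed by the observation that the colimit $\ct_{i\in\eta\downarrow X}\sigma(FX_i)$ already lies in the subcategory,'' which is exactly what the lemma on $\mc PF\circ\Sigma\eta$ guarantees.
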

\begin{proof}
    \emph{Extension}: For any $F : \mc C \to \ms T^{\mc H}(\mc D)$, by construction we have $\qsi F \circ \eta_{\mc C} \cong F$. 
    
    \emph{Unit}: By construction the extension $\qsi\eta$ of $\eta : \mc C \hook \ms T^{\mc H}(\mc C)$ corresponds to the geometric morphism $\mc P\eta \circ \Sigma\eta \cong 1$. 
    
    \emph{Multiplication}: Suppose we have $F : \mc C \to \ms T^{\mc H}(\mc D)$ and $G : \mc D \to \ms T^{\mc H}\mc E$. By construction of $\qsi G$, the following diagram commutes,
    \[
    \begin{tikzcd}
        \psh(\mc E) \ar[d, "\Sigma\eta_{\mc E}"'] \ar[r, "\mc PG \circ \Sigma\eta_{\mc E}"] & \psh(\mc D) \ar[d, hook, "\Sigma\eta_{\mc D}"] \\ 
        \psh(\ms T^{\mc H}\mc E) \ar[r, "\mc P\qsi G"'] & \psh(\ms T^{\mc H}(\mc D))
    \end{tikzcd}
    \]
    Now the extension $\qsi{\qsi G F}$ corresponds to the map
    \[ \mc P(\qsi GF) \circ \Sigma\eta_{\mc E} \cong \mc PF \circ \mc P\qsi G \circ \Sigma\eta_{\mc E} \cong \mc PF \circ \Sigma\eta_{\mc D} \circ \mc PG \circ \Sigma\eta_{\mc E}, \]
    which is exactly the map corresponds to the composition $\qsi G \circ \qsi F$. Verifying they satisfy the coherent axioms is routine.
\end{proof}

Finally, the following result shows we in practice have a morphism of lax-idempotent relative pseudomonad from $\ms T^{\mc H}$ to $\psh$: 

\begin{prop}\label{prop:morphismofpsalg}
    The inclusion $\sigma : \ms T^{\mc H}(\mc C) \hook \psh(\mc C)$ for any $\mc C$ in $\lex$ induces a pseudonatural transformation $\sigma : \ms T^{\mc H} \nt \psh$, which is computed as a left Kan extension in $\LEX$ as below,
    \[
    \begin{tikzcd}
        \mc C \ar[d, hook, "\eta"'] \ar[r, hook, "\yo"] & \psh(\mc C) \\ 
        \ms T^{\mc H}(\mc C) \ar[ur, hook, "\sigma \cong \lan_{\eta}\yo"']
    \end{tikzcd}
    \]
\end{prop}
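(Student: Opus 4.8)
The plan is to split the statement into two parts: that each component $\sigma_{\mc C}$ is the left Kan extension $\lan_{\eta_{\mc C}}\yo_{\mc C}$ in $\LEX$, and that the $\sigma_{\mc C}$ assemble into a pseudonatural transformation $\ms T^{\mc H}\nt\psh$. As common input I would first record three facts: under the object classifier equivalence $\Topoi(\psh(\mc C),\Set[\mbb O]) \simeq \psh(\mc C)$, the category $\ms T^{\mc H}(\mc C) = \WRInj(\mc H)(\psh(\mc C),\Set[\mbb O])$ is a full subcategory of $\psh(\mc C)$ with inclusion $\sigma_{\mc C}$ sending a geometric morphism $g$ to the object it classifies; this $\sigma_{\mc C}$ is left exact by Proposition~\ref{synislex} applied to $\mc X = \psh(\mc C)$; and by construction the (fully faithful) unit $\eta_{\mc C}$ picks out exactly the representables, so $\sigma_{\mc C}\circ\eta_{\mc C} \cong \yo_{\mc C}$.

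For the Kan extension formula I would argue pointwise. Fix $t \in \ms T^{\mc H}(\mc C)$. Fullness of $\sigma_{\mc C}$, the isomorphism $\sigma_{\mc C}\eta_{\mc C}\cong\yo_{\mc C}$ and the Yoneda lemma give $\ms T^{\mc H}(\mc C)(\eta_{\mc C}c,t) \cong \psh(\mc C)(\yo_c,\sigma_{\mc C}t) \cong (\sigma_{\mc C}t)(c)$ naturally in $c$, hence an equivalence of $\eta_{\mc C}\downarrow t$ with the category of elements $\mr{el}(\sigma_{\mc C}t)$, compatible with the projections to $\mc C$ and thence to $\psh(\mc C)$ along $\yo_{\mc C}$. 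As $\sigma_{\mc C}t$ is a small presheaf, $\mr{el}(\sigma_{\mc C}t)$ is small, so the colimit $\ct_{\eta_{\mc C}\downarrow t}\yo_{\mc C}$ exists in the cocomplete $\psh(\mc C)$, and the density theorem identifies it with $\sigma_{\mc C}t$. This exhibits $\sigma_{\mc C}$, together with the canonical $2$-cell $\yo_{\mc C}\cong\sigma_{\mc C}\eta_{\mc C}$, as the pointwise left Kan extension $\lan_{\eta_{\mc C}}\yo_{\mc C}$ computed in $\CAT$; since $\sigma_{\mc C}$ is left exact and the $2$-cells of $\LEX$ between left exact functors are simply natural transformations, the same universal property holds in $\LEX$, so $\sigma_{\mc C}\cong\lan_{\eta_{\mc C}}\yo_{\mc C}$ there.

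For pseudonaturality I would compute the structure $2$-cell directly from the recipe for $\ms T^{\mc H}(F)$. For $F : \mc C\to\mc D$ in $\lex$ one has $\psh(F) = F_!$, while $\ms T^{\mc H}(F) = \qsi{\eta_{\mc D}F}$ is precomposition with the geometric morphism $\mc P(\eta_{\mc D}F)\circ\Sigma\eta_{\mc D} : \psh(\mc D)\to\psh(\mc C)$, whose inverse image is $\eta_{\mc D}^*\circ(\eta_{\mc D}F)_!$. Translating through the object classifier identification one obtains
\[ \sigma_{\mc D}\bigl(\ms T^{\mc H}(F)(g)\bigr) \;\cong\; \eta_{\mc D}^*(\eta_{\mc D}F)_!\bigl(\sigma_{\mc C}g\bigr) \;\cong\; \eta_{\mc D}^*(\eta_{\mc D})_!F_!\bigl(\sigma_{\mc C}g\bigr) \;\cong\; F_!\bigl(\sigma_{\mc C}g\bigr) \;=\; \psh(F)\bigl(\sigma_{\mc C}g\bigr), \]
using the pseudofunctoriality $(\eta_{\mc D}F)_!\cong(\eta_{\mc D})_!F_!$ and the fact that $\eta_{\mc D}$ fully faithful forces $\eta_{\mc D}^*(\eta_{\mc D})_!\cong\id$. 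Naturality in $g$ of this isomorphism furnishes the invertible $2$-cell $\sigma_{\mc D}\ms T^{\mc H}(F)\cong\psh(F)\sigma_{\mc C}$; its coherence (respect for identities and for composition of $1$-cells in $\lex$) unwinds to the pseudofunctoriality constraints of $\ms T^{\mc H}$ and $\psh$ together with the relative pseudomonad axioms of Theorem~\ref{thm:laxidempotentmonad}, which I would verify but expect to be routine. The step I expect to cost the most is the pointwise analysis in the second paragraph: upgrading the tautology $\sigma_{\mc C}\eta_{\mc C}\cong\yo_{\mc C}$ to the Kan extension statement hinges on identifying $\eta_{\mc C}\downarrow t$ with $\mr{el}(\sigma_{\mc C}t)$ and running the density computation, plus the easy but essential remark that the $\CAT$-level pointwise Kan extension, being left exact, already serves as the one in $\LEX$; once this is in place, the pseudonaturality is a direct calculation.
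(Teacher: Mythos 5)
Your proposal is correct and follows essentially the same route as the paper: the core of the argument is the identification of the comma category $\eta\downarrow t$ with $\yo\downarrow\sigma t$ followed by the density (co-Yoneda) computation $\sigma t \cong \ct_{\yo\downarrow\sigma t}\yo_C \cong (\lan_\eta\yo)t$, using that the left Kan extension is computed pointwise in $\CAT$. You additionally spell out the pseudonaturality square via the explicit formula $\eta_{\mc D}^*(\eta_{\mc D}F)_! \cong F_!$, which the paper's proof leaves implicit; that verification is correct and a welcome supplement, but not a different method.
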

\begin{proof}
    For any $X\in\ms T^{\mc H}(\mc C)$, notice that by definition the two comma categories $\eta \downarrow X$ and $\yo \downarrow \sigma X$ coincides. Hence, by Yoneda we have
    \[ \sigma X \cong \ct_{C \in \yo\downarrow \sigma X}\yo_C \cong \ct_{C\in\eta\downarrow X} \yo_C \cong (\lan_\eta\yo) X. \]
    The final isomorphism holds because $\lan_\eta\yo$ is computed pointwise in $\CAT$.
\end{proof}

\subsection{Algebras for $\mathsf{T}^{\Hcal}$ and classifying topoi}

In this subsection we discuss some important properties for the $2$-category of algebras of $\mathsf{T}^{\Hcal}$. More specifically for us, we use the definition of \emph{lax-idempotent pseudoalgebras} for a lax-idempotent relative pseudomonad introduced in~\cite[Def. 5.1 and Prop. 5.14]{arkor2025bicategories}. This is our default notion, thus by algebra we will always mean a lax-idempotent pseudo one. Concretely, an algebra structure on a (possibly large) lex category $\mc C$ is essentially unique, and it exists iff for any $f : \mc A \to \mc C$ in $\LEX$ from a small lex category, the left Kan extension $a^f$ in $\LEX$ exists,
% https://q.uiver.app/#q=WzAsMyxbMCwwLCJcXG1jIEEiXSxbMSwwLCJcXG1jIEMiXSxbMCwxLCJcXG1hdGhzZntUfV57XFxIY2FsfShcXG1jIEEpIl0sWzAsMSwiZiJdLFswLDIsIntcXGV0YX0iLDJdLFsyLDEsIntcXGxhbl97XFxldGF9Zn0iLDIseyJzdHlsZSI6eyJib2R5Ijp7Im5hbWUiOiJkYXNoZWQifX19XV0=
\[\begin{tikzcd}
	{\mc A} & {\mc C} \\
	{\mathsf{T}^{\Hcal}(\mc A)}
	\arrow["f", from=1-1, to=1-2]
	\arrow["{{\eta}}"', from=1-1, to=2-1]
	\arrow["{a^f}"', dashed, from=2-1, to=1-2]
\end{tikzcd}\]
which satisfies the following conditions:
\begin{itemize}
    \item The unit $1 \cong a^f\eta$ is invertible;
    \item These left Kan extensions are stable in the sense that for any $g : \mc B \to \ms T^{\Hcal}(\mc A)$ in $\LEX$ from a small $\mc B$, we have $a^f\qsi g \cong a^{a^fg}$ as shown below,
    % https://q.uiver.app/#q=WzAsNSxbMSwwLCJcXG1jIEEiXSxbMiwwLCJcXG1jIEMiXSxbMSwxLCJcXG1hdGhzZntUfV57XFxIY2FsfShcXG1jIEEpIl0sWzAsMSwiXFxtYyBCIl0sWzAsMiwiXFxtcyBUXntcXEhjYWx9KFxcbWMgQikiXSxbMCwxLCJmIl0sWzAsMiwie1xcZXRhfSIsMl0sWzIsMSwiYV5mIiwyLHsic3R5bGUiOnsiYm9keSI6eyJuYW1lIjoiZGFzaGVkIn19fV0sWzMsMiwiZyJdLFszLDQsIlxcZXRhIiwyXSxbNCwyLCJcXHFzaSBHIiwyXSxbNCwxLCJhXnthXmZnfSIsMix7ImN1cnZlIjo0fV1d
    \[\begin{tikzcd}
    	& {\mc A} & {\mc C} \\
    	{\mc B} & {\mathsf{T}^{\Hcal}(\mc A)} \\
    	{\ms T^{\Hcal}(\mc B)}
    	\arrow["f", from=1-2, to=1-3]
    	\arrow["{{\eta}}"', from=1-2, to=2-2]
    	\arrow["g", from=2-1, to=2-2]
    	\arrow["\eta"', from=2-1, to=3-1]
    	\arrow["{a^f}"description, dashed, from=2-2, to=1-3]
    	\arrow["{a^{a^fg}}"', curve={height=30pt}, dashed, from=3-1, to=1-3]
    	\arrow["{\qsi g}"description, from=3-1, to=2-2]
    \end{tikzcd}\]
\end{itemize}
A morphism between algebras are exactly a functor preserving these left Kan extensions.

It is easy to see that when $\mc C$ is itself small, the above condition is reduced to the fact that there is a left adjoint $a \dashv \eta$ to the unit $\eta : \mc C \to \ms T^{\Hcal}(\mc C)$ in $\LEX$, since for any $f : \mc A \to \mc C$, the left Kan extension $a^f$ would be induced as a composite $a^f \cong a \ms T^{\mc H}(f)$,
% https://q.uiver.app/#q=WzAsNSxbMywwLCJcXG1jIEMiXSxbMCwwLCJcXG1jIEEiXSxbMCwxLCJcXG1hdGhzZntUfV57XFxIY2FsfShcXG1jIEEpIl0sWzEsMSwiXFxtYyBDIl0sWzEsMiwiXFxtYXRoc2Z7VH1ee1xcSGNhbH0oQykiXSxbMSwyLCJcXGV0YSIsMl0sWzEsMCwiZiJdLFsyLDAsImFeZiIsMSx7InN0eWxlIjp7ImJvZHkiOnsibmFtZSI6ImRhc2hlZCJ9fX1dLFszLDAsIlxcdGV4dHtpZH0iLDFdLFszLDQsIlxcZXRhIiwyXSxbMiw0LCJcXG1zIFRee1xcSGNhbH0oZikiLDJdLFs0LDAsImEiLDEseyJzdHlsZSI6eyJib2R5Ijp7Im5hbWUiOiJkYXNoZWQifX19XSxbMSwzLCJmIiwxXV0=
\[\begin{tikzcd}
	{\mc A} &&& {\mc C} \\
	{\mathsf{T}^{\Hcal}(\mc A)} & {\mc C} \\
	& {\mathsf{T}^{\Hcal}(\mc C)}
	\arrow["f", from=1-1, to=1-4]
	\arrow["\eta"', from=1-1, to=2-1]
	\arrow["f"{description}, from=1-1, to=2-2]
	\arrow["{a^f}"{description}, dashed, from=2-1, to=1-4]
	\arrow["{\ms T^{\Hcal}(f)}"', from=2-1, to=3-2]
	\arrow["{\text{id}}"{description}, from=2-2, to=1-4]
	\arrow["\eta"', from=2-2, to=3-2]
	\arrow["a"{description}, dashed, from=3-2, to=1-4]
\end{tikzcd}\]

\begin{defn}[$2$-categories of algebras for $\mathsf{T}^{\Hcal}$]
We use $\mathsf{alg}(\mathsf{T}^{\Hcal})$ to denote the 2-category of small (!) algebras of $\ms T^{\Hcal}$. $\mathsf{Alg}(\mathsf{T}^{\Hcal})$ will be the 2-category of (possibly large) algebras.
\end{defn}

As commented before, in practice $\sigma : \ms T^{\mc H} \nt \psh$ will be a morphism between relative lax-idempotent pseudomonad. Though we lack a general theory of this in the current literature, we can show the following result directly:

\begin{prop}\label{lessinterestingly}
    Every infinitary pretopos is a $\mathsf{T}^{\Hcal}$-algebra for every ${\Hcal}$, i.e. there is a forgetful functor over $\LEX$
    \[ \ms{Pretopoi}_\infty \to \ms{Alg}(\ms T^{\mc H}). \]
\end{prop}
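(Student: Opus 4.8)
The plan is to transport to each infinitary pretopos $\mc C$ its already-available $\psh$-algebra structure along $\sigma$. Recall from \cite[Prop. 2.5]{lex} that an infinitary pretopos is exactly a (relative, lax-idempotent) pseudoalgebra for $\psh$, whose structure consists of pointwise left Kan extensions $\lan_{\yo}f : \psh(\mc A) \to \mc C$ in $\LEX$, one for each left exact $f : \mc A \to \mc C$ with $\mc A$ small, with invertible unit and stable in the $\psh$-sense. First I would record three features of the component $\sigma_{\mc A} : \ms T^{\Hcal}(\mc A) \hook \psh(\mc A)$: it is a \emph{full} subcategory inclusion, by local fullness of $\WRInj(\Hcal)$ (cf.\ \Cref{syncat}); it is \emph{left exact}, since $\ms T^{\Hcal}(\mc A) = \ms{Syn}^{\Hcal}(\psh(\mc A)) = \WRInj(\Hcal)(\psh(\mc A),\Set[\mbb O])$ --- note $\psh(\mc A)$ and $\Set[\mbb O]$ are free topoi, hence in $\WRInj(\Hcal)$ --- is closed under finite limits in $\psh(\mc A)$ by \Cref{synislex}; and $\sigma_{\mc A}\circ\eta_{\mc A} \cong \yo_{\mc A}$, because $\eta_{\mc A}$ picks out the representables, which is precisely \Cref{prop:morphismofpsalg}.

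I would then set $a^f := (\lan_{\yo}f)\circ\sigma_{\mc A} : \ms T^{\Hcal}(\mc A) \to \mc C$ and check it is the left Kan extension of $f$ along $\eta_{\mc A}$ in $\LEX$, with invertible unit. The unit is immediate: $a^f\circ\eta_{\mc A} \cong (\lan_{\yo}f)\circ\yo \cong f$, using $\sigma_{\mc A}\eta_{\mc A}\cong\yo$ and the $\psh$-unit law. For the extension property I would argue pointwise: full faithfulness of $\sigma_{\mc A}$ together with $\sigma_{\mc A}\eta_{\mc A}\cong\yo$ identifies, for each $X\in\ms T^{\Hcal}(\mc A)$, the comma category $\eta_{\mc A}\downarrow X$ with $\yo_{\mc A}\downarrow\sigma_{\mc A}X$ compatibly with $f$, so the colimit of $f$ over $\eta_{\mc A}\downarrow X$ exists in $\mc C$ --- it is the colimit of $f$ over $\yo_{\mc A}\downarrow\sigma_{\mc A}X$, namely $(\lan_{\yo}f)(\sigma_{\mc A}X) = a^f(X)$ --- and these colimits compute the pointwise left Kan extension $\lan_{\eta_{\mc A}}f$. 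Since $a^f$ is a composite of left exact functors it lands in $\LEX$, and since $2$-cells in $\LEX$ are just natural transformations, this pointwise left Kan extension is a genuine left Kan extension in $\LEX$; thus $a^f = \lan_{\eta_{\mc A}}f$.

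For the stability clause I must verify $a^f\circ\qsi g \cong a^{a^fg}$ for $g : \mc B \to \ms T^{\Hcal}(\mc A)$ with $\mc B$ small, where $\qsi{(-)}$ is the Kleisli extension of $\ms T^{\Hcal}$ constructed above. The crux is an intertwining lemma, $\sigma_{\mc A}\circ\qsi g \cong \bigl(\lan_{\yo}(\sigma_{\mc A}g)\bigr)\circ\sigma_{\mc B}$, saying that $\sigma$ carries the $\ms T^{\Hcal}$-Kleisli extension of $g$ to the $\psh$-Kleisli extension of $\sigma_{\mc A}g$. I would prove this by unwinding the construction: $\qsi g$ is precomposition with the geometric morphism $\mc P g\circ\Sigma\eta_{\mc A} : \psh(\mc A)\to\psh(\mc B)$, whose inverse image is $\eta_{\mc A}^*\circ g_! : \psh(\mc B)\to\psh(\mc A)$; this functor and $\lan_{\yo}(\sigma_{\mc A}g)$ are both cocontinuous and agree on representables (again by $\sigma_{\mc A}\eta_{\mc A}\cong\yo$ and full faithfulness of $\sigma_{\mc A}$), hence are isomorphic. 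Granting this, the $\psh$-stability of $\mc C$ gives
\[ a^f\circ\qsi g \;\cong\; (\lan_{\yo}f)\circ\bigl(\lan_{\yo}(\sigma_{\mc A}g)\bigr)\circ\sigma_{\mc B} \;\cong\; \bigl(\lan_{\yo}((\lan_{\yo}f)\circ\sigma_{\mc A}\circ g)\bigr)\circ\sigma_{\mc B} \;=\; (\lan_{\yo}(a^fg))\circ\sigma_{\mc B} \;=\; a^{a^fg}. \]

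Finally, for functoriality: a morphism of infinitary pretopoi preserves the extensions $\lan_{\yo}(-)$ by definition, hence --- since $a^f = (\lan_{\yo}f)\circ\sigma$ --- it also preserves the $\lan_{\eta}(-)$, so it is a morphism of $\ms T^{\Hcal}$-algebras; being the identity on underlying categories and on $2$-cells, this assignment produces the forgetful $2$-functor $\ms{Pretopoi}_\infty \to \ms{Alg}(\ms T^{\Hcal})$ over $\LEX$. I expect the stability clause to be the main obstacle: with no general theory of morphisms of relative lax-idempotent pseudomonads available, the intertwining lemma has to be dug out by hand from the explicit description of $\qsi{(-)}$, and one must take some care with the size issues around small presheaves on the possibly large categories $\mc C$ and $\ms T^{\Hcal}(\mc A)$.
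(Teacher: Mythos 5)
Your proposal is correct and follows essentially the same route as the paper: define $a^f := (\lan_{\yo}f)\circ\sigma$, identify it with $\lan_\eta f$ via $\sigma\cong\lan_\eta\yo$ (\Cref{prop:morphismofpsalg}), and check the pseudoalgebra axioms. The only difference is one of detail — the paper dismisses the unit and stability conditions as "easily verified, since the left Kan extensions are computed pointwise," whereas you spell out the stability clause via the intertwining isomorphism $\sigma_{\mc A}\circ\qsi g \cong (\lan_{\yo}(\sigma_{\mc A}g))\circ\sigma_{\mc B}$, which is a correct and welcome elaboration of exactly that step.
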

\begin{proof}
    Let $\mc X$ be an infinitary pretopos. For any small lex category $\mc C$ with $f : \mc C \to \mc X$ in $\LEX$, consider the following diagram,
    % https://q.uiver.app/#q=WzAsNCxbMCwwLCJcXG1jIEMiXSxbMSwwLCJcXG1jIFgiXSxbMSwxLCJcXHBzaChcXG1jIEMpIl0sWzAsMSwiXFxtcyBUXntcXEhjYWx9KFxcbWMgQykiXSxbMCwxLCJmIl0sWzAsMywiXFxldGEiLDJdLFszLDIsIlxcc2lnbWEiLDJdLFsyLDEsIlxcbGFuX3tcXHlvfWYiLDJdLFszLDEsImFeZiIsMSx7InN0eWxlIjp7ImJvZHkiOnsibmFtZSI6ImRhc2hlZCJ9fX1dXQ==
    \[\begin{tikzcd}
    	{\mc C} & {\mc X} \\
    	{\ms T^{\Hcal}(\mc C)} & {\psh(\mc C)}
    	\arrow["f", from=1-1, to=1-2]
    	\arrow["\eta"', from=1-1, to=2-1]
    	\arrow["{a^f}"{description}, dashed, from=2-1, to=1-2]
    	\arrow["\sigma"', from=2-1, to=2-2]
    	\arrow["{\lan_{\yo}f}"', from=2-2, to=1-2]
    \end{tikzcd}\]
    where we define $a^f$ to be the composite as shown above. The left Kan extension $\lan_{\yo}f$ exists in $\LEX$ precisely because infinitary pretopoi are precisely the $\psh$-algebras \cite[Prop 2.5]{lex}. By \Cref{prop:morphismofpsalg}, we have
    \[ a^f := \lan_{\yo}f \circ \sigma \cong \lan_{\yo}f \circ \lan_\eta\yo \cong \lan_{\eta} f. \]
    The unity and associativity condition of $a^f$ can be easily verified, since the above left Kan extensions are computed pointwise in $\CAT$.
\end{proof}

% \begin{rem} 
%     Less interestingly, we can observe that the \textit{forgetful} functor, $\Topoi^{\text{op}} \to \mathsf{LEX}$ factors throught $\mathsf{Alg}(\mathsf{T}^{\Hcal})$. 
%     % https://q.uiver.app/#q=WzAsMyxbMCwxLCJcXFJJbmpee1xcY2lyY30iXSxbMSwxLCJcXExFWCJdLFsxLDAsIlxcbWF0aHNme0FsZ30oXFxtYXRoc2Z7VH1ee1xcSGNhbH0pIl0sWzIsMV0sWzAsMV0sWzAsMiwiIiwwLHsic3R5bGUiOnsiYm9keSI6eyJuYW1lIjoiZGFzaGVkIn19fV1d
%     \[\begin{tikzcd}
%     	& {\mathsf{Alg}(\mathsf{T}^{\Hcal})} \\
%     	{\Topoi^{\text{op}}} & \LEX
%     	\arrow[from=1-2, to=2-2]
%     	\arrow[dashed, from=2-1, to=1-2]
%     	\arrow[from=2-1, to=2-2]
%     \end{tikzcd}\]
%     This is true because when $\Hcal$ is simply all geometric morphisms, from Example~\ref{examplesofsyns} we see that $\mathsf{T}^{\mc H_{\text{all}}}$ gives the presheaf construction $\psh$, and  Now, of course every Grothendieck topos is an infinitary pretopos and thus we have the result for $\mathsf{T}^{\text{all}}$. On the other hand, because the doctrine provides a {\color{red} morphism of relative monad} $\sigma : \mathsf{T}^{\Hcal} \to \psh$, any topos will always be \textit{a fortiori} a $\mathsf{T}^{\Hcal}$ algebra.
% \end{rem}

\begin{rem}
    The results above acknowledge a phenomenon we are very used to, and that usually even brings to some confusion in inexperienced readers of topos theory. Indeed, \textit{every} topos is lex, every topos is regular, every topos is coherent, etc. This is explained by the forgetful functor in \Cref{lessinterestingly}. Yet, not every topos is the classifying topos of a lex (regular, coherent, etc.) theory.
\end{rem}

Besides the observation that every topos is a $\ms T^{\mc H}$-algebra, the more interesting observation is the construction of classifying topos of a $\ms T^{\mc H}$-algebra:

\begin{constr}[The site associated to a small algebra]
    Every small $\mathsf{T}^\Hcal$ algebra $\mc C$ admits a canonical site structure $(\mc C, J_{\mc C}^{\Hcal})$, this will provide us with a 2-functor \[J^{\Hcal}: \mathsf{alg}(\mathsf{T}^{\Hcal}) \to \mathsf{sites}.\] The adjunction $a \dashv \eta$ has a unit $1 \to \eta a$, which induces a natural transformation $\delta : \sigma \nt \yo a$,
    \[\begin{tikzcd}
    	{\mathsf{T}^{\Hcal}(\mc C)} & {\mathsf{Psh}(\mc C)}
    	\arrow[""{name=0, anchor=center, inner sep=0}, "\yo a"', shift right=3, from=1-1, to=1-2]
    	\arrow[""{name=1, anchor=center, inner sep=0}, "\sigma", shift left=3, from=1-1, to=1-2]
    	\arrow["\delta", shorten <=2pt, shorten >=2pt, Rightarrow, from=1, to=0]
    \end{tikzcd}\]
    We define the topology $J^{\mc H}_{\mc C}$ as the \emph{least topology} that localises all the components $\delta_X$ for $X\in\ms T^{\mc H}(\mc C)$. Given a morphism $f : \mc C \to \mc D$ between small $\ms T^{\mc H}$ algebras, by definition the following diagram commutes,
    \[
    \begin{tikzcd}
        \mc C \ar[r, "f"] & \mc D \\ 
        \ms T^{\mc H}(\mc C) \ar[u, "a"] \ar[r, "\ms T^{\Hcal}f"'] & \ms T^{\mc H}(\mc D) \ar[u, "a"']
    \end{tikzcd}
    \]
    This implies that for any $X\in\ms T^{\Hcal}\mc C$, $f_!\delta_X \cong \delta_{fX}$, thus $f : (\mc C,J^{\mc H}_{\mc C}) \to (\mc D,J^{\mc H}_{\mc D})$ preserves covering, hence is a morphism of site.
\end{constr}

% The process below is somewhat reminiscent of \cite[Sec. 4.3]{di2024sketches}, and it could have been possible to refer to it, but it would have required too much of a detour from the language and the technology of this paper.

\begin{constr}[The site associated to a possibly large algebra]
The construction above can be extended to possibly large algebras with some additional care, which produces a 2-functor 
\[ J^{\Hcal} : \ms{Alg}(\ms T^{\mc H}) \to \ms{SITES}. \]
Let $\mc C$ be a (possibly large) $\ms T^{\mc H}$ algebra. For any $f : \mc A \to \mc C$ in $\LEX$ with $\mc A$ small, we can still write the diagram below
% https://q.uiver.app/#q=WzAsNSxbMCwwLCJcXG1jIEEiXSxbMCwxLCJcXG1hdGhzZntUfV57XFxIY2FsfShcXG1jIEEpIl0sWzEsMCwiXFxtYyBDIl0sWzIsMSwiXFxwc2goXFxtYyBDKSJdLFsxLDEsIlxccHNoKFxcbWMgQSkiXSxbMCwxLCJcXGV0YV9YIiwyXSxbMCwyLCJmIl0sWzEsMiwiYV5mIiwxXSxbMiwzLCJcXHlvIl0sWzEsNCwiXFxzaWdtYSIsMl0sWzQsMywiZl8hIiwyXSxbNCwyLCJcXGRlbHRhXmYiLDIseyJzaG9ydGVuIjp7InNvdXJjZSI6MjAsInRhcmdldCI6MjB9LCJsZXZlbCI6Mn1dXQ==
\[\begin{tikzcd}
	{\mc A} & {\mc C} \\
	{\mathsf{T}^{\Hcal}(\mc A)} & {\psh(\mc A)} & {\psh(\mc C)}
	\arrow["f", from=1-1, to=1-2]
	\arrow["{\eta}"', from=1-1, to=2-1]
	\arrow["\yo", from=1-2, to=2-3]
	\arrow["{a^f}"{description}, from=2-1, to=1-2]
	\arrow["\sigma"', from=2-1, to=2-2]
	\arrow["{\delta^f}"', shorten <=2pt, shorten >=2pt, Rightarrow, from=2-2, to=1-2]
	\arrow["{f_!}"', from=2-2, to=2-3]
\end{tikzcd}\]
where here $\psh(\mc C)$ denote the category of small presheaves on $\mc C$, i.e. those presheaves on $\mc C$ that can be written as a small colimit of representables, and $f_!$ is given by left Kan extension. The natural transformation $\delta^f$ is given as follows: For any $X\in\ms T^{\mc H}(\mc A)$ with $\sigma X \cong \ct_{i}\yo_{X_i}$ for $X_i\in\mc A$, by construction of $f_!$ we have
\[ f_!\sigma X \cong \ct_{i}\yo_{fX_i}. \]
Since we have a map $X_i \cong a^f\eta X_i \to a^fX$ for any $i$, this induces a morphism
\[ \delta^f_X : f_!\sigma X \cong \ct_{i}\yo_{fX_i} \to \yo_{a^fX}. \]

This brings back to a diagram that is very similar to the one of the previous construction. Now call $J^f_{\mc C}$ the least topology that inverts $\delta^f$.  The topology $J^{\Hcal}_{\mc C}$ that we would like to define is clearly, \[J_{\mc C}^{\Hcal} = \bigvee_{f: \mc A \to \mc C} J^f_{\mc C} \]
where the supremum is computed in the poset of Grothendieck topologies. This construction is always meaningful because the (large) poset of Grothendieck topologies has all supremums. The fact that a morphism of algebras $f : \mc C \to \mc D$ is covering preserving is completely similar as before, thus we get a 2-functor $J^{\Hcal}$. Indeed this extends the previous construction, since when $\mc C$ is a small lex category, this supremum coincides with $J^{\text{id}}_{\mc C}$ because as mentioned before, every $a^f$ in that case factors through $a = a^{\text{id}}$.
\end{constr}

\begin{defn}
   A (possibly large) (lex) site is \textit{Morita-small} if it is Morita equivalent to a small (lex) site. This gives us $\mathsf{SITES}_{\mathsf{M}}$, the full sub $2$-category of Morita-small sites.   An algebra $\mc C$ for $\mathsf{T}^{\Hcal}$ is \textit{Morita-small} if the resulting site is Morita-small. Morita-small algebras are collected in the $2$-category $\mathsf{Alg}(\mathsf{T}^{\Hcal})_{\mathsf{M}}$.
\end{defn}

% \begin{defn}
%    A (possibly large) algebra $(\mc C,a)$ for $\mathsf{T}^{\Hcal}$ is Morita-small if there exists a small algebra $(\mc D,b)$ and a morphism of algebras $F : \mc D \to \mc C$ such that the associated morphism of sites in the construction above is a Morita-equivalence. This gives us $\mathsf{Alg}_{\mathsf{M}}(\mathsf{T}^{\Hcal})$, the full sub $2$-category of Morita-small algebras.
% \end{defn}

\begin{constr}[The classifying topos of a Morita-small algebra] \label{constr:classtopos}
    The construction above associates to each Morita-small algebra $\mc C$ a Morita-small site. Then, we can compute the topos $\mathsf{Sh}(J^{\Hcal}(\mc C))$, yielding the $2$-functor $\mathsf{Cl}$ as below,
    % https://q.uiver.app/#q=WzAsMyxbMSwxLCJcXG1hdGhzZntTSVRFU31ee1xcbWF0aHNme019fSJdLFsyLDAsIlxcbWF0aHNme1RvcG9pfV57XFx0ZXh0e29wfX0iXSxbMCwwLCJcXG1hdGhzZntBbGd9KFxcbWF0aHNme1R9XntcXEhjYWx9KV57XFxtYXRoc2Z7TX19Il0sWzIsMCwiSl57XFxtYXRoY2Fse0h9fSIsMl0sWzAsMSwiXFxtYXRoc2Z7U2h9IiwyXSxbMiwxLCJcXG1hdGhjYWx7Q2x9Il1d
    \[\begin{tikzcd}
    	{\mathsf{Alg}(\mathsf{T}^{\Hcal})_{\mathsf{M}}} && {\mathsf{Topoi}^{\text{op}}} \\
    	& {\mathsf{SITES}_{\mathsf{M}}}
    	\arrow["{\ms{Cl}}", from=1-1, to=1-3]
    	\arrow["{J^{\mathcal{H}}}"', from=1-1, to=2-2]
    	\arrow["{\mathsf{Sh}}"', from=2-2, to=1-3]
    \end{tikzcd}\]
\end{constr}

\section{Diaconescu: Soundness and Completeness} \label{sec:sound}

Now that we have a notion of logic (which could be the semantic prescription $\Hcal$ or its more syntactic incarnation $\mathsf{T}^{\Hcal}$), and we have established some preliminary results and constructions. This way, we can initiate an investigation of the properties of logic. In this section we shall discuss a Diaconescu-type theorem. A deep analysis of the involved ingredients delivers \textit{soundness} and \textit{conceptual completeness} theorems for our fragments, and prepare the ground for a broader analysis in the style of \textit{abstract logic} (see \Cref{abstractnonsense}).

\subsection{Conceptual soundness}

From now let us focus on small $\ms T^{\mc H}$-algebras\footnote{This restriction is to just simplify the treatment of size issues. We believe everything we are going to say in this paper for small algebras will hold similarly for Morita-small algebras. However, a proper treatment of size would also digress from the conceptual message we would like to convey, thus we are content with this restriction. On a practical level, this restriction is not detrimental to the treatment of the majority of examples we have considered in this paper.}.
The goal of this subsection is to show that, the classifying topos construction provided earlier actually factors through $\WRInj(\mc H)$, 
\[ \ms{Cl} : \ms{alg}(\ms T^{\mc H}) \to \WRInj(\mc H)\op. \]
The above result states that for any $\ms T^{\mc H}$-algebra, its classifying topos indeed satisfies the semantic prescription specified by $\mc H$, and any morphism between algebras induces a geometric morphism that respects it. This is thus a \emph{conceptual soundness} result, indicating that the resulting actual syntax of logic we obtain from a semantic prescription actually conforms to the semantic prescription we start from.

\begin{lem}[Classifying topoi are injective]\label{lem:classtopoiareinj}
    The classifying topos $\ms{Cl}(\mc C)$ of a small $\ms T^{\mc H}$-algebra lies in $\WRInj(\mc H)$.
\end{lem}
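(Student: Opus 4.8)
The plan is to present $\ms{Cl}(\mc C)$ as the sheaf topos $\sh(\mc C, J^{\mc H}_{\mc C})$ and verify the hypothesis of Proposition~\ref{prop:genericproof}. Recall that a small $\ms T^{\mc H}$-algebra $\mc C$ is in particular a small lex category, and that $J^{\mc H}_{\mc C}$ is by construction the least Grothendieck topology on $\mc C$ localising all the comparison morphisms $\delta_X : \sigma X \to \yo_{aX}$, one for each $X \in \ms T^{\mc H}(\mc C)$. Fix $f : \mc E \to \mc F$ in $\mc H$ and $u : \mc E \to \ms{Cl}(\mc C)$, and write $j : \ms{Cl}(\mc C) \hook \psh(\mc C)$ for the canonical embedding. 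Since $\psh(\mc C)$ is a free topos it is weakly right Kan injective with respect to every geometric morphism (Example~\ref{exm:ranfree}), so the right Kan extension $g := \ran_f(j u) : \mc F \to \psh(\mc C)$ exists in $\Topoi$. Following the argument in the proof of Proposition~\ref{prop:genericproof}, it suffices to show that $g$ factors through $j$, for then the induced $1$-cell $\mc F \to \ms{Cl}(\mc C)$ is the right Kan extension we want; and because $J^{\mc H}_{\mc C}$ is the least topology inverting the family $\stt{\delta_X}$, the morphism $g$ factors through $j$ precisely when $g^*$ sends each $\delta_X$ to an isomorphism. So the whole proof reduces to showing that $g^*\delta_X$ is invertible for every $X \in \ms T^{\mc H}(\mc C)$.

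To see this I would compute $g^*\delta_X$ using the object classifier. View the presheaves $\sigma X$ and the representable $\yo_{aX}$ as $1$-cells $\psh(\mc C) \to \Set[\mbb O]$, and $\delta_X$ as the $2$-cell between them corresponding to the morphism $\delta_X$ under the equivalence $\Topoi(\psh(\mc C),\Set[\mbb O]) \simeq \psh(\mc C)$. By the very definition of $\ms T^{\mc H}(\mc C) = \WRInj(\mc H)(\psh(\mc C),\Set[\mbb O])$, the $1$-cell $\sigma X$ preserves right Kan extensions along all maps in $\mc H$; and $\yo_{aX}$, being a right adjoint in $\Topoi$, preserves every right Kan extension. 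Hence both $1$-cells preserve $g = \ran_f(ju)$. Whiskering the $2$-cell $\delta_X$ with $g$, invoking Example~\ref{exa:directimagerightKan} — which identifies the right Kan extension into $\Set[\mbb O]$ along $f$ with the direct image $f_*$ — and using the naturality of Kan-extension preservation in the $1$-cell, I would obtain a commuting square identifying $g^*\delta_X : g^*\sigma X \to g^*\yo_{aX}$ with $f_*\,u^*\,j^*\delta_X : f_*u^*j^*\sigma X \to f_*u^*j^*\yo_{aX}$.

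Now the conclusion is immediate: by construction of $J^{\mc H}_{\mc C}$, the sheafification $j^*\delta_X$ is an isomorphism in $\ms{Cl}(\mc C)$; inverse images and direct images of geometric morphisms preserve isomorphisms, so $f_*u^*j^*\delta_X$, hence $g^*\delta_X$, is an isomorphism. Therefore $g$ factors as $g \cong j \circ g'$ for some $g' : \mc F \to \ms{Cl}(\mc C)$; transporting the counit of $g = \ran_f(ju)$ along the full faithfulness of the $1$-cell $j$ (embeddings are fully faithful in $\Topoi$) turns $g'$ into the right Kan extension $\ran_f u$ in $\Topoi$. Since $f \in \mc H$ and $u$ were arbitrary, $\ms{Cl}(\mc C) \in \WRInj(\mc H)$.

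The step I expect to be the main obstacle is the bookkeeping in the second paragraph: making the dictionary between morphisms of presheaves on $\mc C$ and $2$-cells into $\Set[\mbb O]$ precise, and — crucially — checking that the comparison isomorphisms $g^*\sigma X \cong f_*u^*j^*\sigma X$ and $g^*\yo_{aX}\cong f_*u^*j^*\yo_{aX}$ are compatible with $\delta_X$, so that they actually transport the \emph{morphism} $g^*\delta_X$ to $f_*u^*j^*\delta_X$ rather than merely exhibiting its source and target as isomorphic. This is the one place where the argument is not purely formal manipulation of universal properties: it needs the (standard) naturality of preservation of right Kan extensions with respect to $2$-cells, and it is exactly what the localisation criterion requires, since that criterion asks for the specific maps $\delta_X$ to be inverted, not just for their endpoints to agree.
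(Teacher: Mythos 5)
Your proof is correct, but it takes a different route from the one the paper uses for this particular lemma. The paper's own proof is a one-liner: $\ms{Cl}(\mc C) \simeq \sh(\mc C, J^{\mc H}_{\mc C})$ is the \emph{inverter} in $\Topoi$ of the family of $2$-cells $\delta_X : \sigma X \nt \yo_{aX}$ between $1$-cells $\psh(\mc C) \to \Set[\mbb O]$; both legs lie in $\WRInj(\mc H)$ (the former by definition of $\ms T^{\mc H}$, the latter because representables are right adjoints in $\Topoi$ and so preserve all right Kan extensions), and $\WRInj(\mc H)$ is closed under bi/pseudolimits, so the inverter belongs to it. You instead run the factorization argument of Proposition~\ref{prop:genericproof}: compute $g = \ran_f(ju)$ in the ambient free topos and show $g^*$ inverts every $\delta_X$ via $g^*\delta_X \cong f_*u^*j^*\delta_X$, which is exactly the strategy the paper reserves for the \emph{next} lemma (Lemma~\ref{lem:classtopoihavepre}, that $j$ is an $\mc H$-presentation). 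The trade-off is clear: the paper's argument is essentially free once the pseudolimit-closure machinery is in place, whereas your computation costs the $2$-cell bookkeeping you rightly flag (checking that the comparison isomorphisms transport the morphism $g^*\delta_X$ itself, not merely its endpoints) but buys more — it establishes the $\mc H$-presentation, hence by Lemma~\ref{lem:presentationpointwise} that $\ms{Cl}(\mc C)$ in fact lies in $\WRInj_{\pw}(\mc H)$, so your single argument subsumes both of the paper's Lemmas~\ref{lem:classtopoiareinj} and~\ref{lem:classtopoihavepre}. The key inputs are the same in both proofs (that $\sigma X$ and $\yo_{aX}$ preserve the relevant right Kan extensions); only the packaging differs.
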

\begin{proof}
    Let $\mc C$ be a small $\ms T^{\Hcal}$-algebra. By construction $\ms{Cl}(\mc C) \simeq \sh(\mc C,J^{\mc H}_{\mc C})$ is the inverter in $\Topoi$ of the following family
    \[\begin{tikzcd}
    	{\psh(\mc C)} & {\Set[\mbb O]}
    	\arrow[""{name=0, anchor=center, inner sep=0}, "{X}", shift left=3, from=1-1, to=1-2]
    	\arrow[""{name=1, anchor=center, inner sep=0}, "{aX}"', shift right=3, from=1-1, to=1-2]
    	\arrow["{\delta_X}", shorten <=2pt, shorten >=2pt, Rightarrow, from=0, to=1]
    \end{tikzcd}\]
    for all $X\in \mathsf{T}^{\Hcal}(\mc C)$. Now by construction, both $X$ and $aX$ belongs to $\WRInj(\mc H)$: The former by definition of $\ms T^{\mc H}$ and the later by the fact that $aX$ is representable. Thus by $\WRInj(\mc H)$ being closed under pseudolimits, the classifying topos $\mathsf{Cl}(\mc C)$ belongs to the logic.
\end{proof}

\begin{lem}[Classifying topoi admits presentation]\label{lem:classtopoihavepre}
    For any small $\ms T^{\mc H}$-algebra $\mc C$, the canonical inclusion $j : \ms{Cl}(\mc C) \hook \psh(\mc C)$ is an $\mc H$-presentation of $\ms{Cl}(\mc C)$.
\end{lem}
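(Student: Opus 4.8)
The plan is to verify the two clauses in the definition of an $\mc H$-presentation: that $j$ embeds $\ms{Cl}(\mc C)$ into a free topos, and that it is a morphism of $\WRInj(\mc H)$, i.e.\ preserves every right Kan extension along a map of $\mc H$. The first is immediate, since by construction $\ms{Cl}(\mc C) = \sh(\mc C, J^{\mc H}_{\mc C})$ is a subtopos of $\psh(\mc C)$ and $\mc C$ is a small lex category, so $\psh(\mc C)$ is free. For the second I would appeal to Proposition~\ref{prop:genericproof}: it suffices to check that for every $f : \mc E \to \mc F$ in $\mc H$ and every $x : \mc E \to \ms{Cl}(\mc C)$, the composite $f_*x^*y : \mc C \to \mc F$, with $y = j^*\yo$, is $J^{\mc H}_{\mc C}$-continuous. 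Since $J^{\mc H}_{\mc C}$ is by construction the least topology localising the components $\delta_X : \sigma X \to \yo_{aX}$ for $X \in \ms T^{\mc H}(\mc C)$, and since by Example~\ref{exm:ranfree} (the computation in the proof of Proposition~\ref{prop:genericproof}) one has $(\ran_f(jx))^* \cong \lan_{\yo}(f_*x^*y)$, this continuity is equivalent to the statement that $(\ran_f(jx))^*$ inverts every $\delta_X$.

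To prove this, fix $X \in \ms T^{\mc H}(\mc C)$ and set $g := \ran_f(jx)$, which exists in $\Topoi$ because free topoi are weakly right Kan injective with respect to every geometric morphism (Example~\ref{exm:ranfree}); write $\epsilon : gf \nt jx$ for the counit. Regard $X$ and $aX = \yo_{aX}$ as geometric morphisms $\psh(\mc C) \to \Set[\mbb O]$. Both preserve the right Kan extension $g$: the first by the very definition of $\ms T^{\mc H}(\mc C) = \WRInj(\mc H)(\psh(\mc C),\Set[\mbb O])$, and the second because $\yo_{aX}$ is representable and hence a right adjoint in $\Topoi$ (as in the proofs of Lemma~\ref{lem:adjointiffallinj} and Lemma~\ref{lem:classtopoiareinj}), so $aX$ preserves all right Kan extensions. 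Thus $(Xg, X\ast\epsilon)$ and $(aXg, aX\ast\epsilon)$ are the right Kan extensions along $f$ of $X\circ jx$ and $aX\circ jx$, respectively. Now $x$ factors through the subtopos $j$, so the $J^{\mc H}_{\mc C}$-sheafification $j^*$ inverts $\delta_X$, whence $\delta_X \ast jx : X\circ jx \nt aX\circ jx$ is invertible; the canonical comparison $2$-cell between the two right Kan extensions it induces is precisely $\delta_X \ast g$, corresponding to $(\ran_f(jx))^*(\delta_X)$ under $\Topoi(\mc F,\Set[\mbb O]) \simeq \mc F$, and it is therefore invertible. Since $X$ was arbitrary, Proposition~\ref{prop:genericproof} applies and delivers both that $\ms{Cl}(\mc C) \in \WRInj(\mc H)$ (re-proving Lemma~\ref{lem:classtopoiareinj}) and that $j$ is an $\mc H$-presentation.

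I expect the only delicate point to be the identification of the whiskered $2$-cell $\delta_X \ast g$ with the canonical comparison between the right Kan extensions $\ran_f(X\circ jx)$ and $\ran_f(aX\circ jx)$: this is the hinge converting ``$\delta_X \ast jx$ is invertible'' into ``$\delta_X \ast g$ is invertible'', and although it follows formally from the uniqueness of maps out of a right Kan extension together with naturality of $\epsilon$ under whiskering, it deserves to be written out. Everything else is bookkeeping with universal properties, the identification of right Kan extensions into $\Set[\mbb O]$ with direct images (Example~\ref{exa:directimagerightKan}), and closure properties already in hand. A shorter, less hands-on route is also available: since $\ms{Cl}(\mc C)$ is the joint inverter in $\Topoi$ of the family $\{\delta_X\}_{X \in \ms T^{\mc H}(\mc C)}$, a diagram whose vertices ($\psh(\mc C)$ and $\Set[\mbb O]$) and edges ($X$ and $aX$) all lie in $\WRInj(\mc H)$, the closure of $\WRInj(\mc H)$ under pseudolimits makes this inverter a pseudolimit in $\WRInj(\mc H)$ too, so its cone leg $j$ is automatically a morphism there.
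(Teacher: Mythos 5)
Your argument is essentially the paper's own proof: both reduce the claim to showing that $\ran_f(jx)$ inverts every $\delta_X$, using that $X$ preserves the right Kan extension along $f$ by the very definition of $\ms T^{\mc H}(\mc C)$, that $aX$ does so because representables are right adjoints in $\Topoi$, and that $j$ (hence $jx$) inverts $\delta_X$. The extra care you take in identifying the comparison $2$-cell with $\delta_X \ast g$ addresses precisely the step the paper's chain of isomorphisms leaves implicit, and your pseudolimit alternative for the morphism-of-$\WRInj(\mc H)$ clause is a legitimate shortcut consistent with how the paper proves Lemma~\ref{lem:classtopoiareinj}.
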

\begin{proof}
    It suffices to show that for any $f : \mc  E\to \mc F$ in $\mc H$ and $x : \mc E \to \ms{Cl}(\mc C)$, the right Kan extension $\ran_fjx$ factors through $j$. Consider the following diagram for $X\in\ms T^{\mc H}(\mc C)$,
    % https://q.uiver.app/#q=WzAsNSxbMCwwLCJcXG1jIEUiXSxbMCwxLCJcXG1jIEYiXSxbMSwwLCJcXG1ze0NsfShcXG1jIEMpIl0sWzEsMSwiXFxwc2goXFxtYyBDKSJdLFsyLDEsIlxcU2V0W1xcbWJiIE9dIl0sWzAsMSwiZiIsMl0sWzAsMiwieCJdLFsxLDMsIlxccmFuX3tmfWp4IiwyXSxbMiwzLCJqIl0sWzMsNCwiXFxzaWdtYSBYIiwwLHsib2Zmc2V0IjotMn1dLFsxLDIsIiIsMSx7InN0eWxlIjp7ImJvZHkiOnsibmFtZSI6ImRhc2hlZCJ9fX1dLFszLDQsIlxceW8gYVgiLDIseyJvZmZzZXQiOjJ9XSxbOSwxMSwiXFxkZWx0YV9YIiwwLHsic2hvcnRlbiI6eyJzb3VyY2UiOjIwLCJ0YXJnZXQiOjIwfX1dXQ==
    \[\begin{tikzcd}
    	{\mc E} & {\ms{Cl}(\mc C)} \\
    	{\mc F} & {\psh(\mc C)} & {\Set[\mbb O]}
    	\arrow["x", from=1-1, to=1-2]
    	\arrow["f"', from=1-1, to=2-1]
    	\arrow["j", from=1-2, to=2-2]
    	\arrow[dashed, from=2-1, to=1-2]
    	\arrow["{\ran_{f}jx}"', from=2-1, to=2-2]
    	\arrow[""{name=0, anchor=center, inner sep=0}, "{X}", shift left=2, from=2-2, to=2-3]
    	\arrow[""{name=1, anchor=center, inner sep=0}, "{aX}"', shift right=2, from=2-2, to=2-3]
    	\arrow["{\delta_X}", shorten <=1pt, shorten >=1pt, Rightarrow, from=0, to=1]
    \end{tikzcd}\]
    Since $j : \ms{Cl}(\mc C) \hook \psh(\mc C)$ by construction is the inverter for all $\delta_X$, it suffices to show $\ran_fjx$ inverts all $\delta_X$. We then have
    \[ X \circ \ran_fjx \cong \ran_fXjx \cong \ran_f (aX)jx \cong (aX) \circ \ran_fjx. \]
    The above uses the fact that both $X$ and $aX$ preserves the right Kan extension along $f$, and $j$ inverts $\delta_X$.
\end{proof}

\begin{thm}[Conceptual soundness]\label{thm:conceptualsound}
    The classifying topos construction $\ms{Cl}$ factors through $\WRInj_{\pw}(\mc H)$,
    \[ \ms{Cl} : \ms{alg}(\ms T^{\Hcal}) \to \WRInj_{\pw}(\mc H)\op. \]
\end{thm}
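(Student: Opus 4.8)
The plan is to check the factorization on objects and on $1$-cells separately; on $2$-cells there is nothing to do, since $\WRInj_{\pw}(\mc H)$ is a locally full sub-$2$-category of $\Topoi$, so every $2$-cell between $1$-cells of $\WRInj_{\pw}(\mc H)$ already lives there. For the object part I would simply chain the two lemmas just proved: for a small $\ms T^{\Hcal}$-algebra $\mc C$, Lemma~\ref{lem:classtopoiareinj} gives $\ms{Cl}(\mc C) \simeq \sh(\mc C,J^{\mc H}_{\mc C}) \in \WRInj(\mc H)$, and Lemma~\ref{lem:classtopoihavepre} says the canonical inclusion $j_{\mc C} : \ms{Cl}(\mc C) \hook \psh(\mc C)$ is an $\mc H$-presentation (here $\mc C$ is small, so $\psh(\mc C)$ is an honest free topos). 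Feeding this into Lemma~\ref{lem:presentationpointwise} yields $\ms{Cl}(\mc C) \in \WRInj_{\pw}(\mc H)$.

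For the morphism part, let $h : \mc C \to \mc D$ be a morphism of small $\ms T^{\Hcal}$-algebras and set $g := \ms{Cl}(h)$, viewed as a geometric morphism $\ms{Cl}(\mc D) \to \ms{Cl}(\mc C)$. By construction of the site functor $J^{\Hcal}(-)$ and of $\ms{Sh}$, the geometric morphism $g$ is nothing but the restriction of $\mc P h : \psh(\mc D) \to \psh(\mc C)$ to the two subtoposes of sheaves, so we have a commuting square $j_{\mc C} \circ g \cong \mc P h \circ j_{\mc D}$. The load-bearing observation is that $\mc P h$ itself preserves right Kan extensions along every $f \in \mc H$: given $x : \mc E \to \psh(\mc D)$, Example~\ref{exm:ranfree} computes
\[ (\ran_f x)_* \cong \lan_{f_*} x_*, \qquad \big(\ran_f(\mc P h \circ x)\big)_* \cong \lan_{f_*}\big((\mc P h)_* x_*\big) = \lan_{f_*}(h^* x_*), \]
and since $h^* = (\mc P h)_* = (\Sigma h)^*$ is simultaneously an \emph{inverse} image functor, it is a left adjoint, hence preserves the pointwise colimits computing these left Kan extensions; thus $(\mc P h \circ \ran_f x)_* = h^*(\ran_f x)_* \cong \big(\ran_f(\mc P h \circ x)\big)_*$, and the comparison $2$-cells match by naturality of the counits.

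It then remains to transport this along the presentations. Fixing $f : \mc E \to \mc F$ in $\mc H$ and $x : \mc E \to \ms{Cl}(\mc D)$, and using that $j_{\mc D}$ and $j_{\mc C}$ are $\mc H$-presentations (so they preserve right Kan extensions along $\mc H$), the claim on $\mc P h$, and the commuting square, one computes
\[ j_{\mc C}\, g\, \ran_f x \;\cong\; \mc P h\, j_{\mc D}\, \ran_f x \;\cong\; \mc P h\, \ran_f(j_{\mc D} x) \;\cong\; \ran_f(\mc P h\, j_{\mc D} x) \;\cong\; \ran_f(j_{\mc C}\, g\, x) \;\cong\; j_{\mc C}\, \ran_f(g x), \]
compatibly with the counits. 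Since $j_{\mc C}$ is an embedding, $j_{\mc C} \circ -$ is fully faithful on hom-categories, so this isomorphism descends to $g\, \ran_f x \cong \ran_f(g x)$ as right Kan extensions in $\Topoi$. Hence $g$ preserves right Kan extensions along $\mc H$, i.e. it is a $1$-cell of $\WRInj_{\pw}(\mc H)$, and the desired factorization follows.

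I expect the main difficulty here to be organizational rather than conceptual: one must pin down carefully that $\ms{Cl}(h)$ really is the restriction of $\mc P h$ so that the commuting square is available, and then keep track throughout that it is the comparison $2$-cells — not merely the underlying geometric morphisms — that get matched. The one substantive input, which does the actual work, is the remark that $(\mc P h)_*$ coincides with the inverse image $(\Sigma h)^*$ and is therefore a left adjoint; this is precisely what lets it commute with the pointwise Kan extension formula of Example~\ref{exm:ranfree}, and without it $\mc P h$ would have no reason to preserve these right Kan extensions.
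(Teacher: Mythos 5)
Your proposal is correct and follows essentially the same route as the paper's proof: objects via Lemmas~\ref{lem:classtopoiareinj}, \ref{lem:classtopoihavepre} and \ref{lem:presentationpointwise}, and morphisms via the commuting square with the presheaf level, cancelling the embedding $j_{\mc C}$ at the end. The only (cosmetic) difference is that the paper justifies the preservation of right Kan extensions by $\mc P h$ by observing it is ethereal, i.e.\ a right adjoint in $\Topoi$, whereas you unwind the pointwise formula of Example~\ref{exm:ranfree} and use that $(\mc P h)_* = h^*$ is a left adjoint --- the same fact in different clothing.
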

\begin{proof}
    By Lemma~\ref{lem:classtopoiareinj}, it suffices to show for any algebraic morphism $g : \mc C \to \mc D$, the induced geometric morphism lies in $\WRInj_{\pw}(\mc H)$.
    \[ \ms{Cl}(g) : \ms{Cl}(\mc D) \to \ms{Cl}(\mc C). \]
    Consider an $f : \mc E \to \mc F$ in $\mc H$ and $x : \mc E \to \ms{Cl}(\mc D)$. We have the following diagram,
    % https://q.uiver.app/#q=WzAsNixbMSwxLCJcXG1ze0NsfShcXG1jIEMpIl0sWzIsMSwiXFxwc2goXFxtYyBDKSJdLFsxLDAsIlxcbXN7Q2x9KFxcbWMgRCkiXSxbMiwwLCJcXHBzaChcXG1jIEQpIl0sWzAsMCwiXFxtYyBFIl0sWzAsMSwiXFxtYyBGIl0sWzAsMSwiaiJdLFsyLDMsImoiXSxbMiwwLCJcXG1ze0NsfShmKSIsMV0sWzMsMSwiXFxwc2goZikiXSxbNCw1LCJmIiwyXSxbNCwyLCJ4Il0sWzUsMF0sWzUsMywiIiwwLHsiY3VydmUiOjEsInN0eWxlIjp7ImJvZHkiOnsibmFtZSI6ImRhc2hlZCJ9fX1dLFs1LDIsIiIsMCx7InN0eWxlIjp7ImJvZHkiOnsibmFtZSI6ImRhc2hlZCJ9fX1dXQ==
    \[\begin{tikzcd}
    	{\mc E} & {\ms{Cl}(\mc D)} & {\psh(\mc D)} \\
    	{\mc F} & {\ms{Cl}(\mc C)} & {\psh(\mc C)}
    	\arrow["x", from=1-1, to=1-2]
    	\arrow["f"', from=1-1, to=2-1]
    	\arrow["j", hook, from=1-2, to=1-3]
    	\arrow["{\ms{Cl}(g)}"{description}, from=1-2, to=2-2]
    	\arrow["{\psh(g)}", from=1-3, to=2-3]
    	\arrow["\ran_fx"description, dashed, from=2-1, to=1-2]
    	\arrow["\ran_{f}jx"{description, near end}, curve={height=6pt}, dashed, from=2-1, to=1-3]
    	\arrow[from=2-1, to=2-2]
    	\arrow["j", hook, from=2-2, to=2-3]
    \end{tikzcd}\]
    The right square commutes by the fact that $g$ is an algebraic morphism. To show $\ms{Cl}(g)$ preserves the right Kan extension along $f$, we observe that
    \begin{align*}
      j \ms{Cl}(g) \ran_fx   \cong & \psh(g) j \ran_fx \\
         \cong & \ran_{f}\psh(g)jx \\ 
         \cong & \ran_fj\ms{Cl}(g)x \\ 
         \cong & j\ran_f\ms{Cl}(g)x.
    \end{align*}
    The above uses the fact that both $j$ and $\psh(g)$ preserves right Kan extensions along $f$: The former holds by Lemma~\ref{lem:classtopoihavepre}, and the later holds since $\psh(f)$ is ethereal. Since $j$ is an embedding, we indeed have $\ms{Cl}(g) \ran_fx \cong \ran_f\ms{Cl}(g)x$, hence $\ms{Cl}(g)$ belongs to $\WRInj_{\pw}(\mc H)$.
\end{proof}

% In contrast, we also have a \emph{soundness} result, which states that any small $\ms T^{\mc H}$-algebra fully faithfully embeds into its classifying topos:

% \begin{thm}[Soundness]
%     Let $\mc C$ be a small $\ms T^{\mc H}$-algebra. Then the Yoneda embedding factors through $\ms{Cl}(\mc C)$,
%     \[
%     \begin{tikzcd}
%         & \ms{Cl}(\mc C) \ar[d, hook, "j"] \\
%         \mc C \ar[r, hook, "\yo"'] \ar[ur, dashed, hook] & \psh(\mc C)
%     \end{tikzcd}
%     \]
%     In particular, this implies that the corresponding topology $J^{\mc H}_{\mc C}$ on $\mc C$ is always subcanonical.
% \end{thm}
% \begin{proof}
%     Recall from Example~\ref{exa:directimagerightKan} that right Kan extension for $\Set[\mbb O]$ computes the direct image. Thus, for any $X\in\psh(\mc C)$, it is a $j$-sheaf iff the following diagram is a right Kan extension,
%     \[
%     \begin{tikzcd}
%         \ms{Cl}(\mc C) \ar[r, "Xj"] \ar[d, hook, "j"'] & \Set[\mbb O] \\ 
%         \psh(\mc C) \ar[ur, "X"']
%     \end{tikzcd}
%     \]
%     Now if $X$ belongs to $\ms T^{\mc H}\mc C$, 
% \end{proof}

\subsection{Parametric Diaconescu's theorem} \label{diaconescu}

Recall from \Cref{lessinterestingly} there is a forgetful functor from $\Topoi$ to $\ms T^{\mc H}$-algebras. Diaconescu's theorem states that this is relative adjoint to the classifying topos construction:

\begin{thm}[Diaconescu]
    We have a relative pseudoadjunction as below,
    % https://q.uiver.app/#q=WzAsMyxbMSwwLCJcXG1hdGhzZntUb3BvaX0iXSxbMCwwLCJcXG1hdGhzZntBbGd9KFxcbWF0aHNme1R9XntcXEhjYWx9KV97XFxtYXRoc2Z7TX19XFxvcCJdLFswLDEsIlxcbWF0aHNme0FsZ30oXFxtYXRoc2Z7VH1ee1xcSGNhbH0pXFxvcCJdLFsxLDAsIlxcbXN7Q2x9Il0sWzAsMl0sWzEsMl0sWzQsMywiIiwyLHsibGV2ZWwiOjEsInN0eWxlIjp7Im5hbWUiOiJhZGp1bmN0aW9uIn19XV0=
    \[\begin{tikzcd}
    	{\mathsf{alg}(\mathsf{T}^{\Hcal})} & {\mathsf{Topoi}\op} \\
    	{\mathsf{Alg}(\mathsf{T}^{\Hcal})}
    	\arrow[""{name=0, anchor=center, inner sep=0}, "{\ms{Cl}}", from=1-1, to=1-2]
    	\arrow[from=1-1, to=2-1]
    	\arrow[""{name=1, anchor=center, inner sep=0}, from=1-2, to=2-1]
    	\arrow["\dashv"{anchor=center, rotate=-90}, draw=none, from=1, to=0]
    \end{tikzcd}\]
\end{thm}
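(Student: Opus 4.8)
The plan is to reduce the statement to the classical Diaconescu equivalence and then match the two sides. Spelled out, for a small $\ms T^{\Hcal}$-algebra $\mc C$ and a topos $\mc E$ — the latter regarded as a (large) $\ms T^{\Hcal}$-algebra through the forgetful functor $\Topoi\op \to \ms{Pretopoi}_\infty \to \ms{Alg}(\ms T^{\Hcal})$ of \Cref{lessinterestingly} — the asserted relative pseudoadjunction amounts to a natural equivalence of categories
\[ \Topoi(\mc E,\ms{Cl}(\mc C)) \;\simeq\; \ms{Alg}(\ms T^{\Hcal})(\mc C,\mc E), \]
pseudonatural in $\mc C\in\ms{alg}(\ms T^{\Hcal})$ and $\mc E\in\Topoi$. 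It therefore suffices to build this equivalence and check naturality; the unit/counit data of the relative pseudoadjunction then follow formally.

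First I would unwind $\ms{Cl}(\mc C) = \sh(\mc C,J^{\Hcal}_{\mc C})$, where $J^{\Hcal}_{\mc C}$ is by construction the least topology inverting every component $\delta_X : \sigma X \to \yo(aX)$ for $X\in\ms T^{\Hcal}(\mc C)$. Hence $j : \ms{Cl}(\mc C)\hook\psh(\mc C)$ is the subtopos of $\psh(\mc C)$ spanned by the objects orthogonal to all the $\delta_X$, and a geometric morphism $g : \mc E \to \psh(\mc C)$ factors (essentially uniquely) through it iff $g^*$ inverts every $\delta_X$ (cf.\ \cite[C2.3]{elephant2}). By Diaconescu's theorem for presheaf topoi, $\Topoi(\mc E,\psh(\mc C)) \simeq \LEX(\mc C,\mc E)$ via $g \mapsto g^*\yo$ — here $\mc C$ is lex, so flat functors are exactly the left exact ones — and $g^* \cong \lan_{\yo}(g^*\yo)$. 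So the left-hand side of the displayed equivalence is the full subcategory of $\LEX(\mc C,\mc E)$ on those $F$ such that $\lan_{\yo}F$ inverts every $\delta_X$.

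The heart of the argument is to recognize this condition as ``$F$ is a morphism of $\ms T^{\Hcal}$-algebras''. Using \Cref{prop:morphismofpsalg} we have $\sigma \cong \lan_\eta\yo$, and since $\lan_{\yo}F$ is cocontinuous this gives $\lan_{\yo}F \circ \sigma \cong \lan_\eta F$; by the proof of \Cref{lessinterestingly} the latter is precisely the algebra action $a^F$ of $\mc E$ on $F$. Consequently $\lan_{\yo}F(\delta_X)$ is exactly the canonical comparison $a^F(X) = (\lan_\eta F)(X) \to F(a_{\mc C}X)$, with $a_{\mc C}\dashv\eta_{\mc C}$ the algebra structure of the small algebra $\mc C$; so $\lan_{\yo}F$ inverts all $\delta_X$ iff $F\circ a_{\mc C}\cong\lan_\eta F$, which is exactly the condition that the left exact functor $F$ preserves the relevant left Kan extension, i.e.\ is a morphism of algebras $\mc C\to\mc E$. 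On $2$-cells, the Diaconescu equivalence $\Topoi(\mc E,\psh(\mc C))\simeq\LEX(\mc C,\mc E)$ already matches geometric transformations with natural transformations, and this restricts verbatim to the identified subcategories, yielding the equivalence of hom-categories.

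Finally I would check pseudonaturality. In $\mc C$ it follows from the already-established functoriality of $J^{\Hcal}$ and $\ms{Cl}$ together with the naturality of classical Diaconescu along morphisms of sites; in $\mc E$ it follows because precomposition with a geometric morphism corresponds under $g\mapsto g^*\yo$ to postcomposition with its inverse image, which is exactly how the underlying-algebra functor acts on hom-categories. Assembling these equivalences into a counit then gives the relative pseudoadjunction. The step I expect to be the real obstacle is the bookkeeping in the previous paragraph: identifying the comparison $2$-cell $\lan_{\yo}F(\delta_X)$ with the algebra-morphism comparison cell up to coherent isomorphism, and upgrading the objectwise equivalences to a genuinely pseudonatural family; the remaining ingredients are just \Cref{lessinterestingly}, \Cref{prop:morphismofpsalg}, and classical Diaconescu.
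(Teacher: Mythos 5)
Your proposal is correct and follows essentially the same route as the paper: both reduce to the classical Diaconescu equivalence for $\psh(\mc C)$, identify factoring through $\ms{Cl}(\mc C)$ with inverting the maps $\delta_X$, and recognize this (via $\lan_{\yo}F\circ\sigma \cong \lan_\eta F = a^F$) as exactly the condition that $F$ is a morphism of $\ms T^{\Hcal}$-algebras. Your version just spells out more of the bookkeeping (the identification of $\lan_{\yo}F(\delta_X)$ with the algebra comparison cell, and pseudonaturality) that the paper's proof leaves implicit.
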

\begin{proof}
    By the above construction, for any topos $\mc E$ and small algebra $\mc C$,
    \[ \Topoi(\mc E,\mathsf{Cl}(\mc C)) \simeq \ms{SITES}(J^{\mc H}(\mc C),\mc E), \]
    where a morphism of site $F : J^{\mc H}(\mc C) \to \mc E$ is a left exact functor that preserves covers. Equivalently, this means that when we take the following left Kan extension
    \[
    \begin{tikzcd}
        \mc C \ar[d, "\eta"'] \ar[r, "F"] & \mc E \\ 
        \mathsf{T}^{\Hcal}(\mc C) \ar[ur, dashed, "a^F"']
    \end{tikzcd}
    \]
    $a^F$ inverts maps of the form $\delta_X : X \to \eta aX$ for $X\in\ms T^{\mc H}(\mc C)$. In other words, $a^F \cong a^F\eta a$, thus $F$ is a morphism between $\ms T^{\mc H}$-algebras. This way,
    \[ \Topoi(\mc E,\mathsf{Cl}(\mc C)) \simeq \ms{Alg}(\ms T^{\mc H})(\mc C,\mc E). \qedhere \]
\end{proof}

\begin{rem}
    When $\mc H$ is taken to be $\mc H_{\mr{eth}}$, the above relative pseudoadjunction specialises to the following diagram on the left, which corresponds to the plain version of Diaconescu's theorem, stating that geometric morphisms into a free topos corresponds exactly to left exact functors out of the site.
    % https://q.uiver.app/#q=WzAsNixbMCwwLCJcXGxleCJdLFsxLDAsIlxcbWF0aHNme1RvcG9pfVxcb3AiXSxbMCwxLCJcXExFWCJdLFsyLDAsIlxcVG9wb2lcXG9wIl0sWzIsMSwiXFxtc3tQcmV0b3BvaX1fXFxpbmZ0eSJdLFszLDAsIlxcVG9wb2lcXG9wIl0sWzAsMSwie1xcbXN7Q2x9fSJdLFswLDJdLFsxLDJdLFszLDRdLFszLDUsIlxcbXN7Q2x9Il0sWzQsNV0sWzExLDEwLCIiLDIseyJsZXZlbCI6MSwic3R5bGUiOnsibmFtZSI6ImFkanVuY3Rpb24ifX1dLFs4LDYsIiIsMix7ImxldmVsIjoxLCJzdHlsZSI6eyJuYW1lIjoiYWRqdW5jdGlvbiJ9fV1d
    \[\begin{tikzcd}
    	\lex & {\mathsf{Topoi}\op} & {\Topoi\op} & {\Topoi\op} \\
    	\LEX && {\ms{Pretopoi}_\infty}
    	\arrow[""{name=0, anchor=center, inner sep=0}, "{{\ms{Cl}}}", from=1-1, to=1-2]
    	\arrow[from=1-1, to=2-1]
    	\arrow[""{name=1, anchor=center, inner sep=0}, from=1-2, to=2-1]
    	\arrow[""{name=2, anchor=center, inner sep=0}, "{\ms{Cl}}", from=1-3, to=1-4]
    	\arrow[from=1-3, to=2-3]
    	\arrow[""{name=3, anchor=center, inner sep=0}, from=2-3, to=1-4]
    	\arrow["\dashv"{anchor=center, rotate=-90}, draw=none, from=1, to=0]
    	\arrow["\dashv"{anchor=center, rotate=-90}, draw=none, from=3, to=2]
    \end{tikzcd}\]
    Though we haven't provided a full treatment extending to the Morita-small algebras, it should be clear that when taking the class $\mc H_{\mr{all}}$, by replacing small algebras to Morita-small algebras, the above relative pseudoadjunction will specialise to the diagram on the right above. Here $\ms{Cl}$ is simply identity. This explains the so-called \emph{logoi-topoi duality}; see for instance~\cite{anel2021topo}.
\end{rem}

\subsection{Conceptual completeness}
\label{conceptual}

In this section we shall relate the construction of the classifying topos to Makkai's \textit{conceptual completeness} \cite{makkai88} for first order logic, and make it parametric in the choice of the logic. 

Let us take the example of the coherent fragment. Let $\mc X$ be a coherent topos. We will first investigate what is the result for the construction of syntactic category by viewing it as an object in $\WRInj(\mc H_{\mr{flat}})$. We can indeed show that these are exactly the coherent objects in $\mc X$:

\begin{prop}\label{prop:synofcoh}
    For any coherent topos $\mc X$, $\ms{Syn}^{\mc H_{\mr{flat}}}(\mc X)$ is equivalent to the full subcategory of coherent objects in $\mc X$.
\end{prop}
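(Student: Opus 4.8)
The plan is to realise $\ms{Syn}^{\mc H_{\mr{flat}}}(\mc X)$ as a full subpretopos of $\mc X$ squeezed between the coherent objects and $\mc X$, and then to collapse it onto the coherent objects using Makkai's conceptual completeness.

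First I would set up the canonical presentation. For a coherent topos $\mc X$ the full subcategory $\mc X_{\mr{coh}}$ of coherent objects is an (essentially small) pretopos, it is a dense subsite for the canonical topology, and $\mc X \simeq \sh(\mc X_{\mr{coh}}, J_{\mr{coh}})$ with $J_{\mr{coh}}$ the restriction of $J^{\mr{can}}_{\mc X}$ (cf.~\cite[D3.3]{elephant2}); write $\mc C := \mc X_{\mr{coh}}$, let $i : \mc C \hook \mc X$ be the inclusion and $j : \mc X \hook \psh(\mc C)$ the corresponding embedding, so that $i \simeq j^*\yo$, and recall from Theorem~\ref{thm:cohformal} that $j$ is an $\mc H_{\mr{flat}}$-presentation of $\mc X$. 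For the inclusion ``coherent objects $\subseteq \ms{Syn}^{\mc H_{\mr{flat}}}(\mc X)$'', take $c \in \mc C$, a flat $f : \mc E \to \mc F$ and $x : \mc E \to \mc X$; the right Kan extension $\ran_f x$ exists in $\Topoi$ (Theorem~\ref{thm:cohformal}) and factors through $j$, and as in the proof of Proposition~\ref{prop:genericproof} one has $(\ran_f x)^* \circ j^* \cong \lan_{\yo}(f_* x^* y)$ with $y = j^*\yo$. Evaluating at $\yo c$ and using the Yoneda lemma gives $(\ran_f x)^*(j^*\yo c) \cong f_*(x^*(j^*\yo c))$, compatibly with the counit; by Example~\ref{exa:directimagerightKan} this says precisely that the geometric morphism $j^*\yo c : \mc X \to \Set[\mbb O]$ preserves $\ran_f x$, so $j^*\yo c \in \ms{Syn}^{\mc H_{\mr{flat}}}(\mc X)$.

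Next I would show that $\mc D := \ms{Syn}^{\mc H_{\mr{flat}}}(\mc X)$ is a full subpretopos of $\mc X$. It is full (Construction~\ref{syncat}) and closed under finite limits (Proposition~\ref{synislex}). For closure under finite coproducts and under quotients of equivalence relations I would use that for flat $f$ the direct image $f_*$ preserves finite coproducts and epimorphisms, while inverse images are cocontinuous and exact: given $X_1, X_2 \in \mc D$ (resp.\ an equivalence relation with quotient $X_1/R$, $R \in \mc D$), one checks that $(\ran_f x)^*$ applied to $X_1 \sqcup X_2$, resp.\ to $X_1/R$, coincides with $f_*$ applied to $x^*$ of the same construction --- using that $f_*$ preserves these coproducts, preserves the relevant epis, and that in a topos every epi is the coequaliser of its kernel pair --- the counit compatibility being a routine diagram chase. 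Since $\mc D$ is closed under the pretopos operations inherited from $\mc X$ and contains the dense pretopos $\mc C$, it is itself a dense subsite; equipping $\mc D$ with the restriction of the canonical topology (the syntactic topology of Construction~\ref{synsite}), the comparison lemma gives $\sh(\mc D, i_{\mc D}^*J^{\mr{can}}_{\mc X}) \simeq \mc X$, and the inclusion $\iota : \mc C \hook \mc D$ is a coherent morphism of sites inducing, up to isomorphism, the identity geometric morphism on $\mc X$, hence the identity functor on $\ms{pt}(\mc X)$.

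Finally I would invoke conceptual completeness. The interpretation $\iota : \mc C \hook \mc D$ induces on categories of (coherent) models the functor $\ms{pt}(\mc X) \to \ms{pt}(\mc X)$ isomorphic to the identity --- in particular an equivalence of categories, and compatible with the canonical ultrastructure on $\ms{pt}(\mc X)$ since it is induced by $\mathrm{id}_{\mc X}$ (here Deligne's theorem that coherent topoi have enough points, together with the $\beta$-map description of the ultrastructure recalled in Remark~\ref{prescribinguniversals}, guarantees that this ultrastructure is determined by $\mc X$). By Makkai's conceptual completeness \cite{makkai88} this forces $\iota$ to be an equivalence of pretoposes, i.e.\ $\ms{Syn}^{\mc H_{\mr{flat}}}(\mc X) = \mc C = \mc X_{\mr{coh}}$. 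The main obstacle is this last step: one must pin down the exact form of conceptual completeness being used --- that an interpretation of pretoposes inducing an equivalence of categories of models \emph{compatibly with their ultrastructure} (automatic here, as it comes from a geometric equivalence) is itself an equivalence --- and handle the incidental size point that the a priori large pretopos $\mc D$ is essentially small; the latter follows a posteriori from $\mc D = \mc C$, and is precisely the ingredient making the fragment $\mc H_{\mr{flat}}$ bounded.
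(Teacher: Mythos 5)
Your first direction (coherent objects lie in $\ms{Syn}^{\mc H_{\mr{flat}}}(\mc X)$) matches the paper's, and your verification that $\mc D := \ms{Syn}^{\mc H_{\mr{flat}}}(\mc X)$ is closed in $\mc X$ under finite limits, finite coproducts and effective quotients is correct. The gap is in the final step, where you apply conceptual completeness to the inclusion $\iota : \mc C \hook \mc D$. That theorem requires the induced functor $\ms{Mod}(\mc D) \to \ms{Mod}(\mc C)$ on categories of \emph{pretopos morphisms into $\Set$} to be an equivalence, and you obtain this by identifying $\ms{Mod}(\mc D)$ with $\ms{pt}(\mc X)$ via the comparison lemma. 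But the comparison lemma identifies $\ms{pt}(\mc X)$ with models of $\mc D$ for the topology \emph{induced} from the canonical topology of $\mc X$, and that induced topology agrees with the finitary coherent topology of the pretopos $\mc D$ only if every object of $\mc D$ is compact in $\mc X$ --- which is essentially what is being proved. A pretopos morphism $\mc D \to \Set$ need not be continuous for infinitary induced covers, so what you actually have is only that $\ms{pt}(\mc X)$ is a retract of $\ms{Mod}(\mc D)$: a $\mc D$-model restricts to a $\mc C$-model and hence extends to a point of $\mc X$, but since $\mc C$ is \emph{already} closed under the pretopos operations in $\mc X$, an object $X \in \mc D \setminus \mc C$ would not be generated from $\mc C$ by those operations, so a $\mc D$-model is not determined by its restriction to $\mc C$ and essential surjectivity of $\ms{pt}(\mc X) \to \ms{Mod}(\mc D)$ fails to follow. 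The size issue you flag (smallness of $\mc D$, needed even to state conceptual completeness) is a second unresolved point, and your proposed resolution (``it follows a posteriori from $\mc D = \mc C$'') is circular.

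The paper sidesteps both problems by arguing objectwise with the \emph{strong} form of conceptual completeness (Makkai's Stone duality \cite{makkai1987stone}) rather than the interpretation-theoretic form: for $X \in \ms{Syn}^{\mc H_{\mr{flat}}}(\mc X)$ it specialises the preservation of right Kan extensions to the $\beta$-maps $\Set^I \hook \sh(\beta I)$, uses that $\sh(\beta I)$ has enough points to translate this into the statement that evaluation at $X$ commutes with ultraproducts, i.e.\ is an ultrafunctor $\ms{pt}(\mc X) \to \Set$, and then invokes the duality to conclude that $X$ is coherent. If you want to retain your global strategy, you would first have to show directly that every object of $\mc D$ is compact (equivalently, that the induced topology on $\mc D$ is finitary), at which point the detour through the intermediate pretopos $\mc D$ no longer does any work.
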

\begin{proof}
    Recall by construction, an object $X$ in $\ms{Syn}^{\mc H_{\mr{flat}}}(\mc X)$ is one that preserves the right Kan extensions along flat geometric morphisms, which means that for any flat map $f : \mc E \to \mc F$ and $x : \mc E \to \mc X$, we have the following situation,
    \[\begin{tikzcd}
    	{\mc E} & {\mc X} \\
    	{\mc F} & {\Set[\mbb O]}
    	\arrow["x", from=1-1, to=1-2]
    	\arrow["f"', from=1-1, to=2-1]
    	\arrow["X", from=1-2, to=2-2]
    	\arrow["{\ran_fx}"{description}, dashed, from=2-1, to=1-2]
    	\arrow["{f_*x^*X}"', dashed, from=2-1, to=2-2]
    \end{tikzcd}\]
    where the lower triangle commutes,
    \[ X \circ \ran_fx \cong f_*x^*X. \]

    Now suppose $X$ is a coherent object in $\mc X$. Let $\mc X \simeq \sh(\mc C,J)$ where $\mc C$ is the pretopos of coherent objects in $\mc X$ and $J$ the coherent topology. From Theorem~\ref{thm:cohformal}, we know $\ran_fx$ is computed pointwise, and in fact induced by the right Kan extension on $\psh(\mc C)$. This way,
    \[ (\ran_fx)^*X \cong (\lan_{\yo}f_*x^*\yo)X \cong f_*x^*X, \]
    which implies $X\in\ms{Syn}^{\mc H_{\mr{flat}}}(\mc X)$.

    On the other hand, to show for any $X\in\ms{Syn}^{\mc H_{\mr{flat}}}(\mc X)$, $X$ will be coherent, we use the connection between maps preserving the right Kan extension along $\mc H_{\mr{flat}}$ and ultrafunctors discussed previously. Consider the flat embedding $\Set^I \hook \sh(\beta I)$, with the following diagram
    \[\begin{tikzcd}
    	{\Set^I} & {\mc X} \\
    	{\sh(\beta I)} & {\Set[\mbb O]}
    	\arrow["x", from=1-1, to=1-2]
    	\arrow["f"', from=1-1, to=2-1]
    	\arrow["X", from=1-2, to=2-2]
    	\arrow["{\ran_fx}"{description}, dashed, from=2-1, to=1-2]
    	\arrow["{f_*x^*X}"', dashed, from=2-1, to=2-2]
    \end{tikzcd}\]
    Since $\sh(\beta I)$ has enough points, $X$ makes the lower triangle commutes iff for all $\mu\in\beta I$ we have 
    \[ X \circ \ran_fx \circ \mu \cong (f_*x^*X) \circ \mu. \]
    By viewing the above two geometric morphisms $\Set \to \Set[\mbb O]$ as sets, and by the discussion at the end of Section~\ref{sec:logic} relating $\ran_f(-)$ with ultraproducts, this exactly means that
    \[ X(\int xd\mu) \cong \int Xx d\mu, \]
    In other words, the evaluation functor $\ms{pt}(\mc X) \to \Set$ induced by $X$ preserves ultraproducts. Such an object $X$ must be coherent is the consequence of the duality result in~\cite{makkai1987stone}.
\end{proof}

\begin{thm}\label{thm:cohalgebra}
    $\mathsf{Pretopoi}$ is the 2-category of algebras for $\mathsf{T}^{\mathcal{H}_{\text{flat}}}$.
\end{thm}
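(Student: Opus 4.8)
The plan is to identify $\ms T^{\mathcal H_{\mr{flat}}}$ with the free pretopos completion pseudomonad on $\lex$, and then to invoke the standard fact that the pseudoalgebras of a free cocompletion KZ-doctrine are exactly the categories admitting the relevant colimits. Throughout, write $\mathbf{Pretop}(\mc C)$ for the pretopos completion of a small left exact category $\mc C$ — the free cocompletion of $\mc C$ under finite disjoint stable coproducts and effective stable quotients of equivalence relations — equipped with its fully faithful left exact unit $\mc C \hook \mathbf{Pretop}(\mc C)$; this is again a small pretopos.

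\emph{Step 1 (computing $\ms T^{\mathcal H_{\mr{flat}}}$ on objects).} For $\mc C$ small and left exact, the Yoneda embedding preserves finite limits and every pullback of representables along maps between representables is representable, so the representables form a small generating family of coherent objects of $\psh(\mc C)$; hence $\psh(\mc C)$ is a coherent topos (and also a free topos, so it lies in $\WRInj(\mc H_{\mr{flat}})$ by \Cref{thm:algebraic}). By construction $\ms T^{\mathcal H_{\mr{flat}}}(\mc C) = \ms{Syn}^{\mc H_{\mr{flat}}}(\psh(\mc C))$, so \Cref{prop:synofcoh} identifies it with the full subcategory of coherent objects of $\psh(\mc C)$. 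Now $\psh(\mc C)$ classifies left exact functors on $\mc C$, and — applying the universal property of the pretopos completion with a Grothendieck topos (hence an infinitary, a fortiori finitary, pretopos) as codomain — so does $\sh(\mathbf{Pretop}(\mc C),J_{\mr{coh}})$, where $J_{\mr{coh}}$ is the coherent topology; these classifying topoi therefore coincide, and since the coherent objects of the classifying topos of a small pretopos recover that pretopos, we obtain a natural equivalence $\ms T^{\mathcal H_{\mr{flat}}}(\mc C)\simeq\mathbf{Pretop}(\mc C)$ carrying $\sigma:\ms T^{\mathcal H_{\mr{flat}}}(\mc C)\hook\psh(\mc C)$ to the canonical embedding of $\mathbf{Pretop}(\mc C)$ into its classifying topos and $\eta$ to the unit of $\mathbf{Pretop}$.

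\emph{Step 2 (pseudomonads and algebras).} Both $\ms T^{\mathcal H_{\mr{flat}}}$ and $\mathbf{Pretop}$ are lax-idempotent (relative) pseudomonads over $\lex\hook\LEX$ with the same underlying object assignment and the same fully faithful unit; since for a lax-idempotent pseudomonad the extension of $f$ along the unit is forced to be the left Kan extension $\lan_{\eta}f$, the pseudomonad structure is essentially unique once the object map and unit are fixed, so these two pseudomonads are equivalent, whence $\mathsf{alg}(\ms T^{\mathcal H_{\mr{flat}}})\simeq\mathsf{alg}(\mathbf{Pretop})$ over $\lex$. Finally, $\mathbf{Pretop}$ is a free cocompletion under a lex-compatible class of colimits, so by the standard theory of such KZ-doctrines its small pseudoalgebras are precisely the small left exact categories admitting these colimits, with morphisms the left exact functors that preserve them — that is, $\mathsf{Pretopoi}$; this is the pretopos analogue of the treatments of coherent categories and infinitary pretopoi in~\cite{lex,di2025bi}. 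Composing the two equivalences gives $\mathsf{Pretopoi}\simeq\mathsf{alg}(\ms T^{\mathcal H_{\mr{flat}}})$ over $\lex$, as claimed. In the direction from pretoposes to algebras one can also argue explicitly: the coherent topology on a small pretopos $\mc C$ is subcanonical, so the $\mc H_{\mr{flat}}$-presentation $j:\ms{Cl}(\mc C)\simeq\sh(\mc C,J_{\mr{coh}})\hook\psh(\mc C)$ of \Cref{thm:cohformal} lies in $\WRInj(\mc H_{\mr{flat}})$, and applying $\ms{Syn}^{\mc H_{\mr{flat}}}$ to it together with \Cref{prop:synofcoh} at both ends produces the sheafification functor $a:\ms T^{\mathcal H_{\mr{flat}}}(\mc C)\to\mc C$, which is left exact, satisfies $a\eta\cong\id$ because representables are already sheaves, and is left adjoint to $\eta$ since $\mc C(aP,c)\cong\sh(\mc C,J_{\mr{coh}})(j^*P,\yo_c)\cong\psh(\mc C)(P,j_*\yo_c)=\psh(\mc C)(P,\yo_c)$; conversely an algebra structure forces the reflector $a$ to be left exact and $\ms T^{\mathcal H_{\mr{flat}}}(\mc C)\subseteq\psh(\mc C)$ to be closed under finite coproducts and effective quotients (direct images of flat maps preserve these, by purity and \Cref{lem:inductiveiffeffective}), so $\mc C$, being a left exact reflection of this pretopos, inherits the pretopos structure.

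\emph{Main obstacle.} The delicate point is Step 2 rather than Step 1: the object-level identification is immediate from \Cref{prop:synofcoh}, but one must verify that it is compatible with the full relative lax-idempotent pseudomonad structure (extension and multiplication), and keep the size bookkeeping honest, checking that $\ms T^{\mathcal H_{\mr{flat}}}(\mc C)$ is essentially small so that $\ms T^{\mathcal H_{\mr{flat}}}$ genuinely restricts to an ordinary pseudomonad on $\lex$. Lax-idempotence makes the structure property-like, which handles the first concern, and the fact that the coherent objects of a presheaf topos on a small left exact category form an essentially small pretopos handles the second.
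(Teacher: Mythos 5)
Your proposal is correct and follows essentially the same route as the paper: both hinge on \Cref{prop:synofcoh} to identify $\ms T^{\mc H_{\mr{flat}}}(\mc C)$ with the coherent objects of $\psh(\mc C)$, then identify these with the free pretopos on $\mc C$, and finally invoke the exactness-property/KZ-doctrine description of pretopoi to read off the algebras. The only real difference is cosmetic and sits in the middle step: the paper realises the free pretopos concretely as the closure $P\mc C$ of $\mc C$ under finite coproducts and effective quotients inside $\psh(\mc C)$ and applies the comparison lemma to obtain $\psh(\mc C)\simeq\sh(P\mc C,J)$, whereas you compare classifying topoi via the universal property of the abstract pretopos completion; both arguments then close with the same fact (cited as \cite[D3.3.7]{elephant2} in the paper) that the coherent objects of the coherent topos on a small pretopos recover that pretopos.
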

\begin{proof}
    For any small lex category $\mc C$, $\ms T^{\mc H}(\mc C)$ by Proposition~\ref{prop:synofcoh} is the category of coherent objects in $\psh(\mc C)$. We show this is the free pretopos generated by $\mc C$. Notice by the results in~\cite[Sec. 5]{lex}, being a pretopos is an exactness property over $\LEX$. By Thm. 7.7 of the \emph{loc. cit.}, it suffices to show $\ms T^{\mc H}(\mc C)$ is the closure of $\mc C$ in $\psh(\mc C)$ under finite limits, finite coproducts, and effective quotients. Let us denote the latter category as $P\mc C$. Consider the inclusion of site,
    \[ \mc C \hook (P\mc C,J), \]
    where $J$ is the canonical topology on $\psh(\mc C)$ restricted to $P\mc C$, which coincides with its coherent topology. This way, the topology restricted to $\mc C$ is the trivial topology. Furthermore, by construction every object in $P\mc C$ is covered by a coproduct of objects in $\mc C$. Hence, this is a dense morphism of site, and by the comparison lemma they induces an equivalence of topoi,
    \[ \psh(\mc C) \simeq \sh(P\mc C,J). \]
    Now by~\cite[D3.3.7]{elephant2}, the coherent objects in $\sh(P\mc C,J)$ are exactly the images of $P\mc C$ under Yoneda. This implies that $\ms T^{\mc H}(\mc C) \simeq P\mc C$, hence $\ms T^{\mc H}(\mc C)$ is the free pretopos generated by $\mc C$.
\end{proof}

\begin{rem}[There are more fragments than doctrines]\label{rem:morelogicthandoctrine}
    Following the proof of the above, it follows that both $\mc H_{\mr{flat}}$ and $\mc H_{\beta}$ induce the \emph{same} lax-idempotent pseudomonad, viz. the free pretopos construction over $\lex$. However, from \Cref{prop:betanotflat} we know that $\mc H_{\mr{flat}}$ and $\mc H_{\beta}$ do not generate the same Kan injectivity class in $\Topoi$. 
\end{rem}

% Following the above proof, more generally we have the following result:

\begin{cor}\label{conceptualcompleteness}
    For any coherent topoi $\mc X,\mc Y$, a map $f : \mc X \to \mc Y$ belongs to $\WRInj(\mc H_{\mr{flat}})$ iff $f^*$ preserves coherent objects, i.e. there is a full embedding
    \[ \ms{Pretopoi}\op \hook \WRInj(\mc H_{\mr{flat}}). \]
\end{cor}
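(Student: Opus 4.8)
The plan is to read off both implications from Proposition~\ref{prop:synofcoh}, using the $2$-categorical structure of $\WRInj(\mc H_{\mr{flat}})$ for one direction and the Diaconescu/soundness machinery for the other. Throughout I use the equivalence $\Topoi(\mc Z,\Set[\mbb O]) \simeq \mc Z$, under which precomposition with a geometric morphism corresponds to its inverse image; so an object $Y$ of $\mc Y$, viewed as a $1$-cell $Y : \mc Y \to \Set[\mbb O]$, satisfies $Y \circ f \cong f^*(Y)$. Note also that a coherent topos is formally coherent (Theorem~\ref{thm:cohformal}), hence lies in $\WRInj(\mc H_{\mr{flat}})$, and $\Set[\mbb O]$, being free, lies there as well, so all objects below are available inside $\WRInj(\mc H_{\mr{flat}})$.

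For the forward direction I would argue as follows. Suppose $f \in \WRInj(\mc H_{\mr{flat}})(\mc X,\mc Y)$ and let $Y$ be a coherent object of $\mc Y$. By Proposition~\ref{prop:synofcoh}, $Y$ as a $1$-cell belongs to $\ms{Syn}^{\mc H_{\mr{flat}}}(\mc Y) = \WRInj(\mc H_{\mr{flat}})(\mc Y,\Set[\mbb O])$; since $\WRInj(\mc H_{\mr{flat}})$ is a sub-$2$-category and hence closed under composition, $f^*(Y) \cong Y \circ f$ lies in $\WRInj(\mc H_{\mr{flat}})(\mc X,\Set[\mbb O]) = \ms{Syn}^{\mc H_{\mr{flat}}}(\mc X)$, which by Proposition~\ref{prop:synofcoh} is the category of coherent objects of $\mc X$. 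Thus $f^*$ preserves coherent objects.

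For the converse, suppose $f^*$ preserves coherent objects and write $\mc C_{\mc X},\mc C_{\mc Y}$ for the pretopoi of coherent objects of $\mc X,\mc Y$. The restriction $g := f^*|_{\mc C_{\mc Y}} : \mc C_{\mc Y} \to \mc C_{\mc X}$ is left exact, and since finite coproducts and effective quotients in these pretopoi are computed as in the ambient topoi while $f^*$ is cocontinuous and preserves pullbacks, $g$ preserves them too; hence $g$ is a pretopos morphism, i.e. a morphism of $\ms T^{\mc H_{\mr{flat}}}$-algebras by Theorem~\ref{thm:cohalgebra}. Conceptual soundness (Theorem~\ref{thm:conceptualsound}) then gives $\ms{Cl}(g) \in \WRInj(\mc H_{\mr{flat}})(\ms{Cl}(\mc C_{\mc X}),\ms{Cl}(\mc C_{\mc Y}))$. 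Finally, by Diaconescu's theorem (\ref{diaconescu}) a coherent topos is the classifying topos of its own pretopos of coherent objects, so $\ms{Cl}(\mc C_{\mc X}) \simeq \mc X$ and $\ms{Cl}(\mc C_{\mc Y}) \simeq \mc Y$; under these equivalences $\ms{Cl}(g)^*$ and $f^*$ agree on coherent objects and are both cocontinuous, hence agree, since coherent objects generate a coherent topos. Therefore $f \cong \ms{Cl}(g) \in \WRInj(\mc H_{\mr{flat}})$.

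To obtain the full embedding, send a small pretopos $\mc P$ to $\ms{Cl}(\mc P)$, which is a coherent topos and hence an object of $\WRInj(\mc H_{\mr{flat}})$. The two implications above, together with Proposition~\ref{prop:synofcoh} (so that the coherent objects of $\ms{Cl}(\mc P)$ recover $\mc P$) and Diaconescu's theorem, show that $\ms{Cl}$ identifies $\ms{Pretopoi}\op$ with the full sub-$2$-category of $\WRInj(\mc H_{\mr{flat}})$ spanned by the coherent topoi: on hom-categories this is precisely the equivalence between pretopos morphisms $\mc Q \to \mc P$ and geometric morphisms $\ms{Cl}(\mc P) \to \ms{Cl}(\mc Q)$ whose inverse image preserves coherent objects, and on $2$-cells it follows again from cocontinuity. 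The step I expect to need the most care is the converse direction: checking that $g$ is genuinely a \emph{pretopos} morphism, and — more delicately — that the geometric morphism $\ms{Cl}(g)$ produced by the soundness and Diaconescu machinery is canonically the \emph{original} $f$ rather than merely \emph{some} element of the injectivity class; this is where one must invoke that coherent objects generate a coherent topos and that cocontinuous functors are determined by their restriction to them.
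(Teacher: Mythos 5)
Your argument is correct, but it takes a genuinely different route from the paper's. The paper disposes of this corollary in one line by appealing directly to Makkai's duality, in the same style as the second half of Proposition~\ref{prop:synofcoh}: morphisms of $\WRInj(\mc H_{\mr{flat}})$ between coherent topoi induce ultrafunctors on points (via the $\beta$-maps), and Makkai's theorem identifies ultrafunctors with pretopos morphisms, i.e.\ with geometric morphisms whose inverse image preserves coherent objects. You instead split the two implications: the forward direction via Proposition~\ref{prop:synofcoh} plus closure of $\WRInj(\mc H_{\mr{flat}})$ under composition of $1$-cells (which is the natural argument and surely what the paper has in mind for that half), and the converse via the classifying-topos machinery --- restricting $f^*$ to a pretopos morphism $g$, invoking Theorem~\ref{thm:cohalgebra} to see $g$ as an algebra morphism, conceptual soundness (Theorem~\ref{thm:conceptualsound}) to get $\ms{Cl}(g)\in\WRInj_{\pw}(\mc H_{\mr{flat}})$, and density of coherent objects to identify $\ms{Cl}(g)$ with $f$. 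This is more work but more informative: it makes explicit exactly which pieces of the paper's own framework carry the converse, and it correctly isolates the delicate points (that $g$ is a pretopos morphism, and that $\ms{Cl}(g)\cong f$ rather than merely some morphism of the injectivity class; both are fine, by [D3.3] for the pretopos structure on coherent objects and by left Kan extension along the dense inclusion for the identification). What it does not buy is independence from Makkai: Theorem~\ref{thm:cohalgebra} rests on Proposition~\ref{prop:synofcoh}, whose hard direction is exactly the appeal to Makkai's duality, so your proof still bottoms out there --- consistent with the paper's own admission in Remark~\ref{6.3.6}.
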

\begin{proof}
    Similarly, this is a consequence of the duality result in~\cite{makkai1987stone}.
\end{proof}

\begin{rem}[The relation to conceptual completeness] \label{relationtoconceptual}
Let us discuss more precisely in what sense the corollary above can be understood as a form of conceptual completeness. The usual understanding of conceptual completeness is given by the fact that Pretopoi are $2$-fully faithfully embedded in ultracategories,
\[ \ms{Pretopoi}\op \hook \ms{Ult}. \]
The general idea of the theorem is the fact that from the \textit{semantics trace} of the category of models one can recover the whole theory that defines it. 

Now, one could claim that on the right of the fully faithful embedding we have shown above, 
\[ \ms{Pretopoi}\op \hook \WRInj(\mc H_{\mr{flat}}), \]
we are not discussing ultracategories, and that we are -- at most -- discussing some class of topoi, hence it is not that clear how to connect the corollary above to Makkai's theorem. Yet, when reading the proofs above in detail, one realises that this is not exactly the case. The $\beta$-completeness of each of those topoi is specifying the ultrastructure on its category of points, and the restriction to morphism of injectives is selecting ultrafunctors. So, while indeed on the right we are specifying \textit{theories} (as opposed to models) our notion of logic is inherently making claims about their semantics behaviour (see \cite[Sec. 2]{diliberti2022geometry}). 

We thus obtain an \textit{analysis} of the usual conceptual completeness via a factorization of its embedding,
\[ \ms{Pretopoi}\op \hook \WRInj(\mc H_{\mr{flat}}) \hook \ms{Ult}. \]
We shall claim that this is a more \textit{adequate} notion of conceptual completeness, as it bounds the axioms (reified by the syntactic category) to their \textit{prescribed} semantic behaviour, while the usual conceptual completeness is making a further step into semantics, for the lack of a finer description of logical systems. Our point here is that a conceptual completeness (as opposed to a \textit{reconstruction theorem} or a \textit{duality theorem}) should be understood as a form of Beth-definability for (prescribed) semantic behaviours.
\end{rem}

\begin{rem}[An unsatisfactory result] \label{6.3.6}
It goes without saying that it's quite unsatisfying for us that our proof of \Cref{conceptualcompleteness} relies on the classical conceptual completeness, and that we haven't managed to deliver a proof that completely relies on the introduced framework. This is also the reason why large portion of our table in the introduction is are so hard to fill and are left to conjectures. Yet, we believe that further work could fill this gap, and deliver a more modular approach to completeness-like theorems.
\end{rem}

Finally, we can end this section by formulating a definition and asking a question. Despite currently not delivering any self contained answer, notice how such notions could not even be defined without the technology of the paper (and indeed have only been investigated case-by-case).

\begin{defn}
    A bounded logic $\mathcal{H}$ enjoys conceptual completeness if the 2-functor exhibiting conceptual soundness $\mathsf{alg}{(\mathsf{T}^{\mathcal{H}})}\op \to \WRInj(\mc H)$ is in fact 2-fully faithful.
\end{defn}

\begin{rem}
    Again as mentioned earlier, the above definition is confined to the bounded case simply due to the fact that we are not actively dealing with size issues in this paper -- not because we think they are particularly hard, but just this treatment falls out of the scope of this paper. For the general case of conceptual completeness, we should ask the (tentative) 2-functor $\ms{Alg}(\ms T^{\mc H})\op_{\ms M} \to \WRInj(\mc H)$ to be 2-fully faithful.
\end{rem}

\begin{quest} \label{question}
    What logic $\Hcal$ enjoy conceptual completeness?
\end{quest}

\begin{exa}
    As shown in \Cref{class:essalg}, the essentially algebraic fragment $\Hcal_{\text{all}}$ enjoys conceptual completeness. If we ignore size issues, the geometric fragment $\Hcal_{\text{eth}}$ also enjoys conceptual completeness as shown in \Cref{chargeom}. Of course, as indicated in \Cref{conceptualcompleteness}, the classical conceptual completeness for coherent logic implies conceptual completeness for $\mc H_{\mr{flat}}$ in our sense.
\end{exa}

% \begin{defn}
% A topos in $H\RInj$ has a \textit{presentation} if it admits a geometric embedding in a free topos which is a morphism of injectives.
% \end{defn}

% \begin{prop}
% A $H$-injective topos is of the form $Cl[X]$ if and only if it admits a presentation.
% \end{prop}
% \begin{proof}
% Let $\Ecal$ be the injective and let $j: \Ecal \to \psh(C)$ be the presentation. We shall show that $\Ecal$ is a strong generator of $\Ecal$ and hence generates the topos.  Yet, this is obvious because the objects of the form $j^*(\yo c)$ with $c \in C$ form a strong generator and they all populate the syntactic category. {\color{red} We need to show that $ClSyn[\mathcal{E}]$ = sheaves over the canonical topology of Syn{E} for this to work!}.
% \end{proof}
\section{Catharsis and Sipario} \label{sec:future}

This short section serves as an outro of the paper. Developing the foundations of this theory has required a quite deep leap of faith: while we would have wanted to jump directly to the last two sections (and even the next paper (see \ref{abstractnonsense})), finding the correct definitions and growing a theory that could encompass all the examples has required to start with a more foundational take. The final result is a very broad project, where many different routes can be taken. There are four main directions.

\subsection{Broaden our understanding of formal category theory in $\mathsf{Topoi}$} 

Below would be very much a further investigation of the work we initiated in \Cref{sec:universal}.

\begin{enumerate}
    \item[($\mathsf{point}$)] In \Cref{pointwise} we have seen a bit \textit{en passant} that the notion of pointwise Kan extension based on the equipment coincides with the one internally captured by comma objects. In the paper, we do not explore the consequences of this fact, but it would be interesting if this has interesting repercussions to the theory, as in principle the \textit{internalization} of pointwise Kan extension should influence their behaviour.
    \item[$(\mathsf{lifts})$] It would be also interesting to study better Kan lifts in  $\mathsf{Topoi}$, as we expect them to be better behaved then in $\CAT$. One source of this expectation is given by the fact that there seems to be more naturally arising coreflective subcategories of $\Topoi$, with Boolean topoi being one such example.
\end{enumerate}

\subsection{Complete several aspects of the taxonomy initiated in \Cref{sec:logicshowoff}} There is quite an asymmetry in the level of development of each example in \Cref{sec:logicshowoff}. For some fragments we can give a complete and satisfying description of their injectives and their algebras and for others we can't. In the table below we condense as much as possible of this information.

\begin{table}[!h]\renewcommand{\arraystretch}{1.3}
\begin{tabular}{|c|c|c|}
\hline
\textbf{Logic} --  \textbf{$\mathcal{H}$}  & $\mathsf{WRInj}_{\pw}(\mathcal{H})$  & $\ms{alg}(\ms T^{\mc H})/\mathsf{Alg}(\ms T^\mathcal{H}$) \\ \hline

geometric \hfill  $\mathcal{H}_{\text{eth}}$ (\ref{defeth}) & $\mathsf{Topoi}$  \hfill (\ref{chargeom}) & \textsf{Pretopoi}$_\infty$ \hfill (\ref{examplesofsyns}) \\ \hline

integral \hfill  $\mathcal{H}_{\text{cl}}$ (\ref{exa:complementedfragment}) & $\mathsf{TocTopoi}$ \hfill  (\ref{cor:classtot}) & \textsf{ConLex} \hfill (\ref{examplesofsyns}) \\ \hline

ess. alg. \hfill  $\mathcal{H}_{\text{all}}$ (\ref{defall}) & $\mathsf{Free}^{*}$ \hfill  (\ref{class:essalg}) & \textsf{Lex} \hfill (\ref{examplesofsyns}) \\ \hline

ess. alg. w. $\bot$ \hfill  $\mathcal{H}_{\text{dom}}$ (\ref{defdom}) & $\mathsf{ClFree}^{*}$  \hfill (\ref{thm:elsealgebraic}) & \textsf{siLex} \hfill (conj!) \\ \hline

disjunctive \hfill  $\mathcal{H}_{\text{inn}}$ (\ref{definn}) & $\mathsf{DisjTopoi}^{*}$ \hfill  (conj!) & \textsf{Lexten}$_\infty$ \hfill (conj!) \\ \hline

fin. disj. \hfill  $\mathcal{H}_{\text{pure}}$ (\ref{defpure}) & $\mathsf{fDisjTopoi}^{*}$ \hfill  (conj!) &  \textsf{Lexten} \hfill (conj!) \\ \hline

regular \hfill   $\mathcal{H}_{\text{matte}}$ (\ref{defmatte}) & $\mathsf{RegTopoi}^{*}$ \hfill  (conj!) & \textsf{EffReg} \hfill (conj!) \\ \hline

coherent \hfill $\mathcal{H}_{\text{flat}}$ (\ref{defflat}) & $\mathsf{CohTopoi}^{*}$ \hfill  (conj!) & \textsf{Pretopoi} \hfill (\ref{thm:cohalgebra}) \\ \hline

\end{tabular}
\label{tab:summary}
\end{table}

\begin{enumerate}
\item[($\WRInj_{\pw}(\Hcal)$)] The difficulty in providing a complete classification of these 2-categories is displayed -- for example -- by \Cref{thm:elsealgebraic} where we show an exact correspondence between $\WRInj_{\pw}(\mc H_{\mr{dom}})$ and closed subtopoi of free topoi. In the implication $(1) \Rightarrow (2)$, the only strategy that is known to us requires to show that every $\mc X\in\WRInj_{\pw}(\mc H)$ admits an $\Hcal$-embedding into a classifying topos in this fragment of logic. Unfortunately, we haven't managed to prove this result for every fragment, and thus we must leave the table incomplete. In the table where we haven't provided a full characterisation of $\WRInj_{\pw}(\mc H)$, we conjecture the topoi in $\WRInj_{\pw}(\mc H)$ are up to coadjoint retracts the classifying topoi of the corresponding fragment of logic.
\item[($\mathsf{alg}(\mathsf{T}^{\Hcal})$)] Similarly, for several fragments we haven't provided a characterisation of algebras for the (relative) pseudomonad $\ms T^{\Hcal}$. In this case, the difficulty in delivering this result is perfectly exemplified by  \Cref{prop:synofcoh}, where we had to invoke Makkai's conceptual completeness in order to compute the syntactic category. Of course, the class of maps $\mc H_{\mr{flat}}$ we choose is much more general the $\beta$-maps, thus \emph{a priori} a simpler and more conceptual computation could be available. Such a method is most likely to provide a general solution to all the cases we have listed. The table does contain our computation/conjecture in each case. The unfamiliar objects are $\ms{ConLex}$, which consists of those lex categories that can arise as the subcategory of connected objects in a totally connected topos, and $\ms{siLex}$, which consists of lex categories with a strict initial object.
\end{enumerate}

We believe it is crucial to complete the table above. To succeed in such computations will probably require genuinely new insights and development of techniques. Such results will deepen our understanding of the relationship between the syntax and semantics of logic, for example in addressing \Cref{question} and \Cref{6.3.6}.

\subsection{Symbolic logic} \label{concretenonsense}

We have shown a fragment $\mc H$ induces a doctrine $\ms T^{\mc H}$, which can be viewed as recovering a form of categorical syntax for this logic. One might go a step further to introduce a proper \emph{deductive system} over a \emph{concrete syntax} associated to $\mc H$, so that $\ms T^{\mc H}$ can then be understood as the construction of syntactic categories in the traditional sense~\cite[D1.4]{elephant2}. For instance, this will answer our quest in \Cref{exa:complementedfragment} to provide a concrete syntax for the exotic fragment $\mc H_{\mr{cl}}$. A systematic approach will provide a strong connection between logic in the traditional sense, which is about the manipulation of symbols, and \emph{abstract logic}, which is the perspective of this paper.

\subsection{Abstract logic} \label{abstractnonsense}

The perspective of this paper (that of studying logic abstractly and in a modular way) is not entirely new to the literature. Before diving further we shall discuss the connection between our approach and existing attempts. 

After Tarski's original definition of validity and satisfaction, many called for a general treatment of logical systems (notably Mostowski \cite{mostowski1957generalization}). This led to the tradition of \textit{abstract logic}, which has several incarnations.

One is the school that studies \textit{abstract satisfaction relations} $(\mathcal{L}, \models_{\mathcal{L}})$ \cite[Chap. 2]{barwise2017model}. In this approach, both syntax and semantics are specified independently, and are bond by the satisfaction relation $\models_{\mathcal{L}}$. The most celebrated result of this tradition is Lindström theorem \cite[Chap. 3]{barwise2017model}, which provides a characterization of first order logic in terms of some properties of its abstract satisfaction relations. The more recent theory of institutions \cite{diaconescu2008institution} should be understood as a natural prosecution of this tradition. 

Another school is that of \textit{algebraic logic} \cite{blok1989algebraizable}. Algebraic logic deal with varieties of posets, which are understood as putative Lindenbaum-Tarski algebras of fragments of propositional logic, and investigates the properties of those varieties to infer properties of the fragment of logic. One relevant cluster of results of this school is the reduction of Craig interpolation-type theorems \cite{craig1957three} to the \textit{amalgamation} property \cite[2.6.1]{hoogland2001definability}.

Our approach feels closest to algebraic logic. Indeed, we are carving out relevant classes of topoi, and in a similar spirit, they are carving out relevant classes of lattices. Moreover, the jump from lattices to categories \textit{precisely} fills the gap between propositional and predicate logic. However, \Cref{rem:morelogicthandoctrine} shows that the environment we are working with has a richer semantic aspect. Besides this, the advantage of our approach to that of abstract satisfaction relations is that it captures more precisely what a logic \textit{is}, as opposed to only describing its \textit{semantic shadow} via the satisfaction relation. As mentioned multiple times, for us a logic is (a) its intended semantic behaviour $\Hcal$, (b) its collection of theories $\WRInj(\Hcal)$, (c) its syntactic description $\mathsf{T}^{\Hcal}$. Most importantly, it is the combination of (a),(b) and (c), which interact with each other under the unified umbrella we propose in the paper. 

Importing (some of) the classical results that these traditions have produced to our framework, offering on one hand a more \textit{syntactic} understanding of them, and on the other a generalization to predicate logic is currently the main focus of the two authors for the near future.

\bibliography{thebib}
\bibliographystyle{alpha}

\end{document}